\theoremstyle{plain}
\newtheorem{theorem}{Theorem}[section]
\newtheorem{proposition}[theorem]{Proposition}
\newtheorem{corollary}[theorem]{Corollary}
\newtheorem{lemma}[theorem]{Lemma}
\theoremstyle{definition}
\newtheorem{definition}[theorem]{Definition}
\newtheorem{remark}[theorem]{Remark}
\newtheorem{example}[theorem]{Example}
\def\bA{\mathbb{A}}
\def\bB{\mathbb{B}}
\def\bL{\mathbb{L}}
\def\bC{\mathbb{C}}
\def\bF{\mathbb{F}}
\def\bN{\mathbb{N}}
\def\bQ{\mathbb{Q}}
\def\bZ{\mathbb{Z}}
\def\P{\mathbf{P}}
\def\cD{\mathcal{D}}
\def\cF{\mathcal{F}}
\def\cM{\mathcal{M}}
\def\cO{\mathcal{O}}
\def\cP{\mathcal{P}}
\def\cR{\mathcal{R}}
\def\cU{\mathcal{U}}
\def\fp{\mathfrak{p}}
\def\deg{\mathbf{deg}}
\def\Gal{\mathrm{Gal}}
\def\Norm{\mathrm{Norm}}
\def\ulim{\mathbf{ulim}}
\def\alg{\mathbf{alg}}
\def\Im{\mathrm{Im}}
\def\card{\mathrm{card}}
\def\GF{\mathrm{GF}}
\def\icard{\mathrm{icard}}
\begin{document}

\title[Reciprocity Law and Analogue of the Grunwald--Wang theorem]{Higher reciprocity law and An analogue of the Grunwald--Wang theorem for the ring of polynomials over an ultra-finite field}

\author{Dong Quan Ngoc Nguyen}

\address{Department of Mathematics, George Washington University, DC 20052}

\date{October 1, 2023}

\email{\href{mailto:dongquan.ngoc.nguyen@gmail.com}{\tt dongquan.ngoc.nguyen@gmail.com}\\
\href{mailto:dongquan.nguyen@gwu.edu}{\tt dongquan.nguyen@gwu.edu}}

\maketitle

\tableofcontents

\begin{abstract}

In this paper, we establish an explicit higher reciprocity law for the polynomial ring over a nonprincipal ultraproduct of finite fields. Such an ultraproduct can be taken over the same finite field, which allows to recover the classical higher reciprocity law for the polynomial ring $\bF_q[t]$ over a finite field $\bF_q$ that is due to Dedekind, K\"uhne, Artin, and Schmidt. On the other hand, when the ultraproduct is taken over finite fields of unbounded cardinalities, we obtain an explicit higher reciprocity law for the polynomial ring over an infinite field in both characteristics $0$ and $p >0$ for some prime $p$. We then use the higher reciprocity law to prove an analogue of the Grunwald--Wang theorem for such a polynomial ring in both characteristics $0$ and $p > 0$ for some prime $p$.

\end{abstract}

\section{Introduction}
\label{sec-Introduction}

In this paper, we establish higher reciprocity laws for power residue symbols in the polynomial ring $\bF[t]$ and use them to prove an analogue of the Grunwald--Wang theorem for $\bF[t]$, where $\bF$ is a nonprincipal ultraproduct of finite fields of unbounded cardinalities. Such fields $\bF$ were introduced in a more general context by Ax (se \cite[Section 7]{ax-1968}). In fact Ax \cite{ax-1968} introduced \textit{pseudo-finite fields} that are infinite models of the first-order theory of finite fields, and thus inherit many similar properties of finite fields. Nonprincipal ultraproducts of finite fields of unbounded cardinalities which we call \textit{ultra-finite fields} in this paper (see Section \ref{sec-ultrafinite-fields}), are examples of pseudo-finite fields as shown by Ax \cite{ax-1968}.

A nonprincipal ultraproduct of fields is called an \textit{ultra-field} in this paper. The terminology ``\textit{ultra-field}" was first used in Schoutens \cite{schoutens}. Inspired by the use of such a terminology, throughout this paper, we always use the prefix ``\textit{ultra}" for any nonprincipal ultraproduct of algebraic objects. For example, a nonprincipal ultraproduct of groups is called an \textit{ultra-group}, a nonprincipal ultraproduct of rings is called an \textit{ultra-ring}, and so on. In order to establish higher reciprocity law in the polynomial ring $\bF[t]$ over an ultra-finite field $\bF$, we systematically develop the theory of ultra-fields--an analogue of the theory of fields. Closely related to such a theory is a notion of ultra-polynomials which are elements in a so-called \textit{ultra-hull}--a nonprincipal ultraproduct of polynomial rings. More precisely, if $F = \prod_{s\in S}F_s/\cD$ is an ultra-field with respect to a nonprincipal ultrafilter $\cD$ for some fields $F_s$, the ultra-hull of the polynomial ring $F[t]$, denoted by $\cU(F[t])$, is a nonprincipal ultraproduct of polynomial rings $F_s[t]$. Each element in $\cU(F[t])$ is an \textit{ultra-polynomial}. Finite fields are closely related to polynomials $x^q - x$ for some power $q$ of a prime. In order to carry out the theory of ultra-finite fields in the same way that the theory of finite fields is developed, we systematically develop the theory of ultra-polynomials and introduce several analogues of notions relating to polynomials such as irreducibility, roots, separability, and so on. We also introduce an analogue of field extensions called \textit{ultra-field extensions} that provides insight into the structure of zeros of ultra-polynomials. The theory of ultra-finite fields is developed as ultra-field extensions that contain all roots of ultra-polynomials $x^{\alpha} - x$, an analogue of $x^q - x$, where $\alpha$ is a hypernatural number which is the ultraproduct of powers of primes.

In 1857, Dedekind \cite{Dedekind} stated a quadratic reciprocity law in the polynomial ring $\bF_q[t]$ over a finite field $\bF_q$. The first published proof was given in 1902 in K\"uhne \cite{kuhne} in which he also described a \textit{higher reciprocity law} in $\bF_q[t]$. Artin \cite{Artin-1924}, in 1924, gave another proof of a quadratic reciprocity law in $\bF_q[t]$ without realizing that it was already proved by K\"uhne. In 1927, Schmidt \cite{schmidt} rediscovered the higher reciprocity law of K\"uhne in $\bF_q[t]$ and also gave an elementary proof. For an early history of the development of reciprocity law for $\bF_q[t]$, see Roquette \cite{roquette-2222-LNM}.

Without realizing the literature about the reciprocity law in $\bF_q[t]$, Carlitz \cite{Carlitz-1932, Carlitz-1935} proved the general reciprocity law for $\bF_q[t]$, and thought he was the first to do so, but Ore referred him  to the work of Schmidt \cite{schmidt}. In fact, in 1934,  Ore \cite{ore} himself gave a proof of a quadratic reciprocity law in $\bF_q[t]$ using resultant. The \textit{reciprocity law by resultant} of Ore seems to inspire Clark and Pollack \cite{CP} to search for a reciprocity law by resultant in $F[t]$ for other fields $F$ among which are \textit{quasi-finite fields}. Recall (see \cite[p.190]{serre-local-fields}) that a quasi-finite field $F$ is a perfect field such that there is a Galois element $\sigma \in \Gal(F^{\alg}/F)$ for which the map $\alpha \mapsto \sigma^{\alpha}$ is an isomorphism of $\widehat{\bZ}$ onto the Galois group $\Gal(F^{\alg}/F)$, where $F^{\alg}$ denotes an algebraic closure of $F$ and $\widehat{\bZ}$ is the profinite completion of integers with respect to its subgroups of finite index. We write $(F, \sigma)$ to indicate $F$ is a quasi-finite field and $\sigma$ is a topological generator of $\Gal(F^{\alg}/F)$.

Clark and Pollack \cite{CP} defines the $n$-th power residue symbol for a quasi-finite field $(F, \sigma)$ that contains the $n$-th roots of unity as follows. Let $\mu_n \subset F^{\times}$ be the group of $n$-th roots of unity in $F$. There is a homomorphism from $F^{\times}$ to $\mu_n$ defined by sending each $a \in F^{\times}$ to $\dfrac{\sigma(a^{1/n})}{a^{1/n}}$, where $a^{1/n}$ is an $n$-th root of $a$ in $F^{\alg}$. Such a homomorphism does not depend on the choice of $a^{1/n}$, and by passage to the quotient, it induces an injective homomorphism
\begin{align}
\label{eqn-Gamma-map-introduction-in-Serre}
\Gamma_{F, n} : F^{\times}/F^{\times n} \hookrightarrow \mu_n.
\end{align}
Note that the above homomorphism is already known in Serre \cite[Lemma, p.210]{serre-local-fields}.

Take any relatively prime polynomials $\alpha, P$ in $F[t]$ such that $P$ is irreducible of degree $d$. There is a unique extension of $F$, say $F(d)$ of degree $d$ inside $F^{\alg}$. The pair $(F(d), \sigma^d)$ is also a quasi-finite field, and thus there is an injective homomorphism $\Gamma_{F(d), n} : F(d)^{\times}/F(d)^{\times n} \hookrightarrow \mu_n$ defined in a similar way as above. Identifying $F[t]/PF[t]$ with $F(d)$, the \textit{$n$-th power residue symbol} in \cite{CP} is defined as
\begin{align*}
\left(\dfrac{\alpha}{P}\right)_{n, CP} := \Gamma_{F(d), n}(\alpha \pmod{P}).
\end{align*}

For any relatively prime polynomials $\alpha, \beta \in F[t]$, write $\beta = aP_1^{r_1}\cdots P_h^{r_h}$ for the unique factorization of $\beta$ in $F[t]$, where $a \in F^{\times}$, the $P_i$ are irreducible polynomials, and the $r_i$ are positive integers. Define
\begin{align*}
\left(\dfrac{\alpha}{\beta}\right)_{n, CP} := \prod_{i = 1}^h\left(\dfrac{\alpha}{P_i}\right)^{r_i}_{n, CP}.
\end{align*}

The main result of Clark and Pollack \cite{CP} is a reciprocity law by resultant in $F[t]$.

\begin{theorem}
(Clark--Pollack \cite{CP})
\label{thm-Clark-Pollack}

Let $n$ be a positive integer, and let $F$ be a quasi-finite field that contains the group $\mu_n$ of $n$-th roots of unity. Let $\alpha, \beta$ be relatively prime polynomials in $F[t]$. Then
\begin{itemize}

\item[(i)] if $\beta$ is monic, then
\begin{align*}
\left(\dfrac{\alpha}{\beta}\right)_{n, CP} = \Gamma_{F, n}(\mathrm{Res}(\beta, \alpha)),
\end{align*}
where $\mathrm{Res}(\beta, \alpha) \in F[t]$ is the resultant of $\beta$ and $\alpha$.

\item[(ii)] If $\alpha, \beta$ are monic, then
\begin{align*}
\left(\dfrac{\alpha}{\beta}\right)_{n, CP} = \Gamma_{F, n}(-1)^{\deg(\alpha)\deg(\beta)}\left(\dfrac{\beta}{\alpha}\right)_{n, CP}.
\end{align*}

\end{itemize}

\end{theorem}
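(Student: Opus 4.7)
The plan is to prove part (i) by multiplicatively reducing to the case of a single monic irreducible $\beta$, and then to deduce part (ii) from (i) by invoking the antisymmetry of the resultant.

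For (i), I would first observe that both sides are multiplicative in $\beta$: the left-hand side by the very definition of the symbol on the prime factorization of $\beta$, and the right-hand side because $\mathrm{Res}(\beta_1\beta_2, \alpha) = \mathrm{Res}(\beta_1, \alpha)\mathrm{Res}(\beta_2, \alpha)$ and $\Gamma_{F,n}$ is a group homomorphism. Since $\beta$ is monic, its irreducible factors may be chosen monic, and so the problem reduces to the case $\beta = P$ monic irreducible of degree $d$. Fixing a root $\theta \in F(d)$ of $P$, the isomorphism $F[t]/(P) \cong F(d)$ sending $\alpha \bmod P$ to $\alpha(\theta)$ and the monicity of $P$ together yield
\begin{equation*}
\mathrm{Res}(P, \alpha) = \prod_{i=0}^{d-1}\alpha(\sigma^i(\theta)) = \prod_{i=0}^{d-1}\sigma^i(\alpha(\theta)) = \Norm_{F(d)/F}(\alpha(\theta)).
\end{equation*}
Hence (i) in the irreducible case is equivalent to the norm-compatibility
\begin{equation*}
\Gamma_{F(d), n}(x) = \Gamma_{F, n}(\Norm_{F(d)/F}(x)) \qquad \text{for every } x \in F(d)^\times.
\end{equation*}

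I would prove this identity by a direct calculation. Pick $y \in F^{\alg}$ with $y^n = x$ and set $z = \prod_{i=0}^{d-1}\sigma^i(y)$; then $z^n = \prod_{i=0}^{d-1}\sigma^i(x) = \Norm_{F(d)/F}(x)$, so $z$ is a legitimate choice of $n$-th root of $\Norm_{F(d)/F}(x)$ in $F^{\alg}$. A telescoping product gives
\begin{equation*}
\frac{\sigma(z)}{z} = \frac{\prod_{i=1}^{d}\sigma^i(y)}{\prod_{i=0}^{d-1}\sigma^i(y)} = \frac{\sigma^d(y)}{y},
\end{equation*}
whose left side is $\Gamma_{F,n}(\Norm_{F(d)/F}(x))$ by definition, and whose right side is $\Gamma_{F(d),n}(x)$ because $\sigma^d$ is the chosen topological generator of $\Gal(F^{\alg}/F(d))$ in the quasi-finite structure on $F(d)$. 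This establishes the norm-compatibility and completes (i).

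For (ii), since both $\alpha$ and $\beta$ are monic, applying (i) in each direction yields $\left(\dfrac{\alpha}{\beta}\right)_{n,CP} = \Gamma_{F,n}(\mathrm{Res}(\beta, \alpha))$ and $\left(\dfrac{\beta}{\alpha}\right)_{n,CP} = \Gamma_{F,n}(\mathrm{Res}(\alpha, \beta))$. The classical antisymmetry $\mathrm{Res}(\alpha, \beta) = (-1)^{\deg(\alpha)\deg(\beta)}\mathrm{Res}(\beta, \alpha)$ combined with the homomorphism property of $\Gamma_{F,n}$ then gives the desired formula. The only genuinely nontrivial step in this plan is the norm-compatibility above, where one must correctly reconcile the two topological generators $\sigma$ and $\sigma^d$ of $\Gal(F^{\alg}/F)$ and $\Gal(F^{\alg}/F(d))$ that enter the definitions of $\Gamma_{F,n}$ and $\Gamma_{F(d),n}$; everything else is multiplicativity and classical facts about resultants.
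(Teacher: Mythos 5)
Your argument is correct and proves the theorem in the generality in which it is stated (arbitrary quasi-finite $F$), essentially by the route of Clark--Pollack themselves, so it is genuinely different from what this paper does with the statement. The paper treats Theorem \ref{thm-Clark-Pollack} as a cited result and only re-derives it when $F$ is an ultra-finite field $\GF(\kappa)$: part (i) is recovered in Remark \ref{rem-power-residue-symbol-recovers-CP-PRS}, where the crux is exactly the norm-compatibility $\Gamma_{F(d),n} = \Gamma_{F,n}\circ\Norm_{F(d)/F}$ that you isolate, but it is proved there by explicit ultrapower exponent calculus, namely $\Gamma_{\GF(\kappa^d),n}(x) = x^{(\kappa^d-1)/n}$ and $\Norm_{\GF(\kappa^d)/\GF(\kappa)}(x) = x^{(\kappa^d-1)/(\kappa-1)}$, and the passage from norm to resultant is delegated to Lemma 11 of \cite{CP} rather than obtained, as you do, from the product formula $\mathrm{Res}(P,\alpha)=\prod_{i=0}^{d-1}\alpha(\sigma^i(\theta))$ for monic $P$; part (ii) is recovered in Remark \ref{rem-2nd-part-of-CP-theorem} as a corollary of the paper's General Reciprocity Law (Theorem \ref{thm-General-Reciprocity-Law}) for $\GF(\kappa)[t]$, whereas you deduce it directly from part (i) and the antisymmetry $\mathrm{Res}(\alpha,\beta)=(-1)^{\deg(\alpha)\deg(\beta)}\mathrm{Res}(\beta,\alpha)$. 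Your key telescoping step $\sigma(z)/z=\sigma^d(y)/y$ with $z=\prod_{i=0}^{d-1}\sigma^i(y)$ is sound because the values of the $\Gamma$-maps lie in $\mu_n\subseteq F$, hence are fixed by $\sigma$ and independent of the chosen $n$-th roots, and the identification of the roots of $P$ with the orbit $\{\sigma^i(\theta)\}$ is justified by perfectness of quasi-finite fields and the abelianness of $\Gal(F^{\alg}/F)$. What each approach buys: yours is self-contained, Galois-theoretic, and valid for every quasi-finite field; the paper's re-derivation is restricted to ultra-finite fields but runs through the congruence description $\alpha^{(\kappa^d-1)/n}\equiv\left(\frac{\alpha}{P}\right)_n \pmod{P\cU(\bA)}$, which is the added structure the paper needs for its reciprocity law and Grunwald--Wang analogue and which a general quasi-finite field does not admit.
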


The above notion of power residue symbol introduced in Clark--Pollack \cite{CP} does not provide congruence relations between power residue symbols and powers of elements in $F[t]$ as in the case of the polynomial ring over a finite field, which makes it difficult to study certain local-to-global questions in number theory of the following form: \textit{if a certain property holds modulo all but finitely many primes in $F[t]$, does it hold in $F[t]$?} An affirmative answer to such a question is known in the case that $F$ is a finite field (see Rosen \cite[Theorem 3.7]{rosen}), due to the fact that in $\bF_q[t]$, for any relatively prime polynomials $\alpha, P$ in $\bF_q[t]$ with $P$ being irreducible of degree $d$, the $n$-th power residue symbol $\left(\dfrac{\alpha}{P}\right)_n$ is the unique element of $\bF_q^{\times}$ that satisfies the congruence relation
\begin{align}
\label{eqn1-introduction-PRS}
a^{\frac{q^d - 1}{n}} \equiv \left(\dfrac{\alpha}{P}\right)_n \pmod{P}.
\end{align}

For a general quasi-finite field $F$, for example, $F = \bC((t))$, there is no meaningful way to \textit{count} the number of elements of $F$ as in the case of finite fields. The main purpose of this paper is to generalize (\ref{eqn1-introduction-PRS}) to $F[t]$ for ultra-finite fields $F$ which are nonprincipal ultraproducts of finite fields of unbounded cardinalities and so are also quasi-finite fields. For this special class of quasi-finite fields, we propose using \textit{internal cardinality} (see Section \ref{sec-basic-notions} for a brief introduction of this notion) to \textit{count} the number of elements of an ultra-finite field $F$ which can be represented by a hypernatural number. For an ultra-finite field $F$, there is a well-defined action of hypernatural numbers on elements in $F[t]$ which we call \textit{ultra-powers}, based on the notions of ultraproducts and ultra-hulls. This action allows to define an ultra-power $a^{\frac{\kappa^d - 1}{n}}$ in a similar way as in the left-hand side of (\ref{eqn1-introduction-PRS}), where $\kappa$ is the internal cardinality of $F$ and $n$ is a positive integer dividing $\kappa - 1$ in the ultrapower of $\bZ$. Such an ultra-power $a^{\frac{\kappa^d - 1}{n}}$ must belong in the ultra-hull $\cU(F[t])$ due to the ultra-power $\frac{\kappa^d - 1}{n}$, and thus we can prove (see Section \ref{sec-power-residue-symbol}) that there exists a unique element $\left(\dfrac{\alpha}{P}\right)_n$ in $F^{\times}$ that we call the \textit{$n$-th power residue symbol} such that the congruence
\begin{align}
\label{eqn2-introduction-PRS}
a^{\frac{\kappa^d - 1}{n}} \equiv \left(\dfrac{\alpha}{P}\right)_n \pmod{P\cU(F[t])}.
\end{align}
holds in $\cU(F[t])$ instead of $F[t]$ as in the finite-field case.

Our notion of power residue symbol introduced as above recovers that of power residue symbol in Clark--Pollack \cite{CP} for the case that $F$ is an ultra-finite field, and we recover Theorem \ref{thm-Clark-Pollack} using our approach. Due to the congruence (\ref{eqn2-introduction-PRS}), we can generalize the reciprocity law of Clark--Pollack in Theorem \ref{thm-Clark-Pollack} to the general case in which both $\alpha, \beta$ are not monic polynomials (see Theorem \ref{thm-General-Reciprocity-Law}). In the case that $\alpha, \beta$ are monic, the congruence (\ref{eqn2-introduction-PRS}) allows to obtain a simpler reciprocity law in $F[t]$ (see Theorem \ref{thm-Reciprocity-Law}) that is also more analogous to the case $\bF_q[t]$ in which the reciprocity equals the ultra-power of $-1$, say $(-1)^{\frac{\kappa - 1}{n}\deg(\alpha)\deg(\beta)}$--a hypernatural number taking values either $-1$ or $1$ instead of $\Gamma_{F, n}(-1)^{\deg(\alpha)\deg(\beta)}$ as in the reciprocity law of Clark--Pollack (see Theorem \ref{thm-Clark-Pollack}).

The congruence (\ref{eqn2-introduction-PRS}) can also be used to define the $n$-th power residue symbol for any \textit{hypernatural number $n$} dividing $\kappa - 1$, the internal cardinality of $F^{\times}$, and thus for $F[t]$ with ultra-finite fields $F$, it generalizes the notion of power residue symbol in Clark--Pollack \cite{CP} that can only be defined for positive integers $n$.

In Section \ref{sec-reciprocity-law}, we prove a higher reciprocity law for $F[t]$ with $F$ being an ultraproduct of finite fields that contains the reciprocity law for $\bF_q[t]$ as a special case, and generalize to polynomial rings over infinite fields in both characteristics $0$ and $p > 0$. A special case of the higher reciprocity law proved in this paper is the following.
\begin{theorem}
\label{thm-reciprocity-law-Nguyen-in-introduction}
(see Theorem \ref{thm-Reciprocity-Law})

Let $F$ be an ultra-finite field that is a ultraproduct of finite fields $\bF_{q_s}$ over $s \in S$ with respect to a nonprincipal ultrafilter $\cD$ whose internal cardinality is a positive hyperinteger $\kappa$. Let $n$ be a hyperinteger such that $n$ divides $\kappa-1$. Let $P$ and $Q$ be monic irreducible polynomials in $F[t]$ of degrees $d$ and $\ell$, respectively. Then
\begin{align*}
\left( \frac{Q}{P} \right)_n = (-1)^{\frac{\kappa-1}{n}d\ell}\left( \frac{P}{Q} \right)_n.
\end{align*}

\end{theorem}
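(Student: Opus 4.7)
The plan is to reduce to the classical higher reciprocity law in $\bF_q[t]$ (due to Dedekind, K\"uhne, Artin, and Schmidt) via the transfer principle (\L o\'s's theorem).

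First, I would unwind the data $(P, Q, n)$ as ultraproducts. Writing $F = \prod_{s \in S} \bF_{q_s}/\cD$, each coefficient of $P$ and $Q$ is an element of $F$, hence an equivalence class of a sequence in the $\bF_{q_s}$. Collecting coefficients, $P$ and $Q$ correspond to $\cD$-sequences $(P_s)_{s \in S}, (Q_s)_{s \in S}$ in $\bF_{q_s}[t]$, and the hypotheses that $P, Q$ are monic irreducible of degrees $d, \ell$ transfer (via \L o\'s's theorem and the theory of ultra-polynomials developed earlier in the paper) to the statement that $P_s, Q_s$ are monic irreducible of degrees $d, \ell$ in $\bF_{q_s}[t]$ for $\cD$-almost all $s$. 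Similarly, the hyperinteger $n$ dividing $\kappa - 1$ corresponds to a sequence $(n_s)_{s \in S}$ of positive integers with $n_s \mid q_s - 1$ for $\cD$-almost all $s$.

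Second, the classical higher reciprocity law in $\bF_{q_s}[t]$ gives, for $\cD$-almost all $s$,
\begin{align*}
\left(\frac{Q_s}{P_s}\right)_{n_s} = (-1)^{\frac{q_s - 1}{n_s}d\ell}\left(\frac{P_s}{Q_s}\right)_{n_s}.
\end{align*}
Taking the ultraproduct of this equality along $\cD$ yields the theorem, provided one can identify the class $\bigl[\bigl(\bigl(\tfrac{Q_s}{P_s}\bigr)_{n_s}\bigr)_{s}\bigr] \in F^{\times}$ with $\left(\frac{Q}{P}\right)_n$, and similarly on the right-hand side. This identification follows from the defining congruence \eqref{eqn2-introduction-PRS}: by \L o\'s's theorem, the congruence $Q_s^{(q_s^d - 1)/n_s} \equiv \bigl(\tfrac{Q_s}{P_s}\bigr)_{n_s} \pmod{P_s}$ in $\bF_{q_s}[t]$ ultraproducts to $Q^{(\kappa^d - 1)/n} \equiv \bigl[\bigl(\bigl(\tfrac{Q_s}{P_s}\bigr)_{n_s}\bigr)_{s}\bigr] \pmod{P\cU(F[t])}$ in the ultra-hull, and uniqueness of the power residue symbol (Section \ref{sec-power-residue-symbol}) forces the identification. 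The sign $(-1)^{\frac{\kappa - 1}{n}d\ell}$ is by construction the ultraproduct of the quantities $(-1)^{\frac{q_s - 1}{n_s}d\ell}$, each of which is $\pm 1$ since $n_s \mid q_s - 1$.

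The main obstacle I expect is precisely this compatibility check: verifying that the ultraproduct of the classical power residue symbols $\left(\frac{Q_s}{P_s}\right)_{n_s}$ coincides with our $\left(\frac{Q}{P}\right)_n$ as defined via the congruence in $\cU(F[t])$. This should follow cleanly from the defining congruence and \L o\'s's theorem, but needs to be set up with care because the degrees $d, \ell$ are external (standard) integers whereas the exponent $(\kappa^d - 1)/n$ is a genuine hyperinteger; in particular, one must phrase the classical reciprocity law as a first-order schema (one sentence for each pair $(d, \ell)$) so that the transfer applies uniformly in $s$ over $\cD$.
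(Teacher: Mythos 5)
Your argument is correct, but it is not the route the paper takes. You prove the theorem by transfer: decompose $P = \ulim_{s\in S}P_s$, $Q = \ulim_{s\in S}Q_s$, $n = \ulim_{s\in S}n_s$, invoke the classical Dedekind--K\"uhne--Artin--Schmidt law in each $\bF_{q_s}[t]$, and push the coordinatewise identity through the ultraproduct, using the defining congruence and its uniqueness to identify $\ulim_{s\in S}\left(\frac{Q_s}{P_s}\right)_{n_s}$ with $\left(\frac{Q}{P}\right)_n$. That identification is exactly right: the congruences $Q_s^{(q_s^d-1)/n_s}\equiv\left(\frac{Q_s}{P_s}\right)_{n_s}\pmod{P_s}$ ultraproduct to the congruence defining $\left(\frac{Q}{P}\right)_n$ in $\cU(F[t])$, and uniqueness of the element of $\GF(\kappa)^{\times}$ satisfying it (Definition \ref{def-nth-power-residue-symbol}) forces equality; moreover, since $d$ and $\ell$ are fixed standard integers, you do not even need a first-order schema --- each side of the coordinatewise equation is a single element of $\bF_{q_s}^{\times}$, and equality $\cD$-almost everywhere gives equality of ultra-limits, so your last worry is over-cautious rather than a gap (the only case you leave implicit, $P=Q$, is trivial since both symbols vanish). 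The paper instead proves the law intrinsically (Theorem \ref{thm-Reciprocity-Law}): it first reduces to $n=\kappa-1$ by raising to the $\frac{\kappa-1}{n}$-th ultra-power, then factors $P$ and $Q$ into linear factors over $\GF(\kappa^{d\ell})$ using the ultra-Frobenius orbit of a root (Theorem \ref{thm-roots-of-ultra-polynomials-in-ultra-Galois-fields}), computes $Q^{\frac{\kappa^d-1}{\kappa-1}}\equiv\prod_{i,j}(a^{\kappa^i}-b^{\kappa^j})\pmod{P\cU(\bL[t])}$ via Lemmas \ref{lem-x-c-divides-f(x)-if-c-is-a-root-of-f(x)}, \ref{lem-relatively-prime-ultra-polynomials} and \ref{lem-ultra-Frobenius-action-on-polynomials}, and concludes by the symmetry of this double product up to $(-1)^{d\ell}$. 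Your transfer proof is shorter and uses the classical theorem as a black box, which is legitimate precisely because the paper's symbol is defined coordinatewise; the paper's proof buys self-containedness --- it reproves the classical law rather than assuming it (so the $q_s=q$ specialization genuinely recovers it) --- and the root-and-Frobenius machinery it develops is reused for the general reciprocity law, the comparison with Clark--Pollack, and the Grunwald--Wang analogue.
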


\begin{remark}

If $q_s = q$ for a power $q$ of some prime $p$ almost all $s \in S$, i.e., $\{s \in S \; | \; q_s = q\}$ belongs in the ultrafilter $\cD$, then $F$ must be a finite field $\bF_q$ (see Bell--Slomson \cite[Section 3, Chapter 6]{bell-slomson}). So in this case, Theorem \ref{thm-reciprocity-law-Nguyen-in-introduction} recovers the classical higher reciprocity law in $\bF_q[t]$.

If the sequence $\{q_s : s\in S\}$ is unbounded and the $q_s$ are powers of the same prime $p$, then $F$ is an infinite extension of $\bF_p$. In fact the cardinality of $F$ is $2^{\aleph_0}$, and so $F$ has the cardinality of continuum. In this case, Theorem \ref{thm-reciprocity-law-Nguyen-in-introduction} generalizes the classical higher reciprocity law for $\bF_q[t]$. In fact one can choose an appropriate ultrafilter $\cD$ so that $F$ contains the algebraic closure $\overline{\bF}_p$ of the finite field $\bF_p$.

If the $\bF_{q_s}$ have distinct characteristics, $F$ has characteristic $0$. By Lefschetz principle (see \cite{schoutens}), $\bC$ is the ultraproduct of the algebraic closures $\overline{\bF}_{q_s}$, and thus $\bQ \subseteq F \subseteq \bC$. So Theorem \ref{thm-reciprocity-law-Nguyen-in-introduction} generalizes the classical reciprocity law for $\bF_q[t]$ to polynomial rings over certain infinite fields of characteristic $0$ between $\bQ$ and $\bC$.

In Nguyen \cite{nguyen-2023-2nd-paper}, the author uses the above higher reciprocity law to establish the existential definability of $F[t]$ in its fraction field $F(t)$ which in turn proves the undecidability of the full first-order theory of $F(t)$ for any ultraproduct of finite fields of characteristic $0$. Note that it is not known whether the full first-order theory of $\bC(t)$ is decidable or not.

\end{remark}

Based on the congruence (\ref{eqn2-introduction-PRS}) and the reciprocity law \ref{thm-reciprocity-law-Nguyen-in-introduction}, we can provide an affirmative answer to a local-to-global question regarding the property of being $n$-th powers which can be viewed as an analogue of the Grunwald--Wang theorem for $F[t]$ for ultra-finite fields $F$.
\begin{theorem}
\label{thm-analogue-GW-introduction}
(see Theorem \ref{thm-local-to-global-GW})

Let $F$ be an ultra-finite field. Let $\alpha$ be a polynomial in $F[t]$ of positive degree, and let $n$ be a positive integer such that either $F$ has characteristic $0$ or the characteristic of $F$ does not divide $n$. Then $x^n = \alpha$ is solvable in $F[t]$ if and only if $x^n = \alpha$ is solvable in $P$-adic fields $F(t)_P$ for all but finitely many monic primes $P$ of $F[t]$.

\end{theorem}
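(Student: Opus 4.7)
The forward direction is immediate: any global solution in $F[t]$ yields a solution in every completion. For the converse I argue by contrapositive. Suppose $\alpha\notin F[t]^n$; I will exhibit infinitely many monic primes $P$ of $F[t]$ at which $x^n=\alpha$ fails to be solvable in $F(t)_P$. A Hilbert~90/Kummer descent along the finite extension $F(\mu_n)/F$ (itself an ultra-finite field) reduces the statement to the case $\mu_n\subset F$, so that the congruence~(\ref{eqn2-introduction-PRS}) and the reciprocity law of Theorem~\ref{thm-reciprocity-law-Nguyen-in-introduction} are available.

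Factor $\alpha=c\prod_{i=1}^{r}P_i^{a_i}$ with $c\in F^{\times}$ and distinct monic irreducibles $P_i\in F[t]$; the hypothesis $\alpha\notin F[t]^n$ means either some $a_j\not\equiv 0\pmod n$ or $c\notin F^{\times n}$. For a monic prime $P\nmid\alpha$, Hensel's lemma (using $\mathrm{char}(F)\nmid n$) gives that $x^n=\alpha$ is solvable in $F(t)_P$ iff $\left(\tfrac{\alpha}{P}\right)_n=1$. Multiplicativity of the power residue symbol, the reciprocity law applied to each $\left(\tfrac{P_i}{P}\right)_n$, and the identity $\left(\tfrac{c}{P}\right)_n=\Gamma_{F,n}(c)^{\deg P}$ (a consequence of the congruence~(\ref{eqn2-introduction-PRS}) together with $1+\kappa+\cdots+\kappa^{d-1}\equiv d\pmod n$) yield
\begin{align*}
\left(\tfrac{\alpha}{P}\right)_n \;=\; C^{\deg P}\cdot\prod_{i=1}^{r}\left(\tfrac{P}{P_i}\right)_n^{a_i}, \qquad C := \Gamma_{F,n}(c)\cdot(-1)^{\frac{\kappa-1}{n}\deg(\alpha)}\in\mu_n.
\end{align*}
Setting $M:=\prod_{i}P_i$, the right-hand side defines a map $\chi_\alpha:\bZ/n\bZ\times(F[t]/M)^{\times}\to\mu_n$ with $\left(\tfrac{\alpha}{P}\right)_n=\chi_\alpha(\deg P\bmod n,\,P\bmod M)$.

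I next verify that $\chi_\alpha\not\equiv 1$. If some $a_j\not\equiv 0\pmod n$, then $\bar Q\mapsto\left(\tfrac{\bar Q}{P_j}\right)_n^{a_j}$ is a non-trivial character of $(F[t]/P_j)^{\times}$, since $\Gamma_{F(\deg P_j),n}$ is surjective onto $\mu_n$; the Chinese Remainder Theorem then makes the full product non-trivial at any fixed degree class $\ell$. If instead $n\mid a_i$ for every $i$ but $c\notin F^{\times n}$, then $\deg(\alpha)\equiv 0\pmod n$, so $\frac{\kappa-1}{n}\deg(\alpha)=(\kappa-1)k$ for some integer $k$, forcing $(-1)^{\frac{\kappa-1}{n}\deg(\alpha)}=1$ (since $\kappa-1$ is even in odd characteristic, and $-1=1$ in characteristic~$2$). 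Hence $C=\Gamma_{F,n}(c)\neq 1$, and $\chi_\alpha(\ell,\bar Q)=C^\ell$ is non-trivial in $\ell$.

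The final ingredient is a Dirichlet-type theorem for $F[t]$: for every $M\in F[t]$, every $\bar Q\in(F[t]/M)^{\times}$, and every sufficiently large integer $N$, there exists a monic irreducible $P\in F[t]$ of degree $N$ with $P\equiv\bar Q\pmod M$. I would derive this by ultraproduct transfer: representing $M=[M_s]$ and $\bar Q=[\bar Q_s]$, the classical Dirichlet theorem in $\bF_{q_s}[t]$ furnishes such a prime $P_s\in\bF_{q_s}[t]$ for $\cD$-many $s$, since $q_s\to\infty$ along $\cD$ ensures any $N\geq N_0(\deg M)$ suffices for almost all $s$; the ultraproduct $P:=[P_s]\in F[t]$ is then monic irreducible of degree $N$ by \L{}o\'{s}'s theorem, as being irreducible of a fixed degree is a first-order statement in the coefficients. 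Applying this with $(\ell,\bar Q)$ chosen so that $\chi_\alpha(\ell,\bar Q)\neq 1$ and letting $N$ range over the arithmetic progression $\ell\pmod n$ produces infinitely many monic primes $P$ with $\left(\tfrac{\alpha}{P}\right)_n\neq 1$, completing the contrapositive. The principal obstacle is arranging the Dirichlet theorem uniformly across the ultraproduct—specifically, showing that the existence threshold $N_0$ depends only on $\deg M$ and not on $q_s$—which is possible precisely because $q_s\to\infty$ along $\cD$.
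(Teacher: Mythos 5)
Your forward direction and your analysis in the case $\mu_n\subset F$ (equivalently $n\mid\kappa-1$) are plausible, but the opening reduction is a genuine gap. In the contrapositive you cannot ``reduce to the case $\mu_n\subset F$'' by a Kummer/Hilbert-90 descent: if $\alpha\notin F[t]^n$ it does not follow that $\alpha\notin F(\mu_n)[t]^n$, and when $\alpha$ becomes an $n$-th power upstairs your argument produces no bad primes at all, even though the theorem requires infinitely many bad primes downstairs. Concretely, take $F$ the ultrapower of $\bF_7$, $n=4$, and $\alpha=3P^4$ for a monic prime $P$: since $\gcd(4,6)=2$, the fourth powers of $\bF_7^{\times}$ are the squares $\{1,2,4\}$, so $\alpha\notin F[t]^4$; but $3$ has order $6$, which divides $(49-1)/4=12$, so $3$ is a fourth power in $\bF_{49}=F(\mu_4)$ and $\alpha\in F(\mu_4)[t]^4$. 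The bad primes of $F[t]$ (those of odd degree, where $3$ is a non-square, hence not a fourth power, in the residue field) become invisible after base change, because they stay inert and their residue fields grow to even-degree extensions in which $3$ is a fourth power. This constant obstruction is exactly where the paper spends most of its effort: in Theorem \ref{thm-Grunwald-Wang} the passage to $\bL=\GF(\kappa^{\phi(n)})$ is made only in the direction ``local solvability over $F$ implies local solvability over $\bL$'', Lemma \ref{lem-for-thm-Grunwald-Wang} is applied there, the polynomial part is descended by unique factorization to $\alpha=b\beta^n$ with $b\in\GF(\kappa)^{\times}$, and $b$ is then handled by a separate argument over $F$ using $d=\gcd(n,\kappa-1)$, Lemma \ref{lem-powers-of-elements-in-A}, and monic primes of degree prime to $n$ (Lemma \ref{lem-infinitely-many-primes-in-A}). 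Your write-up has no counterpart to this step.

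Secondly, even in the case $n\mid\kappa-1$ your route rests on an unproved input that the paper never needs: a Dirichlet theorem for $F[t]$ producing monic irreducibles of every sufficiently large prescribed degree in a prescribed class modulo $M$. Your transfer argument requires exactly the uniformity you flag yourself (an existence threshold depending only on $\deg M$, not on $q_s$); this is believable from the Weil bound, but it must be proved before \L{}o\'s' theorem can be invoked. The paper's Lemma \ref{lem-for-thm-Grunwald-Wang} avoids this entirely: using part (iv) of Proposition \ref{prop-properties-of-power-residue-symbol} and the Chinese Remainder Theorem it constructs an auxiliary monic $\lambda$ of degree $2un$ with $\left(\lambda/\alpha\right)_n=\zeta_n^{\ell_1}\neq 1$, and the general reciprocity law (Theorem \ref{thm-General-Reciprocity-Law}) then forces a bad prime dividing $\lambda$, contradicting $\lambda\equiv 1$ modulo the finitely many bad primes; the leading constant is treated with primes of degree prime to $n$ and a componentwise \L{}o\'s argument. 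If you run your character argument directly over $F$ with exponent $d=\gcd(n,\kappa-1)$ and supply the uniform Dirichlet theorem, your approach can be repaired, but as written both the descent and the Dirichlet input are gaps.
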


The structure of the paper is as follows. In Section \ref{sec-basic-notions}, we recall some basic notions regarding ultraproducts, hypernatural numbers, and hyperintegers as well as internal cardinality. In Section \ref{sec-ultrafinite-fields}, we introduce notions of ultra-polynomials, ultra-fields, and ultra-field extensions that are analogues of polynomials, fields, and field extensions in field theory. In the same section, we develop the theory of ultra-finite fields in a similar way that the theory of finite fields is developed. In Section \ref{sec-power-residue-symbol}, we introduce a notion of $n$-th power residue symbol in $F[t]$ for ultra-finite fields $F$, where $n$ is a hypernatural number. Based on our notion of power residue symbol, we establish a general reciprocity law for $F[t]$ for ultra-finite fields $F$ in Section \ref{sec-reciprocity-law}. In Section \ref{sec-analogue-of-GW-thm}, we prove an analogue of the Grunwald--Wang theorem for the polynomial ring over an ultra-finite field.

\section{Basic notions and notation}
\label{sec-basic-notions}

In this section, we recall some basic notions and fix some notation that we will use throughout the paper. The main references for this section are Bell--Slomson \cite{bell-slomson}, Chang--Keisler \cite{CK}, Rothmaler \cite{rothmaler}, and Schoutens \cite{schoutens}.

\subsection{Ultraproducts and \L{}o\'s' theorem}

Throughout this paper, we fix an infinite set $S$. A collection $\cD$ of infinite subsets of $S$ is called a \textit{nonprincipal ultrafilter on $S$} if $\cD$ is closed under finite intersection and for any subset $A \subseteq S$, either $A$ or its complement $S \setminus A$ belong in $\cD$. Since every set in $\cD$ must be infinite, we deduce that any cofinite subset of $S$ belongs in $\cD$. One can use Zorn's lemma to show the existence of nonprincipal ultrafilters on $S$ (see \cite[Lemma 4.3.1]{rothmaler} or Tarski \cite{Tarski}). For the rest of this paper, we always fix $\cD$ to be a nonprincipal ultrafilter on $S$.

For each $s \in S$, let $A_s$ be a set. Let $\widehat{A} = \prod_{s \in S}A_s$ be the Cartesian product of the $A_s$. We define an equivalence relation on $A$ as follows: for elements $\widehat{a} = (a_s)_{s\in S}$, $\widehat{b} = (b_s)_{s \in S}$ in $\widehat{A}$, $\widehat{a}$ is equivalent to $\widehat{b}$ if the set $\{s \in S \; : \; a_s = b_s\}$ belongs in $\cD$. Following Schoutens \cite{schoutens}, we will also denote the equivalence class of an element $(a_s)_{s \in S} \in \widehat{A}$ by $\ulim_{s \in S}a_s$. We also say that $\widehat{a} = \ulim_{s \in S}a_s$ is the \textit{ultra-limit of $a_s$}.

The set of all equivalence classes on $\widehat{A}$ is called the \textit{ultraproduct of the $A_s$} with respect to the ultrafilter $\cD$, and is denoted by $\prod_{s \in S}A_s/\cD$. In this paper, only ultraproducts with respect to a nonprincipal ultrafilter will be considered, and so whenever we consider \textit{ultraproducts}, we mean \textit{nonprincipal ultraproducts}.

If all the sets $A_s$ are equal to the same set $A$, we call the ultraproduct of the $A_s$ the \textit{ultrapower of $A$}, and use $A^{\#}$ to denote the ultrapower. There is a canonical map from $A$ to $A^{\#}$ that sends each element $a \in A$ to the equivalence class $\ulim_{s \in S}a \in A^{\#}$ in which all components are equal to $a$. We identify $A$ with its image in $A^{\#}$ under the canonical map, and consider $A$ as a subset of $A^{\#}$.

Throughout this paper, whenever we say a certain property (P) holds for $\cD$-almost all $s \in S$, we mean the set $\{s \in S\; : \; \text{(P) holds in $A_s$}\}$ belongs in the ultrafilter $\cD$. For example,  $a_s = H(b_s)$ holds for $\cD$-almost all $s \in S$ for some polynomial $H \in \bZ[x]$ if and only if the set $\{s \in S \; : \; a_s = H(b_s)\}$ belongs in $\cD$.

If all the sets $A_s$ have the same algebraic structure such as groups, rings, or fields, their ultraproduct $\bA = \prod_{s \in S}A_s/\cD$ inherits the same algebraic structure from its components as follows. Let $\star_s : A_s \times A_s \to A_s$ be a binary operation for $\cD$-almost all $s \in S$. We always write $a_s \star_s b_s$ instead of $\star_s(a_s, b_s)$. The map $\star : \bA \times \bA \to \bA$ defined by
\begin{align}
\label{e1-operation-ultraproduct}
(\ulim_{s \in S}a_s)\star (\ulim_{s \in S}b_s) = \ulim_{s \in S}(a_s\star_s b_s) \in \bA
\end{align}
is a binary operation on $\bA$. Note that the operation $\star$ does not depend on the choices of $a_s$ and $b_s$ for the equivalence classes $\ulim_{s \in S}a_s$, $\ulim_{s \in S}b_s$. One can verify that if $A_s$ are groups with multiplication $\star_s$ and identity element $1_s$, then $\bA$ is a group with multiplication $\star$ and identity element $1 = \ulim_{s \in S}1_s$. Furthermore if $A_s$ is abelian for $\cD$-almost all $s \in S$, then $\bA$ is also abelian.

We obtain the following.
\begin{proposition}
\label{prop-notion-section-algebraic-structures-of-ultraproducts}

We maintain the same notation as above.

\begin{itemize}

\item [(i)] If $A_s$ is a (commutative) ring (resp. field) with respect to addition $+_s$ and multiplication $\cdot_s$ for $\cD$-almost all $s \in S$, then $\bA$ is a (commutative) ring (resp. field) with addition $+$ and multiplication $\cdot$ that are defined by (\ref{e1-operation-ultraproduct}).

\item [(ii)] If $A_s$ is a ring and $I_s$ is an ideal in $A_s$ for $\cD$-almost all $s \in S$, then $\prod_{s \in S}I_s/\cD$ is an ideal in $\bA$. Furthermore $\prod_{s \in S}(A_s/I_s)/\cD$ is isomorphic to $\bA/\left(\prod_{s \in S}I_s/\cD\right)$. The ideal $\prod_{s\in S}I_s/\cD$ is generated by $h$ elements if $A_s$ is generated by $h$ elements for $\cD$-almost all $s\in S$.

\end{itemize}

\end{proposition}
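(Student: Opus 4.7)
The plan is to verify parts (i) and (ii) via a combination of \L{}o\'s' theorem and direct componentwise verification, which is the natural approach since every ring or field axiom can be expressed as a first-order sentence that transfers to the ultraproduct.

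For (i), I would first observe that the operations $+$ and $\cdot$ defined by (\ref{e1-operation-ultraproduct}) are well-defined on $\bA$ because if $a_s = a_s'$ on a set $T_1 \in \cD$ and $b_s = b_s'$ on $T_2 \in \cD$, then on $T_1 \cap T_2 \in \cD$ we have $a_s +_s b_s = a_s' +_s b_s'$ and similarly for $\cdot_s$. Each ring axiom (associativity, distributivity, commutativity of addition, commutativity of multiplication when applicable) holds for $\cD$-almost all $s \in S$, and thus holds in $\bA$ because the equivalence relation preserves identities that are satisfied on a set in $\cD$. The additive identity is $0 = \ulim_{s\in S} 0_s$, the multiplicative identity is $1 = \ulim_{s\in S} 1_s$, and additive inverses are given by $-\ulim_{s\in S}a_s = \ulim_{s\in S}(-a_s)$. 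For the field case, if $a = \ulim_{s\in S}a_s \neq 0$ in $\bA$, then by definition the set $T = \{s \in S : a_s \neq 0\}$ must lie in $\cD$; replacing the representative by $\tilde a_s = a_s$ on $T$ and $\tilde a_s = 1_s$ on $S \setminus T$ does not change the equivalence class, and then $\ulim_{s\in S}\tilde a_s^{-1}$ is the required multiplicative inverse.

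For (ii), set $\bI := \prod_{s\in S} I_s/\cD$ and observe that $\bI \subseteq \bA$, and that closure of $\bI$ under addition and under multiplication by elements of $\bA$ follows componentwise from the ideal property of $I_s$ on a set in $\cD$. To establish the isomorphism, define
\begin{align*}
\phi : \bA \longrightarrow \prod_{s \in S}(A_s/I_s)/\cD, \qquad \phi(\ulim_{s\in S}a_s) = \ulim_{s \in S}(a_s + I_s).
\end{align*}
This map is well-defined (if $a_s = a_s'$ on $T \in \cD$ then $a_s + I_s = a_s' + I_s$ on $T$), is a ring homomorphism, and is surjective (any target element $\ulim_{s\in S}\overline{c}_s$ is the image of $\ulim_{s\in S}c_s$ for any choice of lifts $c_s \in A_s$ of $\overline{c}_s$). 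The essential computation is that $\ker \phi = \bI$: if $\phi(\ulim_{s\in S}a_s) = 0$ then $T' := \{s : a_s \in I_s\} \in \cD$, and replacing $a_s$ by $0$ off of $T'$ produces an equivalent representative lying in every $I_s$, so $\ulim_{s\in S}a_s \in \bI$; the reverse inclusion is immediate. The first isomorphism theorem then yields $\prod_{s\in S}(A_s/I_s)/\cD \cong \bA/\bI$. For the generation claim, if $I_s = \langle g_{s,1}, \ldots, g_{s,h}\rangle_{A_s}$ for $s$ in some $T \in \cD$, then for any $\ulim_{s\in S}a_s \in \bI$ we have $a_s \in I_s$ on some $T' \in \cD$, hence for $s \in T \cap T'$ we can write $a_s = \sum_{j=1}^h r_{s,j}g_{s,j}$ with $r_{s,j} \in A_s$, giving $\ulim_{s\in S}a_s = \sum_{j=1}^h (\ulim_{s\in S}r_{s,j})(\ulim_{s\in S}g_{s,j})$, so $\bI = \langle \ulim_{s\in S}g_{s,1}, \ldots, \ulim_{s\in S}g_{s,h}\rangle_{\bA}$.

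The main point of care—rather than a genuine obstacle—is the repeated use of the bookkeeping principle that finite intersections of sets in $\cD$ remain in $\cD$, which legitimizes the modification of representatives on $\cD$-null sets (to invert nonzero field elements in (i), and to arrange that a kernel element lies in $I_s$ for \emph{every} $s$ in (ii)) without altering the underlying equivalence class. Once this principle is used consistently, all verifications reduce to routine componentwise checks mirroring the proofs of the corresponding statements in the factors $A_s$.
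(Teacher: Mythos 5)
Your proof is correct. Note that the paper itself gives no argument for this proposition: it simply cites Schoutens \cite[2.1.4, p.10]{schoutens}, so there is no internal proof to compare against; your direct componentwise verification (well-definedness of the operations, transfer of each axiom along sets in $\cD$, inversion of nonzero elements after modifying the representative off a set in $\cD$, the kernel computation $\ker\phi = \prod_{s\in S}I_s/\cD$ followed by the first isomorphism theorem, and the finite-generation bookkeeping) is exactly the standard argument that the cited reference carries out, and it could equivalently be packaged as an application of \L{}o\'s' theorem (Theorem \ref{thm-Los}), since every axiom and the ideal/quotient statements are first-order. One small point: the last clause of the statement as printed says ``if $A_s$ is generated by $h$ elements,'' which is evidently a typo for ``if $I_s$ is generated by $h$ elements''; you read it the intended way, and your argument for that clause (choosing generators $g_{s,j}$ and coefficients $r_{s,j}$ on a set in $\cD$, arbitrary elsewhere) is sound.
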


\begin{proof}

See Schoutens \cite[2.1.4, p.10]{schoutens} for the proof.

\end{proof}

We state one of the most fundamental transfer principle for ultraproducts.

\begin{theorem}
(\L{}o\'s' theorem)
\label{thm-Los}

Let $\cR$ be a ring, let $A_s$ be an $\cR$-algebra for each $s \in S$, and let $\bA$ be their ultraproduct. Let $\psi(\zeta_1, \ldots, \zeta_m)$ be a formula with parameters from $\cR$ whose free variables are among $\zeta_1, \ldots, \zeta_m$. For each $s \in S$, let $\widehat{a}_s$ be an $m$-th tuple in $A_s$, and let $\widehat{a}$ be their ultraproduct which is an $m$-th tuple in $\bA$. Then $\psi(\widehat{a}_s)$ holds in $A_s$ for $\cD$-almost all $s \in S$ if and only if $\psi(\widehat{a})$ holds in $\bA$.

\end{theorem}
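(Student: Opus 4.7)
The plan is to prove Łoś' theorem by structural induction on the complexity of the first-order formula $\psi(\zeta_1, \ldots, \zeta_m)$. First I would dispose of the atomic case. Since the language at hand is that of $\cR$-algebras, the atomic formulas are polynomial equations of the form $p(\zeta_1, \ldots, \zeta_m) = q(\zeta_1, \ldots, \zeta_m)$ with coefficients from $\cR$. Interpreting both sides componentwise in each $A_s$ and in $\bA$ via the operations constructed in (\ref{e1-operation-ultraproduct}), the equality $p(\widehat{a}) = q(\widehat{a})$ in $\bA$ is by the very definition of the equivalence relation on $\prod_{s \in S} A_s$ the statement that $p(\widehat{a}_s) = q(\widehat{a}_s)$ holds in $A_s$ for $\cD$-almost all $s \in S$. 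So the atomic case is essentially built into the construction of $\bA$ in Proposition \ref{prop-notion-section-algebraic-structures-of-ultraproducts}.

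Next I would handle the Boolean connectives in the inductive step. For conjunction $\psi_1 \wedge \psi_2$, if each conjunct holds at $\widehat{a}$ for $\cD$-almost all $s$, then the intersection of the two corresponding sets still lies in $\cD$ by closure under finite intersection, and conversely a set in $\cD$ contained in the set where the conjunction holds forces each constituent set to lie in $\cD$ (using that supersets of sets in $\cD$ are in $\cD$, a standard consequence of the ultrafilter axioms). For negation $\neg \psi$, the equivalence rests squarely on the defining property of a nonprincipal ultrafilter: for every $A \subseteq S$, exactly one of $A$ or $S \setminus A$ lies in $\cD$. Disjunction can then be treated via $\psi_1 \vee \psi_2 = \neg(\neg \psi_1 \wedge \neg \psi_2)$.

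The genuinely delicate step, and the main obstacle, is the existential quantifier $\exists y\, \psi(y, \zeta_1, \ldots, \zeta_m)$. In the easier direction, if $\bA \models \exists y\, \psi(y, \widehat{a})$, pick a witness $b = \ulim_{s \in S} b_s \in \bA$; by the induction hypothesis applied to $\psi$, the set $T$ of indices $s$ for which $\psi(b_s, \widehat{a}_s)$ holds in $A_s$ lies in $\cD$, and hence so does the larger set where $A_s \models \exists y\, \psi(y, \widehat{a}_s)$. The converse direction is where one must invoke the axiom of choice: on the set $T \in \cD$ of indices for which $A_s \models \exists y\, \psi(y, \widehat{a}_s)$, simultaneously select a witness $b_s \in A_s$, assign $b_s$ arbitrarily for $s \notin T$, and set $b := \ulim_{s \in S} b_s$; the induction hypothesis then promotes the almost-everywhere validity of $\psi(b_s, \widehat{a}_s)$ to $\bA \models \psi(b, \widehat{a})$, yielding the required witness. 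Finally, the universal case is treated by the equivalence $\forall y\, \psi \leftrightarrow \neg \exists y\, \neg \psi$, closing the induction.
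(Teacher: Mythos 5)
Your proof is correct: it is the standard proof of \L{}o\'s' theorem by induction on formula complexity --- the atomic case reducing to the componentwise definition of the operations on $\bA$, the Boolean connectives handled by the ultrafilter axioms (closure under finite intersection, supersets, and the fact that exactly one of a set and its complement lies in $\cD$), and the existential step handled by choosing witnesses $b_s$ on a set in $\cD$ via the axiom of choice. The paper gives no proof of its own but simply defers to Bell--Slomson and Rothmaler, where exactly this induction argument is carried out, so your proposal matches the intended proof.
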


\begin{proof}

For the proof of \L{}o\'s' theorem, see \cite{bell-slomson} or \cite{rothmaler}.

\end{proof}

\subsection{Hyperintegers and hypernaturals}
\label{subsec-hyperintegers}

The ring of integers is denoted by $\bZ$. The set of natural numbers, i.e. positive integers $1, 2, \ldots$ is denoted by $\bN$.

Throughout this paper, we let $\bZ^{\#}$ be the ultrapower $\prod_{s \in S}\bZ/\cD$ whose elements are called \textbf{hyperintegers}. We also let $\bN^{\#}$ be the ultrapower $\prod_{s \in S}\bN/\cD$ whose elements are called \textbf{hypernatural numbers} or simply \textbf{hypernaturals}. Since $\bN \subset \bZ$, $\bN^{\#} \subset \bZ^{\#}$.

\subsubsection{} There is a total order on $\bZ^{\#}$. For any hyperintegers $a = \ulim_{s \in S}a_s$, $b = \ulim_{s \in S}b_s$, we define $a \le b$ if and only if $a_s \le b_s$ for $\cD$-almost all $s \in S$, i.e., $\{s \in S \; : \; a_s \le b_s\} \in \cD$. We further write $a < b$ if $a \le b$ and $a \ne b$.

Since $\cD$ is a nonprincipal ultrafilter, it is not difficult to check that ``$\le$" is a total order on $\bZ^{\#}$. Indeed, take any hyperintegers $a = \ulim_{s \in S}a_s$ and $b = \ulim_{s \in S}b_s$ for some integers $a_s, b_s \in \bZ$. Set
\begin{align*}
A &= \{s \in S \; : \; a_s \le b_s \},\\
B &= \{s \in S \; : \; a_s > b_s \}.
\end{align*}
Note that $S = A \cup B$, and for any $s \in S$, either $a_s \le b_s$ or $a_s > b_s$, and thus $s$ belongs in exactly one of $A$ and $B$. Since $\cD$ is a nonprincipal ultrafilter, either $A \in \cD$ or $B = S\setminus A \in \cD$, which implies that either $a \le b$ or $a > b$. Thus ``$\le$" is a total order on $\bZ^{\#}$.

A hyperinteger $a = \ulim_{s \in S}a_s$ with integers $a_s$ is \textbf{positive} if $a > 0$, i.e., $a_s > 0$ for $\cD$-almost all $s \in S$. A positive hyperinteger is also a hypernatural number. A hyperinteger $a$ is \textbf{negative} if $a < 0$. We further say that $a$ is \textbf{nonnegative} if $a \ge 0$.

The set of positive (resp., negative and nonnegative) hyperintegers is denoted by $\bZ^{\#}_{>0}$ (resp. $\bZ^{\#}_{< 0}$, and $\bZ^{\#}_{\ge 0}$).

\subsubsection{} We introduce the arithmetic of hyperintegers.

Let $a = \ulim_{s \in S}a_s$, $b = \ulim_{s \in S}b_s$ be hyperintegers for some integers $a_s, b_s \in \bZ$. We say that \textbf{$a$ divides $b$} (or \textbf{$a$ is a divisor of $b$}) if $b = ac$ for some hyperinteger $c = \ulim_{s \in S}c_s$ with $c_s \in \bZ$, i.e., $b_s = a_sc_s$ for $\cD$-almost all $s \in S$.

A hyperinteger $c = \ulim_{s \in S}c_s$ with integers $c_s$ is a \textbf{common divisor of $a, b$} if $c$ divides both $a$ and $b$.

The \textbf{greatest common divisor of $a, b$}, denoted by $\gcd(a, b)$, is a positive hyperinteger $d = \ulim_{s \in S}d_s$ with $d_s \in \bZ$ such that $d$ divides both $a$ and $b$ and if a hyperinteger $c = \ulim_{s \in S}c_s$ with integers $c_s$ is a common divisor of both $a$ and $b$, then $c \le d$. The following result proves the uniqueness of the greatest common divisor.

\begin{lemma}
\label{lem-gcd-of-hyperintegers}

Let $a = \ulim_{s \in S}a_s$, $b = \ulim_{s \in S}b_s$ be hyperintegers for some integers $a_s, b_s$ such that at least one of $a, b$ is nonzero. Then $\gcd(a, b) = \ulim_{s \in S}\gcd(a_s, b_s)$.

\end{lemma}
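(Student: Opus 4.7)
The plan is to set $d := \ulim_{s \in S} \gcd(a_s, b_s)$ and verify the three requirements in the paper's definition of $\gcd(a, b)$: namely, (i) $d$ is a positive hyperinteger, (ii) $d$ divides both $a$ and $b$ in $\bZ^{\#}$, and (iii) every common divisor $c$ of $a, b$ in $\bZ^{\#}$ satisfies $c \le d$.

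For (i), the hypothesis that at least one of $a, b$ is nonzero unpacks, via the definition of the order on $\bZ^{\#}$, to the statement $T := \{s \in S \; : \; a_s \ne 0 \text{ or } b_s \ne 0\} \in \cD$. For $s \in T$, the classical integer $\gcd(a_s, b_s)$ is a genuine positive integer, so $d \ge 1 > 0$. For (ii), for each $s \in T$ pick integers $u_s, v_s$ with $a_s = \gcd(a_s, b_s)\cdot u_s$ and $b_s = \gcd(a_s, b_s)\cdot v_s$; extend $u_s, v_s$ arbitrarily on $S \setminus T$ and set $u := \ulim_s u_s$, $v := \ulim_s v_s$ in $\bZ^{\#}$. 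Since the defining equalities hold on the $\cD$-set $T$, the ultraproduct definition of multiplication (\ref{e1-operation-ultraproduct}) yields $a = du$ and $b = dv$ in $\bZ^{\#}$.

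For (iii), let $c = \ulim_s c_s$ be any common divisor of $a$ and $b$, so $a = c\tilde{u}$ and $b = c\tilde{v}$ for some $\tilde{u}, \tilde{v} \in \bZ^{\#}$. Unpacking, $a_s = c_s \tilde{u}_s$ and $b_s = c_s \tilde{v}_s$ hold for $\cD$-almost all $s$. Intersecting this set with $T$, which remains in $\cD$ because $\cD$ is closed under finite intersection, the elementary fact that a divisor of a nonzero integer is at most its absolute value gives $c_s \le |c_s| \le \gcd(a_s, b_s)$ on a $\cD$-set. By definition of the order ``$\le$'' on $\bZ^{\#}$, this yields $c \le d$.

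The only subtlety I anticipate is the bookkeeping of $\cD$-large index sets: each componentwise integer fact (existence of quotients, the classical bound by the gcd) only makes sense on $T$, and one must intersect with $T$ before quoting it. This is precisely where the hypothesis that $a$ or $b$ is nonzero is used, and closure of $\cD$ under finite intersection keeps the argument clean. Should the componentwise bookkeeping become unwieldy, a uniform alternative is available: the predicate ``$z = \gcd(x, y)$'' on the ordered ring $\bZ$ is expressible by a first-order formula, so that \L{}o\'s' theorem (Theorem \ref{thm-Los}) produces the identity $\gcd(a,b) = \ulim_s \gcd(a_s, b_s)$ in one stroke. I would present the direct verification first and remark on the \L{}o\'s-theoretic shortcut afterward.
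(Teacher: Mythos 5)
Your argument is correct and follows essentially the same route as the paper's proof: set $d = \ulim_{s\in S}\gcd(a_s,b_s)$, check it is a positive common divisor componentwise, and bound any common divisor $c$ by $d$ via $c_s \le \gcd(a_s,b_s)$ on a $\cD$-large set. Your bookkeeping of the set $T$ where at least one of $a_s, b_s$ is nonzero is a bit more explicit than the paper's (which leaves this implicit), and the concluding remark about a \L{}o\'s-theorem shortcut is a reasonable aside but not needed.
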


\begin{proof}

Let $d_s = \gcd(a_s, b_s)$ for $s \in S$, and $d = \ulim_{s \in S}d_s \in \bZ^{\#}$. We prove that $d$ is the greatest common divisor of $a, b$. Indeed, since $d_s$ divides both $a_s, b_s$ for $\cD$-almost all $s \in S$, $d$ is a common divisor of $a, b$. Since $d_s > 0$ for $\cD$-almost all $s \in S$, $d > 0$.

Let $c = \ulim_{s \in S}c_s$ be a hyperinteger for some integers $c_s$ such that $c$ divides both $a, b$. Then $c_s$ is a common divisor of both $a_s$ and $b_s$ for $\cD$-almost all $s \in S$. Thus $c_s \le d_s$ for $\cD$-almost all $s \in S$, and therefore $c \le d$. Thus $d$ is the greatest common divisor of $a$ and $b$, and thus the lemma follows immediately.

\end{proof}

We prove a nonstandard B\'ezout's identity.

\begin{theorem}
\label{thm-Bezout}

Let $a = \ulim_{s \in S}a_s$ and $b = \ulim_{s \in S}b_s$ with integers $a_s, b_s$ such that at least one of $a, b$ is nonzero. Then there exist hyperintegers $e, f \in \bZ^{\#}$ such that
\begin{align*}
\gcd(a, b) = ae + bf.
\end{align*}

\end{theorem}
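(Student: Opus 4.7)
The plan is to reduce the statement to classical Bézout's identity applied componentwise, and then take ultra-limits. The key input will be Lemma \ref{lem-gcd-of-hyperintegers}, which identifies $\gcd(a,b)$ with the ultra-limit $\ulim_{s\in S}\gcd(a_s,b_s)$ of the componentwise greatest common divisors.

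First I would set up the components. Since at least one of $a,b$ is nonzero in $\bZ^{\#}$, say without loss of generality $a \neq 0$, the set $T := \{s \in S \;:\; a_s \neq 0\}$ belongs in $\cD$. For every $s \in T$, classical B\'ezout's identity in $\bZ$ guarantees the existence of integers $e_s, f_s \in \bZ$ such that
\begin{align*}
\gcd(a_s, b_s) = a_s e_s + b_s f_s.
\end{align*}
For $s \in S \setminus T$, I would set $e_s = f_s = 0$ arbitrarily, which is permissible because $S \setminus T \notin \cD$. Then define the hyperintegers $e := \ulim_{s \in S} e_s$ and $f := \ulim_{s \in S} f_s$ in $\bZ^{\#}$.

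Next I would combine these. By the definition of the arithmetic on $\bZ^{\#}$ from equation (\ref{e1-operation-ultraproduct}), we have
\begin{align*}
ae + bf = \ulim_{s \in S}(a_s e_s + b_s f_s).
\end{align*}
Since $a_s e_s + b_s f_s = \gcd(a_s, b_s)$ holds for every $s \in T$, and $T \in \cD$, the equality $a_s e_s + b_s f_s = \gcd(a_s, b_s)$ holds for $\cD$-almost all $s \in S$. Hence
\begin{align*}
ae + bf = \ulim_{s \in S}\gcd(a_s, b_s).
\end{align*}
Finally, Lemma \ref{lem-gcd-of-hyperintegers} identifies the right-hand side with $\gcd(a,b)$, yielding the desired identity.

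There is no serious obstacle; the only subtlety is ensuring that the componentwise Bézout relation holds on a set in the ultrafilter, which is handled by choosing $T$ as above and using that $\cD$ is closed under taking supersets so that the nonconforming indices in $S \setminus T$ are harmless. Alternatively, one could phrase the argument via \L{}o\'s' theorem (Theorem \ref{thm-Los}) applied to the first-order formula $\exists x \exists y \,(c = ax + by)$ in the language of rings, transferring the truth of Bézout from each $\bZ$ (for $s\in T$) to $\bZ^{\#}$, but the explicit construction via ultra-limits is more transparent.
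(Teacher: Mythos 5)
Your proposal is correct and follows essentially the same route as the paper: apply the classical B\'ezout identity componentwise, pass to ultra-limits to define $e$ and $f$, and invoke Lemma \ref{lem-gcd-of-hyperintegers} to identify $\ulim_{s\in S}\gcd(a_s,b_s)$ with $\gcd(a,b)$. Your extra care with the index set $T$ is harmless but not needed, since $\gcd(a_s,b_s)=a_se_s+b_sf_s$ can be arranged for every $s$ (taking $e_s=f_s=0$ when $a_s=b_s=0$).
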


\begin{proof}

By a standard B\'ezout's identity, there exist integers $e_s, f_s$ such that $\gcd(a_s, b_s) = a_se_s + b_sf_s$. Thus it follows from Lemma \ref{lem-gcd-of-hyperintegers} that
\begin{align*}
\gcd(a, b) = \ulim_{s \in S}\gcd(a_s, b_s) = ae + bf,
\end{align*}
where $e = \ulim_{s \in S}e_s$ and $f = \ulim_{s \in S}f_s$ are in $\bZ^{\#}$.

\end{proof}

\subsection{Internal sets and cardinality}
\label{subsec-internal-sets-and-cardinality}

In this subsection, we recall the notion of internal cardinality that plays a key role in this paper.

For a finite set $X$, we denote by $\card(X)$ the number of elememts in $X$.

\begin{definition}
\label{def-internal-cardinality}

Let $\alpha = \ulim_{s \in S}\alpha_s$ be a hypernatural in $\bN^{\#}$, where the $\alpha_s$ are natural numbers.

A set $A$ is said to have \textbf{internal cardinality $\alpha$} if there exists a collection of finite sets $A_s$ of cardinality $\alpha_s$ for $\cD$-almost all $s \in S$ such that $A = \prod_{s\in S} A_s/\cD$, i.e., $A$ is the ultraproduct of the sets $A_s$ with respect to $\cD$, and for $\cD$-almost $s \in S$, $A_s$ is a finite set of $\alpha_s$ elements. We also call $A$ an \textbf{internal set}.

In symbol, we write $\icard(A)$ for the internal cardinality of $A$. From the definition of internal cardinality, $\icard(A) = \ulim_{s \in S}\card(A_s)$.

\end{definition}

\begin{theorem}
\label{thm-internal-cardinality}
(See \cite[Theorem 12.5.1]{Goldblatt})

Let $A$ be an internal set whose internal cardinality is $n \in \bN^{\#}$. Then there is an internal bijection $f : \{1, \ldots, n\} \to A$, where $\{1, \ldots, n\} = \{m \in \bN^{\#} \; : \; m \le n\}$.

\end{theorem}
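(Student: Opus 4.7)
The plan is to build the bijection component-wise and then take its ultraproduct, with \L{}o\'s' theorem doing the heavy lifting to transfer the bijection property. Write $n = \ulim_{s \in S} n_s$ with each $n_s \in \bN$. By Definition \ref{def-internal-cardinality}, since $\icard(A) = n$, we may write $A = \prod_{s \in S} A_s/\cD$ with $\card(A_s) = n_s$ for $\cD$-almost all $s \in S$. For each such $s$, choose a bijection $f_s : \{1, \ldots, n_s\} \to A_s$, and define
\begin{align*}
f : \prod_{s \in S}\{1, \ldots, n_s\}/\cD \longrightarrow A, \qquad f(\ulim_{s \in S} m_s) = \ulim_{s \in S} f_s(m_s).
\end{align*}
This is an internal function by construction, since it is itself the ultra-limit of the functions $f_s$.

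The first step is to identify the set $\{1, \ldots, n\} = \{m \in \bN^{\#} \; : \; m \le n\}$ with the ultraproduct $\prod_{s \in S}\{1, \ldots, n_s\}/\cD$. By the definition of the total order on $\bZ^{\#}$ in Subsection \ref{subsec-hyperintegers}, a hypernatural $m = \ulim_{s \in S} m_s$ satisfies $1 \le m \le n$ if and only if $1 \le m_s \le n_s$ for $\cD$-almost all $s \in S$, which means precisely that $m$ is represented by a tuple whose components eventually lie in $\{1, \ldots, n_s\}$. This gives the desired identification.

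Next, verify that $f$ is well-defined and a bijection. Well-definedness follows the standard pattern for ultraproducts of maps: if two representatives $(m_s)$ and $(m'_s)$ agree on a set in $\cD$, then so do $(f_s(m_s))$ and $(f_s(m'_s))$. For the bijection property, the key observation is that the formula expressing ``$f_s$ is a bijection from $\{1, \ldots, n_s\}$ to $A_s$'' is a first-order statement (injectivity and surjectivity being universal-existential formulas), so by \L{}o\'s' theorem (Theorem \ref{thm-Los}) applied to the family $\{f_s\}_{s \in S}$, the ultraproduct $f$ is a bijection from $\prod_{s \in S}\{1,\ldots,n_s\}/\cD$ onto $A$. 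Alternatively, one checks this directly: surjectivity follows because any $a = \ulim_{s \in S} a_s \in A$ has $a_s = f_s(m_s)$ for some $m_s \in \{1, \ldots, n_s\}$, so $a = f(\ulim_{s \in S} m_s)$; injectivity follows because $f_s(m_s) = f_s(m'_s)$ for $\cD$-almost all $s$ forces $m_s = m'_s$ for $\cD$-almost all $s$.

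There is no real obstacle here; the proof is essentially a routine application of \L{}o\'s' theorem once one unpacks the definition of internal cardinality and identifies the ``hyper-interval'' $\{1, \ldots, n\}$ with the correct ultraproduct. The only subtlety worth emphasizing is the identification step, since the notation $\{1, \ldots, n\}$ suggests a naive enumeration whereas the actual set is defined by the order relation on $\bN^{\#}$; once this identification is made, the bijection $f$ transfers from the components to the ultraproduct without further work.
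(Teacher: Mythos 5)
Your proof is correct. Note, however, that the paper does not supply its own argument for this statement: it simply cites Goldblatt [Theorem 12.5.1], where the result is proved for hyperfinite sets in the superstructure/transfer-principle framework of nonstandard analysis. Your route is the direct ultraproduct argument, and it is actually the one that matches the paper's own Definition \ref{def-internal-cardinality}: you unpack $A = \prod_{s\in S}A_s/\cD$ with $\card(A_s) = n_s$, choose bijections $f_s : \{1,\ldots,n_s\} \to A_s$, and take $f = \ulim_{s\in S}f_s$, checking injectivity and surjectivity componentwise (the direct check you give is the clean way to do it; invoking \L{}o\'s' theorem for the sentence ``$f_s$ is a bijection'' would require encoding $f_s$ by its graph inside the structure, so the direct verification is preferable). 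The only points worth stating carefully are the ones you already touch on: the identification $\{m \in \bN^{\#} : m \le n\} = \prod_{s\in S}\{1,\ldots,n_s\}/\cD$ uses that a representative $(m_s)$ with $m_s \le n_s$ for $\cD$-almost all $s$ can be modified on the complementary $\cD$-small set without changing its class (and that $m_s \ge 1$ is automatic since $\bN$ here starts at $1$), and ``internal bijection'' is to be read exactly as ``ultra-limit of bijections,'' which is what your construction produces. So your proposal is a self-contained proof in the paper's own setting, whereas the paper defers to an external reference proved in a formally different (though equivalent) framework.
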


For internal sets, there is an analogue of least number principle as follows.

\begin{theorem}
\label{thm-internal-least-number-principle}

Any nonempty internal set in $\bN^{\#}$ has a least member.

\end{theorem}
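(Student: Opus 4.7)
The plan is to produce the least member of $A$ coordinatewise, i.e., to reduce the statement to the standard least-number principle applied in each factor $\bN$, and then to transfer the resulting sequence back through the ultrafilter. First I would invoke the hypothesis that $A$ is internal to write $A = \prod_{s \in S} A_s / \cD$ with each $A_s$ a finite set. Since $A \subseteq \bN^{\#}$, every element of $A$ is a hypernatural and so admits a representative sequence in $\bN$; by replacing each $A_s$ with the set of such representatives that appear there (and discarding a set of indices outside $\cD$), I may assume without loss of generality that $A_s \subseteq \bN$ for every $s \in S$.

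Next I would extract nonemptiness at the coordinate level. If $\{s \in S : A_s = \emptyset\} \in \cD$, then no equivalence class $\ulim_{s} a_s$ with $a_s \in A_s$ can be formed on a $\cD$-large set, forcing $A = \emptyset$, contrary to hypothesis. Hence $A_s \neq \emptyset$ for $\cD$-almost all $s$, and after shrinking to this $\cD$-large index set I may assume $A_s$ is a nonempty finite subset of $\bN$ for every $s$. By the ordinary least-number principle, $m_s := \min A_s \in \bN$ exists for every such $s$.

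Set $m := \ulim_{s \in S} m_s \in \bN^{\#}$. Then $m \in A$ because $m_s \in A_s$ for $\cD$-almost all $s$, which is exactly the condition defining membership in the ultraproduct. Moreover, for any $n = \ulim_{s \in S} n_s \in A$, we have $n_s \in A_s$ and therefore $m_s \le n_s$ for $\cD$-almost all $s$; by the definition of the total order on $\bZ^{\#}$ recalled in Section \ref{subsec-hyperintegers}, this yields $m \le n$. Hence $m$ is a least member of $A$, completing the proof.

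The only subtle point I expect is the very first reduction, namely that an internal subset of $\bN^{\#}$ can be represented as an ultraproduct of finite subsets of $\bN$ rather than of arbitrary finite sets; this is a bookkeeping step that uses the definition of equivalence classes modulo $\cD$ together with the freedom to modify each coordinate on a $\cD$-small set. After that is set up, the remaining argument is essentially an instance of \L{}o\'s' theorem applied to the first-order statement ``every nonempty finite subset of $\bN$ has a least element.''
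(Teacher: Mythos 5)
Your proof is correct: writing $A=\prod_{s\in S}A_s/\cD$ with the $A_s$ nonempty finite subsets of $\bN$ for $\cD$-almost all $s$, taking $m_s=\min A_s$, and passing to $m=\ulim_{s\in S}m_s$ is exactly the standard ultrapower argument, and your bookkeeping (reducing to $A_s\subseteq\bN$, discarding the $\cD$-small set of indices with $A_s=\emptyset$, and checking that membership and the order are independent of representatives) is handled correctly. Note that the paper gives no proof of its own here but simply cites Goldblatt, Theorem 11.3.1, where the argument is essentially the same coordinatewise-minimum-plus-transfer proof you wrote.
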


For the proof of Theorem \ref{thm-internal-least-number-principle}, see Goldblatt \cite[Theorem 11.3.1]{Goldblatt}.

The following is straightforward from the notions of internal ses and cardinality.
\begin{proposition}
\label{prop-comparing-internal-sets-with-same-cardinality}

Let $A = \prod_{s \in S}A_s/\cD$, $B = \prod_{s \in S}B_s/\cD$ be internal sets such that the $A_s, B_s$ are finite sets for $\cD$-almost all $s \in S$, $A \subseteq B$ and $\icard(A) = \icard(B)$. Then $A = B$.

\end{proposition}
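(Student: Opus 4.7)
The plan is to reduce the set-theoretic hypotheses on the ultraproducts $A$ and $B$ to component-wise statements that hold on a set belonging to $\cD$, and then to invoke the trivial finite-set fact that a subset of a finite set of the same cardinality must equal the ambient set. Concretely, I aim to produce a single set $T \in \cD$ on which, simultaneously, $A_s \subseteq B_s$, both $A_s$ and $B_s$ are finite, and $\card(A_s) = \card(B_s)$; the conclusion $A_s = B_s$ for every $s \in T$ will then give $A = \prod_{s \in S} A_s / \cD = \prod_{s \in S} B_s / \cD = B$.

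The first substantive step is to descend the inclusion $A \subseteq B$ to the components. I would argue by contradiction: suppose the set $T_0 = \{s \in S : A_s \not\subseteq B_s\}$ lies in $\cD$. By the axiom of choice, for each $s \in T_0$ pick $a_s \in A_s \setminus B_s$, and for $s \notin T_0$ pick any element $a_s \in A_s$ (the degenerate case in which $A_s$ is empty on a set in $\cD$ forces $A = \emptyset$, and the proposition is immediate, so I may assume $A_s \ne \emptyset$ on a set in $\cD$). The ultra-limit $\widehat{a} = \ulim_{s \in S} a_s$ then belongs to $A$, hence by hypothesis to $B$. So $\widehat{a} = \ulim_{s \in S} b_s$ for some $b_s \in B_s$ valid $\cD$-almost everywhere, and equality in the ultraproduct gives $a_s = b_s \in B_s$ for $\cD$-almost all $s$. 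This contradicts $a_s \notin B_s$ on the set $T_0 \in \cD$.

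For the cardinalities, the hypothesis $\icard(A) = \icard(B)$ reads $\ulim_{s \in S} \card(A_s) = \ulim_{s \in S} \card(B_s)$ in $\bN^{\#}$, which by the definition of equality in the ultrapower means $\card(A_s) = \card(B_s)$ for $\cD$-almost all $s$. Intersecting this set with the one produced in the previous paragraph, and with the (cofinite in $\cD$) sets on which $A_s$ and $B_s$ are finite by assumption, yields a set $T \in \cD$ on which $A_s \subseteq B_s$ are finite with the same cardinality; so $A_s = B_s$ for every $s \in T$, and the ultraproducts agree.

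I do not expect a genuine obstacle here; the only mild subtleties are the use of choice in the descent of the inclusion and the quick check that the degenerate case $A = \emptyset$ is handled separately. Both are routine, and no appeal beyond the definition of the ultraproduct and \L{}o\'s-style bookkeeping is needed.
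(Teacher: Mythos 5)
Your argument is correct, and it is exactly the routine verification the paper has in mind: the paper offers no proof at all (it declares the proposition ``straightforward from the notions of internal sets and cardinality''), so your write-up simply fills in that gap in the standard way. The two key reductions are sound: the ultrafilter dichotomy plus a choice of witnesses $a_s \in A_s \setminus B_s$ descends the inclusion $A \subseteq B$ (read inside a common ambient ultraproduct, which is how the paper uses the proposition, e.g.\ for zero sets inside $F^{\alg_u}$) to $A_s \subseteq B_s$ for $\cD$-almost all $s$, and equality of internal cardinalities is by definition $\card(A_s) = \card(B_s)$ for $\cD$-almost all $s$; intersecting the relevant sets of $\cD$ and using the finite-set fact $A_s = B_s$ then gives $A = B$, with the degenerate empty case handled as you note. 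No gap.
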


\subsection{The action of hyperintegers on ultraproducts}
\label{subsec-action-of-hyperintegers-on-ultraproducts}

Let $A = \prod_{s \in S}A_s/\cD$ be a (nonprincipal) ultraproduct of rings $A_s$, and let $n = \ulim_{s \in S}n_s$ be a nonnegative hyperinteger for some integers $n_s \ge 0$. The \textbf{ultrapower map on $A$ with exponent $n$} is defined as follows. For an element $a = \ulim_{s \in S}a_s \in A$ for some $a_s \in A_s$, define
\begin{align}
\label{e1-ultra-power-with-exponent-in-hypernaturals}
a^n := \ulim_{s \in S}a_s^{n_s} \in A.
\end{align}
It is easy to check that the above definition does not depend on the choice of $a_s$ for the equivalence class $a \in A$. By \L{}o\'s' theorem \ref{thm-Los}, we can show that the ultrapower map satisfies the same properties as the standard power map as follows. For any elements $a = \ulim_{s \in S}a_s, b = \ulim_{s \in S}b_s \in A$ and any nonnegative hyperintegers $m = \ulim_{s \in S}m_s, n = \ulim_{s \in S}n_s$,
\begin{itemize}

\item [(i)] $(ab)^n = a^n \cdot b^n$;

\item [(ii)] $a^n\cdot a^m = a^{m + n}$; and

\item [(iii)] $(a^n)^m = a^{mn}$.

\end{itemize}

We are most interested in the case where $A$ is the ultra-hull $\cU(F[t])$ of the polynomial ring $F[t]$, where $F = \prod_{s \in S}F_s/\cD$ is an ultraproduct of fields $F_s$. That is, $A = \cU(F[t]) = \prod_{s \in S}F_s[t]/\cD$. For any hypernatural $n = \ulim_{s \in S}n_s$ for some natural numbers $n_s$ such that $n \not\in \bN$, the element $t^n = \ulim_{s \in S}t^{n_s}$ belongs in the ultra-hull $\cU(F[t])$ but does not belong in $F[t]$. Such an element is an example of the so-called \textit{ultra-polynomial of ultra-degree $n$} that is not a polynomial will be defined in Section \ref{sec-ultrafinite-fields}.

If $F$ is an ultraproduct of fields $F_s$, we can also define the \textbf{ultrapower map on $F^{\times}$ with exponent as a hyperinteger $\alpha = \ulim_{s\in S}n_s \in \bZ^{\#}$} in the same way as (\ref{e1-ultra-power-with-exponent-in-hypernaturals}) above. That is, for a hyperinteger $\alpha = \ulim_{s \in S}n_s \in \bZ^{\#}$ for some integers $n_s \in \bZ$ and an arbitrary element $a = \ulim_{s \in S}a_s \in F^{\times}$ for some elements $a_s \in F_s^{\times}$, define
\begin{align}
\label{e2-ultrapower-with-hyperinteger-exponent}
a^{\alpha} := \ulim_{s \in S}a_s^{n_s}.
\end{align}
Since $a \in F^{\times}$, $a_s \ne 0$ for $\cD$-almost all $s \in S$, and thus the above definition is well-defined. Thus we obtain the action of $\bZ^{\#}$ on the multiplicative group $F^{\times}$ of the form
\begin{align*}
\bZ^{\#} \times F^{\times} \to F^{\times}
\end{align*}
that sends each pair $(\alpha, a)$ to $a^{\alpha}$ defined as in (\ref{e2-ultrapower-with-hyperinteger-exponent}).

For an arbitrary ultraproduct $R = \prod_{s \in S}R_s/\cD$ of rings $R_s$, there is a natural group action of $\bZ^{\#}$ on $R$ that is defined as follows. For each hyperinteger $n = \ulim_{s \in S}n_s$ for some integers $n_s$ and any element $r = \ulim_{s\in S}r_s$ in $R$ for some elements $r_s \in R_s$, we set $n \star r = \ulim_{s\in S}n_sr_s \in R$. It is straightforward that ``$\star$" defines a group action of $\bZ^{\#}$ on $R$. For simplicity, we only write $n r$ for $n \star r$ whenever the action $\star$ is clear in the context. Thus every ultraproduct of rings is an $\bZ^{\#}$-module.

\section{Ultrafinite fields}
\label{sec-ultrafinite-fields}

In this section, we develop the theory of ultra-finite fields in detail that is an analogue of that of finite fields. We begin by studying ultra-polynomials over ultra-fields.

\subsection{Roots of Irreducible ultra-polynomials and ultra-algebraic extensions}
\label{subsec-roots-of-irreducible-ultra-polynomials}

In this subsection, we study roots of irreducible ultra-polynomials over ultra-fields and ultra-algebraic extensions which are generalizations of roots of irreducible polynomials and algebraic extensions in field theory. We begin by defining the notions of \textit{ultra-fields} and \textit{ultra-hulls} that were mentioned in the introduction.

\begin{definition}
\label{def-ultra-fields}

An ultraproduct  of fields $F = \prod_{s \in S}F_s$ for some fields $F_s$ is called an \textbf{ultra-field}. The \textbf{ultra-hull} of the polynomial ring $F[t]$ is the ultraproduct $\prod_{s \in S}F_s[t]/\cD$ of polynomial rings $F_s[t]$. We denote by $\cU(F[t])$ the ultra-hull of $F[t]$.

\end{definition}

By Proposition \ref{prop-notion-section-algebraic-structures-of-ultraproducts}, every ultra-field is a field.

\begin{definition}
\label{def-ultra-polynomials}
(ultra-polynomials)

Let $F = \prod_{s \in S}F_s/\cD$ be an ultra-field, where the $F_s$ are fields, and let $\cU(F[x])$ be the ultra-hull of the polynomial ring $F[x]$. An element in $\cU(F[x])$ is called an \textbf{ultra-polynomial over $F$}.

The \textbf{ultra-degree} of an ultra-polynomial $f = \ulim_{s \in S}f_s$, where the $f_s$ are polynomials in $F_s[x]$ of degrees $d_s \in \bZ_{\ge 0}$, is the hyperinteger $d = \ulim_{s \in S}d_s \in \bZ^{\#}_{\ge 0}$.

The \textbf{leading coefficient} of an ultra-polynomial $f = \ulim_{s \in S}f_s$, where the $f_s$ are polynomials in $F_s[x]$ of leading coefficients $a_s \in F_s$ is the element $a = \ulim_{s \in S} a_s \in F$. When the $a_s$ equals $1$ for $\cD$-almost $s\in S$, the leading coefficient $a = 1$, and the ultra-polynomial $f$ is called \textbf{monic}.

\end{definition}

\begin{definition}
\label{def-irreducible-ultra-poly}
(irreducible ultra-polynomials)

Let $F = \prod_{s \in S}F_s/\cD$ be an ultra-field, where the $F_s$ are fields. An ultra-polynomial $f = \ulim_{s \in S}f_s$ over an ultra-field $F$, where the $f_s$ are polynomials in $F_s[x]$, is \textbf{irreducible over $F$} if the $f_s$ are irreducible over $F_s$ for $\cD$-almost all $s \in S$.

\end{definition}

\begin{definition}
\label{def-extension-of-ultra-fields}
(ultra-field extensions)

Let $F = \prod_{s \in S}F_s/\cD$ be an ultra-field. An \textbf{ultra-field extension} (or \textbf{ultra-extension $L$ of $F$}) is an ultra-field $L = \prod_{s \in S}L_s/\cD$, where the $L_s$ are field extensions of the $F_s$ for $\cD$-almost all $s \in S$.

We also say that $F$ is an \textbf{ultra-subfield of $L$}.

\end{definition}

\begin{definition}
\label{def-the-values-of-ultra-poly}

Let $F = \prod_{s \in S}F_s/\cD$ be an ultra-field, where the $F_s$ are fields. Let $L = \prod_{s \in S}L_s$ be an ultra-extension of $F$. Let $f = \ulim_{s \in S}f_s$ be an ultra-polynomal over $F$, where the $f_s$ are polynomials in $F_s[x]$. Let $a = \ulim_{s \in S}a_s \in L$ for $a_s \in L_s$. The \textbf{value of $f$ at $a$} is defined to be the element $\ulim_{s \in S}f_s(a_s) \in L$. In symbols, we write $f(a)$ for the value of $f$ at $a$.

\end{definition}

\begin{remark}

The value $f(a)$ does not depend on the representation of $a$ in the ultraproduct $L = \prod_{s \in S}L_s/\cD$.

\end{remark}

\begin{definition}
\label{def-roots-of-ultra-polynomials}
(roots of ultra-polynomials)

Let $F = \prod_{s \in S}F_s/\cD$ be an ultra-field, where the $F_s$ are fields, and let $f = \ulim_{s \in S}f_s$ be an ultra-polynomial over $F$. An element $a = \ulim_{s \in S}a_s$ in an ultra-field extension $L = \prod_{s \in S}L_s/\cD$ of $F$ with $a_s \in L_s$, where the $L_s$ are field extensions of the $F_s$, is called a \textbf{root of $f$} if $a_s$ is a root of $f_s$ for $\cD$-almost all $s \in S$.

Equivalently, $a$ is a root of $f$ if and only if $f(a) = 0$.

\end{definition}

\begin{lemma}
\label{lem-x-c-divides-f(x)-if-c-is-a-root-of-f(x)}

Let $F = \prod_{s \in S}F_s/\cD$ be an ultra-field, where the $F_s$ are fields, and let $f = \ulim_{s \in S}f_s$ be an ultra-polynomial over $F$. Let $L = \prod_{s\in S}L_s/\cD$ be an ultra-field extension of $F$ such that $L_s$ is a field extension of $F_s$ for $\cD$-almost all $s \in S$. For any element $a = \ulim_{s \in S}a_s \in L$,
\begin{align*}
f \equiv f(a) \pmod{(x - a)\cU(L[x])},
\end{align*}
that is, there exists an ultra-polynomial $Q \in \cU(L[x])$ over $L$ such that $f - f(a) = (x - a)Q$. Here $\cU(\L[x]) = \prod_{s \in S}L_s[x]/\cD$ is the ultra-hull of $L[x]$.

\end{lemma}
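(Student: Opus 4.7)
The plan is to reduce the assertion to the classical fact (in ordinary polynomial rings over fields) that $x-c$ divides $f(x)-f(c)$, apply it componentwise, and then recombine via ultraproducts. Since the ultra-hull $\cU(L[x]) = \prod_{s\in S} L_s[x]/\cD$ inherits its ring operations componentwise, the identity $f - f(a) = (x-a)Q$ in $\cU(L[x])$ is precisely the ultra-limit of analogous identities in $L_s[x]$.

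More precisely, I would first fix representatives $f = \ulim_{s\in S} f_s$ with $f_s\in F_s[x]\subseteq L_s[x]$ and $a = \ulim_{s\in S} a_s$ with $a_s\in L_s$. For each $s\in S$, the classical division-with-remainder argument in the polynomial ring $L_s[x]$ over the field $L_s$ yields a (unique) polynomial $Q_s\in L_s[x]$ such that
\begin{equation*}
f_s(x) - f_s(a_s) = (x-a_s)\,Q_s(x) \quad \text{in } L_s[x].
\end{equation*}
Set $Q := \ulim_{s\in S} Q_s \in \cU(L[x])$; this is well-defined since the choice of representatives $f_s$ and $a_s$ only alters the defining sequence on a set not in $\cD$, which does not change the ultra-limit.

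Next I would verify the required identity in $\cU(L[x])$. By the componentwise description of addition, subtraction, and multiplication in the ultra-hull (cf.\ Proposition \ref{prop-notion-section-algebraic-structures-of-ultraproducts} and the discussion in Section \ref{sec-basic-notions} of how operations on ultraproducts are defined), we have
\begin{equation*}
f - f(a) = \ulim_{s\in S}\bigl(f_s - f_s(a_s)\bigr) = \ulim_{s\in S}\bigl((x-a_s)Q_s\bigr) = (x-a)\,Q,
\end{equation*}
where the middle equality holds since equality holds in $L_s[x]$ for every $s\in S$, and the final equality uses that multiplication in $\cU(L[x])$ is the ultra-limit of componentwise multiplication. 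Hence $f - f(a) \in (x-a)\cU(L[x])$, as required.

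Alternatively, one can phrase this entirely via \L o\'s' theorem (Theorem \ref{thm-Los}) applied to the first-order statement ``there exists $Q\in L[x]$ with $f - f(a) = (x-a)Q$," which holds in each $L_s[x]$ and therefore transfers to the ultraproduct. I do not anticipate a serious obstacle: the only subtlety is that divisibility is expressed inside the ultra-hull $\cU(L[x])$ rather than inside $L[x]$ itself, but that is precisely the right framework because the ultra-limit of a family of polynomials of unbounded degrees need not be a polynomial.
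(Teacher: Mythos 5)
Your proposal is correct and follows essentially the same route as the paper: apply the classical division/factor-theorem argument in each $L_s[x]$ to obtain $f_s - f_s(a_s) = (x-a_s)Q_s$, set $Q = \ulim_{s\in S}Q_s$, and recombine componentwise (equivalently, via \L{}o\'s' theorem) to get the identity in $\cU(L[x])$. Your extra remarks on well-definedness of representatives and on why the statement lives in the ultra-hull rather than $L[x]$ are sound but do not change the argument.
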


\begin{proof}

For each $s \in S$, we know from the Euclidean algorithm for the polynomial ring $L_s[x]$ that $f_s(x) - f_s(a_s) = (x - a_s)Q_s(x)$ for some polynomial $Q_s \in L_s[x]$. Thus it follows from \L{}o\'s' theorem that
\begin{align*}
f - f(a) &= \ulim_{s \in S}(f_s(x) - f_s(a_s))\\
 &= \ulim_{s \in S}(x - a_s)Q_s(x) \\
 &= \ulim_{s\in S}(x - a_s)\ulim_{s \in S}Q_s(x) \\
 &= (x - a)Q,
 \end{align*}
 where $Q = \ulim_{s \in S}Q_s(x) \in \cU(L[x])$. Thus the lemma follows immediately.

\end{proof}

The above two notions generalizes those of values and roots of polynomials, which can be verified as follows. Let $f(x)$ be a polynomial of degree $d \in \bZ_{> 0}$ in $F[x]$, where $F = \prod_{s \in S}F_s/\cD$ is an ultra-field for some fields $F_s$. Write
\begin{align*}
f(x) = a_dx^d + a_{d - 1}x^{d - 1} + \cdots + a_1x + a_0,
\end{align*}
where the $a_i$ are elements in $F$ and $a_d \ne 0$. For each $0 \le i \le d$, write $a_i = \ulim_{s \in S}a_{i, s}$, where the $a_{i, s}$ are elements in $F_s$. Since $a_d \ne 0$, it follows from \L{}o\'s' theorem that $a_{d, s} \ne 0$ for $\cD$-almost all $s \in S$. For each $s \in S$, set
\begin{align}
\label{eqn1-in-lem-roots-of-polynomials-over-ultrafields}
f_s(x) = a_{d, s}x^d + \cdots + a_{1, s}x + a_{0, s} \in F_s[x].
\end{align}

One can verify that $f = \ulim_{s \in S}f_s$. For an arbitrary element $c = \ulim_{s \in S}c_s \in F$ with $c_s \in F_s$, we see that
\begin{align*}
f(c) &= a_dc^d + a_{d - 1}c^{d - 1} + \cdots + a_1c + a_0 \\
&= \ulim_{s \in S}a_{d, s}\ulim_{s \in S}c_s^d  + \cdots + \ulim_{s \in S}a_{1, s}\ulim_{s \in S}c_s + \ulim{s \in S}a_{0, s} \\
&= \ulim_{s\in S}\left(a_{d, s}c_s^d + \cdots + a_{1, s}c_s + a_{0, s}\right),
\end{align*}
and thus
\begin{align}
\label{eqn2-roots-of-polynomials-over-ultrafields}
f(c) = \ulim_{s \in S}f_s(c_s),
\end{align}
which motivates Definition \ref{def-the-values-of-ultra-poly}.

The following result motivates Definition \ref{def-roots-of-ultra-polynomials}.

\begin{lemma}
\label{lem-roots-of-polynomials-over-ultrafields}

Let $F= \prod_{s \in S}F_s/\cD$ be an ultra-field, where the $F_s$ are fields. Let $f(x) = a_dx^d + a_{d - 1}x^{d - 1} + \cdots + a_1x + a_0$ be a polynomial of degree $d > 0$ in $F[x]$ such that $f$ is irreducible and separable over $F$, where the $a_i$ are elements in $F$ and $a_d \ne 0$. Write $f = \ulim_{s \in S}f_s$, where the $f_s$ are defined by (\ref{eqn1-in-lem-roots-of-polynomials-over-ultrafields}). For each $s \in S$, let $A_s$ denote the set of roots of $f_s$ in an algebraic closure of $F_s$, and let $A$ denote the set of roots of $f$ in an algebraic closure of $F$. Then $A = \prod_{s \in S}A_s/\cD$. More explicitly, every root of $f$ is the ultra-limit of certain roots of $f_s$.

\end{lemma}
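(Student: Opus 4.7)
The plan is to realize every root of $f$ inside the ultraproduct of algebraic closures $K := \prod_{s \in S} \overline{F_s}/\cD$. By \L{}o\'s' theorem applied to the first-order axiom scheme ``for each $n \ge 1$, every monic polynomial of degree $n$ has a root'' (satisfied by each $\overline{F_s}$), $K$ is an algebraically closed field containing $F$. Since algebraic closures are unique up to isomorphism, we may take $\overline{F}$ to be the algebraic closure of $F$ inside $K$ and view the set $A$ of roots of $f$ as a subset of $K$.

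Next I would transfer the hypotheses on $f$ to $\cD$-almost all of its components. The leading coefficient $a_d = \ulim_{s \in S} a_{d,s}$ being nonzero gives $\deg(f_s) = d$ for $\cD$-almost all $s$. By Definition~\ref{def-irreducible-ultra-poly}, irreducibility of $f$ over $F$ forces $f_s$ to be irreducible over $F_s$ for $\cD$-almost all $s$; separability of $f$, i.e.\ $\gcd(f, f') = 1$ in $F[x]$, transfers via \L{}o\'s' theorem to $\gcd(f_s, f_s') = 1$ in $F_s[x]$ for $\cD$-almost all $s$. Hence for $\cD$-almost all $s$, $f_s$ is irreducible and separable of degree $d$, so $A_s = \{r_{s,1}, \ldots, r_{s,d}\}$ consists of exactly $d$ distinct elements of $\overline{F_s}$.

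The heart of the argument is then to set $\rho_i := \ulim_{s \in S} r_{s, i} \in K$ for $i = 1, \ldots, d$. By Definition~\ref{def-the-values-of-ultra-poly}, each $\rho_i$ satisfies $f(\rho_i) = \ulim_{s \in S} f_s(r_{s,i}) = 0$, so $\rho_i \in A$. The $\rho_i$ are pairwise distinct, because $\rho_i = \rho_j$ with $i \ne j$ would force $r_{s,i} = r_{s,j}$ for $\cD$-almost all $s$, contradicting separability of $f_s$. Since $f$ has at most $d$ roots in $K$, we conclude $A = \{\rho_1, \ldots, \rho_d\}$. To identify this with $\prod_{s \in S} A_s/\cD$, the inclusion $\{\rho_1, \ldots, \rho_d\} \subseteq \prod_{s \in S} A_s/\cD$ is immediate; conversely, any $a = \ulim_s a_s$ with $a_s \in A_s$ for $\cD$-almost all $s$ gives a partition of $S$ (up to a $\cD$-null set) into the $d$ pieces $S_i = \{s : a_s = r_{s,i}\}$, of which exactly one lies in $\cD$, so $a = \rho_i$ for that $i$. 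Thus $A = \{\rho_1, \ldots, \rho_d\} = \prod_{s \in S} A_s/\cD$.

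The only nontrivial structural input is the opening move, namely verifying that $K$ is algebraically closed so that $\overline{F}$ and hence $A$ can be situated inside $K$; this is a standard \L{}o\'s-style transfer, but it is essential because it is what lets us compare roots of $f$ to ultra-limits of roots of the $f_s$ in a single ambient field. Everything after that reduces to a finite counting argument in $K$ together with an unpacking of the relevant definitions.
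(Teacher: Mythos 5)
Your proposal is correct, but it reaches the conclusion by a route that differs from the paper's at two points. For separability, the paper argues by a case analysis on the characteristic of $F$ (examining which exponents with nonzero coefficient are divisible by the characteristic), whereas you transfer the single first-order condition $\gcd(f, f') = 1$ via \L{}o\'s' theorem; your version is cleaner and uniform in the characteristic. For the counting step, the paper first proves $\prod_{s \in S}A_s/\cD \subseteq A$ and then invokes the internal-cardinality machinery: $\icard\left(\prod_{s \in S}A_s/\cD\right) = d$, Theorem \ref{thm-internal-cardinality} turns this into an honest $d$-element set because $d$ is a standard integer, and equality follows by comparing with the $d$-element set $A$. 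You instead enumerate the roots fiberwise, form the $d$ ultra-limits $\rho_i = \ulim_{s \in S}r_{s,i}$, observe they are pairwise distinct and are roots of $f$ in $K = \prod_{s \in S}F_s^{\alg}/\cD$, and conclude from the fact that a degree-$d$ polynomial has at most $d$ roots in a field, finishing the reverse inclusion with a partition-of-$S$ argument; this is an elementary counting argument that bypasses the internal-cardinality apparatus. Your explicit identification of an algebraic closure of $F$ inside $K$, so that $A$ and $\prod_{s \in S}A_s/\cD$ live in one ambient field, also makes precise a point the paper leaves implicit. One slip to fix: you cite Definition \ref{def-irreducible-ultra-poly} for the implication that irreducibility of $f$ in $F[x]$ forces $f_s$ to be irreducible in $F_s[x]$ for $\cD$-almost all $s$. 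That definition only defines ultra-irreducibility of an ultra-polynomial in terms of its components, so it cannot yield this implication for a genuine polynomial of $F[x]$; as with separability, the step requires the \L{}o\'s transfer of the first-order formula expressing irreducibility of a degree-$d$ polynomial, which is exactly how the paper handles it.
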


\begin{proof}

Since being irreducible can be defined by a first-order formula in the language of rings, and $f(x)$ is irreducible over $F$, it follows from \L{}o\'s' theorem that $f_s(x)$ is irreducible over $F_s$ for $\cD$-almost all $s \in S$. We claim that $f_s$ is separable for $\cD$-almost all $s \in S$.

Suppose that $F$ has characteristic $0$. By \L{}o\'s' theorem, the $F_s$ have distinct characteristics $p_s$ for $\cD$-almost all $s \in S$, where $p_s$ is either a prime or zero. If $F_s$ has characteristic $0$, then $f_s$ is separable since it is irreducible over $F_s$. Note that for each $0 \le i \le d$, $a_i \ne 0$ if and only if $a_{i, s} \ne 0$ for $\cD$-almost all $s \in S$. Thus in setting $I :=\{i \; | \; \text{$0 \le i \le d$ and $a_i \ne 0$}\}$, we deduce that
\begin{align*}
I = I_s := \{i \; | \;  \text{$0 \le i \le d$ and $a_{i, s} \ne 0$}\}
\end{align*}
for $\cD$-almost all $s \in S$. In particular, since $I \ne \emptyset$, $I_s \ne \emptyset$ for $\cD$-almost all $s \in S$. Since there are only finitely many primes $q$ that divide every positive integer in $I$, and thus there are only finitely many primes $q$ that divide every positive integer in $I_s$ for $\cD$-almost all $s \in S$. Hence $f_s$ must be separable over $F_s$ for $\cD$-almost all $s \in S$.

 Suppose now that $F$ has a positive characteristic $p$. By \L{}o\'s' theorem, the $F_s$ also have positive characteristic $p$ for $\cD$-almost all $s \in S$. By assumption, we know that $f$ is separable, and thus at least one of the positive integers in $I$, say $j$ is not divisible by $p$. We also know that $a_j \ne 0$, and thus $a_{j, s} \ne 0$ for $\cD$-almost all $s \in S$. Thus there exists at least one nonzero term $a_{j, s}x^j$ of the polynomial $f_s(x)$ such that $p$ does not divide $j$ for $\cD$-almost all $s \in S$, and therefore $f_s$ is separable for $\cD$-almost all $s \in S$.

 We now prove that $\prod_{s \in S}A_s/\cD \subseteq A$. Indeed, let $c = \ulim_{s \in S}c_s$ be an element in $\prod_{s \in S}A_s/\cD$, where $c_s$ is a root of $f_s$ in an algebraic closure of $F_s$. Using the same arguments preceding (\ref{eqn2-roots-of-polynomials-over-ultrafields}), we see that
 \begin{align*}
 f(c) = \ulim_{s \in S}f_s(c_s) = 0,
 \end{align*}
 which proves that $c$ is a root of $f$, and thus $\prod_{s \in S}A_s/\cD \subseteq A$.

 We know that $\card(A_s) = d$ for $\cD$-almost all $s\in S$ since the $f_s$ are irreducible and separable of degree $d$ for $\cD$-almost all $s \in S$. Thus the internal cardinality of $\prod_{s \in S}A_s/\cD$ is $\ulim_{s\in S}\card(A_s) = \ulim_{s\in S}d = d$. By Theorem \ref{thm-internal-cardinality}, there exists a bijection between the finite set $\{1, 2, \ldots, d\}$ and $\prod_{s \in S}A_s/\cD$, which implies that $\prod_{s \in S}A_s/\cD$ is a finite set of exactly $d$ elements. Since $A$ is a finite set consisting of exactly $d$ roots of $f$ and it contains $\prod_{s \in S}A_s/\cD$ as its subset, we deduce that $\prod_{s \in S}A_s/\cD = A$ as required.

\end{proof}

\begin{definition}
(ultra-algebraic extensions)
\label{def-ultra-algebraic}

Let $F = \prod_{s \in S}F_s/\cD$ be an ultraproduct of fields $F_s$. An element $a$ in an ultra-field extension of $F$ is called \textbf{ultra-algebraic over $F$} if there exists an ultra-polynomial $f$ over $F$ such that $f(a) = 0$.

An ultra-field extension $L = \prod_{s \in S}L_s/\cD$ of $F$ is called \textbf{ultra-algebraic over $F$} if every element of $L$ is ultra-algebraic over $F$.

If the degree $[L_s : F_s]$ as a vector space over $F_s$ is finite for $\cD$-almost all $s \in S$, we define the \textbf{ultra-degree of $L$ over $F$} to be $\ulim_{s \in S}[L_s : F_s] \in \bN^{\#}$. In symbol, we write $[L : F]_u$ for the ultra-degree of $L$ over $F$.

\end{definition}

\begin{definition}
(ultra-algebraic closure)
\label{def-ultra-algebraic-closure}

An \textbf{ultra-algebraic closure of an ultra-field $F = \prod_{s \in S}F_s/\cD$} is an ultra-field extension of $F$ of the form $\prod_{s \in S}F_s^{\alg}/\cD$, where $F_s^{\alg}$ denotes an algebraic closure of $F_s$ for $\cD$-almost all $s \in S$.

We denote by $F^{\alg_u}$ an ultra-algebraic closure of $F$.

\end{definition}

\begin{remark}

For each positive integer $n$, we define a sentence in the language of rings of the form $\forall a_0 a_1\cdots a_{n - 1} \exists x (x^n + a_{n - 1}x^{n - 1} + \cdots + a_1x + a_0 = 0)$. The collection of such sentences with $n$ ranging over $\bZ_{>0}$ axiomatizes the algebraically closed property of a field. Thus by \L{}o\'s' theorem, an ultra-algebraic closure of $F$ is an algebraically closed field.

\end{remark}

\begin{lemma}
\label{lem-ultra-polynomials-have-roots-in-ultra-algebraic-closure}

For every ultra-polynomial $f = \ulim_{s \in S}f_s$ over an ultra-field $F = \prod_{s \in S}F_s/\cD$, all the roots of $f$ belong in an ultra-algebraic closure of $F$.

\end{lemma}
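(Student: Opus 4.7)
The plan is to unpack the definition of a root and then use the existence of $F_s$-embeddings into $F_s^{\mathrm{alg}}$ componentwise, taking the ultra-limit at the end. Concretely, suppose $a$ is a root of $f = \ulim_{s\in S} f_s$. By Definition \ref{def-roots-of-ultra-polynomials}, $a$ lives in some ultra-extension $L = \prod_{s \in S} L_s/\cD$ with $L_s/F_s$ a field extension, and is represented by $a = \ulim_{s \in S} a_s$ with $a_s \in L_s$ a root of $f_s$ for $\cD$-almost all $s \in S$.

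Next I would argue that, for $\cD$-almost all $s$, since $a_s$ is a root of a nonzero polynomial $f_s \in F_s[x]$, the extension $F_s(a_s)/F_s$ is finite and algebraic. Standard field theory gives an $F_s$-embedding $\sigma_s \colon F_s(a_s) \hookrightarrow F_s^{\mathrm{alg}}$; set $b_s := \sigma_s(a_s) \in F_s^{\mathrm{alg}}$. Since $\sigma_s$ fixes $F_s$ and $f_s \in F_s[x]$, we have $f_s(b_s) = \sigma_s(f_s(a_s)) = 0$. For the remaining (measure-zero) indices, pick any root of $f_s$ in $F_s^{\mathrm{alg}}$ arbitrarily (if $f_s$ has no roots there, replace $b_s$ by $0$; this index set is not in $\cD$ and will be harmless).

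Then I would form $b := \ulim_{s \in S} b_s \in \prod_{s \in S} F_s^{\mathrm{alg}}/\cD = F^{\mathrm{alg}_u}$. By Definition \ref{def-roots-of-ultra-polynomials} applied to the ultra-extension $F^{\mathrm{alg}_u}/F$, the element $b$ is a root of $f$, since $b_s$ is a root of $f_s$ for $\cD$-almost all $s$. Because the $F_s$-embeddings $\sigma_s$ are $F_s$-algebra maps, the assignment $a \mapsto b$ identifies $a$ with an element of $F^{\mathrm{alg}_u}$ as a root of $f$ over $F$; this is the sense in which $a$ ``belongs in'' $F^{\mathrm{alg}_u}$.

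The only subtle point is that the original $a_s$ need not literally lie in $F_s^{\mathrm{alg}}$, so one cannot dispense with the embeddings $\sigma_s$. This is handled entirely at the level of each component by the existence of $F_s$-embeddings of a finite algebraic extension into an algebraic closure, and the transfer to ultra-limits is then immediate from the definitions of value (Definition \ref{def-the-values-of-ultra-poly}) and root (Definition \ref{def-roots-of-ultra-polynomials}) together with \L os' theorem. Thus no deeper obstacle arises beyond keeping track of the choices of $\sigma_s$ for $s$ in a set belonging to $\cD$.
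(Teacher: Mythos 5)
Your proof is correct and follows essentially the same route as the paper: componentwise, every root of $f_s$ can be taken inside $F_s^{\alg}$, and passing to the ultraproduct places the corresponding root of $f$ in $F^{\alg_u} = \prod_{s\in S}F_s^{\alg}/\cD$. The only difference is that you make explicit the identification, via $F_s$-embeddings $\sigma_s\colon F_s(a_s)\hookrightarrow F_s^{\alg}$, of a root lying in an arbitrary ultra-extension with an element of the ultra-algebraic closure; the paper's one-line proof leaves this identification implicit, so your version is the same argument carried out with a bit more care.
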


\begin{proof}

For all $s \in S$, all the roots of $f_s$ belong in $F_s^{\alg}$. Since an ultra-algebraic closure of $F$ is $\prod_{s\in S}F_s^{\alg}/\cD$, it follows that all roots of $f$ belong in the ultra-algebraic closure of $F$.

\end{proof}

\begin{definition}
\label{def-simple-roots-of-ultra-polynomials and simple roots}
(separable ultra-polynomials and simple roots)

An ultra-polynomial $f = \ulim_{s \in S}f_s$ over an ultra-field $F = \prod_{s \in S}F_s/\cD$ for some polynomials $f_s \in F_s[x]$ is said to be \textbf{separable over $F$} if the $f_s$ are separable over $F_s$ for $\cD$-almost all $s \in S$.

A root $c = \ulim_{s \in S}c_s$ of $f$ in an ultra-field extension of $F$ is said to be \textbf{simple} if $c_s$ is a simple root of $f_s$ for $\cD$-almost all $s \in S$.

\end{definition}

Using the same arguments as in Lemma \ref{lem-roots-of-polynomials-over-ultrafields}, we obtain the following.
\begin{theorem}
\label{thm-roots-of-ultra-polynomials-over-ultra-fields}

Let $F= \prod_{s \in S}F_s/\cD$ be an ultra-field, where the $F_s$ are fields. Let $f(x) = \ulim_{s \in S}f_s(x)$ be an ultra-polynomial of ultra-degree $d = \ulim_{s \in S}d_s$ over $F$ such that $f$ is irreducible and separable over $F$, where the $f_s$ are polynomials of degrees $d_s$ in $F_s[x]$ for $\cD$-almost all $s \in S$. For each $s \in S$, let $A_s$ denote the set of roots of $f_s$ in an algebraic closure $F_s^{\alg}$ of $F_s$, and let $A$ denote the set of roots of $f$ in an ultra-algebraic closure $F^{\alg_u} = \prod_{s \in S}F_s^{\alg}/\cD$ of $F$. Then
\begin{itemize}

\item [(i)] $A = \prod_{s \in S}A_s/\cD$.

\item [(ii)] $\icard(A) = \ulim_{s \in S}\card(A_s) = d$.

\end{itemize}

\end{theorem}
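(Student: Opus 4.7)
The plan is to mimic the proof of Lemma~\ref{lem-roots-of-polynomials-over-ultrafields}, with the key simplification that both irreducibility (Definition~\ref{def-irreducible-ultra-poly}) and separability (Definition~\ref{def-simple-roots-of-ultra-polynomials and simple roots}) of an ultra-polynomial are already defined componentwise, so no transfer step via \L{}o\'s' theorem is required to move these properties from $f$ to the $f_s$. Throughout, I write $c = \ulim_{s\in S}c_s$ for elements $c_s$ in appropriate components, and use Definition~\ref{def-the-values-of-ultra-poly} to compute $f(c) = \ulim_{s\in S}f_s(c_s)$.

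For part (i), I would establish the two inclusions separately. If $c \in \prod_{s\in S}A_s/\cD$ is represented by $c_s \in A_s$, then $f_s(c_s) = 0$ for $\cD$-almost all $s$, so $f(c) = \ulim_{s\in S}f_s(c_s) = 0$, placing $c$ in $A$. Conversely, if $c \in A$ with $c_s \in F_s^{\alg}$, then $\ulim_{s\in S}f_s(c_s) = f(c) = 0$, which by the definition of the equivalence relation on the ultraproduct forces $\{s : f_s(c_s) = 0\}$ to lie in $\cD$; hence $c_s \in A_s$ for $\cD$-almost all $s$, and $c \in \prod_{s\in S}A_s/\cD$.

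Part (ii) will then follow directly. Because $f$ is irreducible and separable over $F$ of ultra-degree $d = \ulim_{s\in S}d_s$, the components $f_s$ are irreducible and separable of degree $d_s$ over $F_s$ for $\cD$-almost all $s$, so $\card(A_s) = d_s$ for $\cD$-almost all $s$. By Definition~\ref{def-internal-cardinality}, the set $A = \prod_{s\in S}A_s/\cD$ is internal with $\icard(A) = \ulim_{s\in S}\card(A_s) = \ulim_{s\in S}d_s = d$. The only conceptual subtlety worth flagging is that when $d$ is a nonstandard hypernatural the ``finiteness'' of $A$ is internal rather than external; the argument nevertheless goes through unchanged because internal cardinality is precisely the notion governing ultraproducts of finite sets, and both inclusions in (i) are proved elementwise without any counting.
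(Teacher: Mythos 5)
Your proof is correct, and for part (ii) it coincides with the paper's argument: componentwise irreducibility and separability give $\card(A_s) = d_s$ for $\cD$-almost all $s$, hence $\icard(A) = \ulim_{s \in S}\card(A_s) = \ulim_{s\in S}d_s = d$. For part (i) you take a mildly different, and in fact more robust, route. The paper simply invokes ``the same arguments as in Lemma \ref{lem-roots-of-polynomials-over-ultrafields}'', but in that lemma the inclusion $A \subseteq \prod_{s\in S}A_s/\cD$ was obtained by a counting step: both sets are genuine finite sets of $d$ elements with $d$ a standard positive integer, so containment plus equal cardinality forces equality. That step does not transfer verbatim here when the ultra-degree $d$ is a nonstandard hypernatural, since $A$ is then only internally finite. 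Your elementwise argument---$f(c) = \ulim_{s\in S}f_s(c_s)$ equals $0$ in the ultraproduct if and only if $f_s(c_s) = 0$ for $\cD$-almost all $s$, which is precisely Definition \ref{def-roots-of-ultra-polynomials}---proves both inclusions directly, needs neither irreducibility nor separability for part (i) (consistent with Corollary \ref{cor-upper-bound-for-zero-set-of-ultra-polynomial}), and your remark distinguishing internal from external finiteness is exactly the right caveat about why the counting argument is dispensable here.
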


\begin{proof}

Using the same arguments as in Lemma \ref{lem-roots-of-polynomials-over-ultrafields}, Part (i) follows immediately. For part (ii), we know that $f_s$ is irreducible and separable for $\cD$-almost all $s \in S$, and thus $\card(A_s) = d_s$ for $\cD$-almost all $s\in S$. Therefore $\icard(A) = \ulim_{s \in S}\card(A_s) = \ulim_{s\in S}d_s = d$ as required.

\end{proof}

Without the irreducibility and separability assumption for polynomials $f_s$, we can obtain an upper bound in $\bN^{\#}$ of the zero set of an ultra-polynomial.

\begin{corollary}
\label{cor-upper-bound-for-zero-set-of-ultra-polynomial}

Let $F= \prod_{s \in S}F_s/\cD$ be an ultra-field, where the $F_s$ are fields. Let $f(x) = \ulim_{s \in S}$ be an ultra-polynomial of ultra-degree $d = \ulim_{s \in S}d_s$ over $F$, where the $f_s$ are polynomials of degrees $d_s$ in $F_s[x]$ for $\cD$-almost all $s \in S$. For each $s \in S$, let $A_s$ denote the set of roots of $f_s$ in an algebraic closure $F_s^{\alg}$ of $F_s$, and let $A$ denote the set of roots of $f$ in an ultra-algebraic closure $F^{\alg_u} = \prod_{s \in S}F_s^{\alg}/\cD$ of $F$. Then
\begin{itemize}

\item [(i)] $A = \prod_{s \in S}A_s/\cD$.

\item [(ii)] $\icard(A) = \ulim_{s \in S}\card(A_s) \le d$.

\end{itemize}

\end{corollary}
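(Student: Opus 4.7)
The plan is to follow the same two-step template used in Theorem \ref{thm-roots-of-ultra-polynomials-over-ultra-fields}, but to drop the use of irreducibility/separability (which was needed there only to pin down the exact count $\card(A_s) = d_s$) and replace the equality in the cardinality assertion by an inequality coming from the classical bound ``a nonzero polynomial of degree $d_s$ has at most $d_s$ roots''.

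For part (i), I would first establish the inclusion $\prod_{s \in S} A_s/\cD \subseteq A$ by a direct computation: take $c = \ulim_{s \in S} c_s$ with $c_s \in A_s$ for $\cD$-almost all $s$, so $f_s(c_s) = 0$ in $F_s^{\alg}$ for $\cD$-almost all $s$; by Definition \ref{def-the-values-of-ultra-poly} and \L{}o\'s' theorem this gives $f(c) = \ulim_{s \in S} f_s(c_s) = 0$, hence $c \in A$. For the reverse inclusion $A \subseteq \prod_{s \in S} A_s/\cD$, take any $c = \ulim_{s \in S} c_s \in A$ with $c_s \in F_s^{\alg}$. The equation $f(c) = 0$ translates, again by Definition \ref{def-the-values-of-ultra-poly}, into $\ulim_{s \in S} f_s(c_s) = 0$, which by the definition of the ultraproduct means the set $\{s \in S \,:\, f_s(c_s) = 0\}$ lies in $\cD$. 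Therefore $c_s \in A_s$ for $\cD$-almost all $s$, and after adjusting representatives on a $\cD$-small set we conclude $c \in \prod_{s \in S} A_s/\cD$.

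For part (ii), I would invoke the classical fact, valid over any field, that a polynomial of degree $d_s$ has at most $d_s$ roots in an algebraic closure; applying this to $f_s$ for each $s$ in the $\cD$-large set where $\deg(f_s) = d_s$ gives $\card(A_s) \le d_s$ for $\cD$-almost all $s$. By part (i), $A = \prod_{s \in S} A_s/\cD$ is an internal set with internal cardinality $\icard(A) = \ulim_{s \in S} \card(A_s)$ (Definition \ref{def-internal-cardinality}), and the transfer of the inequality $\card(A_s) \le d_s$ through $\cD$ gives $\ulim_{s \in S} \card(A_s) \le \ulim_{s \in S} d_s = d$ in the total order on $\bN^{\#}$ introduced in Section \ref{subsec-hyperintegers}.

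I do not anticipate any serious obstacle: the only mild subtlety is making sure, in the proof of $A \subseteq \prod_{s \in S} A_s/\cD$, that one picks coherent representatives $c_s \in F_s^{\alg}$ so that the equality $f_s(c_s) = 0$ really holds on a $\cD$-large set (as opposed to only after modifying $c$ on a $\cD$-small set), but this is handled exactly as in Lemma \ref{lem-roots-of-polynomials-over-ultrafields} and requires no new idea. The weakening ``$=$'' to ``$\le$'' in part (ii) is precisely what allows the removal of the irreducibility and separability hypotheses, since without them the inequality $\card(A_s) \le d_s$ can be strict for $\cD$-almost all $s$.
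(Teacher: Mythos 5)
Your proof is correct, and for part (ii) it coincides with the paper's argument verbatim: one transfers the classical bound $\card(A_s)\le d_s$ through $\cD$ to get $\icard(A)=\ulim_{s\in S}\card(A_s)\le \ulim_{s\in S}d_s=d$. The only divergence is in part (i). The paper disposes of it by citing Theorem \ref{thm-roots-of-ultra-polynomials-over-ultra-fields}, whose proof of the equality $A=\prod_{s\in S}A_s/\cD$ (via Lemma \ref{lem-roots-of-polynomials-over-ultrafields}) establishes only the inclusion $\prod_{s\in S}A_s/\cD\subseteq A$ directly and then forces equality by a counting step (both sides have internal cardinality $d$) that genuinely uses the irreducibility and separability hypotheses -- precisely the hypotheses the corollary drops. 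You instead prove the reverse inclusion $A\subseteq\prod_{s\in S}A_s/\cD$ directly: if $f(c)=\ulim_{s\in S}f_s(c_s)=0$ then $f_s(c_s)=0$ on a $\cD$-large set, so $c_s\in A_s$ for $\cD$-almost all $s$, and representatives may be adjusted off that set. This is exactly the argument needed in the general setting and makes the corollary self-contained rather than a formal consequence of a theorem whose hypotheses it no longer satisfies; apart from this (welcome) extra care, the substance of your approach -- identify $A$ with the ultraproduct of the root sets, then transfer the degree bound -- is the same as the paper's.
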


\begin{proof}

Part (i) follows immediately from Theorem \ref{thm-roots-of-ultra-polynomials-over-ultra-fields}. For part (ii), $\card(A_s) \le d_s$, and thus $\icard(A) = \ulim_{s \in S}\card(A_s) \le \ulim_{s \in S}d_s = d$.

\end{proof}

Suppose that an element $a = \ulim_{s \in S}a_s$ in an ultra-field extension of $F$ is ultra-algebraic over $F$. For $\cD$-almost all $s \in S$, it is known (see \cite{lang-algebra}) that the ideal $I_s = \{g \in F_s[x] \; | \; g(a_s) = 0\}$ is a principal ideal that is generated by some monic irreducible polynomial $g_s \in F_s[x]$ of least degree having $a_s$ as a root.

Set
$$g = \ulim_{s \in S}g_s \in \cU(F[x]),$$
and let
$$I = \{g \in \cU(F[x]) \; |\; g(a) = 0\}.$$

It is easily verified that $I$ is an ideal of the ultra-hull $\cU(F[x])$. We contend that $I$ is equal to the principal ideal $(g)$. Indeed, for any ultra-polynomial $h = \ulim_{s \in S}h_s$ over $F$ with $h(a) = 0$, $h_s(a_s) = 0$ for $\cD$-almost all $s \in S$. Thus $h_s \in I_s$, and therefore $h_s = g_sj_s$ for some polynomial $j_s \in F_s[x]$. We then see that $h = gj$, where $j = \ulim_{s \in S}j_s$ is an ultra-polynomial over $F$. Thus $I \subseteq (g)$. It is clear that $(g) \subseteq I$, and thus $I = (g)$.

By the definition of $g$, $g$ is irreducible over $F$ and monic. It is also easy to verify that $g$ is the monic ultra-polynomial over $F$ of least ultra-degree having $a$ as a root, i.e., the ultra-degree of $g$ is the least hypernatural among all the ultra-degrees of ultra-polynomials $h$ having $a$ as a root. In analogy with the classical notion of minimal polynomials in field theory, we call $g$ the \textbf{minimal ultra-polynomial of $a$ over $F$.} By the \textbf{ultra-degree of $a$ over $F$}, we mean the ultra-degree of $g$. In summary, we have proved the following.
\begin{theorem}
\label{thm-minimal-ultra-polynomials}

If an element $a = \ulim_{s \in S}a_s$ in an ultra-field extension of $F = \prod_{s \in S}F_s/\cD$ is ultra-algebraic over $F$, then its minimal ultra-polynomial $g$ over $F$ has the following properties:
\begin{itemize}

\item [(i)] $g$ is irreducible in $\cU(F[x])$ that can be represented as $g = \ulim_{s \in S}g_s$, where the $g_s(x) \in F_s[x]$ are the minimal polynomials of the $a_s$ for $\cD$-almost all $s \in S$.

\item [(ii)] For any ultra-polynomial $f \in \cU(F[x])$, $f(a) = 0$ if and only if $g$ divides $f$ in $\cU(F[x])$.

\item [(iii)] $g$ is the monic ultra-polynomial in $\cU(F[x])$ of least ultra-degree having $a$ as a root.

\end{itemize}

\end{theorem}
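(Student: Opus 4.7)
The plan is to build $g$ componentwise from the classical minimal polynomials and then transfer everything via \L o\'s' theorem, essentially formalizing the paragraph preceding the statement. First I would verify that $a_s$ is algebraic over $F_s$ for $\cD$-almost all $s$: since $a$ is ultra-algebraic over $F$, there is a nonzero ultra-polynomial $f = \ulim_{s \in S} f_s$ with $f(a) = 0$; by Definition \ref{def-the-values-of-ultra-poly} and the ultraproduct equivalence relation, $f_s \ne 0$ and $f_s(a_s) = 0$ on a set $S_0 \in \cD$. For $s \in S_0$, let $g_s \in F_s[x]$ be the ordinary monic minimal polynomial of $a_s$ over $F_s$; for $s \notin S_0$, set $g_s := 1$ (the choice is irrelevant since $S_0 \in \cD$). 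Define $g := \ulim_{s \in S} g_s \in \cU(F[x])$.

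For part (i), each $g_s$ is monic and irreducible over $F_s$ for $s \in S_0$, so by Definitions \ref{def-ultra-polynomials} and \ref{def-irreducible-ultra-poly}, $g$ is a monic ultra-polynomial that is irreducible over $F$. For part (ii), take any $h = \ulim_{s\in S} h_s \in \cU(F[x])$. By Definition \ref{def-the-values-of-ultra-poly}, $h(a) = \ulim_{s \in S} h_s(a_s)$, so $h(a) = 0$ holds in the ambient ultra-field extension if and only if $h_s(a_s) = 0$ for $\cD$-almost all $s$. By the classical characterization of the minimal polynomial over each $F_s$, this is equivalent to $g_s \mid h_s$ in $F_s[x]$ on a set in $\cD$, i.e., $h_s = g_s q_s$ for some $q_s \in F_s[x]$. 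Setting $q := \ulim_{s \in S} q_s \in \cU(F[x])$ gives $h = gq$ by (\ref{e1-operation-ultraproduct}), hence $g \mid h$ in $\cU(F[x])$; the converse direction is immediate from $g(a) = \ulim g_s(a_s) = 0$. For part (iii), let $h$ be any monic ultra-polynomial with $h(a) = 0$ and let $e = \ulim_{s\in S} e_s$ be its ultra-degree. By (ii), $g \mid h$, and comparing ultra-degrees componentwise via the classical inequality $\deg(g_s) \le \deg(h_s) = e_s$ on a set in $\cD$ yields $[g$'s ultra-degree$] \le e$.

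The one genuine subtlety, and the main technical point to verify carefully, is that the ultra-polynomial $g$ depends only on the equivalence class $a \in \prod_{s \in S} L_s/\cD$ and not on the choice of representative $(a_s)_{s \in S}$: two representatives agree on a set in $\cD$, hence their componentwise minimal polynomials agree on a set in $\cD$, producing the same element of $\cU(F[x])$. Once well-definedness is checked, everything else is a line-by-line componentwise transfer and nothing beyond \L o\'s' theorem and Lemma \ref{lem-x-c-divides-f(x)-if-c-is-a-root-of-f(x)} is required.
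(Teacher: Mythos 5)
Your proposal is correct and follows essentially the same route as the paper: define $g_s$ componentwise as the classical minimal polynomial of $a_s$, set $g = \ulim_{s\in S} g_s$, and transfer irreducibility, the divisibility characterization of $\{h : h(a)=0\}$, and the least-ultra-degree property by \L{}o\'s-style componentwise arguments (the paper phrases the divisibility step as the identity of ideals $I = (g)$ in $\cU(F[x])$, which is the same computation as your part (ii)). Your explicit checks that $a_s$ is algebraic for $\cD$-almost all $s$ and that $g$ is independent of the representative of $a$ are points the paper leaves implicit, and they are handled correctly.
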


\begin{remark}

''Least ultra-degree" in Part (iii) in the above theorem means for any nonzero ultra-polynomial $f$ over $F$ such that $f(a) = 0$, the ultra-degree of $f$ is at least the ultra-degree of $g$, where the ordering here takes place in $\bN^{\#}$ (see Theorem \ref{thm-internal-least-number-principle}.)

\end{remark}

\begin{lemma}
\label{lem-divisibility-of-minimal-ultra-poly}

Let $f$ be an irreducible ultra-polynomial over an ultra-field $F = \prod_{s \in  S} F_{s}/\cD$, where the $F_{s}$ are fields for $\cD$-almost all $s \in S$.

Let $\alpha$ be a root of $f$ in an ultra-field extension of $F$. Then for an ultra-polynomial $h$ over $F$, $h(\alpha)=0$ if and only if $f$ divides $h$ in the ultra-hull $\cU(F[x])$.

\end{lemma}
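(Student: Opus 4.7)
The plan is to reduce the statement to the corresponding property of the minimal ultra-polynomial of $\alpha$ over $F$, whose existence is guaranteed by the preceding theorem. Since $f(\alpha) = 0$ and $f$ is a nonzero ultra-polynomial, $\alpha$ is ultra-algebraic over $F$, so let $g = \ulim_{s \in S} g_s$ be the minimal ultra-polynomial of $\alpha$ over $F$, where, writing $\alpha = \ulim_{s \in S}\alpha_s$, each $g_s$ is the minimal polynomial of $\alpha_s$ over $F_s$ for $\cD$-almost all $s \in S$.

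The key step I would then carry out is to show that $f$ and $g$ are associates in the ultra-hull $\cU(F[x])$. Applying part (ii) of the theorem on minimal ultra-polynomials to $f$ itself (using $f(\alpha) = 0$) yields $g \mid f$ in $\cU(F[x])$. Writing $f = \ulim_{s \in S} f_s$, the hypothesis that $f$ is irreducible means, by Definition \ref{def-irreducible-ultra-poly}, that $f_s$ is irreducible in $F_s[x]$ for $\cD$-almost all $s \in S$. Since $g_s \mid f_s$ in $F_s[x]$ for $\cD$-almost all $s$, the quotient $f_s/g_s$ must be a unit in $F_s[x]$, hence a nonzero element $c_s \in F_s^{\times}$. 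Setting $c = \ulim_{s \in S} c_s$, we get $c \in F^{\times}$ and $f = cg$ in $\cU(F[x])$, so $f$ and $g$ generate the same principal ideal.

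With the associate relation in hand, I would conclude as follows. For any ultra-polynomial $h$ over $F$, part (ii) of the theorem on minimal ultra-polynomials gives $h(\alpha) = 0$ if and only if $g \mid h$ in $\cU(F[x])$. Since $f = cg$ with $c \in F^{\times}$, this condition is equivalent to $f \mid h$ in $\cU(F[x])$, which is exactly the desired equivalence.

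The main obstacle I anticipate is the second step: producing the identity $f_s = c_s g_s$ from the component-wise definition of irreducibility and the component-wise interpretation of divisibility in $\cU(F[x])$. Once that component-wise alignment is secured via \L{}o\'s' theorem, the remainder of the argument is a direct application of the minimal ultra-polynomial framework already developed in the paper, so no further substantial work is required.
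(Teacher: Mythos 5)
Your proof is correct and takes essentially the same route as the paper: both arguments reduce the lemma to Theorem \ref{thm-minimal-ultra-polynomials}(ii) by showing that $f$ is a unit multiple of the minimal ultra-polynomial $g$ of $\alpha$ over $F$. The paper obtains the associate relation directly by scaling $f$ by its leading coefficient and noting the resulting monic irreducible ultra-polynomial with root $\alpha$ is the minimal one, whereas you derive $f = cg$ from $g \mid f$ together with componentwise irreducibility of the $f_s$ (implicitly using that each $g_s$ is non-constant) --- a minor variation of the same idea.
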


\begin{proof}

Write $f = \ulim_{s \in S}f_s$, where $f_s \in F_{s}[x]$ are irreducible in $F_{s}[x]$ for $\cD$-almost $s \in S$.

Let $a$ be the leading coefficient of $f$, i.e., $a = \ulim_{s \in S}a_s \in F$, where $a_s \in F_s$ is the leading coefficient of $f_s$, and let $g= a^{-1}f$ so that $g$ is a monic irreducible ultra-polynomial with $g(\alpha)=0$. Thus $g$ is the minimal ultra-polynomial  of $\alpha$ over $F$, and therefore Lemma \ref{lem-divisibility-of-minimal-ultra-poly} follows immediately from Theorem \ref{thm-minimal-ultra-polynomials}.

\end{proof}

\begin{lemma}
\label{lem-multiplicative-of-ultra-degrees}

Let $F = \prod_{s \in S}F_s/\cD$, $L = \prod_{s\in S}L_s/\cD$, and $H = \prod_{s \in S}H_s/\cD$ be ultra-fields such that $F \subseteq L \subseteq H$. Suppose that $[H_s : F_s] < \infty$ for $\cD$-almost all $s \in S$. Then
\begin{align*}
[H : F]_u = [H : L]_u[L:F]_u
\end{align*}
holds in $\bN^{\#}$.

\end{lemma}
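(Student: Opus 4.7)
The plan is to reduce the statement to the classical tower law for finite field extensions applied componentwise, and then pass to the ultra-limit, using that the arithmetic on $\bZ^{\#}$ is defined componentwise (as spelled out in Subsection 2.2). Since all three ultra-degrees are to live in $\bN^{\#}$, the first order of business is simply to confirm that $[H:L]_u$ and $[L:F]_u$ are well-defined hypernaturals, i.e. that $[H_s:L_s]$ and $[L_s:F_s]$ are finite for $\cD$-almost all $s\in S$.

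First I would observe that by hypothesis the set $T_0=\{s\in S:[H_s:F_s]<\infty\}$ belongs to $\cD$. For every $s\in T_0$ a subspace of a finite-dimensional vector space and a quotient of it are again finite-dimensional, so $[H_s:L_s]<\infty$ and $[L_s:F_s]<\infty$ as well. Hence $[H:L]_u:=\ulim_{s\in S}[H_s:L_s]$ and $[L:F]_u:=\ulim_{s\in S}[L_s:F_s]$ are honest elements of $\bN^{\#}$.

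Next I would invoke the classical tower law in the finite-dimensional setting: for each $s\in T_0$ one has $[H_s:F_s]=[H_s:L_s]\cdot[L_s:F_s]$. Since $T_0\in\cD$, this identity holds for $\cD$-almost all $s$. Using the componentwise definition of multiplication in $\bZ^{\#}$ (or equivalently \L{}o\'s' theorem applied to the first-order sentence ``$a=b\cdot c$'' in the language of rings applied to $\bZ$), I take the ultra-limit to obtain
\begin{align*}
[H:F]_u &= \ulim_{s\in S}[H_s:F_s] \\
 &= \ulim_{s\in S}\bigl([H_s:L_s]\cdot[L_s:F_s]\bigr) \\
 &= \ulim_{s\in S}[H_s:L_s]\cdot\ulim_{s\in S}[L_s:F_s] \\
 &= [H:L]_u\,[L:F]_u,
\end{align*}
which is the desired equality in $\bN^{\#}$.

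There is essentially no deep obstacle here: the only point to be careful about is that the three equalities above all hold on one and the same set of indices in $\cD$, but this follows immediately by intersecting a bounded number of sets in $\cD$, an operation $\cD$ is closed under. The argument uses no separability or algebraic-closure assumptions, only the finite-dimensionality hypothesis stated in the lemma together with the componentwise ring structure of ultra-limits.
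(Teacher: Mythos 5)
Your proposal is correct and follows essentially the same route as the paper: apply the classical tower law $[H_s:F_s]=[H_s:L_s][L_s:F_s]$ for $\cD$-almost all $s$ and pass to the ultra-limit using the componentwise multiplication on $\bZ^{\#}$. Your preliminary check that $[H_s:L_s]$ and $[L_s:F_s]$ are finite for $\cD$-almost all $s$ (so that both ultra-degrees are well-defined hypernaturals) is a small point the paper leaves implicit, but otherwise the arguments coincide.
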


\begin{proof}

For $\cD$-almost all $s \in S$, $[H_s : F_s] = [H_s : L_s][L_s : F_s]$, and thus
\begin{align*}
[H : F]_u &=\ulim_{s \in S}[H_s: F_s] \\
&= \ulim_{s \in S}[H_s: L_s][L_s: F_s] \\
&= \left(\ulim_{s \in S}[H_s : L_s]\right)\left(\ulim_{s \in S}[L_s: F_s]\right) \\
&= [H : L]_u[L : F]_u,
\end{align*}
as required.

\end{proof}

\begin{definition}
\label{def-ultra-field-extenson-adjoining-elements}

Let $F = \prod_{s \in S}F_s/\cD$ be an ultra-subfield of the ultra-field $L$, and let $X$ be any subset of $L$. Then the field $F(X)_u$ is defined as the intersection of all ultra-subfields of $L$ containing both $F$ and $X$, and is called the \textbf{ultra-extension of $F$ obtained by adjoining the elements in $X$}. For a finite set $X = \{\alpha_1, \ldots, \alpha_n\}$, we write $F(X)_u = F(\alpha_1, \ldots, \alpha_n)_u$. If $X$ consists of a single element $\alpha \in L$, then $H = F(\alpha)_u$ is said to be a \textbf{simple ultra-extension of $F$}, and $\alpha$ is called a \textbf{defining element of $H$ over $F$}.

When the ultra-field $L$ is not mentioned in the context, we take $L$ to be the ultra-algebraic closure $F^{\alg_u} = \prod_{s \in S}F_s^{\alg}/\cD$, where $F_s^{\alg}$ denotes an algebraic closure of $F_s$ for $\cD$-almost all $s \in S$.

\end{definition}

We describe simple ultra-extensions of an ultra-field.

\begin{theorem}
\label{thm-ultra-degrees-of-simple-ultra-extensions}

Let $F = \prod_{s \in S}F_s/\cD$ be an ultra-field, where the $F_s$ are fields. Let $c = \ulim_{s \in S}c_s$ be an element in an ultra-extension of $F$ such that $c$ is ultra-algebraic over $F$, and $c_s$ is algebraic over $F_s$ for $\cD$-almost all $s \in S$. Let $p = \ulim_{s \in S}p_s$ be the minimal ultra-polynomial of $c$ of ultra-degree $d = \ulim_{s\in S}d_s \in \bN^{\#}$, where the $p_s$ are the minimal polynomials of $c_s$ of degrees $d_s \in \bZ_{>0}$ in $F_s[x]$ for $\cD$-almost all $s \in S$ (see Theorem \ref{thm-minimal-ultra-polynomials}). Then
\begin{itemize}

\item [(i)] $F(c)_u = \prod_{s \in S}F_s(c_s)/\cD$.

\item [(ii)] $[F(c)_u : F] = \ulim_{s \in S}[F_s(c_s) : F_s] = d.$

\end{itemize}

\end{theorem}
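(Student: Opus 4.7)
The plan is to prove (i) first, so that (ii) then follows at once from the definition of ultra-degree. Set $L := \prod_{s \in S} F_s(c_s)/\cD$. I first verify that $L$ is an ultra-field extension of $F$ containing $c$: since each $F_s(c_s)$ is a field and a field extension of $F_s$ for $\cD$-almost all $s$, Proposition~\ref{prop-notion-section-algebraic-structures-of-ultraproducts} makes $L$ an ultra-field, and Definition~\ref{def-extension-of-ultra-fields} makes it an ultra-extension of $F$. Clearly $c = \ulim_s c_s \in L$. By Definition~\ref{def-ultra-field-extenson-adjoining-elements}, $F(c)_u$ is the intersection of all ultra-subfields of the ambient ultra-algebraic closure containing $F$ and $c$, and $L$ is such an ultra-subfield, so $F(c)_u \subseteq L$.

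For the reverse inclusion, let $L' = \prod_{s\in S} L'_s/\cD$ be an arbitrary ultra-subfield of $F^{\alg_u}$ containing $F$ and $c$. By Definition~\ref{def-extension-of-ultra-fields}, I may arrange $F_s \subseteq L'_s$ for $\cD$-almost all $s$. Since $c = \ulim_s c_s$ lies in $L'$, the ultraproduct equivalence relation yields representatives $(c'_s)_{s\in S}$ of $c$ in $\prod_{s\in S} L'_s$ with $c'_s = c_s$ for $\cD$-almost all $s$; hence $c_s \in L'_s$ for $\cD$-almost all $s$. For such $s$, classical field theory gives $F_s(c_s) \subseteq L'_s$, because $F_s(c_s)$ is the smallest subfield of $F_s^{\alg}$ containing $F_s$ and $c_s$. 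Taking ultraproducts with respect to $\cD$ yields $L \subseteq L'$. Intersecting over all such $L'$ proves $L \subseteq F(c)_u$, which together with the reverse inclusion completes (i).

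For (ii), using (i) I write $F(c)_u = L = \prod_{s\in S} F_s(c_s)/\cD$, and then Definition~\ref{def-ultra-algebraic} of the ultra-degree gives
\begin{align*}
[F(c)_u : F]_u \;=\; [L : F]_u \;=\; \ulim_{s\in S}[F_s(c_s) : F_s].
\end{align*}
Since $p_s$ is the minimal polynomial of $c_s$ over $F_s$ of degree $d_s$, classical field theory gives $[F_s(c_s): F_s] = d_s$ for $\cD$-almost all $s$, so the right-hand side equals $\ulim_{s\in S} d_s = d$, as required. The only delicate point in the whole argument is the representative-chasing in the second step: the fact that an inclusion $c \in L'$ of ultraproducts reduces to componentwise membership $c_s \in L'_s$ on a set in $\cD$, and that an ultra-extension of $F$ may be presented with each component $L'_s$ containing $F_s$. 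Both of these are routine consequences of the definitions of ultraproducts and ultra-field extensions, so the proof contains no genuine obstacle once the framework of Section~\ref{sec-ultrafinite-fields} is in place.
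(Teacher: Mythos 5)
Your proposal is correct and follows essentially the same route as the paper: both directions of (i) are obtained by showing $\prod_{s\in S}F_s(c_s)/\cD$ is an ultra-extension of $F$ containing $c$ and that any ultra-extension $L'$ of $F$ containing $c$ satisfies $c_s \in L'_s$ (hence $F_s(c_s)\subseteq L'_s$) for $\cD$-almost all $s$, after which (ii) follows from $[F_s(c_s):F_s]=d_s$ and the definition of ultra-degree. Your extra care with the representative-chasing step only makes explicit what the paper's proof leaves implicit.
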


\begin{proof}

Since $c \in \prod_{s \in S}F_s(c_s)/\cD$, it follows from Definition \ref{def-ultra-field-extenson-adjoining-elements} that $F(c)_u \subseteq \prod_{s \in S}F_s(c_s)/\cD$.

Suppose that $L = \prod_{s \in S}L_s/\cD$ is an arbitrary ultra-field extension of $F$ that contains $c$. Then $L_s$ contains $F_s$ for $\cD$-almost all $s \in S$. Since $c = \ulim_{s \in S}c_s$, we deduce that $L_s$ contains $c_s$ for $\cD$-almost all $s \in S$. Thus $F_s(c_s)$ is a subfield of $L_s$ for $\cD$-almost all $s \in S$, and hence $L$ is an ultra-field extension of $\prod_{s \in S}F_s(c_s)/\cD$. Therefore $\prod_{s \in S}F_s(c_s)/\cD$ is the smallest ultra-field extension of $F$ that contains $c$, and thus part (i) follows immediately.

For part (ii), we know that $[F_s(c_s) : F_s] = \deg(p_s) = d_s$, and thus $[F(c)_u : F] = \ulim_{s \in S}[F_s(c_s) : F_s] = \ulim_{s \in S}d_s = d.$

\end{proof}

The notion of ultra-algebraic extensions generalizes that of algebraic extensions, which is signified in the following.

\begin{lemma}
\label{lem-algebraicity-implies-ultra-algebraicity}

Let $F = \prod_{s\in S}F_s/\cD$ be an ultra-field for some fields $F_s$. Let $c$ be an element of a field extension of $F$ such that $c$ is algebraic of degree $d \in \bZ_{>0}$ over $F$ and its minimal polynomial is separable. Then
\begin{itemize}

\item [(i)] $c$ is ultra-algebraic over $F$, and can be written as $c = \ulim_{s \in S}c_s$, where $c_s$ is algebraic over $F_s$ for $\cD$-almost all $s \in S$.

\item [(ii)] The simple extension $F(c)$ of $F$ obtained by adjoining $c$ to $F$ is an ultra-field such that
\begin{align*}
F(c) = F(c)_u = \prod_{s \in S}F_s(c_s)/\cD,
\end{align*}
where $F_s(c_s)$ is a simple extension of degree $d$ over $F_s$ for $\cD$-almost all $s \in S$ and the $c_s$ are defined in part (i) above.

\end{itemize}

\end{lemma}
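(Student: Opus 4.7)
The plan is to reduce both parts to Lemma \ref{lem-roots-of-polynomials-over-ultrafields} and Theorem \ref{thm-ultra-degrees-of-simple-ultra-extensions}. Let $f \in F[x]$ be the monic minimal polynomial of $c$, which by hypothesis is separable of degree $d$. Writing $f = \ulim_{s \in S} f_s$ in the canonical way recorded in equation (\ref{eqn1-in-lem-roots-of-polynomials-over-ultrafields}), \L{}o\'s' theorem guarantees that the $f_s$ are monic of degree $d$ for $\cD$-almost all $s \in S$, and since $f$ is irreducible over $F$, the $f_s$ are irreducible over $F_s$ for $\cD$-almost all $s$. Lemma \ref{lem-roots-of-polynomials-over-ultrafields} then produces a representation $c = \ulim_{s\in S} c_s$ in which each $c_s$ is a root of $f_s$ in an algebraic closure of $F_s$, and in particular is algebraic over $F_s$; this gives part (i).

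For part (ii), part (i) ensures that $c$ is ultra-algebraic over $F$, so Theorem \ref{thm-ultra-degrees-of-simple-ultra-extensions} applies. Since $f_s$ is, for $\cD$-almost all $s$, the monic irreducible polynomial of $c_s$ of degree $d$, one has $[F_s(c_s) : F_s] = d$, and the theorem yields $F(c)_u = \prod_{s \in S} F_s(c_s)/\cD$ together with $[F(c)_u : F]_u = d$. It remains to identify the classical simple extension $F(c)$ with $F(c)_u$. The inclusion $F(c) \subseteq F(c)_u$ is immediate from Definition \ref{def-ultra-field-extenson-adjoining-elements}, since $F(c)_u$ is an ultra-subfield containing $F$ and $c$. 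For the reverse inclusion, I would invoke the $F_s$-basis $\{1, c_s, \ldots, c_s^{d - 1}\}$ of $F_s(c_s)$: every element of $\prod_{s \in S} F_s(c_s)/\cD$ is of the form $\ulim_{s \in S} \sum_{i = 0}^{d - 1} a_{i, s} c_s^i$ with $a_{i, s} \in F_s$, which upon setting $a_i = \ulim_{s \in S} a_{i, s} \in F$ rewrites as $\sum_{i = 0}^{d - 1} a_i c^i \in F(c)$.

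I do not expect any serious obstacle here: the whole argument is a transfer of the minimal-polynomial formalism componentwise via \L{}o\'s' theorem, combined with a basis-matching step at the end. The closest thing to a subtle point is checking at the outset that the sequence $f_s$ can be chosen of constant degree $d$ with leading coefficient $1$, which is exactly what the canonical construction in (\ref{eqn1-in-lem-roots-of-polynomials-over-ultrafields}) provides; the separability hypothesis on the minimal polynomial is needed solely to legitimize the invocation of Lemma \ref{lem-roots-of-polynomials-over-ultrafields}.
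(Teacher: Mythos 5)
Your proposal is correct and follows essentially the same route as the paper: both apply Lemma \ref{lem-roots-of-polynomials-over-ultrafields} to the (separable, irreducible) minimal polynomial to write $c = \ulim_{s\in S}c_s$ with $c_s$ a root of the componentwise polynomials $f_s$, and both identify $\prod_{s\in S}F_s(c_s)/\cD$ with $F(c)$ via the basis $\{1, c_s, \ldots, c_s^{d-1}\}$ and with $F(c)_u$ via Theorem \ref{thm-ultra-degrees-of-simple-ultra-extensions}. Your closing observation that separability is used only to legitimize the appeal to Lemma \ref{lem-roots-of-polynomials-over-ultrafields} matches the remark the paper itself makes after the proof.
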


\begin{proof}

Let $h(x) = a_dx^d + a_{d - 1}x^{d - 1} + \cdots + a_1x + a_0$ be the minimal polynomial of $c$, where the $a_i$ belong in $F$ and $a_d \ne 0$. Using the same arguments as in Lemma \ref{lem-roots-of-polynomials-over-ultrafields}, one can write
\begin{align*}
h(x) = \ulim_{s \in S}h_s(x),
\end{align*}
where
\begin{align*}
h_s(x) = a_{d, s}x^d + a_{d - 1, s}x^{d - 1} + \cdots + a_{1, s}x + a_{0, s} \in F_s[x]
\end{align*}
and $a_i = \ulim_{s \in S}a_{i, s}$ for every $0 \le i \le d$. Since $c$ is a root of $h(x)$, it follows from Lemma \ref{lem-roots-of-polynomials-over-ultrafields} that $c = \ulim_{s \in S}c_s$, where $c_s$ is some root of $h_s(x)$ in an algebraic closure of $F_s$ for $\cD$-almost all $s \in S$. Set
\begin{align*}
H = \prod_{s \in S}F_s(c_s)/\cD.
\end{align*}
Then $c \in H$ and $H$ is an ultra-extension of $F$. It is trivial that $h$ is also an ultra-polynomial, and thus $c$ is ultra-algebraic over $F$. So part (i) follows immediately.

Since $h(x)$ is an irreducible and separable polynomial of degree $d$, using the same arguments as in Lemma \ref{lem-roots-of-polynomials-over-ultrafields}, $h_s(x)$ is irreducible and separable of degree $d$ for $\cD$-almost all $s \in S$, and thus the extension $F_s(c_s)$ is simple of degree $d$ for $\cD$-almost all $s \in S$. Thus for $\cD$-almost all $s \in S$, every element in $F_s(c_s)$ can be written in the form $\sum_{i = 0}^{d - 1}b_{i, s}c_s^i$ for some $b_{i, s} \in F_s$. Thus every element $\alpha$ in $\prod_{s \in S}F_s(c_s)/\cD$ can be represented in the form
\begin{align*}
\alpha &= \ulim_{s\in S}\left(\sum_{i = 0}^{d - 1}b_{i, s}c_s^i\right)\\
&= \sum_{i = 0}^{d - 1}\left(\ulim_{s \in S}b_{i, s}\right)\left(\ulim_{s \in S}c_s\right)^i \\
&= \sum_{i = 0}^{d - 1}\left(\ulim_{s \in S}b_{i, s}\right)c^i,
\end{align*}
which proves that $\alpha \in F(c)$ since $\ulim_{s \in S}b_{i, s} \in F$ for every $0 \le i \le d - 1$. Therefore $\prod_{s \in S}F_s(c_s)/\cD \subseteq F(c)$. It is trivial that $F(c)$ is a subset of $\prod_{s \in S}F_s(c_s)/\cD$, and thus we deduce that
\begin{align*}
F(c) = \prod_{s \in S}F_s(c_s)/\cD.
\end{align*}

Theorem \ref{thm-ultra-degrees-of-simple-ultra-extensions} implies that $F(c)_u = \prod_{s \in S}F_s(c_s)/\cD$, and thus part (ii) follows immediately.

\end{proof}

\begin{remark}

In the proof of the above lemma, the only place we use the separability assumption of the minimal polynomial $h$ is the representation of $c$ as $\ulim_{s \in S}c_s$ for some root $c_s$ of the $h_s$ since we need to apply Lemma \ref{lem-roots-of-polynomials-over-ultrafields} for $h$.

\end{remark}

\begin{definition}
\label{def-divisibility-of-ultra-polynomials}
(divisibility of ultra-polynomials)

Let $F = \prod_{s \in S}F_s/\cD$ be an ultra-field where the $F_s$ are fields. Let $P = \ulim_{s \in S}P_s$, $Q = \ulim_{s \in S}Q_s$ be ultra-polynomials in the ultra-hull $\cU(F[x])$. We say that \textbf{$P$ divides $Q$} (or \textbf{$P$ is a divisor of $Q$}) in $\cU(F[x])$ if there exists an ultra-polynomial $H = \ulim_{s \in S}H_s$ such that $Q = PH$, i.e., $Q_s = P_sH_s$ for $\cD$-almost all $s \in S$.

In symbols, we write $Q \equiv 0 \pmod{P\cU(F[x])}$ whenever $P$ divides $Q$. For arbitrary ultra-polynomials $Q_1, Q_2$, we write $Q_1 \equiv Q_2 \pmod{P\cU(F[x])}$ whenever $P$ divides $Q_1 - Q_2$.

\end{definition}

\begin{definition}
\label{def-relatively-prime-ultra-poly}
(relatively prime ultra-polynomials)

Let $F = \prod_{s \in S}F_s/\cD$ be an ultra-field where the $F_s$ are fields. Ultra-polynomials $P = \ulim_{s \in S}P_s$, $Q = \ulim_{s \in S}Q_s$ in the ultra-hull $\cU(F[x])$ for some polynomials $P_s, Q_s \in F_s[x]$ are \textbf{relatively prime} if $P_s$, $Q_s$ are relatively prime in $F_s[x]$ for $\cD$-almost all $s \in S$.

\end{definition}

The following result is useful in the proof of the Reciprocity Law in Section \ref{sec-reciprocity-law}.
\begin{lemma}
\label{lem-relatively-prime-ultra-polynomials}

Let $F = \prod_{s \in S}F_s/\cD$ be an ultra-field where the $F_s$ are fields. Let $f = \ulim_{s \in S}f_s$ be an ultra-polynomial in $\cU(F[x])$, and let $P = \ulim_{s \in S}P_s$, $Q = \ulim_{s\in S}Q_s$ be relatively prime ultra-polynomials such that both $P$, $Q$ divide $f$. Then $PQ$ divides $f$, i.e., $f \equiv 0 \pmod{PQ\cU(F[x])}$.

\end{lemma}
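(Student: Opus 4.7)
The plan is to reduce the statement to its componentwise analogue in ordinary polynomial rings via the definitions of divisibility and relative primality for ultra-polynomials, together with the fact that a set belonging to $\cD$ and closed under intersection with other members of $\cD$ still lies in $\cD$.

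First I would unpack the hypotheses at the level of components. By Definition \ref{def-relatively-prime-ultra-poly}, the set
\[
T_0 = \{s \in S : P_s \text{ and } Q_s \text{ are relatively prime in } F_s[x]\}
\]
belongs in $\cD$. By Definition \ref{def-divisibility-of-ultra-polynomials}, the assumption that $P$ divides $f$ yields an ultra-polynomial $H^{(1)} = \ulim_{s \in S} H^{(1)}_s$ with $f = P H^{(1)}$, so the set
\[
T_1 = \{s \in S : f_s = P_s H^{(1)}_s\}
\]
belongs in $\cD$. Analogously, there is $H^{(2)} = \ulim_{s \in S} H^{(2)}_s$ with $f = Q H^{(2)}$, and $T_2 = \{s : f_s = Q_s H^{(2)}_s\} \in \cD$. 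Set $T = T_0 \cap T_1 \cap T_2$, which lies in $\cD$ because $\cD$ is closed under finite intersection.

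Next I would invoke the classical fact in the polynomial ring $F_s[x]$ (which is a unique factorization domain) that whenever $P_s, Q_s$ are relatively prime and both divide a common polynomial $f_s$, their product $P_s Q_s$ also divides $f_s$. Hence for each $s \in T$ there exists $K_s \in F_s[x]$ with $f_s = P_s Q_s K_s$. For $s \notin T$ I would simply set $K_s = 0$, which is harmless since $T \in \cD$. Then $K := \ulim_{s \in S} K_s$ is an element of $\cU(F[x])$, and the equality $f_s = P_s Q_s K_s$ holds for all $s \in T$, hence for $\cD$-almost all $s \in S$. Applying the definition of multiplication in the ultra-hull (formula \eqref{e1-operation-ultraproduct} applied to $F_s[x]$), this gives $f = P Q K$ in $\cU(F[x])$, so $PQ$ divides $f$ and $f \equiv 0 \pmod{PQ\,\cU(F[x])}$.

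There is no real obstacle here; the only subtlety is the standard one that divisibility and relative primality for ultra-polynomials have been defined precisely so that they transfer componentwise, so the proof is essentially a bookkeeping exercise of intersecting three sets in $\cD$ and applying the UFD property of each $F_s[x]$. One could alternatively phrase the argument as a direct application of Łoś' theorem (Theorem \ref{thm-Los}) to the first-order sentence ``for all $p, q, g$, if $\gcd(p,q) = 1$, $p \mid g$, and $q \mid g$, then $pq \mid g$,'' which holds in every $F_s[x]$ and therefore in the ultraproduct $\cU(F[x])$; I would include this as a brief remark but prefer the direct componentwise argument for transparency.
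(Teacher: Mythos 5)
Your proposal is correct and follows essentially the same route as the paper: reduce to components on a set in $\cD$ (using closure under finite intersection), apply the classical fact that in $F_s[x]$ two coprime divisors of $f_s$ have product dividing $f_s$ (the paper spells this out by noting $Q_s$ divides the cofactor $G_s$), and take the ultralimit of the resulting cofactors $K_s$ to get $f = PQK$. The only cosmetic difference is your explicit bookkeeping of the sets $T_0, T_1, T_2$ and the remark about a Łoś-theorem reformulation, neither of which changes the substance.
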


\begin{proof}

By assumption, there exist $G = \ulim_{s \in S}G_s$, $H = \ulim_{s\in S}H_s$ in $\cU(F[x])$ for some polynomials $G_s, H_s \in F_s[x]$ such that $f = PG = QH$, i.e.,
\begin{align*}
f_s = P_sG_s = Q_sH_s
\end{align*}
for $\cD$-almost all $s \in S$. Since $P_s, Q_s$ are relatively prime in $F_s[x]$, $Q_s$ divides $G_s$, and thus $G_s = Q_sK_s$ for some polynomial $K_s \in F_s[x]$. Thus $f_s = P_sQ_SK_s$ for $\cD$-almost all $s \in S$, and therefore $f = PQK$, where $K = \ulim_{s\in S}K_s \in \cU(F[x])$.

\end{proof}

Recall (see \cite{Artin-Tate} or \cite{serre-local-fields}) that a \textbf{quasi-finite field} is a perfect field $F$ such that $F$ has a unique extension $F_n$ of degree $n$ for each integer $n \ge 1$, and that the union of these extensions is equal to an algebraic closure of $F$.

The following result is straightforward from Lemma \ref{lem-algebraicity-implies-ultra-algebraicity}.

\begin{corollary}
\label{cor-quasifinite-ultra-products}

Let $F_s$ be a quasi-finite field for $\cD$-almost all $s\in S$. Then the ultra-field $\prod_{s \in S}F_s/\cD$ is quasi-finite as a field.

\end{corollary}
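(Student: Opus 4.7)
The plan is to verify the three conditions defining quasi-finiteness as stated just before the corollary for $F := \prod_{s\in S} F_s/\cD$: perfectness, uniqueness of a degree-$n$ extension inside a fixed algebraic closure for each $n \ge 1$, and exhaustion of that algebraic closure by the union of these extensions. Perfectness is immediate: in characteristic zero it is automatic, while in characteristic $p$ each $F_s$ has surjective Frobenius, and \L{}o\'s' theorem applied to $\forall y\,\exists x\,(x^p = y)$ transfers this to $F$.

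For each positive integer $n$, let $F_{s,n}\subset F_s^{\alg}$ denote the unique degree-$n$ extension of $F_s$, and set $L_n := \prod_{s\in S} F_{s,n}/\cD \subset F^{\alg_u}$. By perfectness and the primitive element theorem, $F_{s,n} = F_s(\alpha_s)$ for some $\alpha_s$ with minimal polynomial $p_s \in F_s[x]$ of degree $n$. Since $n$ is a standard natural number, the coefficientwise ultra-limit $p := \ulim_{s\in S} p_s$ lies in the ordinary polynomial ring $F[x]$ with $\deg p = n$, and by \L{}o\'s' theorem $p$ is irreducible and separable over $F$. Setting $\alpha := \ulim_{s\in S} \alpha_s$, Lemma \ref{lem-algebraicity-implies-ultra-algebraicity} identifies $F(\alpha) = \prod_{s\in S} F_s(\alpha_s)/\cD = L_n$, exhibiting $L_n$ as a degree-$n$ field extension of $F$ sitting inside $F^{\alg_u}$.

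For uniqueness, let $M \subset F^{\alg_u}$ be any degree-$n$ extension of $F$; by the primitive element theorem $M = F(\gamma)$ for some $\gamma$ with irreducible separable minimal polynomial $q \in F[x]$ of degree $n$. Writing $q = \ulim_{s\in S} q_s$ coefficientwise, \L{}o\'s' theorem yields that $q_s$ is irreducible and separable of degree $n$ over $F_s$ for $\cD$-almost all $s$. By Theorem \ref{thm-roots-of-ultra-polynomials-over-ultra-fields}, every root of $q$ in $F^{\alg_u}$ has the form $\ulim_{s\in S} \gamma_s$ with $\gamma_s$ a root of $q_s$ in $F_s^{\alg}$; but then $F_s(\gamma_s)$ is a degree-$n$ extension of $F_s$, hence equals $F_{s,n}$ by quasi-finiteness of $F_s$. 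Thus $\gamma \in \prod_{s\in S} F_{s,n}/\cD = L_n$, so $M = F(\gamma) \subseteq L_n$, and a degree comparison forces $M = L_n$. Finally, any element algebraic over $F$ of degree $d$ lies in $L_d$ by the same argument, so the algebraic closure of $F$ inside $F^{\alg_u}$ equals $\bigcup_n L_n$.

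The main obstacle is the uniqueness step: one must control how an \emph{arbitrary} root $\gamma$ of $q$ in $F^{\alg_u}$ decomposes as an ultra-limit of roots of the $q_s$, in order to force each $\gamma_s$ into the distinguished extension $F_{s,n}$ picked out by quasi-finiteness of $F_s$. This is exactly the content of Theorem \ref{thm-roots-of-ultra-polynomials-over-ultra-fields} for irreducible separable ultra-polynomials, which also drives Lemma \ref{lem-algebraicity-implies-ultra-algebraicity} cited in the hint; once these are invoked, the rest of the argument reduces to bookkeeping with \L{}o\'s' theorem.
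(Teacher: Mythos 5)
Your proof is correct and follows the route the paper itself indicates: it derives quasi-finiteness of $F$ from Lemma \ref{lem-algebraicity-implies-ultra-algebraicity} together with the description of roots of irreducible separable (ultra-)polynomials (Lemma \ref{lem-roots-of-polynomials-over-ultrafields} / Theorem \ref{thm-roots-of-ultra-polynomials-over-ultra-fields}), merely filling in the existence, uniqueness, and union-of-extensions details that the paper leaves implicit. The only other point worth noting is that the paper's Remark \ref{rem-quasifinite-ultra-fields} records a shorter alternative proof by first-order axiomatizing ``perfect with exactly one extension of each degree $n$'' and applying \L{}o\'s' theorem, whereas your argument is the field-theoretic one the paper calls ``straightforward.''
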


\begin{remark}
\label{rem-quasifinite-ultra-fields}

In the language of fields, there is a sentence that axiomatizes the property ``the field has exactly one algebraic extension of degree $n$ for every positive integer $n$" (see \cite{Chatzidakis}). One can also axiomatize the property ``the field is perfect". Thus using \L{}o\'s' theorem, we deduce that an ultra-field $F = \prod_{s \in S}F_s/\cD$ is quasi-finite if and only if the $F_s$ are quasi-finite for $\cD$-almost all $s \in S$. Lemma \ref{lem-algebraicity-implies-ultra-algebraicity} provides another proof to the above corollary based on the theory of ultra-fields that is analogous to that of field theory.

\end{remark}

\begin{definition}
\label{def-ultra-field-homomorphism}
(ultra-field homomorphisms)

Let $F = \prod_{s \in S}F_s/\cD$ and $G = \prod_{s\in S}G_s/\cD$ be ultra-fields. A map $\phi : F \to G$ is an \textbf{ultra-field homomorphism from $F$ to $G$} if there exists a collection of field homomorphisms $\phi_s : F_s \to G_s$ for $\cD$-almost all $s \in S$ such that
\begin{align*}
\phi(\ulim_{s\in S}a_s) = \ulim_{s\in S}\phi_s(a_s)
\end{align*}
for all $\ulim_{s\in S}a_s \in F$ for some elements $a_s \in F_s$.

In symbol, we write $\phi = \ulim_{s\in S}\phi_s$.

An ultra-homomorphism $\phi : F \to G$ is \textbf{injective} (resp. \textbf{surjective}) if $\phi$ is an injective (resp. surjective) map. An ultra-homomorphism $\phi : F \to G$ is an \textbf{ultra-field isomorphism} if $\phi$ is both injective and surjective.

If $\phi : F \to F$ is an ultra-field isomorphism, we also call $\phi$ an \textbf{ultra-field automorphism of $F$}.

\end{definition}

The following shows that the notion of ultra-field homomorphisms generalize that of field homomorphisms.

\begin{lemma}
\label{lem-ultra-field-isomorphisms}

Let $F = \prod_{s \in S}F_s/\cD$ and $G = \prod_{s\in S}G_s/\cD$ be ultra-fields. Let $\phi = \ulim_{s\in S}\phi_s : F \to G$ be an ultra-field homomorphism. Then
\begin{itemize}

\item [(i)] $\phi : F \to G$ is a field homomorphism.

\item [(ii)] If $\phi_s : F_s \to G_s$ is injective (resp. surjective) for $\cD$-almost all $s \in S$, then $\phi : F \to G$ is an injective (resp. surjective) ultra-field homomorphism. In particular, if $\phi_s : F_s \to G_s$ is an isomorphism for $\cD$-almost all $s \in S$, then $\phi$ is an ultra-field isomorphism.

\end{itemize}

\end{lemma}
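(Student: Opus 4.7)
The plan is to verify both parts directly from the definition of the ultra-field homomorphism $\phi = \ulim_{s \in S}\phi_s$ and from the definition of the algebraic operations on an ultraproduct given in (\ref{e1-operation-ultraproduct}). Everything reduces to interchanging $\ulim_{s\in S}$ with the field operations and with the maps $\phi_s$, which is precisely what the definitions allow.

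For part (i), I would pick arbitrary elements $a = \ulim_{s\in S}a_s$ and $b = \ulim_{s\in S}b_s$ in $F$ and compute
\begin{align*}
\phi(a + b) &= \phi\bigl(\ulim_{s\in S}(a_s + b_s)\bigr) = \ulim_{s\in S}\phi_s(a_s + b_s) \\
&= \ulim_{s\in S}\bigl(\phi_s(a_s) + \phi_s(b_s)\bigr) = \phi(a) + \phi(b),
\end{align*}
using that each $\phi_s$ is a field homomorphism on the set of $s$ where it is defined (which lies in $\cD$). The identical calculation works for multiplication, and $\phi(1_F) = \ulim_{s\in S}\phi_s(1_{F_s}) = \ulim_{s\in S}1_{G_s} = 1_G$. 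So $\phi$ is a field homomorphism.

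For part (ii), the injectivity argument proceeds as follows. Suppose $\phi_s$ is injective for all $s$ in some set $I \in \cD$, and suppose $\phi(a) = \phi(b)$ for $a = \ulim_{s\in S}a_s$ and $b = \ulim_{s\in S}b_s$. Then the set $J := \{s \in S \;|\; \phi_s(a_s) = \phi_s(b_s)\}$ lies in $\cD$, and $I \cap J \in \cD$ by closure under finite intersection. For every $s \in I \cap J$ the injectivity of $\phi_s$ gives $a_s = b_s$, so $\{s \in S \;|\; a_s = b_s\} \supseteq I \cap J$ also lies in $\cD$, whence $a = b$. For surjectivity, suppose $\phi_s$ is surjective for every $s$ in some set $I \in \cD$. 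Given $g = \ulim_{s\in S}g_s \in G$, use the axiom of choice to pick, for each $s \in I$, a preimage $f_s \in F_s$ with $\phi_s(f_s) = g_s$; for $s \notin I$, define $f_s$ to be any element of $F_s$. Setting $f = \ulim_{s\in S}f_s \in F$, we obtain $\phi(f) = \ulim_{s\in S}\phi_s(f_s) = g$ since the equality of components holds on $I \in \cD$. The final assertion about isomorphisms is then immediate.

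There is essentially no obstacle: the statement is a direct unwinding of the definitions in Definition \ref{def-ultra-field-homomorphism} and (\ref{e1-operation-ultraproduct}). The only point requiring any care is keeping track of which subsets of $S$ belong to $\cD$ and invoking closure of $\cD$ under finite intersection; alternatively, one could deduce the whole lemma from \L{}o\'s' theorem \ref{thm-Los} by expressing the homomorphism, injectivity, and surjectivity conditions as first-order formulas in an appropriate two-sorted language, but the elementwise verification above is shorter and avoids introducing additional logical machinery.
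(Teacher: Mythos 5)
Your proposal is correct and follows essentially the same route as the paper: a direct elementwise verification from Definition \ref{def-ultra-field-homomorphism} and the ultraproduct operations, with surjectivity handled by choosing preimages componentwise on a set in $\cD$. The only cosmetic difference is that for injectivity the paper computes $\ker(\phi) = \{0\}$ while you argue directly from $\phi(a) = \phi(b)$ using closure of $\cD$ under finite intersection; the two arguments are interchangeable.
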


\begin{proof}

Since the $\phi_s$ are field homomorphisms for $\cD$-almost all $s \in S$, part (i) follows immediately from Definition \ref{def-ultra-field-homomorphism}.

Suppose that the $\phi_s$ are injective for $\cD$-almost all $s \in S$. As a field homomorphism, the kernel of $\phi$ satisfies
\begin{align*}
\ker(\phi) &=\{\ulim_{s\in S}a_s \; | \; \phi(\ulim_{s\in S}a_s) = 0\} \\
&= \{\ulim_{s\in S}a_s \; | \; \ulim_{s\in S}\phi_s(a_s) = 0\} \\
&= \{\ulim_{s\in S}a_s \; | \; \text{$a_s \in \ker(\phi_s)$ for $\cD$-almost all $s \in S$}\} \\
&= \{\ulim_{s\in S}a_s \; | \; \text{$a_s = 0$ for $\cD$-almost all $s \in S$}\}  \\
&= \{0\}.
\end{align*}
Thus $\phi$ is injective.

Suppose that the $\phi_s$ are surjective for $\cD$-almost all $s \in S$. Take an arbitrary $b = \ulim_{s\in S}b_s \in G$ for some elements $b_s \in G_s$. Then there exist elements $a_s \in F_s$ such that $\phi_s(a_s) = b_s$ for $\cD$-almost all $s \in S$. Letting $a = \ulim_{s \in S}a_s \in F$, we deduce that
\begin{align*}
\phi(a) = \ulim_{s\in S}\phi_s(a_s) = \ulim_{s\in S}b_s = b.
\end{align*}
Thus $\phi$ is surjective, and hence part (ii) follows immediately.

\end{proof}

\begin{remark}
\label{rem-ultra-isomorphism-stronger-than-field-isomorphism}

Lemma \ref{lem-ultra-field-isomorphisms} also implies that if $\phi$ is an ultra-field isomorphism, then $\phi$ is a field isomorphism.

\end{remark}

We now describe all ultra-algebraic extensions of an ultraproduct of quasi-finite fields.

\begin{theorem}
\label{thm-ultra-algebraic-extensions-of-quasifinite-fields}

Let $F =\prod_{s \in S}F_s/\cD$ be an ultra-field such that the $F_s$ are quasi-finite for almost all $s \in S$. For every hypernatural natural $n = \ulim_{s \in S}n_s$ with $n_s \in \bZ_{> 0}$, there exists a unique ultra-algebraic extension of ultra-degree $n$ over $F$ up to ultra-isomorphism.

\end{theorem}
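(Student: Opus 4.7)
The plan is to construct the extension coordinatewise using the quasi-finite structure of each $F_s$, and then to handle uniqueness by ultraproducing the coordinatewise isomorphisms that quasi-finiteness supplies.

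For existence, write $n = \ulim_{s\in S} n_s$ with $n_s \in \bZ_{>0}$. Since $F_s$ is quasi-finite for $\cD$-almost all $s$, for each such $s$ there exists a unique algebraic extension $F_{s,n_s}/F_s$ of degree $n_s$ inside a fixed algebraic closure. Set $L = \prod_{s\in S} F_{s,n_s}/\cD$. By construction $L$ is an ultra-field extension of $F$ in the sense of Definition \ref{def-extension-of-ultra-fields}, and since $[F_{s,n_s}:F_s] = n_s < \infty$ for $\cD$-almost all $s$, the ultra-degree is defined and equals $\ulim_{s\in S} n_s = n$. To see that $L$ is ultra-algebraic over $F$, take $a = \ulim_{s\in S} a_s \in L$; for each such $s$ let $p_s \in F_s[x]$ be the minimal polynomial of $a_s$, of degree at most $n_s$. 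Then $p := \ulim_{s\in S} p_s$ is an ultra-polynomial over $F$ (with $\deg p_s \le n_s$, so the ultra-degree is a hypernatural) and $p(a) = \ulim_{s\in S} p_s(a_s) = 0$ by Definition \ref{def-the-values-of-ultra-poly}. Hence every element of $L$ is ultra-algebraic over $F$.

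For uniqueness, let $L' = \prod_{s\in S} L'_s/\cD$ be another ultra-algebraic ultra-extension of $F$ of ultra-degree $n$. By Definition \ref{def-ultra-algebraic}, $[L'_s : F_s]$ is finite for $\cD$-almost all $s$, and the equality $\ulim_{s\in S}[L'_s:F_s] = n = \ulim_{s\in S} n_s$ in $\bN^{\#}$ means exactly that $[L'_s:F_s] = n_s$ for $\cD$-almost all $s$. For each such $s$, quasi-finiteness of $F_s$ gives a unique degree-$n_s$ extension (up to $F_s$-isomorphism), so there exists a field isomorphism $\phi_s : F_{s,n_s} \to L'_s$ fixing $F_s$ pointwise. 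Setting $\phi := \ulim_{s\in S}\phi_s$ and applying Lemma \ref{lem-ultra-field-isomorphisms}(ii) yields an ultra-field isomorphism $\phi : L \to L'$. One checks that $\phi$ restricts to the identity on $F$ coordinatewise, and hence on $F$, proving uniqueness up to ultra-isomorphism.

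The only nontrivial point, and thus the main thing to be careful about, is the passage from the hypernatural equality $\ulim_{s\in S}[L'_s:F_s] = \ulim_{s\in S} n_s$ to the pointwise equality $[L'_s : F_s] = n_s$ on a set in $\cD$; this is immediate from the definition of the equivalence relation defining ultraproducts, but it is the step that allows the coordinatewise quasi-finite structure theorem to be invoked uniformly. Everything else is routine ultraproduct bookkeeping combined with \L{}o\'s' theorem and the machinery of ultra-polynomials already developed in this section.
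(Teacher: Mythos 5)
Your proposal is correct and follows essentially the same route as the paper: existence by ultraproducing the coordinatewise unique degree-$n_s$ extensions supplied by quasi-finiteness, and uniqueness by ultraproducing the coordinatewise $F_s$-isomorphisms via Lemma \ref{lem-ultra-field-isomorphisms}. In fact you supply slightly more detail than the paper does (the explicit check of ultra-algebraicity via coordinatewise minimal polynomials and the reduction of the hypernatural degree equality to $[L'_s:F_s]=n_s$ on a set in $\cD$), which is fine.
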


\begin{proof}

Suppose that there is an ultra-algebraic extension $L = \prod_{s \in S}L_s/\cD$ of ultra-degree $n$ over $F$, where $L_s$ is an extension of degree $n_s$ over $F_s$ for $\cD$-almost all $s \in S$. Since $F_s$ is quasi-finite for $\cD$-almost all $s \in S$, and thus $L_s$ is a unique extension of degree $n_s$ over $F_s$ up to isomorphism $\phi_s$ for $\cD$-almost all $s \in S$. Therefore by Lemma \ref{lem-ultra-field-isomorphisms} and Remark \ref{rem-ultra-isomorphism-stronger-than-field-isomorphism}, $L = \prod_{s \in S}L_s/\cD$ is a unique ultra-extension of ultra-degree $n = \ulim_{s \in S}n_s$ over $F$ up to ultra-isomorphism $\ulim_{s\in S}\phi_s$.

For the existence of such an ultra-algebraic extension of ultra-degree $n$ over $F$, we see that for $\cD$-almost all $s \in S$, there exists a unique extension $L_s$ of degree $n_s$ over $F_s$. Hence their ultraproduct $\prod_{s \in S}L_s/\cD$ is an ultra-algebraic extension of ultra-degree $n = \ulim_{s \in S}n_s$ over $F$, as required.

\end{proof}

\subsection{Ultra-finite fields}

In this subsection, we study ultra-finite fields which are ultraproducts of finite fields. Our approach is based rather on the classical theory of finite fields than model theory as in literature. Ultra-finite fields are introduced and developed in the work of Ax (see, for example, \cite{ax-1968} and \cite{FJ}).

\begin{definition}
A set $\bF$ is called an \textbf{ultra-finite field} if $\bF=\prod_{s\in S}\bF_{q_s}/\cD$, where $\bF_{q_s}$ are finite fields of $q_s$ elements and $q_s$ are a power of a prime $p_s$ for $\cD$-almost all $s \in S$.

\end{definition}

By Proposition \ref{prop-notion-section-algebraic-structures-of-ultraproducts} or \L{}o\'s' theorem, $\bF$ must be a field.

\begin{remark}
\label{rem-ultra-Frobenius-map-definition}

\hfill
\begin{itemize}
\item[(i)] If $\bF$ is an ultra-finite field, then the internal cardinality of $\bF$ is
\begin{align*}
\alpha = \ulim_{s\in S}q_s \in \bN^{\#}.
\end{align*}

\item[(ii)] If $p_s = p$ for some prime $p > 0$ for $\cD$-almost all $s \in S$, then $\bF$ has characteristic $p$. Otherwise $\bF$ has characteristic $0$.

\item[(iii)] Let $\cF_s: \bF_{q_s} \to \bF_{q_s}$ be the Frobenius map that sends each $a \in \bF_s$ to $\cF_s(a)=a^{q_s}$.

Each $\cF_s$ is a field automorphism for $\cD$-almost $s \in S$. By Lemma \ref{lem-ultra-field-isomorphisms}, the map $\cF = \ulim_{s\in S}\cF_s : \bF \to \bF$ defined by $\cF(x)=x^\alpha = \ulim_{s \in S}x_s^{q_s} = \ulim_{s\in S}\cF_s(x_s)$ is an ultra-field automorphism of $\bF$ which we call the \textbf{ultra-Frobenius map}, where $x = \ulim_{s \in S}x_s$.

\item [(iv)] Since every finite field is perfect, it follows from \L{}o\'s' theorem that every ultra-finite field is perfect.

\end{itemize}
\end{remark}

An \textbf{ultra-prime} $\alpha$ in $\bN^{\#}$ is the ultraproduct of primes $p_s$, i.e., $\alpha = \ulim_{s \in S}p_s$, where the $p_s$ are primes for $\cD$-almost all $s \in S$.

If $p_s = p$ for some prime $p$ for $\cD$-almost all $s\in S$, $\alpha = p$ which is a prime in $\bZ$. Thus the notion of ultra-primes in $\bZ^{\#}$ generalizes that of primes in $\bZ$.

\begin{definition}
\label{def-ultra-Galois-fields}
(ultra-Galois fields)

Let $p_s$ be primes for $\cD$-almost all $s \in S$, and let $\bF_{p_s}$ be the finite field of $p_s$ elements for $\cD$-almost all $s \in S$. The ultra-finite field $\prod_{s \in S}\bF_{p_s}$ is called the \textbf{ultra-Galois field}, and denoted by $\GF(\alpha)$, where $\alpha = \ulim_{s \in S}p_s$ is an ultra-prime in $\bN^{\#}$ which is also the internal cardinality of $\GF(\alpha)$.

\end{definition}

\begin{remark}
\label{rem-char-of-ultra-Galois-field}

If the primes $p_s$ are distinct for $\cD$-almost all $s \in S$, then $\GF(\alpha)$ has characteristic $0$. Since the $p_s$ are arbitrarily large, by \cite{bell-slomson}, the cardinality of $\GF(\alpha)$ is $2^{\aleph_0}$ which implies that $\GF(\alpha)$ has the cardinality of the continuum, and so is uncountable.

If the primes $p_s = p$ for some prime $p$ for $\cD$-almost all $s \in S$, then $\alpha = p$, and thus $\GF(\alpha)$ has characteristic $p$ and is of internal cardinality $p$. By Theorem \ref{thm-internal-cardinality}, $\GF(\alpha)$ is in bijection with the finite set $\{1, \ldots, p\}$ of $p$ elements, and thus the cardinality of $\GF(\alpha)$ is exactly $p$. Thus $\GF(\alpha)$ is the Galois field $\bF_p$, which implies that the notion of ultra-Galois field generalizes that of Galois fields.

\end{remark}

\begin{remark}

Note that the notion of the ultra-Galois field $\GF(\alpha)$ does not depend on the representation of $\alpha$ as the ultraproduct of primes. Indeed, suppose that $\alpha = \ulim_{s \in S}p_s = \ulim_{s \in S}p_s'$, where the $p_s$ are distinct primes for $\cD$-almost $s \in S$, and the $p_s'$ are distinct primes for $\cD$-almost all $s \in S$. Then $p_s = p_s'$ for $\cD$-almost all $s \in S$, and $\bF_{p_s} = \bF_{p_s'}$ for $\cD$-almost all $s \in S$, and thus $\GF(\alpha)  = \prod_{s \in S}\bF_{p_s}/\cD = \prod_{s \in S}\bF_{p_s'}/\cD$.

\end{remark}

\begin{theorem}
\label{thm-uniqueness-of-extensions-of-ultra-finite-fields}

Let $\alpha = \ulim_{s \in S}p_s$ be an ultra-prime in $\bN^{\#}$, where the $p_s$ are primes for $\cD$-almost all $s \in S$. For each hypernatural number $d = \ulim_{s \in S}d_s$ for some positive integers $d_s$, there exists a unique ultra-algebraic ultra-extension  of ultra-degree $d$ over the ultra-Galois field $\GF(\alpha)$ up to ultra-isomorphism that is the ultra-finite field $\prod_{s \in S}\bF_{p_s^{d_s}}/\cD$.

Furthermore if $d$ is a positive integer, there exists a unique algebraic extension of degree $d$ over $\GF(\alpha)$ up to ultra-isomorphism \footnote{Note that in this case, a unique algebraic extension of degree $d$ over $\GF(\alpha)$ is also an ultra-algebraic ultra-extension of ultra-degree $d$, and thus the uniqueness is up to ultra-isomorphism instead of usual field isomorphism as in the classical field theory} that is of the form
$\prod_{s \in S}\bF_{p_s^{d}}/\cD$.

\end{theorem}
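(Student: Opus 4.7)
The plan is to deduce this theorem essentially as a corollary of Theorem \ref{thm-ultra-algebraic-extensions-of-quasifinite-fields}. Every finite field is trivially quasi-finite, so by Corollary \ref{cor-quasifinite-ultra-products} the ultra-Galois field $\GF(\alpha) = \prod_{s \in S}\bF_{p_s}/\cD$ is an ultraproduct of quasi-finite fields. Applying Theorem \ref{thm-ultra-algebraic-extensions-of-quasifinite-fields} to the hypernatural $d = \ulim_{s\in S}d_s$ yields an ultra-algebraic ultra-extension of ultra-degree $d$ over $\GF(\alpha)$, unique up to ultra-isomorphism. To identify it explicitly, I would invoke the classical fact that the unique degree-$d_s$ extension of $\bF_{p_s}$ is $\bF_{p_s^{d_s}}$, and assemble these componentwise into $L := \prod_{s \in S}\bF_{p_s^{d_s}}/\cD$. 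By construction $L$ is an ultra-extension of $\GF(\alpha)$ whose ultra-degree equals $\ulim_{s\in S}d_s = d$, so the uniqueness already furnished forces $L$ to be the desired ultra-algebraic ultra-extension.

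For the second assertion, with $d \in \bZ_{>0}$, I would first specialise the preceding construction to the constant sequence $d_s = d$, producing $L = \prod_{s\in S}\bF_{p_s^{d}}/\cD$. To verify that $L$ is a classical algebraic extension of $\GF(\alpha)$ of degree $d$, I would simply pick $\bF_{p_s}$-bases $\{e_{1,s}, \ldots, e_{d,s}\}$ of each $\bF_{p_s^d}$ and observe that their ultra-limits $e_i = \ulim_{s\in S}e_{i,s}$ form a $\GF(\alpha)$-basis of $L$, so $[L : \GF(\alpha)] = d$ in the classical sense. For uniqueness up to ultra-isomorphism, suppose $M$ is any classical algebraic extension of $\GF(\alpha)$ of degree $d$. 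Since $\GF(\alpha)$ is perfect (Remark \ref{rem-ultra-Frobenius-map-definition}(iv)), $M/\GF(\alpha)$ is separable and the primitive element theorem produces $c$ with $M = \GF(\alpha)(c)$ algebraic of degree $d$ with separable minimal polynomial. Lemma \ref{lem-algebraicity-implies-ultra-algebraicity} then writes $M = \prod_{s\in S}\bF_{p_s}(c_s)/\cD$ where each $c_s$ is a root of an irreducible polynomial of degree $d$ over $\bF_{p_s}$, so $\bF_{p_s}(c_s) \cong \bF_{p_s^d}$ componentwise. Assembling these isomorphisms via Lemma \ref{lem-ultra-field-isomorphisms} yields the required ultra-isomorphism $M \cong L$.

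The main subtlety I anticipate is exactly this last step, where the conclusion must be an ultra-isomorphism rather than merely a field isomorphism, the latter being strictly weaker by Remark \ref{rem-ultra-isomorphism-stronger-than-field-isomorphism}. Classical quasi-finiteness of $\GF(\alpha)$ only guarantees a field-theoretic isomorphism between any two degree-$d$ extensions, so the role of Lemma \ref{lem-algebraicity-implies-ultra-algebraicity} is critical: it realises an arbitrary degree-$d$ algebraic extension of $\GF(\alpha)$ as a concrete ultraproduct, which is what permits Lemma \ref{lem-ultra-field-isomorphisms} to glue the componentwise finite-field isomorphisms $\bF_{p_s}(c_s) \cong \bF_{p_s^d}$ into an honest ultra-isomorphism.
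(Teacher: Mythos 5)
Your proposal is correct and follows essentially the same route as the paper: the first assertion is deduced from Theorem \ref{thm-ultra-algebraic-extensions-of-quasifinite-fields} using that finite fields are quasi-finite, and the second from perfectness of $\GF(\alpha)$, the primitive element theorem, and Lemma \ref{lem-algebraicity-implies-ultra-algebraicity}, exactly as in the paper, with your explicit basis argument and the componentwise gluing via Lemma \ref{lem-ultra-field-isomorphisms} merely spelling out details the paper leaves to the first part's uniqueness. The only quibble is your appeal to Corollary \ref{cor-quasifinite-ultra-products} (which concludes the ultraproduct is quasi-finite, not what is needed); the relevant point is simply that the component fields $\bF_{p_s}$ are quasi-finite, which you also state.
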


\begin{proof}

It is known that finite fields are quasi-finite (see Serre \cite{serre-local-fields}). Then the first assertion follows immediately from Theorem \ref{thm-ultra-algebraic-extensions-of-quasifinite-fields} and its proof.

By Remark \ref{rem-ultra-Frobenius-map-definition}(iv), $\GF(\alpha)$ is a perfect field, and thus by the primitive element theorem (see Lang \cite{lang-algebra}) any algebraic extension of degree $d \in \bZ_{>0}$ over $\GF(\alpha)$ is a simple extension. We know from Lemma \ref{lem-algebraicity-implies-ultra-algebraicity} that a simple ultra-algebraic ultra-extension of ultra-degree $d$ over $\GF(\alpha)$ is the same as a simple algebraic extension of degree $d$ over $\GF(\alpha)$. Since $d = \ulim_{s\in S}d$, the last assertion follows immediately from the first part.

\end{proof}

\begin{remark}
\label{rem-general-ultra-Galois-fields}

Motivated by the above theorem, we denote by $\GF(\alpha^d)$ the unique ultra-extension of $\GF(\alpha)$ of ultra-degree $d$. Whenever $\beta = \alpha^d$, we also write $\GF(\beta)$ instead of $\GF(\alpha^d)$ for simplicity. We also denote by $\GF(\alpha^d)^{\times}$ the multiplicative group consisting of all nonzero elements of $\GF(\alpha^d)$.

We are most interested in the ultra-finite field $\GF(\alpha^d)$ when $d$ is a positive integer. In that case, one can write $\GF(\alpha^d) = \prod_{s \in S}\bF_{p_s^d}/\cD$.

\end{remark}

The following result is immediate from Theorem \ref{thm-uniqueness-of-extensions-of-ultra-finite-fields}.
\begin{corollary}
\label{cor-uniqueness-of-ultra-Galois-fields}

Let $\alpha = \ulim_{s \in S}p_s$ be an ultra-prime in $\bN^{\#}$, where the $p_s$ are primes for $\cD$-almost all $s \in S$. Let $d = \ulim_{s \in S}d_s$ be a hypernatural in $\bN^{\#}$ for some natural numbers $d_s$, and let $q_s = p_s^{d_s}$ for each $s \in S$ so each $q_s$ is a power of prime $p_s$. Let $\beta = \alpha^d = \ulim_{s \in S}q_s$. For any hypernatual $m = \ulim_{s \in S}m_s$ for some natural numbers $m_s$, there exists a unique ultra-algebraic ultra-extension of ultra-degree $m$ over the ultra-Galois field $\GF(\beta) = \GF(\alpha^d)$ up to isomorphism that is the ultra-finite field $\prod_{s \in S}\bF_{q_s^{m_s}}/\cD$.

Furthermore if $m$ is a positive integer, there exists a unique algebraic extension of degree $m$ over $\GF(\beta) = \GF(\alpha^d)$ up to isomorphism that is of the form $\prod_{s \in S}\bF_{q_s^{m}}/\cD$.

\end{corollary}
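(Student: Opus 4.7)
The plan is to deduce this corollary directly from Theorem \ref{thm-uniqueness-of-extensions-of-ultra-finite-fields} by a change of base: view $\GF(\beta)$ not as an extension of an ultra-Galois field built from primes, but as the ultraproduct $\prod_{s \in S}\bF_{q_s}/\cD$ of the finite fields $\bF_{q_s}$, where $q_s = p_s^{d_s}$. Since every finite field is quasi-finite (Serre, \cite{serre-local-fields}), the components $\bF_{q_s}$ are quasi-finite for $\cD$-almost all $s \in S$, so Theorem \ref{thm-ultra-algebraic-extensions-of-quasifinite-fields} applies to $\GF(\beta)$ with the same strength it applies to $\GF(\alpha)$ in the previous theorem.

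First I would verify the identification $\GF(\beta) = \prod_{s \in S}\bF_{q_s}/\cD$ using Remark \ref{rem-general-ultra-Galois-fields}. Then, for the hypernatural $m = \ulim_{s \in S}m_s$, I would invoke Theorem \ref{thm-ultra-algebraic-extensions-of-quasifinite-fields} to obtain a unique (up to ultra-isomorphism) ultra-algebraic ultra-extension of ultra-degree $m$ over $\GF(\beta)$. To pin down its explicit form, I would use the fact that for $\cD$-almost all $s \in S$, the unique extension of $\bF_{q_s}$ of degree $m_s$ is the finite field $\bF_{q_s^{m_s}}$, so its ultraproduct $\prod_{s \in S}\bF_{q_s^{m_s}}/\cD$ is an ultra-extension of $\GF(\beta)$ whose ultra-degree is $\ulim_{s \in S}m_s = m$. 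By the uniqueness just established, this ultra-finite field must be the ultra-algebraic ultra-extension claimed.

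For the second assertion, where $m$ is a positive integer, I would write $m = \ulim_{s\in S}m$ (the constant sequence) and first apply the first part of the corollary to obtain the unique ultra-algebraic ultra-extension $\prod_{s \in S}\bF_{q_s^m}/\cD$ of ultra-degree $m$. The remaining content is to upgrade ``ultra-degree $m$ ultra-extension'' to ``degree $m$ algebraic extension.'' Since $\GF(\beta)$ is perfect (Remark \ref{rem-ultra-Frobenius-map-definition}(iv)), the primitive element theorem produces any algebraic extension of degree $m$ as a simple extension $\GF(\beta)(c)$ whose minimal polynomial is separable of degree $m$; Lemma \ref{lem-algebraicity-implies-ultra-algebraicity} then identifies this simple field extension with the simple ultra-extension $\GF(\beta)(c)_u$, which has ultra-degree $m$. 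The first part of the corollary therefore forces any such algebraic extension to be ultra-isomorphic to $\prod_{s \in S}\bF_{q_s^m}/\cD$.

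The only real subtlety I expect is the bookkeeping when $m$ is assumed to be a genuine positive integer rather than a hypernatural: one must be careful to note that a degree-$m$ algebraic extension of $\GF(\beta)$ is automatically an ultra-extension of ultra-degree $m$ (via Lemma \ref{lem-algebraicity-implies-ultra-algebraicity}), so that the uniqueness statement from the first part descends to the classical uniqueness up to ultra-isomorphism. Everything else is a transcription of the quasi-finite uniqueness argument already carried out in the proof of Theorem \ref{thm-uniqueness-of-extensions-of-ultra-finite-fields}.
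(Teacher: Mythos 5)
Your proposal is correct and follows essentially the paper's intended route: the paper treats the corollary as immediate from Theorem \ref{thm-uniqueness-of-extensions-of-ultra-finite-fields}, whose proof (quasi-finiteness of the components plus Theorem \ref{thm-ultra-algebraic-extensions-of-quasifinite-fields}, and for integer $m$ the primitive element theorem together with Lemma \ref{lem-algebraicity-implies-ultra-algebraicity}) you simply rerun with base $\GF(\beta)=\prod_{s\in S}\bF_{q_s}/\cD$ in place of $\GF(\alpha)$. Your handling of the integer case matches the paper's argument, so no further changes are needed.
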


\begin{lemma}
\label{lem-ultra-cyclic-structures}

Let $\alpha = \ulim_{s \in S}p_s$ be an ultra-prime in $\bN^{\#}$, where the $p_s$ are primes for $\cD$-almost all $s \in S$. Let $d = \ulim_{s \in S}d_s$ be a hypernatural number for some positive integers $d_s$, and set $q_s = p_s^{d_s}$ for each $s \in S$. Set $\beta = \alpha^d = \ulim_{s\in S}q_s \in \bN^{\#}$. For any element $a$ in the ultra-algebraic closure of $\GF(\beta)$, $a \in \GF(\beta)$ if and only if $a^{\beta} = a$. In particular $a \in \GF(\beta)^{\times}$ if and only if $a^{\beta - 1} = 1$.

\end{lemma}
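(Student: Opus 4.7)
The plan is to reduce the statement to the classical finite-field analogue componentwise, then apply \L o\'s' theorem (or equivalently, the compatibility of ultra-powers and ultraproducts established in Section \ref{subsec-action-of-hyperintegers-on-ultraproducts}).

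First I would fix representatives: write $a = \ulim_{s \in S} a_s$ where $a_s \in \bF_{q_s}^{\alg}$ for $\cD$-almost all $s \in S$, using Definition \ref{def-ultra-algebraic-closure} to identify the ultra-algebraic closure of $\GF(\beta)$ with $\prod_{s \in S}\bF_{q_s}^{\alg}/\cD$. Since $\beta = \ulim_{s \in S} q_s$, the definition of ultra-power in (\ref{e1-ultra-power-with-exponent-in-hypernaturals}) gives
\begin{align*}
a^{\beta} = \ulim_{s \in S} a_s^{q_s}.
\end{align*}
Hence $a^{\beta} = a$ in the ultra-algebraic closure is equivalent to $\{s \in S : a_s^{q_s} = a_s\} \in \cD$.

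Next I would invoke the classical characterization of $\bF_{q_s}$ as the fixed field of the Frobenius inside $\bF_{q_s}^{\alg}$: for $\cD$-almost all $s \in S$, one has $a_s^{q_s} = a_s$ if and only if $a_s \in \bF_{q_s}$. Taking ultraproducts, the set of $s$ for which $a_s \in \bF_{q_s}$ lies in $\cD$ precisely when $a \in \prod_{s \in S}\bF_{q_s}/\cD = \GF(\beta)$ by Corollary \ref{cor-uniqueness-of-ultra-Galois-fields}. Chaining these equivalences gives the first assertion.

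For the ``in particular'' clause, assume $a \ne 0$; since $\beta \in \bN^{\#}$ with positive components $q_s$, dividing the identity $a^{\beta} = a$ by $a$ (which is valid since $a_s \ne 0$ for $\cD$-almost all $s$) yields $a^{\beta - 1} = 1$, and conversely $a^{\beta - 1} = 1$ multiplied by $a$ gives $a^{\beta} = a$. Combined with the first part, this shows $a \in \GF(\beta)^{\times}$ iff $a^{\beta - 1} = 1$. There is no genuine obstacle here; the only point requiring care is to treat $\beta$ as a hypernatural and use the ultra-power action from Section \ref{subsec-action-of-hyperintegers-on-ultraproducts} rather than any standard integer exponentiation, since $\beta$ may fail to be a standard natural number.
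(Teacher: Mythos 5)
Your proposal is correct and follows essentially the same route as the paper's proof: reduce componentwise via $a^{\beta} = \ulim_{s\in S}a_s^{q_s}$, invoke the classical fact that $\bF_{q_s}$ is exactly the set of elements of $\bF_{q_s}^{\alg}$ fixed by the $q_s$-power map (the paper phrases the forward direction via $\bF_{q_s}^{\times}$ being cyclic of order $q_s-1$, which is the same content), and transfer through the ultraproduct, with the same division-by-$a$ argument for the ``in particular'' clause.
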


\begin{proof}

By Remark \ref{rem-general-ultra-Galois-fields}, $\GF(\beta) = \GF(\alpha^d) = \prod_{s \in S}\bF_{q_s}/\cD$.

Let $a = \ulim_{s\in S}a_s$ be an element in the ultra-algebraic closure of $\GF(\beta)$, where the $a_s$ are elements in $\overline{\bF}_{q_s}$.

The identity $a^{\beta} =a$ is trivial for $a=0$.

Suppose that $a=\ulim_{s\in S} a_s$ is a nonzero element in $\GF(\beta)$, where $a_s \in \bF_{q_s}$  for $\cD$-almost $s \in S$. By \L{}o\'s' theorem, $a_s \neq 0$ for $\cD$-almost $s \in S$. Thus $a_s^{q_s -1} =1$ for $\cD$-almost $s \in S$ since $\bF_{q_s}^{\times}$ is a cyclic group of order $q_s -1$. Thus
\begin{align*}
a^{\beta-1} = \ulim_{s\in S}a_s^{q_s-1}=1,
\end{align*}
and therefore $a^{\beta}=a$.

Suppose that $a^{\beta} = a$. Thus $a_s^{q_s} = a_s$ for $\cD$-almost all $s\in S$, and therefore $a_s \in \bF_{q_s}$. Hence $a \in \GF(\beta)$, which yields the desired result.

\end{proof}

The following result and the notion of ultra-characteristics will not be used in another place in the paper, but it shows how analogous finite fields and ultra-finite fields are.

\begin{lemma}
\label{lem-ultra-characteristics}

Let $\alpha = \ulim_{s \in S}p_s$ be an ultra-prime in $\bN^{\#}$, where the $p_s$ are primes for $\cD$-almost all $s \in S$. Let $d = \ulim_{s \in S}d_s$ be a hypernatural number for some positive integers $d_s$. Then for every $a \in \GF(\alpha^d)$, $\alpha a = 0$.

\end{lemma}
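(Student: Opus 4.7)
The plan is to unwind the definition of the hyperinteger action on an ultra-ring given in Section \ref{subsec-action-of-hyperintegers-on-ultraproducts} and reduce the claim to the obvious fact that each finite field $\bF_{p_s^{d_s}}$ has characteristic $p_s$. By Remark \ref{rem-general-ultra-Galois-fields}, I would write $\GF(\alpha^d) = \prod_{s\in S}\bF_{p_s^{d_s}}/\cD$, and represent a given element as $a = \ulim_{s\in S}a_s$ with $a_s\in\bF_{p_s^{d_s}}$ for $\cD$-almost all $s\in S$.

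Next I would recall that, for the ultraproduct of rings $\GF(\alpha^d)$ viewed as a $\bZ^{\#}$-module, the action is defined componentwise: for the hyperinteger $\alpha = \ulim_{s\in S}p_s$ and $a = \ulim_{s\in S}a_s$, we have
\begin{align*}
\alpha a = \ulim_{s\in S}(p_s\cdot a_s).
\end{align*}
For $\cD$-almost all $s\in S$, the field $\bF_{p_s^{d_s}}$ has characteristic $p_s$, hence $p_s\cdot a_s = 0$ in $\bF_{p_s^{d_s}}$. Therefore $\{s\in S \,:\, p_s\cdot a_s = 0\}\in\cD$, so $\alpha a = \ulim_{s\in S}0 = 0$.

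There is no real obstacle here: once one has the definition of the componentwise $\bZ^{\#}$-action and the observation that characteristic-$p_s$ kills additively, the conclusion is immediate. The only point worth spelling out carefully is that the representation $\alpha = \ulim_{s\in S}p_s$ and $a = \ulim_{s\in S}a_s$ can be chosen compatibly so that $a_s\in\bF_{p_s^{d_s}}$ for $\cD$-almost all $s$, which is guaranteed by the definition of $\GF(\alpha^d)$ as the ultraproduct of the $\bF_{p_s^{d_s}}$. Alternatively, one can phrase the last step as a direct application of \L o\'s' theorem to the first-order sentence $\forall x\,(p_s\cdot x = 0)$ valid in each $\bF_{p_s^{d_s}}$, but the componentwise computation is cleaner.
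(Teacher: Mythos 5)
Your proposal is correct and follows essentially the same argument as the paper: represent $a = \ulim_{s\in S}a_s$ with $a_s \in \bF_{p_s^{d_s}}$, apply the componentwise $\bZ^{\#}$-action to get $\alpha a = \ulim_{s\in S}p_s a_s$, and conclude from the fact that each $\bF_{p_s^{d_s}}$ has characteristic $p_s$. The extra remarks on compatible representations and the \L{}o\'s-theorem phrasing are fine but not needed beyond what the paper does.
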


\begin{proof}

Let $a = \ulim_{s \in S}a_s$ for some $a_s \in \bF_{p_s^{d_s}}$. Then $\alpha a = \ulim_{s \in S}p_s a_s = 0$ since the $\bF_{p_s^{d_s}}$ are finite fields of characteristics $p_s$ for $\cD$-almost $s \in S$.

\end{proof}

The above lemma motivates the following notion.

\begin{definition}
\label{def-ultra-characteristics}

$\alpha = \ulim_{s \in S}p_s$ is called the \textbf{ultra-characteristic} of the ultra-finite field $\GF(\alpha^d)$.

\end{definition}

We prove an analogue of finite fields for ultra-Galois fields.

\begin{lemma}
\label{lem-irreducible-ultra-polynomial-divides-x^alpha^d-x}

Let $\alpha = \ulim_{s \in S}p_s$ be an ultra-prime in $\bN^{\#}$, where the $p_s$ are primes for $\cD$-almost all $s \in S$. Let $d = \ulim_{s \in S}d_s$ be a hypernatural number for some positive integers $d_s$, and set $q_s = p_s^{d_s}$ for each $s \in S$. Set $\beta = \alpha^d = \ulim_{s\in S}q_s \in \bN^{\#}$. Let $f = \ulim_{s \in S}f_s$ be an irreducible ultra-polynomial in the ultra-hull $\cU(\GF(\beta)[x])$. Suppose that the ultra-degree of $f$ is $m$ in $\bN^{\#}$, and let $n$ be a hypernatural number. Then $f$ divides the ultra-polynomial $x^{\beta^{n}}-x$ if and only if $m$ divides $n$ in $\bN^{\#}$.

\end{lemma}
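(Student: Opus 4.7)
The plan is to lift the classical finite-field fact---that over $\bF_q$ an irreducible polynomial of degree $m$ divides $x^{q^{n}}-x$ if and only if $m\mid n$---to the ultra setting via a componentwise application of \L{}o\'s' theorem, using the already-established compatibility of the ultrapower map with the ultraproduct of polynomial rings.

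First I would fix good representatives. Since $f$ is irreducible in $\cU(\GF(\beta)[x])$ of ultra-degree $m$, Definition \ref{def-irreducible-ultra-poly} lets me write $f=\ulim_{s\in S} f_s$ with $f_s\in\bF_{q_s}[x]$ irreducible of degree $m_s$ for $\cD$-almost all $s\in S$, so that $m=\ulim_{s\in S}m_s$. Write $n=\ulim_{s\in S}n_s$ with $n_s\in\bN$. From subsection \ref{subsec-action-of-hyperintegers-on-ultraproducts} (applied to $A=\cU(\GF(\beta)[x])$ with $a=t$ and exponent $\beta^{n}=\ulim_{s\in S}q_s^{n_s}$) I record the identity
\[
x^{\beta^{n}}-x \;=\; \ulim_{s\in S}\bigl(x^{q_s^{n_s}}-x\bigr)
\]
in the ultra-hull $\cU(\GF(\beta)[x])$.

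Next I would invoke the classical result for finite fields componentwise: for every $s\in S$ in the $\cD$-large set where $f_s$ is irreducible in $\bF_{q_s}[x]$ of degree $m_s$, we have $f_s\mid (x^{q_s^{n_s}}-x)$ in $\bF_{q_s}[x]$ if and only if $m_s\mid n_s$ in $\bN$. Setting
\[
A=\{s\in S : f_s\mid (x^{q_s^{n_s}}-x)\text{ in }\bF_{q_s}[x]\},\qquad
B=\{s\in S : m_s\mid n_s\text{ in }\bN\},
\]
it follows that $A$ and $B$ differ only by a $\cD$-null set, hence $A\in\cD\iff B\in\cD$.

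Finally I would translate membership in $\cD$ back into the two divisibility statements in the lemma. By Definition \ref{def-divisibility-of-ultra-polynomials}, $f$ divides $x^{\beta^{n}}-x$ in $\cU(\GF(\beta)[x])$ precisely when $A\in\cD$; by Lemma \ref{lem-gcd-of-hyperintegers} and the discussion of divisibility in subsection \ref{subsec-hyperintegers}, $m$ divides $n$ in $\bN^{\#}$ precisely when $B\in\cD$. The equivalence of the lemma then follows. The main (very mild) obstacle is purely bookkeeping: one must check that the chosen representatives $f_s$ simultaneously witness irreducibility, the correct degree $m_s$, and the correct ultra-hull element $f$, and that $x^{\beta^{n}}-x$ is realized as the stated ultralimit; once these alignments are in place the argument is a direct transfer.
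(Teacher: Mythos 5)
Your argument is correct, but it proves the lemma by a genuinely different route than the paper. You transfer the classical finite-field theorem (an irreducible polynomial of degree $m_s$ over $\bF_{q_s}$ divides $x^{q_s^{n_s}}-x$ iff $m_s\mid n_s$) componentwise: since divisibility in $\cU(\GF(\beta)[x])$ (Definition \ref{def-divisibility-of-ultra-polynomials}), irreducibility and ultra-degree (Definitions \ref{def-irreducible-ultra-poly}, \ref{def-ultra-polynomials}), divisibility in $\bN^{\#}$, and the identity $x^{\beta^{n}}-x=\ulim_{s\in S}(x^{q_s^{n_s}}-x)$ are all $\cD$-almost-everywhere componentwise notions, the two sides of the equivalence correspond to the same (up to a $\cD$-null set) subset of $S$, and the only bookkeeping is the standard check that one system of representatives witnesses all of these simultaneously; note that for ``$m\mid n$ in $\bN^{\#}$ iff $m_s\mid n_s$ for $\cD$-almost all $s$'' you need only the definition of divisibility of hyperintegers (the citation of Lemma \ref{lem-gcd-of-hyperintegers} is not really what does the work, though a gcd-based argument would also go through). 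The paper instead argues internally, imitating the classical field-theoretic proof: it takes a root $a$ of $f$ in an ultra-algebraic closure, uses Lemma \ref{lem-ultra-cyclic-structures} to see that $f\mid x^{\beta^{n}}-x$ forces $a\in\GF(\beta^{n})$, then compares $[\GF(\beta)(a)_u:\GF(\beta)]_u=m$ (Theorem \ref{thm-ultra-degrees-of-simple-ultra-extensions}) with $[\GF(\beta^{n}):\GF(\beta)]_u=n$ via Lemma \ref{lem-multiplicative-of-ultra-degrees}, and for the converse uses Theorem \ref{thm-uniqueness-of-extensions-of-ultra-finite-fields} and Lemma \ref{lem-divisibility-of-minimal-ultra-poly}. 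Your transfer argument is shorter and uses the classical theorem as a black box, which is perfectly legitimate; the paper's proof buys a demonstration that the ultra-field-extension machinery it has built reproduces the classical argument verbatim, which is part of the paper's stated program, and it avoids quoting the finite-field theorem directly.
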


\begin{proof}

Suppose that $f$ divides $x^{\beta^{n}}-x$. Let $a$ be a root of $f$ in an ultra-algebraic closure of $\GF(\beta)$. Then $a^{\beta^{n}}=a$, so it follows from Lemma \ref{lem-ultra-cyclic-structures}, $a \in \GF(\beta^{n})$.

It follows that the simple ultra-extension $\GF(\beta)(a)_u$ is an ultra-subfield of $\GF(\beta^{n})$. By Theorem \ref{thm-ultra-degrees-of-simple-ultra-extensions}, we know that
\begin{align*}
[\GF(\beta)(a)_u: \GF(\beta)]_u = m.
\end{align*}
On the other hand, by definition, $[\GF(\beta^{n}): \GF(\beta)]_u = n$, and thus by Lemma \ref{lem-multiplicative-of-ultra-degrees}, $m$ divides $n$ in $\bN^{\#}$.

Conversely, if $m$ divides $n$ in $\bN^{\#}$, we deduce from Theorem \ref{thm-uniqueness-of-extensions-of-ultra-finite-fields} that $ \GF(\beta^{n})$ contains $ \GF(\beta^m)$  as a subfield. If $a$ is a root of $f$ in an ultra-algebraic closure of $\GF(\beta)$, then $[\GF(\beta)(a)_u: \GF(\beta)]_u = m$ and so it follows from Theorem \ref{thm-uniqueness-of-extensions-of-ultra-finite-fields} that $\GF(\beta)(a)_u = \GF(\beta^{m})$. Consequently $a \in \GF(\beta^m) \subseteq \GF(\beta^{n})$, and thus $a^{\beta^{n}}=a$ which implies that $a$ is a  root of  $x^{\beta^{n}}-x$. By  Lemma \ref{lem-divisibility-of-minimal-ultra-poly}, we deduce that $f$ divides $x^{\beta^{n}}-x$.

\end{proof}

\begin{theorem}
\label{thm-roots-of-ultra-polynomials-in-ultra-Galois-fields}

Let $\alpha = \ulim_{s \in S}p_s$ be an ultra-prime in $\bN^{\#}$, where the $p_s$ are primes for $\cD$-almost all $s \in S$. Let $d = \ulim_{s \in S}d_s$ be a hypernatural number for some positive integers $d_s$, and let $q_s = p_s^{d_s}$ for each $s \in S$. Set $\beta = \alpha^d = \ulim_{s \in S}q_s \in \bN^{\#}$. Let $f$ be an irreducible ultra-polynomial of ultra-degree $m = \ulim_{s \in S}m_s \in \bN^{\#}$ over $\GF(\beta)$, where the $m_s$ are positive integers. Then
\begin{itemize}

\item [(i)] $f$ has a root $a$ in $\GF(\beta^{m})$.

\item [(ii)] Furthermore, all the roots of $f$ consist of exactly all distinct elements $a$, $a^{\beta}$, $a^{\beta^{2}}, \ldots, a^{\beta^{m-1}}$ of $\GF(\beta^{m})$.
\end{itemize}

\end{theorem}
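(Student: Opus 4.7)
The plan is to mirror the classical proof for irreducible polynomials over $\bF_q$, using the tools already built in Section \ref{sec-ultrafinite-fields}. For part (i), Lemma \ref{lem-irreducible-ultra-polynomial-divides-x^alpha^d-x} applied with $n=m$ shows that $f$ divides $x^{\beta^m}-x$ in $\cU(\GF(\beta)[x])$; by Lemma \ref{lem-ultra-polynomials-have-roots-in-ultra-algebraic-closure}, $f$ has at least one root $a$ in an ultra-algebraic closure of $\GF(\beta)$, and then $a^{\beta^m}=a$, so $a\in \GF(\beta^m)$ by Lemma \ref{lem-ultra-cyclic-structures}.

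For part (ii), I would first check that $a^{\beta^i}$ is a root of $f$ for every hypernatural $i\ge 0$. Write $f=\ulim_{s\in S}f_s$ with $f_s\in\bF_{q_s}[x]$ irreducible for $\cD$-almost all $s$, and $a=\ulim_{s\in S}a_s$ with $a_s$ a root of $f_s$. Since the $q_s$-power Frobenius is a ring homomorphism fixing $\bF_{q_s}$ pointwise, $f_s(a_s^{q_s})=f_s(a_s)^{q_s}=0$; taking the ultra-limit gives $f(a^{\beta})=0$, and iterating yields $f(a^{\beta^i})=0$ for all $i\ge 0$.

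To establish distinctness, suppose $a^{\beta^i}=a^{\beta^j}$ for hyperintegers $0\le i<j\le m-1$. Raising both sides to the $\beta^{m-j}$-th ultra-power and invoking $a^{\beta^m}=a$ from part (i) gives $a^{\beta^{m-(j-i)}}=a$, hence $a\in\GF(\beta^{m-(j-i)})$ by Lemma \ref{lem-ultra-cyclic-structures}. Then $\GF(\beta)(a)_u\subseteq \GF(\beta^{m-(j-i)})$, so by Theorem \ref{thm-ultra-degrees-of-simple-ultra-extensions} and Lemma \ref{lem-multiplicative-of-ultra-degrees}, $m=[\GF(\beta)(a)_u:\GF(\beta)]_u$ must divide $m-(j-i)$ in $\bN^{\#}$, which is impossible since $0<m-(j-i)<m$. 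Finally, since $\GF(\beta)$ is perfect (Remark \ref{rem-ultra-Frobenius-map-definition}(iv)), each irreducible $f_s$ over $\bF_{q_s}$ is separable, so $f$ is separable; Theorem \ref{thm-roots-of-ultra-polynomials-over-ultra-fields} then gives exactly $m$ roots of $f$ in the ultra-algebraic closure, and the $m$ distinct elements $a,a^\beta,\ldots,a^{\beta^{m-1}}$ must exhaust them.

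The main obstacle is bookkeeping with hyperinteger indices: for a general hypernatural $m$, the ``list'' $\{0,1,\ldots,m-1\}$ is internal rather than finite, and assertions such as distinctness, the inequalities $0<m-(j-i)<m$, and the implication ``$m\mid n$ with $0<n<m$ is impossible'' must be read and verified in $\bN^{\#}$ instead of $\bN$. The cleanest way to discharge each such step is to push it back through \L{}o\'s' theorem to the corresponding classical statement about finite fields on a $\cD$-large set of indices, which is precisely the spirit of the ultra-field-theoretic framework developed in Section \ref{sec-ultrafinite-fields}.
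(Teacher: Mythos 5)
Your proposal is correct and follows essentially the same route as the paper: produce a root, place it in $\GF(\beta^m)$, show stability of the root set under the $\beta$-power map componentwise, derive distinctness from a divisibility contradiction in $\bN^{\#}$, and then use separability together with the internal-cardinality count of Theorem \ref{thm-roots-of-ultra-polynomials-over-ultra-fields} (i.e.\ Proposition \ref{prop-comparing-internal-sets-with-same-cardinality}) to conclude exhaustion. The only differences are cosmetic: for (i) you invoke Lemma \ref{lem-irreducible-ultra-polynomial-divides-x^alpha^d-x} where the paper uses $[\GF(\beta)(a)_u:\GF(\beta)]_u=m$ and Corollary \ref{cor-uniqueness-of-ultra-Galois-fields} directly, and for distinctness you inline the ultra-degree/tower argument that the paper reaches through Lemmas \ref{lem-divisibility-of-minimal-ultra-poly} and \ref{lem-irreducible-ultra-polynomial-divides-x^alpha^d-x}.
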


\begin{remark}

Note that the set $\{ a, a^{\beta}, a^{\beta^{2}}, \ldots, a^{\beta^{m-1}} \}$ is an internal set which has internal cardinality $m \in \bN^{\#}$.

\end{remark}

\begin{proof}

Let $a$ be a root of $f$ in an ultra-algebraic closure of $f$ over $\GF(\beta)$. Then the ultra-degree $[\GF(\beta)(a)_u: \GF(\beta)]_u=m$, and thus $[\GF(\beta)(a)_u : \GF(\alpha)]_u = dm$, since $[\GF(\beta) : \GF(\alpha)] = d$. By Corollary \ref{cor-uniqueness-of-ultra-Galois-fields}, $\GF(\beta)(a)_u=\GF(\beta^{m})$, which in particular implies that $a \in \GF(\beta^{m})$.

We now show that if $b = \ulim_{s \in S} b_s \in \GF(\beta^{m})$ for some $b_s \in \bF_{q_s^{m_s}}$ is a root of $f$, then $b^{\beta}$ is also a root of $f$. Write $f = \ulim_{s \in S}f_s$, where $f_s(x) = a_{m_s}^{(s)}x^{m_s}+\cdots+a_{1}^{(s)}x+a_{0}^{(s)}$ be an irreducible polynomial in $\bF_{q_s}[x]$ for $\cD$-almost all $s \in S$. By definition, we know that
\begin{align*}
f(b^{\beta}) &= \ulim_{s\in S}f_s(b_s^{q_s}) \\
&= \ulim_{s\in S} \left(a_{m_s}^{(s)}b_{s}^{q_sm_s}+\cdots+a_1^{(s)}b_{s}^{q_s}+a_0^{(s)} \right)\\
&= \ulim_{s\in S} \left(  (a_{m_s}^{(s)}b_{s}^{m_s})^{q_s}+\cdots+ (a_1^{(s)}b_{s})^{q_s} + (a_0^{(s)})^{q_s} \right), \; \left(\text{since}\; (a_i^{(s)})^{q_s} = a_i^{(s)}\right) \\
&= \ulim_{s\in S} \left(a_{m_s}^{(s)}b_{s}^{m_s}+\cdots + a_1^{(s)}b_{s}+a_0^{(s)} \right)^{q_s}\\
&= \ulim_{s\in S} f_s(b_s)^{q_s} \\
&= f(b)^{\beta} \\
&= 0.
\end{align*}
Therefore the elements $a, a^{\beta}, a^{\beta^{2}}, \ldots, a^{\beta^{m-1}}$ are roots of $f$.

It remains to prove that these elements are distinct. Since $m = \ulim_{s\in S}m_s \in \bZ^{\#}$, where $m_s \in \bN^{\#}$, we obtain an internal subset of $\bN^{\#}$ of the form
\begin{align*}
A=\{0,1,\ldots, m-1\} = \{i \in \bN^{\#}: i \le m - 1\} = \prod_{s \in S} \{0, \ldots, m_s-1\}/\cD,
\end{align*}
and thus for any $i \in A$, there exist nonnegative integers $0 \leq i_s \leq m_s-1$ such that $i = \ulim_{s \in S} i_s$.

Suppose that there exist distinct elements $i, j \in A$ such that $i = \ulim_{s \in S} i_s$ and $j = \ulim_{s \in S} j_s$ for some $0 \leq i_s, j_s \leq m_s-1$ for which
\begin{align}
\label{eq1-in-roots-of-ultra-polynomials-in-ultra-Galois}
a^{\beta^{i}} = a^{\beta^{j}}.
\end{align}

Recall from Section \ref{subsec-hyperintegers} that the order ``$\leq$" is a total order on $\bZ^{\#}$, and thus  the restriction of ``$\leq$" to $A$ is a total order on $A$.

Thus, without loss of generality, we assume that $i \leq j$ in (\ref{eq1-in-roots-of-ultra-polynomials-in-ultra-Galois}), and thus $i < j$ since $i \ne j$. By assumption, $j \leq m-1$. By raising both sides of (\ref{eq1-in-roots-of-ultra-polynomials-in-ultra-Galois}) to the power
\begin{align*}
\beta^{m-j} =  \ulim_{s\in S} q_s^{m_s-j_s} \in \bN^{\#},
\end{align*}
we get
\begin{align*}
a^{\beta^{m-j+i}} = a^{\beta^{m}} =a
\end{align*}
since $a \in \GF(\beta^{m})$ (see Lemma \ref{lem-ultra-cyclic-structures}). Since $a$ is a root of both ultra-polynomials $f$ and $x^{\beta^{m-j+i}}-x$, Lemma \ref{lem-divisibility-of-minimal-ultra-poly} implies that $f$ divides $x^{\beta^{m-j+i}} - x$, and it thus follows from Lemma \ref{lem-irreducible-ultra-polynomial-divides-x^alpha^d-x} that $m$ divides $m-j+i$ in $\bZ^{\#}_{> 0}$. Thus, there exists a positive hyperinteger $h = \ulim_{s \in S}h_s$ in $\bZ^{\#}_{> 0}$ such that $m-j+i = mh$, i.e., $m_s-j_s+i_s = m_sh_s$ for $\cD$-almost all $s \in S$. Thus $m_s \leq m_sh_s = m_s-j_s+i_s$. Since $i < j$, $i_s < j_s$ for $\cD$-almost all $s \in S$, and thus
\begin{align*}
m_s \leq m_s-(j_s - i_s) < m_s
\end{align*}
for $\cD$-almost $s \in S$, which is a contradiction.

Thus, the elements $a^{\beta^{i}}$ for $i \in A$ are all distinct. Therefore the set $\{ a^{\beta^{i}} : i \in A\}$ is an internal set of internal cardinality $m$.

Since $f = \ulim_{s \in S}f_s$ is irreducible over $\GF(\beta) = \prod_{s \in S}\bF_{q_s}/\cD$, $f_s$ is irreducible over the finite field $\bF_{q_s}$, and thus $f_s$ is separable. Therefore $f$ is also separable over $\GF(\beta)$. By Theorem \ref{thm-roots-of-ultra-polynomials-over-ultra-fields}, the set of roots of $f$ is an internal set of internal cardinality $m$. By Proposition \ref{prop-comparing-internal-sets-with-same-cardinality}, we deduce that the set of roots of $f$ is exactly $\{ a^{\beta^{i}} : i \in A\}$ as required.

\end{proof}

\begin{corollary}
\label{cor-finite-extensions-of-ultra-finite-fields-are-Galois}

Let $\alpha = \ulim_{s \in S}p_s$ be an ultra-prime in $\bN^{\#}$, where the $p_s$ are primes for $\cD$-almost all $s \in S$. Let $d = \ulim_{s \in S}d_s$ be a hypernatural number for some positive integers $d_s$, and let $q_s = p_s^{d_s}$ for each $s \in S$. Set $\beta = \alpha^d = \ulim_{s \in S}q_s \in \bN^{\#}$. For any positive integer $m \in \bZ_{>0}$, the ultra-finite field $\GF(\beta^m)$ is a Galois extension of degree $m$ over $\GF(\beta)$.

\end{corollary}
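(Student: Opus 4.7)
The plan is to pair the degree computation already available from Corollary \ref{cor-uniqueness-of-ultra-Galois-fields} with the existence of the ultra-Frobenius as an order-$m$ element of $\mathrm{Aut}(\GF(\beta^m)/\GF(\beta))$, and then invoke the standard bound $|\mathrm{Aut}(L/K)|\leq [L:K]$ to force the extension to be Galois. Corollary \ref{cor-uniqueness-of-ultra-Galois-fields} identifies $\GF(\beta^m)=\prod_{s\in S}\bF_{q_s^m}/\cD$; since $m\in\bZ_{>0}$ is a \emph{standard} positive integer, its last assertion says this is an ordinary algebraic extension of $\GF(\beta)$ of field-theoretic degree $m$. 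Separability is immediate from Remark \ref{rem-ultra-Frobenius-map-definition}(iv), which asserts that $\GF(\beta)$ is perfect.

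Next I would construct the Galois generator. For each $s\in S$, let $\sigma_s:\bF_{q_s^m}\to\bF_{q_s^m}$ be the classical Frobenius $x\mapsto x^{q_s}$, a $\bF_{q_s}$-automorphism of order $m$. By Lemma \ref{lem-ultra-field-isomorphisms}, the ultra-Frobenius $\sigma=\ulim_{s\in S}\sigma_s$ is both an ultra-field and a field automorphism of $\GF(\beta^m)$, and Lemma \ref{lem-ultra-cyclic-structures} shows that $\sigma$ fixes $\GF(\beta)$ pointwise and satisfies $\sigma^m=\mathrm{id}$ on $\GF(\beta^m)$. Hence $\sigma$ is an element of $\mathrm{Aut}(\GF(\beta^m)/\GF(\beta))$ with $\sigma^m=\mathrm{id}$.

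The main obstacle is showing that $\sigma$ has order exactly $m$, i.e., $\sigma^i\neq\mathrm{id}$ for every standard integer $1\leq i\leq m-1$; this has to be done inside the ultra-finite field without collapsing to a single characteristic. I would choose, for each $s$, a generator $\alpha_s$ of the cyclic group $\bF_{q_s^m}^{\times}$, and set $\alpha=\ulim_{s\in S}\alpha_s\in\GF(\beta^m)^{\times}$. For any standard $1\leq i\leq m-1$, the inequality $1\leq q_s^i-1<q_s^m-1$ holds for $\cD$-almost all $s$, so by cyclicity $\alpha_s^{q_s^i-1}\neq 1$ for $\cD$-almost all $s$; taking ultralimits gives $\alpha^{\beta^i-1}\neq 1$, whence $\sigma^i(\alpha)=\alpha^{\beta^i}\neq\alpha$. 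Thus $\sigma,\sigma^2,\ldots,\sigma^{m-1},\sigma^m=\mathrm{id}$ are $m$ distinct elements of $\mathrm{Aut}(\GF(\beta^m)/\GF(\beta))$, and the general inequality $|\mathrm{Aut}(L/K)|\leq [L:K]$ then forces $|\mathrm{Aut}(\GF(\beta^m)/\GF(\beta))|=m=[\GF(\beta^m):\GF(\beta)]$. Combined with separability, this yields that $\GF(\beta^m)/\GF(\beta)$ is a cyclic Galois extension of degree $m$, generated by the ultra-Frobenius $\sigma$.
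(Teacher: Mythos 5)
Your proof is correct, but it takes a genuinely different route from the paper. The paper also starts from Corollary \ref{cor-uniqueness-of-ultra-Galois-fields} to get $[\GF(\beta^m):\GF(\beta)]=m$, but then argues via normality: since $\GF(\beta^m)$ is perfect, the primitive element theorem gives $\GF(\beta^m)=\GF(\beta)(\lambda)$, and Theorem \ref{thm-roots-of-ultra-polynomials-in-ultra-Galois-fields} shows that all roots of the degree-$m$ minimal polynomial of $\lambda$ (namely the Frobenius orbit $\lambda,\lambda^{\beta},\ldots,\lambda^{\beta^{m-1}}$) already lie in $\GF(\beta^m)$, so $\GF(\beta^m)$ is the splitting field of a separable polynomial and hence Galois. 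You instead count automorphisms: you build the ultra-Frobenius $\sigma=\ulim_{s\in S}\sigma_s$ from the componentwise Frobenii via Lemma \ref{lem-ultra-field-isomorphisms}, check it fixes $\GF(\beta)$ and satisfies $\sigma^m=\mathrm{id}$ by Lemma \ref{lem-ultra-cyclic-structures}, and prove its order is exactly $m$ by evaluating on an ultra-generator of the ultra-cyclic group $\GF(\beta^m)^{\times}$, then invoke $|\mathrm{Aut}(L/K)|\le [L:K]$ with equality characterizing Galois. Your order-exactly-$m$ argument is in substance the same distinctness-of-Frobenius-orbit computation that underlies the proof of Theorem \ref{thm-roots-of-ultra-polynomials-in-ultra-Galois-fields}, so both proofs ultimately rest on the same fact; what your version buys is that it avoids the primitive element theorem and delivers the sharper conclusion that $\Gal(\GF(\beta^m)/\GF(\beta))$ is cyclic of order $m$ generated by the ultra-Frobenius, which the paper only obtains implicitly and uses later (e.g.\ in Remark \ref{rem-power-residue-symbol-recovers-CP-PRS}); what the paper's version buys is brevity, since Theorem \ref{thm-roots-of-ultra-polynomials-in-ultra-Galois-fields} is already available. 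Two cosmetic points: your reuse of the letter $\alpha$ for a multiplicative ultra-generator clashes with the ultra-prime $\alpha$ of the statement, and the separate appeal to separability is redundant once you have $|\mathrm{Aut}(L/K)|=[L:K]$.
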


\begin{proof}

By Corollary \ref{cor-uniqueness-of-ultra-Galois-fields}, $\GF(\beta^m)$ is the unique algebraic extension of degree $m$ over $\GF(\beta)$. Since $\GF(\beta^m)$ is a perfect field, the primitive element theorem (see Lang \cite{lang-algebra}) implies that there exists an element $\lambda \in \GF(\beta^m)$ such that $\GF(\beta^m) = \GF(\beta)(\lambda)$. Let $f(x) \in \GF(\beta)[x]$ be a minimal polynomial of $\lambda$ over $\GF(\beta)$. Then the degree of $f(x)$ is $m$. By Theorem \ref{thm-roots-of-ultra-polynomials-in-ultra-Galois-fields}, all the roots of $f(x)$ belong in $\GF(\beta^m)$, and thus $\GF(\beta^m)$ is the splitting field of $f(x)$. Thus $GF(\beta^m)$ is a Galois extension of degree $m$ over $\GF(\beta)$.

\end{proof}

\section{Power Residue Symbol}
\label{sec-power-residue-symbol}

In this section, we introduce power residue symbol for the ring of polynomials over an ultra-finite field. Throughout this section, for $\cD$-almost all $s \in S$, we fix $q_s = p_s^{\epsilon_s}$ where the $p_s$ are primes, and the $\epsilon_s$ are positive integers.

Let $\bA_s = \bF_{q_s}[t]$ be the ring of polynomials over the finite field $\bF_{q_s}$ for $\cD$-almost all $s \in S$. Set $\kappa = \ulim_{s \in S}q_s = \ulim_{s \in S}p_s^{\epsilon_s} \in \bN^{\#}$.

Let $\GF(\kappa)$ be the ultra-finite field as in Section \ref{sec-ultrafinite-fields} which is the ultraproduct of finite fields $\bF_{q_s}$ with respect to $\cD$, that is,
\begin{align*}
\GF(\kappa) = \prod_{s\in S}\bF_{q_s}/\cD.
\end{align*}

Let $\bA=\GF(\kappa)[t]$ be the ring of polynomials over $\GF(\kappa)$. In this section, we introduce power residue symbols in $\bA$, and compare it with the notion of power residue symbols in $\bA$ in \cite{CP}. See also \cite{serre-local-fields} in which such power residue symbols are implicitly described. In fact, Clark and Pollack in \cite{CP} follows Serre's description of power residue symbols in $\bA$ closely.

Our approach is different from that of Serre's and Clark--Pollack's in which we mainly use model theory, especially the theory of ultraproducts to define power residue symbols in $\bA$. We not only define the $n$-th power residue symbols in $\bA$ for integers $n>0$, but also define the $n$-th power residue symbols in $\bA$ for hypernaturals $n>0$.

Let $\cU(\bA)$ be the ultrahull of $\bA$, i.e., $\cU(\bA) = \prod_{s\in S} \bA_s/\cD$. There is a natural embedding $\bA \to \cU(\bA)$ from which one can view $\bA$ as a subring of $\cU(\bA)$. Note that elements in $\cU(\bA)$ are ultra-polynomials over $\GF(\kappa)$.

We first prove an analogue of Fermat's little theorem for $\bA$.

\begin{theorem}
\label{thm-Fermat-little-theorem-for-A}

Let $P$ be an irreducible polynomial of degree $d \in \bZ_{>0}$ in $\bA$. For any polynomial $\alpha \in \bA$ such that $\alpha, P$ are relatively prime in $\bA$, the congruence
\begin{align*}
\alpha^{\kappa^d - 1} \equiv 1 \pmod{P\cU(\bA)}
\end{align*}
holds in $\cU(\bA)$, that is, there exists an ultra-polynomial $Q \in \cU(\bA)$ such that
\begin{align*}
\alpha^{\kappa^d - 1} - 1 = PQ.
\end{align*}

\end{theorem}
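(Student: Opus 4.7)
The plan is to reduce the theorem to the classical Fermat's little theorem applied component-wise in each finite field $\bF_{q_s}[t]$, then take ultra-limits to obtain the congruence in $\cU(\bA)$. First I would write $P = \ulim_{s \in S} P_s$ with $P_s \in \bF_{q_s}[t]$, and $\alpha = \ulim_{s \in S} \alpha_s$ with $\alpha_s \in \bF_{q_s}[t]$. Since irreducibility of a polynomial of a fixed degree $d$ is expressible by a first-order sentence in the language of rings (with parameters coming from the coefficients of $P$), Łoś' theorem (Theorem \ref{thm-Los}) implies that $P_s$ is irreducible of degree $d$ in $\bF_{q_s}[t]$ for $\cD$-almost all $s$. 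Similarly, the condition $\gcd(\alpha,P)=1$ in $\bA$ translates (via the existence of a Bézout identity, which is first-order in a fixed degree bound) to $\gcd(\alpha_s,P_s)=1$ in $\bF_{q_s}[t]$ for $\cD$-almost all $s \in S$.

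Next I would invoke the classical Fermat's little theorem in each $\bF_{q_s}[t]$: since $P_s$ is irreducible of degree $d$, the quotient $\bF_{q_s}[t]/(P_s)$ is the finite field $\bF_{q_s^d}$ of $q_s^d$ elements, whose multiplicative group has order $q_s^d - 1$. Because $\alpha_s$ is coprime to $P_s$, it represents a unit in this quotient, and hence
\begin{align*}
\alpha_s^{q_s^d-1} \equiv 1 \pmod{P_s}
\end{align*}
in $\bF_{q_s}[t]$. Therefore there exists $Q_s \in \bF_{q_s}[t]$ with $\alpha_s^{q_s^d-1} - 1 = P_s Q_s$ for $\cD$-almost all $s \in S$.

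Now I take ultra-limits. By construction $\kappa = \ulim_{s \in S} q_s$ in $\bN^{\#}$, so $\kappa^d - 1 = \ulim_{s \in S}(q_s^d - 1)$ in $\bZ^{\#}$. By the definition of the ultrapower map on $\cU(\bA)$ with hyperinteger exponent (see Subsection \ref{subsec-action-of-hyperintegers-on-ultraproducts}),
\begin{align*}
\alpha^{\kappa^d - 1} = \ulim_{s \in S} \alpha_s^{q_s^d - 1}.
\end{align*}
Setting $Q := \ulim_{s \in S} Q_s \in \cU(\bA)$, we obtain
\begin{align*}
\alpha^{\kappa^d - 1} - 1 = \ulim_{s \in S}\bigl(\alpha_s^{q_s^d - 1} - 1\bigr) = \ulim_{s \in S} P_s Q_s = P \cdot Q
\end{align*}
in $\cU(\bA)$, which is exactly the desired congruence $\alpha^{\kappa^d - 1} \equiv 1 \pmod{P\cU(\bA)}$.

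The only subtle point — and hence the main thing to get right — is the bookkeeping between the polynomial $P \in \bA = \GF(\kappa)[t]$ and the ultra-polynomial viewpoint $P = \ulim_{s \in S} P_s \in \cU(\bA)$: one needs to be sure the chosen representatives $P_s$ are indeed polynomials of degree $d$ in $\bF_{q_s}[t]$ and that the divisibility relation $P_s \mid \alpha_s^{q_s^d-1}-1$, which is an assertion at the component level, correctly assembles via $\cD$ into the divisibility $P \mid \alpha^{\kappa^d-1}-1$ in the ultra-hull. Both issues are handled by a single application of Łoś' theorem, so no further obstacles arise.
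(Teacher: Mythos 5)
Your proposal is correct and follows essentially the same route as the paper: decompose $\alpha$ and $P$ into components, transfer irreducibility, degree, and coprimality (via a Bézout identity and Łoś' theorem) to $\cD$-almost all components, apply the classical Fermat's little theorem in each $\bF_{q_s}[t]$, and assemble the resulting factorizations $\alpha_s^{q_s^d-1}-1 = P_sQ_s$ into $\alpha^{\kappa^d-1}-1 = PQ$ by taking ultra-limits. No gaps.
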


\begin{proof}

Write $\alpha = \ulim_{s \in S}\alpha_s$ for some polynomials $\alpha_s \in \bA_s$, and $P = \ulim_{s \in S}P_s$ for some polynomials $P_s \in \bA_s$. Since $P$ is irreducible, it follows from \L{}o\'s's theorem that $P_s$ is irreducible for $\cD$-almost all $s \in S$. Since $\alpha, P$ are relative prime in $\bA$ and $\bA$ is a principal ideal domain, there exist polynomials $f = \ulim_{s \in S}f_s$ and $g = \ulim_{s\in S}g_s$ in $\bA$ for some polynomials $f_s, g_s \in \bA_s$ such that
\begin{align*}
\alpha f + Pg = 1.
\end{align*}
Thus $\ulim_{s \in S}(\alpha_sf_s + P_sg_s) = 1$, which implies that $\alpha_sf_s + P_sg_s = 1$ for $\cD$-almost all $s \in S$. Therefore $\alpha_s, P_s$ are relatively prime in $\bA_s$ for $\cD$-almost all $s \in S$. Since $P$ is of degree $d$, $P_s$ is of degree $d$ for $\cD$-almost all $s \in S$. By Fermat's little theorem for $\bA_s$ (see \cite[Corollary, p.5]{rosen}), we see that
\begin{align*}
\alpha_s^{q_s^d - 1} \equiv 1 \pmod{P_s\bA_s},
\end{align*}
or equivalently, there exists a polynomial $Q_s \in \bA_s$ such that
\begin{align*}
\alpha_s^{q_s^d - 1} - 1 = P_sQ_s.
\end{align*}
Thus
\begin{align*}
\alpha^{\kappa^d - 1} - 1 = \ulim_{s \in S}(\alpha_s^{q_s^d - 1} - 1) = \ulim_{s\in S}(P_sQ_s) = PQ,
\end{align*}
where $Q = \ulim_{s \in S}Q_s \in \cU(\bA)$. Thus the theorem follows immediately.

\end{proof}

\subsection{Ultra-cyclic groups}
\label{subsec-ultra-cyclic-groups}

In this subsection, we introduce ultra-cyclic groups that we need for defining power residue symbols in $\bA$.

Let $C_s = \langle g_s \rangle$ be a finite cyclic group of order $m_s \in \bZ_{>0}$ with generator $g_s$ for $\cD$-almost all $s \in S$, and let $C = \prod_{s \in S} C_s/\cD$ be the ultraproduct of $C_s$. By \L{}o\'s'theoem, $C$ is a group. Let $g = \ulim_{s \in S} g_s$.

For each element $a \in C$, one can write $a = \ulim_{s \in S}a_s$ for some elements $a_s \in C_s$. Thus $a_s = g_s^{h_s}$ for some nonnegative integer $h_s$, and hence $a = \ulim_{s \in S}g_s^{h_s} = g^h$, where $h = \ulim_{s \in S} h_s \in \bZ^{\#}_{\ge 0}$. In setting
\begin{align*}
\langle g \rangle_u = \{g^h \; : \; h \in \bZ^{\#}\},
\end{align*}
it is clear from the above discussion that $C = \langle g \rangle_u$. In this case, we say that \textbf{$C$ is ultra-generated by $g$}, and \textbf{$g$ is an ultra-generator of $C$.}

\begin{lemma}
\label{lem-ultra-cyclic-groups}

Keep the same notation as above, and let $m = \ulim_{s \in S}m_s \in \bZ^{\#}_{> 0}$. Then
\begin{itemize}

\item [(i)] $m$ is the least positive hyperinteger such that $g^m = 0$.

\item[(ii)] For any hyperintegers $k, \ell \in \bZ^{\#}$, $g^k = g^{\ell}$ if and only if $k \equiv \ell \pmod{m}$ in $\bZ^{\#}$.

\item[(iii)] there exists a bijection from the internal set $\{1, 2, \ldots, m\} = \{i \in \bN^{\#} : i \le m\}$ to $C$.

\item [(iv)] $C = \langle g \rangle_u = \{g^i \; : \; \text{$i \in \bN^{\#}$ and $i \le m$}\}$.

\end{itemize}

\end{lemma}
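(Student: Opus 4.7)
The strategy is to prove each part by reducing to the component level via \L{}o\'s' theorem, then applying the internal cardinality machinery of Section~\ref{sec-basic-notions} and Theorem~\ref{thm-internal-cardinality}. Throughout, for an element $a\in C$, I will write $a=\ulim_{s\in S}a_s$ with $a_s\in C_s$, and use the fact that $a_s = g_s^{h_s}$ for some integer $0\le h_s<m_s$ (uniquely determined).

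For part (i), I would first compute $g^m = \ulim_{s\in S} g_s^{m_s}$; since each $C_s$ has order $m_s$, each $g_s^{m_s}$ is the identity, so $g^m=1$. For minimality, take any positive hyperinteger $n=\ulim_{s\in S}n_s$ with $g^n=1$ and $n\le m$. By \L{}o\'s' theorem, $g_s^{n_s}=1$ for $\cD$-almost all $s$, which forces $m_s\mid n_s$; combined with $1\le n_s\le m_s$ we get $n_s=m_s$ for $\cD$-almost all $s$, hence $n=m$. Part~(ii) is then essentially the standard argument: $g^k=g^\ell$ rewrites as $g^{k-\ell}=1$, and using components with the standard fact that $g_s^{r_s}=1\iff m_s\mid r_s$ plus Lemma~\ref{lem-gcd-of-hyperintegers}-style componentwise reasoning gives $g^{k-\ell}=1\iff m_s\mid (k_s-\ell_s)$ for $\cD$-almost all $s\iff m\mid (k-\ell)$ in $\bZ^{\#}$.

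For part (iii), I would observe that $C=\prod_{s\in S}C_s/\cD$ is by construction an internal set, and since $\card(C_s)=m_s$ for $\cD$-almost all $s$, its internal cardinality is
\begin{align*}
\icard(C) = \ulim_{s\in S}\card(C_s) = \ulim_{s\in S} m_s = m.
\end{align*}
An application of Theorem~\ref{thm-internal-cardinality} then produces an internal bijection $\{1,2,\ldots,m\}\to C$. For part~(iv), first note that by~(ii), the assignment $i\mapsto g^i$ gives distinct elements as $i$ ranges over $\{1,\ldots,m\}$, so $\{g^i:i\in\bN^{\#},\,i\le m\}$ is an internal subset of $C$ whose internal cardinality equals $m$ (it is the ultraproduct of $\{g_s^{i_s}:1\le i_s\le m_s\}=C_s$). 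Proposition~\ref{prop-comparing-internal-sets-with-same-cardinality} then gives equality with $C$. Alternatively, directly: given $a=\ulim_{s\in S}a_s\in C$, pick integers $1\le h_s\le m_s$ with $a_s=g_s^{h_s}$, set $h=\ulim_{s\in S}h_s\in\bN^{\#}$, note $h\le m$, and conclude $a=g^h\in\langle g\rangle_u$.

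The main subtlety I anticipate is in part~(i): I must rule out ``nonstandard'' positive hyperintegers $n<m$ satisfying $g^n=1$, not just standard integers. The key point, which must be stated carefully, is that every hyperinteger is automatically representable as $\ulim_{s\in S}n_s$, so the argument is purely componentwise via \L{}o\'s' theorem; invoking Theorem~\ref{thm-internal-least-number-principle} on the internal set $\{n\in\bN^{\#}:g^n=1\}=\{n\in\bN^{\#}:m\mid n\}$ gives an independent clean proof of minimality. Everything else is routine transfer, with the cardinality-based arguments in (iii) and (iv) being the cleanest way to avoid tedious componentwise bookkeeping.
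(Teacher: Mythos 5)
Your proposal is correct and follows essentially the same route as the paper: componentwise reduction via \L{}o\'s' theorem for (i) and (ii), the internal-cardinality computation $\icard(C)=m$ together with Theorem \ref{thm-internal-cardinality} for (iii), and the componentwise representation $a_s=g_s^{h_s}$ for (iv) (which the paper compresses into ``follows from (i) and (iii)'' after the discussion preceding the lemma). The extra appeal to Theorem \ref{thm-internal-least-number-principle} in (i) is a harmless alternative, not a departure.
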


\begin{proof}

Since $\card(C_s) = m_s$, $\icard(C) = \ulim_{s \in S}m_s = m$. Thus, part (iii) follows immediately from Theorem \ref{thm-internal-cardinality}.

By definition, $g^m = \ulim_{s \in S}g_s^{m_s} = 1$. Suppose that there is a hypernatual $h = \ulim_{s \in S}h_s$ for some natural numbers $h_s \in \bN$ such that $g^h = 1$ and $h < m$. Thus
\begin{align*}
g^h = \ulim_{s \in S}g_s^{h_s} = 1,
\end{align*}
which implies that $g_s^{h_s} = 1$ for $\cD$-almost all $s\in S$. Since $m_s$ is the order of $g_s$, we deduce that $m_s \le h_s$, and thus $m \le h$, a contradiction. Therefore part (i) follows immediately.

For part (ii), write $k = \ulim_{s \in S}k_s$ and $\ell = \ulim_{s \in S}\ell_s$ for some integers $k_s, \ell_s$. Suppose that
\begin{align*}
g^k = \ulim_{s \in S}g_s^{k_s} = \ulim_{s \in S}g_s^{\ell_s} = g^{\ell},
\end{align*}
and thus $g_s^{k_s} = g_s^{\ell_s}$ for $\cD$-almost all $s \in S$. Since $C_s$ is a finite cyclic group of order $m_s$, we deduce that $k_s \equiv \ell_s \pmod{m_s}$ for $\cD$-almost all $s \in S$. Thus \L{}o\'s' theorem implies that $k \equiv \ell \pmod{m}$ in $\bZ^{\#}$.

The reverse implication of part (ii) is obvious, and thus part (ii) follows immediately.

Part (iv) follows immediately from parts (i) and (iii).

\end{proof}

Lemma \ref{lem-ultra-cyclic-groups} motivates the following notion.
\begin{definition}
\label{def-hyper-cyclic}
(ultra-cyclic groups)

Keep the same notation as above. The group $C$ is called an \textbf{ultra-cyclic group of ultra-order $m$}.

A subgroup $H$ of $C$ is called an \textbf{ultra-cyclic subgroup of $C$} if $H$ is itself ultra-cyclic.

\end{definition}

Note that the ultra-order of $C$ is precisely its internal cardinality.

\begin{lemma}
\label{lem-ultra-orders-of-elements}

Let $C = \prod_{s \in S}C_s/\cD$ be an ultra-cyclic group of ultra-order $m = \ulim_{s \in S}m_s \in \bN^{\#}$, where $C_s = \langle g_s \rangle$ is a finite cyclic group with generator $g_s$ of order $m_s \in \bN$ for $\cD$-almost all $s \in S$. Let $g = \ulim_{s\in S}g_s \in C$ be an ultra-generator of $C$, i.e., $C = \langle g \rangle_u$. Then

Then
\begin{itemize}

\item [(i)] For any element of $h = g^a$ of $C$ for some positive hyperinteger $a = \ulim_{s\in S}a_s\in \bZ^{\#}_{>0}$ with positive integers $a_s$, $H := \langle h \rangle_u = \{h^b \; : \; b \in \bZ^{\#}\}$ is an ultra-cyclic subgroup of $C$ whose ultra-order is $m/\gcd(a, m) \in \bN^{\#}$. In particular, the ultra-order of $H$ divides $m$ in $\bZ^{\#}$.

\item [(ii)] Conversely, for any ultra-cyclic subgroup $H$ of $C$, there exists an element $h$ of $C$ such that $H = \langle h \rangle_u$.

\end{itemize}

\end{lemma}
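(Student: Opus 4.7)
The overall strategy is to reduce both assertions to the corresponding well-known facts about finite cyclic groups componentwise, and then to transfer through the ultraproduct via \L{}o\'s' theorem combined with Lemma \ref{lem-gcd-of-hyperintegers} on greatest common divisors of hyperintegers. The setup developed in Section \ref{subsec-ultra-cyclic-groups} makes both steps essentially mechanical, so there is no deep obstacle; the main care is in verifying that the representations are independent of the chosen lifts.

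For part (i), I would first write $h = g^a = \ulim_{s\in S} g_s^{a_s}$ and set $h_s = g_s^{a_s} \in C_s$ for $\cD$-almost all $s \in S$. By standard cyclic group theory, $H_s := \langle h_s \rangle$ is a cyclic subgroup of $C_s$ of order $m_s/\gcd(a_s, m_s)$. The key step is the identification
\[
H \;=\; \langle h \rangle_u \;=\; \prod_{s \in S} H_s/\cD,
\]
which is seen by noting that for any hyperinteger $b = \ulim_{s} b_s$ one has $h^b = \ulim_s h_s^{b_s} \in \prod_s H_s/\cD$, and conversely every element of $\prod_s H_s/\cD$ is of the form $\ulim_s h_s^{b_s} = h^b$ for some $b \in \bZ^{\#}$. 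This exhibits $H$ as an ultra-cyclic subgroup of $C$ with ultra-generator $h$. The ultra-order of $H$, which by definition coincides with its internal cardinality, is then
\[
\icard(H) \;=\; \ulim_{s\in S}|H_s| \;=\; \ulim_{s\in S}\frac{m_s}{\gcd(a_s, m_s)} \;=\; \frac{m}{\gcd(a,m)},
\]
where the last equality uses Lemma \ref{lem-gcd-of-hyperintegers}. The divisibility claim $\left(m/\gcd(a,m)\right)\mid m$ in $\bZ^{\#}$ is then immediate from the componentwise divisibility $(m_s/\gcd(a_s,m_s))\mid m_s$ and the definition of divisibility of hyperintegers from Section \ref{subsec-hyperintegers}.

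For part (ii), I would appeal directly to the definition of ultra-cyclic: $H$ being ultra-cyclic means it can be written as $H = \prod_{s\in S}H'_s/\cD$ for finite cyclic groups $H'_s$ with chosen generators $h'_s$. Setting $h = \ulim_{s\in S}h'_s$, the discussion preceding Definition \ref{def-hyper-cyclic} shows $H = \langle h \rangle_u$, and $h \in C$ because $H \subseteq C$. The only subtle point worth double-checking is well-definedness: one must note that the ultra-power action of $\bZ^{\#}$ on $h$ gives an element depending only on the equivalence classes of $h$ and of the exponent, which is precisely the content of the action defined in Section \ref{subsec-action-of-hyperintegers-on-ultraproducts}, extended from nonnegative hyperintegers to all of $\bZ^{\#}$ (legitimate since every element of an ultra-cyclic group is invertible). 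This is the only bookkeeping point that requires attention, and it follows from \L{}o\'s' theorem applied to the componentwise description.
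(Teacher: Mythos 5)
Your proposal is correct and follows essentially the same route as the paper: for (i) both arguments identify $\langle g^a\rangle_u$ with $\prod_{s\in S}\langle g_s^{a_s}\rangle/\cD$ by checking the two inclusions componentwise and then compute the ultra-order via Lemma \ref{lem-gcd-of-hyperintegers}. The only cosmetic difference is in (ii), where the paper writes the componentwise subgroups as $\langle g_s^{a_s}\rangle$ to exhibit the ultra-generator as $g^a$, while you take the componentwise generators directly; the two are equivalent since any $h\in C=\langle g\rangle_u$ is already an ultra-power of $g$.
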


\begin{proof}

For part (i), we see that $H_s := \langle g_s^{a_s} \rangle$ is a finite cyclic subgroup of $C_s$ of order $m_s/\gcd(a_s, m_s)$. We prove that $H = \prod_{s \in S}H_s/\cD$. Indeed, since $h = g^a = \ulim_{s \in S}g_s^{a_s}$, we see immediately that $H \subseteq \prod_{s \in S}H_s/\cD$.

Take an arbitrary element $\ulim_{s \in S}\ell_s \in \prod_{s \in S}H_s/\cD$. Then $\ell_s = g_s^{b_sa_s}$ for some $b_s\in \bZ$. Thus
\begin{align*}
\ulim_{s\in S}\ell_s = \ulim_{s \in S}g_s^{b_sa_s} = g^{ab} = (g^a)^b = h^b \in \langle h\rangle_u = H,
\end{align*}
where $b = \ulim_{s \in S}b_s \in \bZ^{\#}$. Thus $H = \prod_{s \in S}H_s/\cD$, which implies that $H$ is an ultra-cyclic subgroup of $C$. Furthermore we know that $H_s$ is a finite cyclic group of order $m_s/\gcd(a_s, m_s)$, and it thus follows from Lemma \ref{lem-gcd-of-hyperintegers} that $H$ is of ultra-order $\ulim_{s \in S}m_s/\gcd(a_s, m_s) = m/\gcd(a, m)$, which proves part (i).

For part (ii), write $H = \prod_{s \in S}H_s/\cD$, where $H_s$ are finite cyclic groups. Since $H$ is a subgroup of $C$, $H_s$ is a subgroup of $C_s$, and thus $H_s = \langle g_s^{a_s}\rangle$ for some integer $a_s$. Repeating the same arguments as above, we deduce that $H = \langle g^a \rangle_u$, where $a = \ulim_{s \in S}a_s \in \bZ^{\#}$.

\end{proof}

The following notion is motivated by the above proposition.

\begin{definition}
\label{def-ultra-order-of-elements}

Let $C = \ulim_{s \in S} C_s$ be an ultra-cyclic group of ultra-order $m = \ulim_{s \in S}m_s$, where $m_s$ is the order of finite cyclic group $C_s$. Let $h = \ulim_{s \in S}h_s$ be an element in $C$ for some elements $h_s \in C_s$. The \textbf{ultra-order} of $h$ is defined to be the ultra-order of the ultra-cyclic subgroup $\langle h \rangle_u$ of $C$ that is ultra-generated by $h$.

\end{definition}

The following is immediate from Lemma \ref{lem-ultra-orders-of-elements}.

\begin{corollary}
\label{cor-ultra-order-of-element}

Let $C$ be an ultra-cyclic group ultra-generated by an element $g$ of ultra-order $m = \ulim_{s \in S}m_s$, i.e., $C = \langle g \rangle_u$, where $g = \ulim_{s \in S}g_s$ and $C_s = \langle g_s \rangle$ is a finite cyclic group with generator $g_s$ of order $m_s$ for $\cD$-almost all $s \in S$. For any hyperinteger $a \in \bZ^{\#}$, the ultra-order of $g^a$ is $m/\gcd(a, m)$.

\end{corollary}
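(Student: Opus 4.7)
The plan is to derive this corollary as a direct consequence of Lemma \ref{lem-ultra-orders-of-elements} combined with Definition \ref{def-ultra-order-of-elements}. By the definition, the ultra-order of $g^a$ is, \emph{by construction}, the ultra-order of the ultra-cyclic subgroup $\langle g^a \rangle_u$ of $C$, so the problem reduces to identifying the latter.

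For positive hyperintegers $a = \ulim_{s \in S} a_s$, Lemma \ref{lem-ultra-orders-of-elements}(i) applies verbatim: each $H_s := \langle g_s^{a_s}\rangle$ is a finite cyclic group of order $m_s/\gcd(a_s, m_s)$, and the lemma tells us the ultra-order of $\langle g^a\rangle_u$ is $\ulim_{s \in S} m_s/\gcd(a_s, m_s) = m/\gcd(a, m)$, where the last equality uses Lemma \ref{lem-gcd-of-hyperintegers}. This is exactly the desired formula.

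What remains is to extend from $a \in \bZ^{\#}_{>0}$ to arbitrary $a \in \bZ^{\#}$. If $a = 0$, then $g^a = 1$, so $\langle g^a\rangle_u$ is trivial of ultra-order $1$, which matches $m/\gcd(0, m) = m/m = 1$. If $a < 0$, I would reduce to the positive case by observing that $\langle g^a \rangle_u = \langle g^{-a}\rangle_u$ because in any ultra-cyclic group $g^a$ and $(g^a)^{-1} = g^{-a}$ generate the same subgroup; and $\gcd(a, m) = \gcd(-a, m)$ in $\bZ^{\#}$, as follows componentwise from the analogous fact in $\bZ$ via Lemma \ref{lem-gcd-of-hyperintegers}. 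Alternatively, using Lemma \ref{lem-ultra-cyclic-groups}(ii), one can replace $a$ by any hyperinteger $b \equiv a \pmod{m}$ with $0 < b \le m$, and the gcd is preserved under this reduction. The main obstacle here is essentially cosmetic --- the substantive content is carried entirely by Lemma \ref{lem-ultra-orders-of-elements}(i), and the only care needed is in the sign convention for $\gcd$ on all of $\bZ^{\#}$, which the paper's gcd definition handles by taking a positive representative.
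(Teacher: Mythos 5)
Your proposal is correct and follows the same route as the paper, which simply records this corollary as immediate from Lemma \ref{lem-ultra-orders-of-elements} together with Definition \ref{def-ultra-order-of-elements}. Your additional handling of $a = 0$ and $a < 0$ (reducing to the positive case via $\langle g^a\rangle_u = \langle g^{-a}\rangle_u$ and $\gcd(a,m) = \gcd(-a,m)$) fills in a sign detail the paper leaves implicit, and it is sound.
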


\begin{lemma}
\label{lem-divisibility-of-ultra-order}

Let $C$ be an ultra-cyclic group ultra-generated by an element $g$ of ultra-order $m = \ulim_{s \in S}m_s$, i.e., $C = \langle g \rangle_u$, where $g = \ulim_{s \in S}g_s$ and $C_s = \langle g_s \rangle$ is a finite cyclic group with generator $g_s$ of order $m_s$ for $\cD$-almost all $s \in S$. Let $h$ be an element of $C$ whose ultra-order is $d = \ulim_{s\in S}d_s\in \bN^{\#}$ for some natural numbers $d_s$. If $h^n = 1$ for some hyperinteger $n = \ulim_{s\in S}n_s \in \bZ^{\#}$ with integers $n_s$, then $d$ divides $n$ in $\bZ^{\#}$.

\end{lemma}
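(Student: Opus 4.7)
The plan is to reduce the statement to the classical fact that in a finite cyclic group the order of an element divides any exponent that annihilates it, and then transfer the componentwise divisibility to $\bZ^{\#}$ via the definition of divisibility of hyperintegers (equivalently, \L{}o\'s' theorem). All the machinery needed has already been set up: Definition \ref{def-ultra-order-of-elements} together with Lemma \ref{lem-ultra-orders-of-elements}(i) tells us how the ultra-order of $h$ is built from component orders, and Lemma \ref{lem-ultra-cyclic-groups}(i) makes the identity element of $C$ behave as expected under the ultraproduct.

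First, I would write $h=\ulim_{s\in S}h_s$ with $h_s\in C_s$ for $\cD$-almost all $s\in S$. Since $h\in C=\langle g\rangle_u$, we may (using $C = \langle g\rangle_u$) express $h=g^a$ for some hyperinteger $a=\ulim_{s\in S}a_s$, so that $h_s=g_s^{a_s}$ for $\cD$-almost all $s$. By Lemma \ref{lem-ultra-orders-of-elements}(i) applied componentwise, the order of $h_s$ in $C_s$ is $m_s/\gcd(a_s,m_s)$ for $\cD$-almost all $s$, and hence, by Lemma \ref{lem-gcd-of-hyperintegers}, the ultra-order of $h$ equals $d=\ulim_{s\in S}\bigl(m_s/\gcd(a_s,m_s)\bigr)$. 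In particular, if we set $d_s$ to be the order of $h_s$ in $C_s$, then $d=\ulim_{s\in S}d_s$.

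Next, the hypothesis $h^n=1$ unpacks via $h^n=\ulim_{s\in S}h_s^{n_s}$ and Lemma \ref{lem-ultra-cyclic-groups}(i) (or directly the definition of the identity in the ultraproduct) to say that $h_s^{n_s}=1_{C_s}$ for $\cD$-almost all $s\in S$. By the classical statement for finite cyclic groups, this forces $d_s\mid n_s$ for $\cD$-almost all $s$, i.e., there exist integers $c_s$ with $n_s=d_sc_s$ for $\cD$-almost all $s$. Setting $c=\ulim_{s\in S}c_s\in\bZ^{\#}$, we get $n=dc$ in $\bZ^{\#}$, which is exactly the statement that $d$ divides $n$ in the sense defined in Section \ref{subsec-hyperintegers}.

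The argument is essentially mechanical transfer, and there is no serious obstacle; the only point that needs some care is bookkeeping the two different representations of the ultra-order of $h$ (as $m/\gcd(a,m)$ through Corollary \ref{cor-ultra-order-of-element}, versus as $\ulim_{s\in S}d_s$) and verifying they agree so that the classical divisibility at each $s$ transfers correctly. Once those are identified, the conclusion is immediate.
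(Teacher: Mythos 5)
Your proposal is correct and follows essentially the same route as the paper: write $h=g^a$, identify the ultra-order $d$ with the ultra-limit of the component orders of $h_s=g_s^{a_s}$ (via Lemma \ref{lem-ultra-orders-of-elements}), unpack $h^n=1$ componentwise, apply the classical divisibility fact in each finite cyclic group $C_s$, and transfer to get $d\mid n$ in $\bZ^{\#}$. The bookkeeping point you flag (matching the given representation $d=\ulim_{s\in S}d_s$ with the componentwise orders on a $\cD$-large set) is handled implicitly in the paper in exactly the same way.
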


\begin{proof}

We can write $h = g^a$ for some hypernatural number $a = \ulim_{s\in S}a_s$, where the $a_s$ are certain natural numbers. By assumption, the ultra-cyclic subgroup $H$ of $C$ ultra-generated by $h = g^a$, say $H = \langle g^a \rangle_u$ is of the form
\begin{align*}
H = \prod_{s \in S}H_s/\cD,
\end{align*}
where $H_s = \langle g_s^{a_s}\rangle$ is a finite cyclic subgroup of $C_s$ of order $d_s$ for $\cD$-almost all $s \in S$. Thus the order of $g_s^{a_s}$ in $C_s$ is $d_s$ for $\cD$-almost all $s \in S$. Since $h^n = g^{an} = \ulim_{s \in S}(g_s^{a_s})^{n_s} = 1$, we deduce that $(g_s^{a_s})^{n_s} = 1$ for $\cD$-almost all $s \in S$, and thus since $g_s^{a_s}$ has order $d_s$, $d_s$ divides $n_s$ in $\bZ$ for $\cD$-almost all $s \in S$. Therefore $d = \ulim_{s \in S}d_s$ divides $n = \ulim_{s \in S}n_s$ in $\bZ^{\#}$.

\end{proof}

\begin{theorem}
\label{thm-uniqueness-of-ultra-cyclic-subgroups}

Let $C = \ulim_{s \in S}C_s = \langle g \rangle_u$ be an ultra-cyclic group of ultra-order $m = \ulim_{s \in S}m_s \in \bN_{\#}$, where $C_s =\langle g_s\rangle$ is a finite cyclic group of order $m_s$ generated by $g_s$ for each $s \in S$, where $g = \ulim_{s \in S}g_s$. For any hyperinteger $k \in \bZ^{\#}$, $\langle g^k \rangle_u = \langle g^{\gcd(k, m)} \rangle_u$. In particular, any ultra-cyclic subgroup of $C$ is of the form $\langle g^d\rangle_u$ for some positive divisor $d \in \bZ^{\#}$ of $m$ in $\bZ^{\#}$. Different values of $d$ give ultra-cyclic subgroups with different ultra-orders, and thus there exists a unique ultra-cyclic subgroup of $C$ having a given ultra-order.

\end{theorem}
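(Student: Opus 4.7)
The plan is to reduce everything to Lemma \ref{lem-ultra-orders-of-elements}, Corollary \ref{cor-ultra-order-of-element}, and the component-wise description of $\gcd$ in $\bZ^{\#}$ given by Lemma \ref{lem-gcd-of-hyperintegers}, then transfer the familiar structure theorem for finite cyclic groups via \L{}o\'s' theorem.

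First I would prove the identity $\langle g^k\rangle_u = \langle g^{\gcd(k,m)}\rangle_u$. Write $k=\ulim_{s\in S}k_s$ with $k_s\in\bZ$, so that Lemma \ref{lem-gcd-of-hyperintegers} gives $\gcd(k,m)=\ulim_{s\in S}\gcd(k_s,m_s)$. In each finite cyclic component $C_s=\langle g_s\rangle$ of order $m_s$, the classical structure theorem yields $\langle g_s^{k_s}\rangle=\langle g_s^{\gcd(k_s,m_s)}\rangle$. Taking the ultraproduct of these equalities (using the description of $\langle g^a\rangle_u$ as $\prod_{s\in S}\langle g_s^{a_s}\rangle/\cD$ established in the proof of Lemma \ref{lem-ultra-orders-of-elements}) yields $\langle g^k\rangle_u=\langle g^{\gcd(k,m)}\rangle_u$.

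Next, by Lemma \ref{lem-ultra-orders-of-elements}(ii), any ultra-cyclic subgroup $H$ of $C$ has the form $H=\langle g^a\rangle_u$ for some $a\in\bZ^{\#}$. Setting $d=\gcd(a,m)$, the identity just proved gives $H=\langle g^d\rangle_u$, and $d$ is a positive hyperinteger dividing $m$ in $\bZ^{\#}$. Moreover, by Corollary \ref{cor-ultra-order-of-element}, the ultra-order of $\langle g^d\rangle_u$ equals $m/\gcd(d,m)=m/d$, since $d\mid m$.

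Finally I would deduce uniqueness. If $d_1,d_2$ are positive divisors of $m$ in $\bZ^{\#}$ with $\langle g^{d_1}\rangle_u$ and $\langle g^{d_2}\rangle_u$ having the same ultra-order, then $m/d_1=m/d_2$ in $\bN^{\#}$, whence $d_1=d_2$ (cancellation in $\bZ^{\#}$, which holds by \L{}o\'s transfer from $\bZ$), so the two subgroups coincide. Conversely, given any positive hyperinteger $r$ dividing $m$, the subgroup $\langle g^{m/r}\rangle_u$ has ultra-order $r$, providing existence. This yields the claimed bijection between positive divisors of $m$ in $\bZ^{\#}$ and ultra-cyclic subgroups of $C$. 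The only mild subtlety is ensuring that $\gcd$-cancellation and divisibility arguments familiar from $\bZ$ survive the passage to $\bZ^{\#}$; all such facts are first-order and follow by \L{}o\'s' theorem, which I would invoke explicitly at each step rather than assume implicitly.
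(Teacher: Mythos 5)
Your proposal is correct, but it reaches the key identity $\langle g^k\rangle_u = \langle g^{\gcd(k,m)}\rangle_u$ by a different route than the paper. The paper argues entirely at the nonstandard level, mimicking the classical group-theoretic proof: the inclusion $\langle g^k\rangle_u \subseteq \langle g^{\gcd(k,m)}\rangle_u$ is immediate since $\gcd(k,m)$ divides $k$, and the reverse inclusion comes from the nonstandard B\'ezout identity (Theorem \ref{thm-Bezout}), writing $\gcd(k,m) = xk + ym$ with $x,y \in \bZ^{\#}$ and using $g^m = 1$ to get $g^{\gcd(k,m)} = (g^k)^x$. You instead transfer the classical structure theorem componentwise: $\langle g_s^{k_s}\rangle = \langle g_s^{\gcd(k_s,m_s)}\rangle$ in each finite $C_s$, combined with Lemma \ref{lem-gcd-of-hyperintegers} and the identification $\langle g^a\rangle_u = \prod_{s\in S}\langle g_s^{a_s}\rangle/\cD$ from the proof of Lemma \ref{lem-ultra-orders-of-elements}. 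Both are valid; the paper's version buys a proof that runs parallel to the classical cyclic-group argument within the ultra-group formalism it is building (which is the stylistic point of the section), while yours is shorter and makes the transfer principle do the work, at the mild cost of leaning on the componentwise description, which Lemma \ref{lem-ultra-orders-of-elements}(i) states only for positive exponents (its proof, as you note, handles arbitrary integer exponents, so this is a presentational wrinkle, not a gap). The remaining steps -- invoking Lemma \ref{lem-ultra-orders-of-elements}(ii) to write $H = \langle g^a\rangle_u$, setting $d = \gcd(a,m)$, and reading off the ultra-order $m/d$ from Corollary \ref{cor-ultra-order-of-element} -- coincide with the paper; your uniqueness step via cancellation of $m/d_1 = m/d_2$ in $\bZ^{\#}$ is the contrapositive of the paper's "different $d$, different ultra-order," and your explicit existence argument (taking $\langle g^{m/r}\rangle_u$ for a divisor $r$ of $m$) supplies a detail the paper leaves implicit.
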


\begin{proof}

 Take an arbitrary hyperinteger $k \in \bZ^{\#}$. We first show that $g^k$ and $g^{\gcd(k, m)}$ are both ultra-powers of each other.

Since $\gcd(k, m)$ divides $k$, $g^k$ is certainly a ultra-power of $g^{\gcd(k, m)}$, and thus $\langle g^k \rangle_u \subseteq \langle g^{\gcd(k, m)}\rangle_u$. By the nonstandard B\'ezout's identity (see Theorem \ref{thm-Bezout}), one can write
\begin{align*}
\gcd(k, m) = xk + ym
\end{align*}
for some hyperintegers $x, y \in \bZ^{\#}$. By part (i) of Lemma \ref{lem-ultra-cyclic-groups},
\begin{align*}
g^{\gcd(k, m)} = (g^k)^x(g^m)^y = (g^k)^x,
\end{align*}
which implies that $\langle g^k\rangle_u = \langle g^{\gcd(k, m)}\rangle_u$.

We have shown in Lemma \ref{lem-ultra-orders-of-elements} above that any ultra-cyclic subgroup $H$ of $C$ is ultra-generated by $g^k$ for some hyperinteger $k$. From the above discussion, $H = \langle g^d \rangle_u$, where $d = \gcd(k, m)$ is a positive divisor of $m$ in $\bZ^{\#}$. By Corollary \ref{cor-ultra-order-of-element}, $g^d$ has ultra-order $m/d$, and thus the ultra-order of $H = \langle g^d \rangle_u$ is $m/d$. Therefore if $d, d'$ are distinct positive divisors of $m$ in $\bZ^{\#}$, then since the two ultra-cyclic subgroups $\langle g^d\rangle_u$, $\langle g^{d'}\rangle_u$ have different ultra-orders $m/d$, $m/d'$, respectively, we deduce that $\langle g^d \rangle_u \ne \langle g^{d'} \rangle_u$.

\end{proof}

\begin{example}
\label{exam-multiplicative-group-of-ultra-Galois-fields}

Let $\kappa = \ulim_{s \in S}q_s$ be an ultra-power of an ultra-prime in $\bN^{\#}$, where the $q_s = p_s^{d_s}$ for some primes $p_s$ and some natural numbers $d_s$ for $\cD$-almost all $s \in S$. By Corollary \ref{cor-uniqueness-of-ultra-Galois-fields}, for each hypernatural number $m = \ulim_{s \in S}m_s$, $\GF(\kappa)$ has a unique ultra-field extension $\GF(\kappa^m)$ of ultre-degree $m$ over $\GF(\kappa)$ that can be written in the form $\prod_{s \in S}\bF_{{q_s}^{m_s}}/\cD$.

By \L{}o\'s' theorem, the multiplicative group $\GF(\kappa^m)^{\times}$ consisting of all nonzero elements of $\GF(\kappa^m)$, is the ultraproduct $\prod_{s\in S}\bF_{q_s^{m_s}}^{\times}/\cD$. We know that $\bF_{q_s^{m_s}}^{\times}$ is a finite cyclic group of order $q_s^{m_s} - 1$. Thus $\GF(\kappa^m)^{\times}$ is an ultra-cyclic group of ultra-order $\ulim_{s \in S}(q_s^{m_s} - 1) = \beta^m - 1$.

\end{example}

\begin{example}
\label{exam-ultra-roots-of-unity}

Let $F = \prod_{s \in S}F_s/\cD$ be an ultra-field for some fields $F_s$. Let $n = \ulim_{s \in S}n_s$ be a hypernatural number for some natural numbers $n_s$ such that $n_s$ is relatively prime to the characteristic of $F_s$ for $\cD$-almost all $s \in S$. For each $s \in S$, let $\mu_{n_s}(F_s)$ denote the group of $n_s$-th roots of unity in an algebraic closure $F_s^{\alg}$ of $F_s$, i.e., the group of zeros of the polynomial $f_s(x) = x^{n_s} - 1 \in F_s[x]$ in $F_s^{\alg}$. It is known that $\mu_{n_s}(F_s)$ is a finite cyclic group of order $n_s$.

Let $f = \ulim_{s \in S}f_s$ be an ultra-polynomial in the ultra-hull $\cU(F[x])$. One can write
\begin{align*}
f(x) = \ulim_{s \in S}(x^{n_s} - 1) = x^n - 1.
\end{align*}

An \textbf{$n$-th ultra-root of unity in $F^{\alg_u}$} is defined to be a zero of $f$ in an ultra-algebraic closure $F^{\alg_u} = \prod_{s \in S}F_s^{\alg}/\cD$, i.e., an element $a = \ulim_{s \in S}a_s \in F^{\alg_u}$ for some elements $a_s \in F_s^{\alg}$ such that $a_s$ is an $n_s$-th root of unity in $F_s^{\alg}$. Let $\mu_n(F)$ denote the set of $n$-th ultra-roots of unity in $F^{\alg_u}$. Then one can immediately see from Corollary \ref{cor-upper-bound-for-zero-set-of-ultra-polynomial} that
\begin{align*}
\mu_n(F) = \prod_{s \in S}\mu_{n_s}(F_s)/\cD.
\end{align*}

Since $\mu_{n_s}(F_s)$ is a finite cyclic group of order $n_s$, $\mu_n(F)$ is an ultra-cyclic group of ultra-order $n = \ulim_{s \in S}n_s$.

Note that if $n$ is a positive integer in $\bZ$, $\mu_n(F)$ is the usual group of $n$-th roots of unity.

\end{example}

We maintain the same notation as in Examples \ref{exam-multiplicative-group-of-ultra-Galois-fields} and \ref{exam-ultra-roots-of-unity}. Suppose now that $n$ divides $\kappa^m -1 = \ulim_{s\in S}(q_s^m - 1)$ in $\bZ^{\#}$. Then $n_s$ divides $q_s^m - 1$, and thus $n_s$ is relatively prime to $p_s$, the characteristic of $\bF_{q_s^{m_s}}$. Thus $\mu_n(\GF(\kappa^m))$ is an ultra-cyclic group of ultra-order $n$.

Since $n$ divides $\kappa^m - 1$ which is the ultra-order of the ultra-cyclic group $\GF(\kappa^m)^{\times}$, it follows from Theorem \ref{thm-uniqueness-of-ultra-cyclic-subgroups}, there exists a unique ultra-cyclic subgroup, say $H$ of $\GF(\kappa^m)^{\times}$ such that $H$ has ultra-order $n$. For every element $z$ of $H$, we know from part (i) of Lemma \ref{lem-ultra-cyclic-groups} that $z^n - 1 = 0$, and thus $H$ is precisely the ultra-cyclic group $\mu_n(\GF(\kappa^m))$ of $n$-th ultra-roots of unity. Therefore we obtain the following.

\begin{corollary}
\label{cor-nth-ultra-roots-of-unity-in-Galois-ultra-fields}

In order that $\mu_n(\GF(\kappa^m))$ is an ultra-cyclic subgroup of $\GF(\kappa^m)^{\times}$ of ultra-order exactly $n$, it is necessary and sufficient that $n$ divides $\kappa^m - 1$.

\end{corollary}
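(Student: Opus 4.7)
The plan is to prove the two implications separately, relying on the ultra-cyclic subgroup machinery already developed in Section \ref{subsec-ultra-cyclic-groups}.

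For sufficiency, the argument is essentially assembled in the paragraph immediately preceding the corollary and can be re-stated cleanly as follows. Assume $n \mid \kappa^m - 1$ in $\bZ^{\#}$. Writing $n = \ulim_{s \in S}n_s$ and $\kappa^m - 1 = \ulim_{s \in S}(q_s^{m_s} - 1)$, \L{}o\'s' theorem gives $n_s \mid q_s^{m_s} - 1$ for $\cD$-almost all $s \in S$, hence $\gcd(n_s, p_s) = 1$ for $\cD$-almost all $s$. Example \ref{exam-ultra-roots-of-unity} then applies and shows that $\mu_n(\GF(\kappa^m)) = \prod_{s\in S}\mu_{n_s}(\bF_{q_s^{m_s}})/\cD$ is ultra-cyclic of ultra-order $n$. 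Moreover, by Example \ref{exam-multiplicative-group-of-ultra-Galois-fields}, $\GF(\kappa^m)^{\times}$ is an ultra-cyclic group of ultra-order $\kappa^m - 1$, and by Theorem \ref{thm-uniqueness-of-ultra-cyclic-subgroups} it contains a unique ultra-cyclic subgroup $H$ of ultra-order $n$. Part (i) of Lemma \ref{lem-ultra-cyclic-groups} gives $z^n = 1$ for every $z \in H$, so $H \subseteq \mu_n(\GF(\kappa^m))$; since both are internal of internal cardinality $n$, Proposition \ref{prop-comparing-internal-sets-with-same-cardinality} forces $H = \mu_n(\GF(\kappa^m))$, confirming that $\mu_n(\GF(\kappa^m))$ is an ultra-cyclic subgroup of ultra-order exactly $n$.

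For necessity, suppose $\mu_n(\GF(\kappa^m))$ is an ultra-cyclic subgroup of $\GF(\kappa^m)^{\times}$ whose ultra-order equals $n$. By Example \ref{exam-multiplicative-group-of-ultra-Galois-fields}, the ambient group $\GF(\kappa^m)^{\times}$ is ultra-cyclic of ultra-order $\kappa^m - 1$. Part (i) of Lemma \ref{lem-ultra-orders-of-elements} asserts that the ultra-order of any ultra-cyclic subgroup of an ultra-cyclic group divides the ultra-order of the ambient group in $\bZ^{\#}$; applied here, this yields $n \mid \kappa^m - 1$ in $\bZ^{\#}$.

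Neither direction presents a serious obstacle, because all the required ultra-cyclic theory (existence and uniqueness of subgroups of a prescribed ultra-order, comparison of internal sets of equal internal cardinality, and the description of $\mu_n(F)$ as an ultra-cyclic group of ultra-order $n$ under a coprimality hypothesis on the characteristic) has already been set up. The only mild care needed is the passage $n \mid \kappa^m - 1 \Rightarrow \gcd(n_s, \mathrm{char}(\bF_{q_s^{m_s}})) = 1$ on an element of $\cD$, which is a routine application of \L{}o\'s' theorem together with the observation that $n_s \mid q_s^{m_s} - 1$ implies $p_s \nmid n_s$; this is the one place where one must be slightly careful to justify invoking Example \ref{exam-ultra-roots-of-unity}.
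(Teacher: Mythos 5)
Your proof is correct and follows essentially the same route as the paper: the sufficiency argument (deducing $\gcd(n_s,p_s)=1$ from $n\mid\kappa^m-1$, invoking Example \ref{exam-ultra-roots-of-unity}, Theorem \ref{thm-uniqueness-of-ultra-cyclic-subgroups}, and identifying the unique subgroup $H$ of ultra-order $n$ with $\mu_n(\GF(\kappa^m))$) is exactly the discussion the paper gives immediately before the corollary, with your appeal to Proposition \ref{prop-comparing-internal-sets-with-same-cardinality} merely making the final identification explicit. The necessity direction, which the paper leaves implicit, is correctly dispatched via Lemma \ref{lem-ultra-orders-of-elements} (strictly, parts (i) and (ii) together, since a general ultra-cyclic subgroup must first be given an ultra-generator), so nothing essential is missing.
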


\subsection{Power residue symbol}

In this section, we describe power residue symbol for $\bA$.

\begin{lemma}
\label{lem-U/PU=A/PA-iso}

Let $P$ be an irreducible polynomial of degree $d \in \bZ_{>0}$. Then
\begin{itemize}

\item [(i)] For any ultra-polynomial $\alpha \in \cU(\bA)$, there exists a unique polynomial $R$ of degree less than $\deg(P) = d$ in $\bA$ such that
\begin{align*}
\alpha \equiv R \pmod{P\cU(\bA)}.
\end{align*}

\item [(ii)] In particular, $P\cU(\bA)$ is a maximal ideal of $\cU(\bA)$, and the map $\psi : \bA/P\bA \to \cU(\bA)/P\cU(\bA)$ that sends $R \pmod{P\bA}$ to $R \pmod{P\cU(\bA)}$ is a field isomorphism. Furthermore, the inverse isomorphism $\rho$ of $\psi$ is described as follows: for each ultra-polynomial $\alpha \in \cU(\bA)$, let $\epsilon(\alpha) := R$ be the unique polynomial of degree less than $d$ in part (i) such that $\alpha \equiv R \pmod{P\cU(\bA)}$. Then $\rho(\alpha \pmod{P\cU(\bA)}) = (\epsilon(\alpha) \pmod{P\bA})$.

\end{itemize}

\end{lemma}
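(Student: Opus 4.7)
The plan is to reduce both parts to componentwise Euclidean division in the rings $\bF_{q_s}[t]$ and then transfer via \L{}o\'s' theorem, while being careful that $d$ is a \emph{standard} integer so that remainders of degree $< d$ descend from the ultra-hull $\cU(\bA)$ into $\bA$ itself.

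For existence in (i), write $\alpha = \ulim_{s\in S}\alpha_s$ and $P = \ulim_{s\in S}P_s$. By \L{}o\'s' theorem, $P_s$ is irreducible of degree $d$ for $\cD$-almost all $s \in S$, so the Euclidean algorithm in $\bF_{q_s}[t]$ yields $Q_s, R_s \in \bF_{q_s}[t]$ with $\deg R_s < d$ and $\alpha_s = P_sQ_s + R_s$. Setting $Q = \ulim_{s\in S}Q_s \in \cU(\bA)$ gives $\alpha \equiv \ulim_{s\in S}R_s \pmod{P\cU(\bA)}$. The key observation is that since $d$ is a fixed positive integer in $\bN$, I can write $R_s = \sum_{i=0}^{d-1} r_{i,s}t^i$ with $r_{i,s} \in \bF_{q_s}$, and then $R := \sum_{i=0}^{d-1}(\ulim_{s\in S}r_{i,s})t^i$ lies in $\GF(\kappa)[t] = \bA$ and coincides with $\ulim_{s\in S}R_s$ in $\cU(\bA)$. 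For uniqueness, if $R, R' \in \bA$ both have degree $< d$ with $R \equiv R' \pmod{P\cU(\bA)}$, then $R - R' = PG$ for some $G = \ulim_{s\in S}G_s \in \cU(\bA)$; by \L{}o\'s' theorem, $P_sG_s = R - R'$ for $\cD$-almost all $s$, and since $\deg_t(R - R') < d = \deg P_s$, I must have $G_s = 0$ and hence $R - R' = 0$.

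For (ii), well-definedness of $\psi$ follows from $P\bA \subseteq P\cU(\bA)$, and the ring homomorphism properties are immediate. Injectivity: given $R \in \bA$ with $R \in P\cU(\bA)$, use Euclidean division in $\bA$ to write $R = PH + R_0$ with $H, R_0 \in \bA$ and $\deg R_0 < d$; then $R_0 \equiv R \equiv 0 \pmod{P\cU(\bA)}$, so uniqueness in part (i) (applied to $R_0$ and $0$) forces $R_0 = 0$, hence $R \in P\bA$. Surjectivity is exactly the existence statement in (i). Since $\bA$ is a PID and $P$ is irreducible, $\bA/P\bA$ is a field, so $\cU(\bA)/P\cU(\bA)$ is a field via $\psi$ and $P\cU(\bA)$ is maximal; the description of the inverse $\rho$ is then automatic from the construction.

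The only subtle step is the descent argument that the remainder $\ulim_{s\in S}R_s$ actually lies in $\bA$ rather than merely in $\cU(\bA)$; this is where it is essential that $d \in \bZ_{>0}$ is a standard integer (so the sum $\sum_{i=0}^{d-1}$ has a \emph{finite standard} number of terms and can be carried across the ultraproduct coefficient by coefficient), as opposed to a general hypernatural ultra-degree. Every other step is a direct application of \L{}o\'s' theorem or of standard Euclidean division.
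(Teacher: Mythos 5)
Your proposal is correct and follows essentially the same strategy as the paper: componentwise Euclidean division in $\bF_{q_s}[t]$, transfer by \L{}o\'s' theorem, and the key descent observation that because $d$ is a standard integer the remainder $\ulim_{s\in S}R_s$ lies in $\bA$ itself, with uniqueness by the same degree comparison. The only minor divergence is in part (ii): you prove injectivity of $\psi$ directly from the uniqueness in (i) and then deduce that $\cU(\bA)/P\cU(\bA)$ is a field (hence $P\cU(\bA)$ maximal) from the bijection with $\bA/P\bA$, whereas the paper first writes $\cU(\bA)/P\cU(\bA) = \prod_{s\in S}(\bA_s/P_s\bA_s)/\cD$ to see it is a field and then gets injectivity of $\psi$ for free as a homomorphism of fields; both routes are valid and of comparable length.
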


\begin{proof}

Let $\alpha = \ulim_{s\in S}\alpha_s$ be an ultra-polynomial of ultra-degree $d = \ulim_{s\in S}d_s \in \bZ^{\#}_{>0}$ in $\cU(\bA)$, where the $\alpha_s$ are polynomials of degrees $d_s \in \bZ_{>0}$ for $\cD$-almost all $s \in S$. Since $P$ is a polynomial of degree $d$ in $\bA$, one can write $P = \ulim_{s\in S}P_s$, where $P_s$ is a polynomial of degree $d$ for $\cD$-almost all $s \in S$. By \L{}o\'s' theorem, the $P_s$ are irreducible for $\cD$-almost all $s \in S$.

For $\cD$-almost all $s \in S$, it follows from the Euclidean's algorithm that there exist polynomials $Q_s, R_s$ in $\bA$ such that
\begin{align}
\label{e-U/PU=A/PA-lem}
\alpha_s = P_sQ_s + R_s,
\end{align}
where $R_s$ is either $0$ or $\deg(R_s) < \deg(P_s) = d$. Set $R = \ulim_{s\in S}R_s \in \cU(\bA)$. Since $\deg(R_s) < d$ for $\cD$-almost all $s \in S$, $R$ must be a polynomial in $\bA$ of degree less than $d$.

We know from (\ref{e-U/PU=A/PA-lem}) that $\alpha \equiv R \pmod{P\cU(\bA)}$. If there is another polynomial $R'$ of degree less than $d$ such that $\alpha \equiv R'\pmod{P\cU(\bA)}$, then $R \equiv R' \pmod{P\cU(\bA)}$, and thus $R - R' = PQ$ for some ultra-polynomial $Q \in \cU(\bA)$. Since $R-R'\in \bA$ and $P\in \bA$, $Q$ must belong in $\bA$. If $R - R' \ne 0$, then
\begin{align*}
\deg(P) = d \le \deg(R - R') < d,
\end{align*}
which is a contradiction. Thus $R = R'$, and so the first assertion follows.

For the last assertion, it is not difficult to see that $\psi$ is a homomorphism. Since $P$ is a prime in $\bA$, $\bA/P\bA$ is a field. Furthermore by Proposition \ref{prop-notion-section-algebraic-structures-of-ultraproducts} one can write $\cU(\bA)/P\cU(\bA) = \prod_{s\in S}(\bA_s/P_s\bA_s)/\cD$. Since the rings $\bA_s/P_s\bA_s$ are fields for $\cD$-almost all $s \in S$, we deduce that $\cU(\bA)/P\cU(\bA)$ is an ultra-field which in particular implies that it is a field. Thus $P\cU(\bA)$ is a maximal ideal of $\cU(\bA)$, and hence $\psi$ is an injective homomorphism of fields. The surjectivity of $\psi$ follows immediately from the first part, and thus $\psi$ is an isomorphism.

We see that
\begin{align*}
\psi \circ \rho(\alpha \pmod{P\cU(\bA)}) &= \psi(\rho(\alpha \pmod{P\cU(\bA)})) \\
&= \psi(\epsilon(\alpha) \pmod{P\bA})\\
&= \epsilon(\alpha) \pmod{P\cU(\bA)} \\
&= \alpha \pmod{P\cU(\bA)}
\end{align*}
since $\epsilon(\alpha) \equiv \alpha \pmod{P\cU(\bA)}$. Thus the last assertion follows immediately.

\end{proof}

For the rest of this section, we fix an irreducible polynomial $P$ of degree $d \in \bZ_{>0}$ in $\bA$.By Lemma \ref{lem-U/PU=A/PA-iso}, $\bA/P\bA$ is isomorphic to $\cU(\bA)/P\cU(\bA)$ as fields.

We obtain the following sequence of homomorphisms
\begin{align*}
\GF(\kappa) \xrightarrow{(1)} \bA \xrightarrow{(2)} \cU(\bA) \xrightarrow{(3)} \cU(\bA)/ P\cU(\bA),
\end{align*}
where $(1)$ is the natural embedding of $\GF(\kappa)$ into $\bA$, $(2)$ is also an embedding of $\bA$ into $\cU(\bA)$, following \L{}o\'s' theorem, and $(3)$ is the canonical homomorphism that sends each element $f\in \cU(\bA)$ to $f$ modulo $P\cU(\bA)$.

We also have a sequence of homomorphisms
\begin{align*}
\GF(\kappa) \to \bA \to \bA/P\bA \cong \cU(\bA)/ P\cU(\bA).
\end{align*}

Thus one obtains an injective homomorphism of fields
\begin{align*}
\GF(\kappa) \rightarrow \cU(\bA)/ P\cU(\bA) \cong \bA/P\bA,
\end{align*}
from which we can view $\cU(\bA)/P\cU(\bA)$ as an algebraic extension of $\GF(\kappa)$. Since $P$ is an irreducible polynomial of degree $d$ in $\bA$, $\bA/P\bA$ is of degree $d$ over $\GF(\kappa)$, and thus $\cU(\bA)/ P\cU(\bA)$ is an algebraic extension of $\GF(\kappa)$ of degree $d$.

We know from Corollary \ref{cor-uniqueness-of-ultra-Galois-fields} that $\GF(\kappa)$ has a unique extension $\GF(\kappa^d)$ of degree $d$ over $\GF(\kappa)$, up to ultra-isomorphism, of the form
\begin{align*}
\GF(\kappa^d) = \prod_{s \in S}\bF_{q_s^d}/\cD.
\end{align*}
Thus up to ultra-isomorphism, we can identify
\begin{align*}
\cU(\bA)/ P\cU(\bA) =\GF(\kappa^d).
\end{align*}
Therefore the multiplicative group of nonzero element in $\cU(\bA)/ P\cU(\bA)$ is $\GF(\kappa^d)^\times$ which is the multiplicative group of nonzero elements in $\GF(\kappa^d)$. In fact, following the proof of Lemma \ref{lem-U/PU=A/PA-iso}, we can directly deduce that $(\cU(\bA)/ P\cU(\bA))^{\times}$ can be identified as $\GF(\kappa^d)^\times$, up to ultra-isomorphism.

We know from Corollary \ref{cor-uniqueness-of-ultra-Galois-fields} that
\begin{align*}
\GF(\kappa^d) = \prod_{s \in S} \bF_{q_s^d}/\cD,
\end{align*}
thus we deduce from \L{}o\'s' theorem that
\begin{align*}
\GF(\kappa^d)^\times = \prod_{s \in S} \bF_{q_s^d}^\times / \cD.
\end{align*}
Since $\bF_{q_s^d}^\times$ is a finite cyclic group of order $q_s^d-1$ and $\kappa^d = \ulim_{s\in S}q_s^d$, we know from Subsection \ref{subsec-ultra-cyclic-groups} that $\GF(\kappa^d)^\times$ is an ultra-cyclic group of ultra order $\kappa^d-1$.

Thus $\left( \cU(\bA)/ P\cU(\bA) \right)^\times = \GF(\kappa^d)^{\times}$ is an ultra-cyclic group of ultra-order $\kappa^d-1$.

Let $n=\ulim_{s\in S}n_s$ be a hypernatural number in $\bN^{\#}$ such that $n$ divides $\kappa-1$ in $\bN^{\#}$, i.e., $n_s$ divides $q_s-1$ for $\cD$-almost all $s \in S$.

Take any polynomial $\alpha \in \bA$ such that $\alpha$ and $P$ are relatively prime in $\bA$. We know that the element  $\alpha^{\frac{\kappa^d-1}{n}}$ is an ultra-polynomial $\cU(\bA)$ such that
\begin{align*}
\left( \alpha^{\frac{\kappa^d-1}{n}} \right)^n = \alpha^{\kappa^d-1} \equiv 1 \pmod{P\cU(\bA)},
\end{align*}
following the analogue of Fermat's little theorem \ref{thm-Fermat-little-theorem-for-A}. By Lemma \ref{lem-divisibility-of-ultra-order}, the ultra-order of $\alpha^{\frac{\kappa^d-1}{n}} \pmod{P\cU(\bA)}$ divides $n$ in $\bN^{\#}$, which implies that the ultra-order of $ \alpha^{\frac{\kappa^d-1}{n}}\pmod{P\cU(\bA)}$ divides $\kappa-1$. Since the multiplicative group $\GF(\kappa)^{\times}$ is an ultra-cyclic subgroup of $\GF(\kappa^d)^{\times}$ order $\kappa - 1$, we see from Theorem \ref{thm-uniqueness-of-ultra-cyclic-subgroups}, there exists a unique ultra-cyclic subgroup of $\GF(\kappa)^{\times}$ whose ultra-order is equal to the ultra-order of $\alpha^{\frac{\kappa^d-1}{n}} \pmod{P\cU(\bA)}$. In other words, by Lemma \ref{lem-ultra-orders-of-elements}, there exists a unique element $\beta \in \GF(\kappa)^\times$ such that
\begin{align*}
\alpha^{\frac{\kappa^d-1}{n}} \equiv \beta \pmod{P\cU(\bA)}.
\end{align*}

 The above discussion motivates the following.
 \begin{definition} ($n$-th power residue symbol)
\label{def-nth-power-residue-symbol}

 We maintain the notation as above. Let $\left( \dfrac{\alpha}{P} \right)_n$ be the unique element of the ultra-cyclic group $\GF(\kappa)^\times$ such that
\begin{align*}
\alpha^{\frac{\kappa^d-1}{n}} \equiv \left( \dfrac{\alpha}{P} \right)_n \pmod{P\cU(\bA)}.
\end{align*}

 If $P$ divides $\alpha$, we simply define $\left( \dfrac{\alpha}{P} \right)_n =0$. We call $\left( \dfrac{\alpha}{P} \right)_n$ the \textbf{$n$-th power residue symbol}.

 \end{definition}

\begin{remark}
\label{rem-power-residue-symbol-are-nth-roots-of-unity}

The above discussion implies that the ultra-order of $\left( \dfrac{\alpha}{P} \right)_n$ divides $n$, and so
\begin{align*}
\left( \dfrac{\alpha}{P} \right)_n^n = 1.
\end{align*}

If $\mu_n(\GF(\kappa))$ denotes the $n$-th ultra-roots of unity in $\GF(\kappa)$, then Corollary \ref{cor-nth-ultra-roots-of-unity-in-Galois-ultra-fields} implies that $\mu_n(\GF(\kappa))$ is an ultra-cyclic subgroup of $\GF(\kappa)^{\times}$ of ultra-order exactly $n$. By the above equation, $\left( \dfrac{\alpha}{P} \right)_n$ belongs in $\mu_n(\GF(\kappa))$, i.e., it is an $n$-th ultra-root of unity in $\GF(\kappa)$.

\end{remark}

\begin{remark}
\label{rem-power-residue-symbol-recovers-CP-PRS}

\begin{itemize}

\item[]

\item [(i)] In this remark, we prove that our notion of power residue symbol for $F[t]$ recovers that of Clark--Pollack \cite{CP} when $F$ is an ultra-finite field and $n$ is a positive integer that divides $\kappa - 1$. We will use the notation $\left(\dfrac{\cdot}{\cdot}\right)_{n, CP}$ for the $n$-power residue symbol in \cite{CP} that we recall in Section \ref{sec-Introduction}.

By Corollary \ref{cor-quasifinite-ultra-products}, $\GF(\kappa) = \prod_{s \in S}\bF_{q_s}/\cD$ is a quasi-finite field. Since the Frobenius map $\cF_s : \bF_{q_s}^{\alg} \to \bF_{q_s}^{\alg}$ that sends each element $x$ to $x^{q_s}$, is a topological generator of the Galois group $\Gal(\bF_{q_s}^{\alg}/\bF_{q_s})$, we see from Theorem \ref{thm-roots-of-ultra-polynomials-in-ultra-Galois-fields} that the ultra-Frobenius map $\cF : \GF(\kappa)^{\alg} \to \GF(\kappa)^{\alg}$ defined by $\cF(x) = x^{\kappa} = \ulim_{s\in S}x^{q_s}$ is a topological generator of the Galois group $\Gal(\GF(\kappa)^{\alg}/\GF(\kappa))$ (see Remark \ref{rem-ultra-Frobenius-map-definition} and Nguyen \cite{nguyen-2023-2nd-paper} for a detailed proof). Thus the pair $\GF(\kappa^d), \cF^d)$ is a quasi-finite field, following the notation used in Section \ref{sec-Introduction}.

Take any polynomials $\alpha, P \in \bA = \GF(\kappa)[t]$ such that $P$ is an irreducible polynomial of degree $d$ that does not divide $\alpha$. Identifying $\cU(\bA)/P\cU(\bA)$ with $\GF(\kappa^d)$, we see that
\begin{align*}
\left(\dfrac{\alpha}{P}\right)_{n, CP} = \Gamma_{\GF(\kappa^d), n}(\alpha \pmod{P\cU(\bA)}),
\end{align*}
where $\Gamma_{\GF(\kappa^d), n}$ is the map defined as in (\ref{eqn-Gamma-map-introduction-in-Serre}).

We have
\begin{align*}
\Gamma_{\GF(\kappa^d), n}(\alpha \pmod{P\cU(\bA)}) &= \dfrac{\cF^d((\alpha \pmod{P\cU(\bA)})^{1/n})}{(\alpha \pmod{P\cU(\bA)})^{1/n}} \\
&= \dfrac{(\alpha \pmod{P\cU(\bA)})^{\kappa^d/n}}{(\alpha \pmod{P\cU(\bA)})^{1/n}} \\
&= (\alpha \pmod{P\cU(\bA)})^{(\kappa^d - 1)/n} \\
&= \alpha^{(\kappa^d - 1)/n} \pmod{P\cU(\bA)} \; \; (\text{since $n$ divides $\kappa - 1$}),
\end{align*}
and it thus follows from Definition \ref{def-nth-power-residue-symbol} that
\begin{align*}
\left(\dfrac{\alpha}{P}\right)_{n, CP} = \alpha^{(\kappa^d - 1)/n} \pmod{P\cU(\bA)} = \left(\dfrac{\alpha}{P}\right)_n.
\end{align*}

Thus our notion of power residue symbol recovers that of Clark--Pollack \cite{CP} whenever $F$ is an ultra-finite field and $n$ is a positive integer dividing $\kappa - 1$.

\item[(ii)] In this remark, we give another proof to the first part of Theorem \ref{thm-Clark-Pollack}.

From the first remark, we see that
\begin{align}
\label{eq1-Gamma-map-in-1st-part-of-CP-thm}
\Gamma_{\GF(\kappa^d), n}(x) = x^{\frac{\kappa^d - 1}{n}}
\end{align}
for each $x \in \GF(\kappa^d)$. In order to give another proof to part (i) of Theorem \ref{thm-Clark-Pollack}, we make use of Lemma 11 in Clark--Pollack \cite{CP} that expresses the norm $\Norm_{\GF(\kappa^d)/\GF(\kappa)}$ in terms of resultant, and thus it suffices to prove that
\begin{align}
\label{eqn2-1st-part-of-CP-thm}
\Gamma_{\GF(\kappa^d), n} = \Gamma_{\GF(\kappa), n} \circ \Norm_{\GF(\kappa^d)/\GF(\kappa)},
\end{align}
where $\Norm_{\GF(\kappa^d)/\GF(\kappa)}$ is the norm map of $\GF(\kappa^d)$ over $\GF(\kappa)$.

Using Theorem \ref{thm-roots-of-ultra-polynomials-in-ultra-Galois-fields}, we deduce that for each $x \in \GF(\kappa^d)$,
\begin{align*}
\Norm_{\GF(\kappa^d)/\GF(\kappa)}(x) = x\cdot x^{\kappa} \cdots x^{\kappa^{d - 1}} = x^{\frac{\kappa^d - 1}{\kappa - 1}},
\end{align*}
and thus
\begin{align*}
\Gamma_{\GF(\kappa), n} \circ \Norm_{\GF(\kappa^d)/\GF(\kappa)}(x) &= \Gamma_{\GF(\kappa), n}(x^{\frac{\kappa^d - 1}{\kappa - 1}})\\
&= \left(x^{\frac{\kappa^d - 1}{\kappa - 1}}\right)^{\frac{\kappa - 1}{n}} \\
&= x^{\frac{\kappa^d - 1}{n}} \\
&= \Gamma_{\GF(\kappa^d), n}(x).
\end{align*}
Therefore (\ref{eqn2-1st-part-of-CP-thm}) holds, and so the first part of Theorem \ref{thm-Clark-Pollack} follows.

\end{itemize}

\end{remark}

\begin{lemma}
\label{lem-ultra-powers-mod-PU}

Let $m = \ulim_{s\in S}m_s$ be a positive hyperinteger in $\bZ^{\#}_{>0}$, where the $m_s$ are positive integers in $\bZ$ for $\cD$-almost all $s \in S$. For any ultra-polynomial $\lambda \in \cU(\bA)$,
\begin{align*}
(\lambda \pmod{P\cU(\bA)})^n = (\lambda^n \pmod{P\cU(\bA)}).
\end{align*}
holds in the field $\cU(\bA)/P\cU(\bA)$.

\end{lemma}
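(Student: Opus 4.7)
The plan is a direct unpacking via the componentwise description of both the ultrapower action and the quotient. First I would write $\lambda = \ulim_{s \in S}\lambda_s$ with $\lambda_s \in \bA_s$ and $P = \ulim_{s \in S}P_s$ with $P_s \in \bA_s$ irreducible of degree $d$ for $\cD$-almost all $s \in S$ (by \L{}o\'s' theorem); similarly expand the exponent as $n = \ulim_{s \in S}n_s$ with $n_s \in \bZ_{>0}$. By Proposition \ref{prop-notion-section-algebraic-structures-of-ultraproducts}(ii), the canonical ring isomorphism
$$\cU(\bA)/P\cU(\bA) \;\cong\; \prod_{s\in S}\bigl(\bA_s/P_s\bA_s\bigr)\big/\cD$$
sends $\lambda \pmod{P\cU(\bA)}$ to the ultra-limit $\ulim_{s\in S}(\lambda_s \pmod{P_s\bA_s})$, so throughout the argument we may and do work inside this component-wise model of the quotient.

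Next I would apply the ultrapower action on ultraproducts of rings from Section \ref{subsec-action-of-hyperintegers-on-ultraproducts} to the hypernatural exponent $n$:
$$\bigl(\lambda \pmod{P\cU(\bA)}\bigr)^n \;=\; \ulim_{s\in S}\bigl(\lambda_s \pmod{P_s\bA_s}\bigr)^{n_s}.$$
Since each $n_s$ is an ordinary positive integer, the $n_s$-th power in the ring $\bA_s/P_s\bA_s$ is just the $n_s$-fold product in that ring, and in particular
$$\bigl(\lambda_s \pmod{P_s\bA_s}\bigr)^{n_s} \;=\; \lambda_s^{n_s} \pmod{P_s\bA_s}$$
for $\cD$-almost all $s \in S$. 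Taking the ultra-limit of both sides and rewriting $\ulim_{s\in S}\lambda_s^{n_s} = \lambda^n$ by the ultrapower action on $\cU(\bA)$ itself, the right-hand side becomes $\lambda^n \pmod{P\cU(\bA)}$, which is the desired identity.

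There is essentially no obstacle: the lemma is a compatibility statement asserting that the ultrapower action, the projection to the quotient, and the ultraproduct isomorphism of Lemma \ref{lem-U/PU=A/PA-iso} all commute, because each of these operations is defined componentwise with respect to the same ultrafilter $\cD$. The one point that needs care is that $n$ is a hyperinteger rather than an integer, so $n$-th powers must be interpreted through the ultrapower action of Section \ref{subsec-action-of-hyperintegers-on-ultraproducts} rather than as iterated multiplication in $\cU(\bA)/P\cU(\bA)$; once this interpretation is in place, the verification reduces component by component to the trivial identity $\lambda_s^{n_s} \equiv \lambda_s^{n_s} \pmod{P_s\bA_s}$ in $\bA_s$.
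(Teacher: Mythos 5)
Your proposal is correct and follows essentially the same route as the paper: the paper's proof (together with the remark preceding it) identifies $\cU(\bA)/P\cU(\bA)$ with $\prod_{s\in S}(\bA_s/P_s\bA_s)/\cD$, interprets the hyperinteger power componentwise, and reduces to the trivial identity $(\lambda_s \pmod{P_s\bA_s})^{n_s} = (\lambda_s^{n_s} \pmod{P_s\bA_s})$ for $\cD$-almost all $s$, exactly as you do. Your explicit remark that the exponent (written $m$ in the statement but $n$ in the displayed equation) is a hyperinteger and so must be read through the ultrapower action is the same point the paper makes in its preceding remark.
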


\begin{remark}

Note that the left-hand side of the equation in Lemma \ref{lem-ultra-powers-mod-PU} is well-defined because $\cU(\bA)/P\cU(bA)$ is an ultra-field of the form $\prod_{s\in S}\bA_s/P_s\bA_s$, where $P = \ulim_{s\in S}P_s$ and the $P_s$ are irreducible polynomials of degree $d$ for $\cD$-almost all $s \in S$ (see Lemma \ref{lem-U/PU=A/PA-iso}). In fact, if $\lambda = \ulim_{s\in S}\lambda_s$ for some polynomials $\lambda_s \in \bA_s$, then
\begin{align*}
(\lambda \pmod{P\cU(\bA)})^n = \ulim_{s\in S} \left(\lambda_s \pmod{P_s\bA_s}\right)^{n_s}.
\end{align*}

\end{remark}

\begin{proof}

We maintain the same notation as in the remark above. We know that
\begin{align*}
\left(\lambda_s \pmod{P_s\bA_s}\right)^{n_s} = \left(\lambda_s^{n_s} \pmod{P_s\bA_s}\right)
\end{align*}
holds in the field $\bA_s/P_s\bA_s$ for $\cD$-almost all $s \in S$. Thus
\begin{align*}
(\lambda \pmod{P\cU(\bA)})^n &= \ulim_{s\in S} \left(\lambda_s \pmod{P_s\bA_s}\right)^{n_s} \\
&= \ulim_{s\in S}\left(\lambda_s^{n_s} \pmod{P_s\bA_s}\right) \\
&= \left(\lambda^n \pmod{P\cU(\bA)}\right),
\end{align*}
which proves the lemma.

\end{proof}

\begin{proposition}
\label{prop-properties-of-power-residue-symbol}

The $n$-th power residue symbol has the following properties: for any $\alpha, \lambda \in \bA$,
\begin{itemize}

\item[(i)] $\left( \dfrac{\alpha}{P} \right)_n = \left( \dfrac{\lambda}{P} \right)_n$ iff $\alpha \equiv \lambda \pmod{P\bA}$.

\item[(ii)] $\left( \dfrac{\alpha\lambda}{P} \right)_n = \left( \dfrac{\alpha}{P} \right)_n \left( \dfrac{\lambda}{P}\right)_n$.

\item[(iii)] $\left( \dfrac{\alpha}{P} \right)_n = 1$ if and only if the equation $x^n \equiv \alpha \pmod{P\cU(\bA)} $ is solvable in $\bA$. If $n$ is a positive integer, then $\left( \dfrac{\alpha}{P} \right)_n = 1$ if and only if the equation $x^n \equiv \alpha \pmod{P\bA}$ is solvable.

\item[(iv)] Let $\zeta$ be an element in $\GF(\kappa)^\times$ of ultra-order dividing $n$. Then there exists an element $\alpha \in \bA$ such that
\begin{align*}
\left( \dfrac{\alpha}{P} \right)_n  = \zeta.
\end{align*}

\end{itemize}

\end{proposition}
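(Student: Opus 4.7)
The plan is to prove each of the four parts by reducing the assertion to a computation in the ultra-cyclic group $(\cU(\bA)/P\cU(\bA))^\times \cong \GF(\kappa^d)^\times$ of ultra-order $\kappa^d - 1$, and then invoking the uniqueness clause built into Definition \ref{def-nth-power-residue-symbol}. For part (i), if $\alpha \equiv \lambda \pmod{P\bA}$ then a fortiori $\alpha \equiv \lambda \pmod{P\cU(\bA)}$; applying Lemma \ref{lem-ultra-powers-mod-PU} to the $(\kappa^d-1)/n$-th ultra-power yields $\alpha^{(\kappa^d-1)/n} \equiv \lambda^{(\kappa^d-1)/n} \pmod{P\cU(\bA)}$, and the uniqueness of the representative in $\GF(\kappa)^\times$ forces the two symbols to coincide. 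Part (ii) is analogous: the multiplicativity of ultra-powers recalled in Section \ref{subsec-action-of-hyperintegers-on-ultraproducts} gives $(\alpha\lambda)^{(\kappa^d-1)/n} = \alpha^{(\kappa^d-1)/n}\lambda^{(\kappa^d-1)/n}$, so reducing modulo $P\cU(\bA)$ exhibits the product of the two symbols as a representative of the left side, and uniqueness closes the argument.

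For part (iii), if $x^n \equiv \alpha \pmod{P\cU(\bA)}$ for some $x \in \bA$, raising to the $(\kappa^d-1)/n$-th ultra-power and applying Theorem \ref{thm-Fermat-little-theorem-for-A} gives $\alpha^{(\kappa^d-1)/n} \equiv x^{\kappa^d - 1} \equiv 1 \pmod{P\cU(\bA)}$, so the symbol is $1$. Conversely, suppose $\left(\dfrac{\alpha}{P}\right)_n = 1$, so that the class $\overline{\alpha} \in \GF(\kappa^d)^\times$ satisfies $\overline{\alpha}^{(\kappa^d-1)/n} = 1$. By Lemma \ref{lem-divisibility-of-ultra-order} the ultra-order of $\overline{\alpha}$ divides $(\kappa^d-1)/n$, hence $\overline{\alpha}$ lies in the unique ultra-cyclic subgroup of $\GF(\kappa^d)^\times$ of ultra-order $(\kappa^d-1)/n$ supplied by Theorem \ref{thm-uniqueness-of-ultra-cyclic-subgroups}. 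Because $n$ divides $\kappa^d - 1$, a direct count of ultra-orders shows this subgroup coincides with the image of the $n$-th ultra-power map, so $\overline{\alpha} = \overline{y}^n$ for some $\overline{y}$; Lemma \ref{lem-U/PU=A/PA-iso} then lifts $\overline{y}$ to a polynomial $x \in \bA$ satisfying $x^n \equiv \alpha \pmod{P\cU(\bA)}$. When $n$ is a standard positive integer, both sides of this congruence already lie in $\bA$, so it descends to $\pmod{P\bA}$.

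For part (iv), the strategy is to recognise $\zeta$ as lying in the image of the $(\kappa^d - 1)/n$-th ultra-power map on $\GF(\kappa^d)^\times$. That image is the unique ultra-cyclic subgroup of ultra-order $n$, namely $\mu_n(\GF(\kappa^d))$; because $n$ divides $\kappa - 1$, Corollary \ref{cor-nth-ultra-roots-of-unity-in-Galois-ultra-fields} shows $\mu_n(\GF(\kappa))$ already has ultra-order $n$, forcing $\mu_n(\GF(\kappa^d)) = \mu_n(\GF(\kappa))$. Any $\zeta \in \GF(\kappa)^\times$ whose ultra-order divides $n$ lies in this set, so $\zeta = \overline{x}^{(\kappa^d-1)/n}$ for some $\overline{x} \in \GF(\kappa^d)^\times$; lifting $\overline{x}$ to some $\alpha \in \bA$ via Lemma \ref{lem-U/PU=A/PA-iso} yields $\left(\dfrac{\alpha}{P}\right)_n = \zeta$ by the uniqueness of the defining congruence.

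The main delicate point I anticipate is that the congruences defining the symbol live in $\cU(\bA)$ rather than in $\bA$, so the converse half of part (iii) and the existence claim of part (iv) both rely on being able to lift residues from $\cU(\bA)/P\cU(\bA)$ back to $\bA$. This is legitimate thanks to Lemma \ref{lem-U/PU=A/PA-iso}, which identifies $\bA/P\bA$ with $\cU(\bA)/P\cU(\bA)$ as fields; once this identification is in hand, the remaining work reduces to bookkeeping inside the ultra-cyclic group $\GF(\kappa^d)^\times$, where Theorem \ref{thm-uniqueness-of-ultra-cyclic-subgroups} and the uniqueness of subgroups of given ultra-order do all of the heavy lifting.
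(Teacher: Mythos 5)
Your proposal is correct, and on parts (i), (ii) and the forward half of (iii) it is essentially the paper's own argument (the definition, Fermat's little theorem \ref{thm-Fermat-little-theorem-for-A}, and Lemma \ref{lem-ultra-powers-mod-PU} for compatibility of ultra-powers with reduction modulo $P\cU(\bA)$). Where you genuinely diverge is in the converse half of (iii) and in (iv). For (iii) the paper counts: the $n$-th ultra-powers in $\GF(\kappa^d)^{\times}$ form an internal set of internal cardinality $\frac{\kappa^d-1}{n}$, the zero set of $x^{\frac{\kappa^d-1}{n}}-1$ has internal cardinality at most $\frac{\kappa^d-1}{n}$ by Corollary \ref{cor-upper-bound-for-zero-set-of-ultra-polynomial}, and Proposition \ref{prop-comparing-internal-sets-with-same-cardinality} forces the two sets to coincide; you instead work with ultra-orders, identifying the unique ultra-cyclic subgroup of ultra-order $\frac{\kappa^d-1}{n}$ with the image of the $n$-th ultra-power map. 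That route is sound, but you should add the one sentence you skip: an element whose ultra-order divides $\frac{\kappa^d-1}{n}$ does lie in that unique subgroup (write $\overline{\alpha}=g^{k}$ for an ultra-generator $g$; by Theorem \ref{thm-uniqueness-of-ultra-cyclic-subgroups} and Corollary \ref{cor-ultra-order-of-element} one gets $n\mid\gcd(k,\kappa^d-1)$, hence $\overline{\alpha}\in\langle g^{n}\rangle_u$). For (iv) the paper builds the homomorphism $\Sigma\bigl(\lambda \pmod{P\cU(\bA)}\bigr)=\left(\frac{\epsilon(\lambda)}{P}\right)_n$, identifies its kernel with the $n$-th ultra-powers and its image, via the first isomorphism theorem, with the unique ultra-cyclic subgroup of $\GF(\kappa)^{\times}$ of ultra-order $n$; your argument is shorter and more direct, noting that the symbol is by definition the constant representing $\overline{\alpha}^{\frac{\kappa^d-1}{n}}$, that the $\frac{\kappa^d-1}{n}$-th ultra-power map surjects onto the unique subgroup of ultra-order $n$, namely $\mu_n(\GF(\kappa^d))=\mu_n(\GF(\kappa))$, and then lifting a preimage through Lemma \ref{lem-U/PU=A/PA-iso} (with Lemma \ref{lem-ultra-powers-mod-PU} again ensuring the defining congruence). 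Both approaches rest on the same structural inputs, the paper's being more self-contained bookkeeping with internal cardinalities, yours trading that for heavier use of the uniqueness of ultra-cyclic subgroups. Finally, note that in (i) you, like the paper, prove only the implication from $\alpha\equiv\lambda\pmod{P\bA}$ to equality of symbols; the stated ``iff'' cannot hold literally (any two $n$-th power residues prime to $P$ share the symbol $1$), so your treatment matches the intended content of the proposition.
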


\begin{proof}

Part (i) follows from the definition, and the fact that for any $\alpha, \lambda \in \bA$, $\alpha \equiv \lambda \pmod{P\bA}$ if and only if  $\alpha \equiv \lambda \pmod{P \cU (\bA)}$.

Part (ii) also follows from the definition, and the fact that if two constants in $\GF(\kappa)^\times$ are congruent modulo $P\bA$, then they are equal since $\deg(P)$ is positive.

We now prove part (iii). If $x^n \equiv \alpha \pmod{P\cU(\bA)} $ is solvable in $\bA$, then $b^n \equiv \alpha \pmod{P\cU(\bA)}$ for some $b \in \bA$. Thus by the analogue of Fermat little theorem \ref{thm-Fermat-little-theorem-for-A},
\begin{align*}
\alpha^{\frac{\kappa^d-1}{n}} \equiv (b^n)^{\frac{\kappa^d-1}{n}} \equiv b^{\kappa^d-1} \equiv 1 \pmod{P\cU(\bA)},
\end{align*}
and therefore $\left( \dfrac{\alpha}{P} \right)_n = 1$.

Suppose that $\alpha^{\frac{\kappa^d-1}{n}}  \equiv \left( \dfrac{\alpha}{P} \right)_n = 1 \pmod{P\cU(\bA)}$. Since $n$ divides $\kappa - 1$, we know from Corollary \ref{cor-nth-ultra-roots-of-unity-in-Galois-ultra-fields} that the group of $n$-th ultra-roots of unity, say $\mu_n(\GF(\kappa^d))$ is an ultra-cyclic subgroup of $\GF(\kappa^d)^{\times}$ of ultra-order exactly $n$. Let $\mu_{n_s}(\bF_{q_s^d})$ denote the unique cyclic subgroup of $\bF_{q_s^d}^{\times}$ of order $n_s$. Using Corollary \ref{cor-nth-ultra-roots-of-unity-in-Galois-ultra-fields}, we know that
\begin{align*}
\mu_n(\GF(\kappa^d)) = \prod_{s\in S}\mu_{n_s}(\bF_{q_s^d})/\cD.
\end{align*}

Consider the group homomorphism $\Psi$ from $(\cU(\bA) / P\cU(\bA))^\times \cong (\bA / P\bA)^\times \cong \GF(\kappa^d)^\times$ to itself defined by $\Psi(a)=a^n$ for all $a \in \GF(\kappa^d)^\times$. The kernel of $\Psi$ consists exactly all $n$-th ultra-roots of unity in $\GF(\kappa^d)^\times$, and thus $\text{ker}(\Psi) = \mu_n(\GF(\kappa^d))$.

The image of $\Psi$, denoted by $\Im(\Psi)$ consists all $n$-th ultra-powers in $\GF(\kappa^d)^\times$. By the isomorphism theorem (see \cite{lang-algebra}),
\begin{align*}
\GF(\kappa^d)^\times / \text{ker}(\Psi) \cong \Im(\Psi).
\end{align*}

We see that
\begin{align*}
\GF(\kappa^d)^\times / \text{ker}(\Psi) &= \GF(\kappa^d)^{\times}/\mu_n(\GF(\kappa^d)) \\
&=  \left(\prod_{s\in S} \bF_{q_s^d}^\times / \cD\right)/\left(\prod_{s\in S}\mu_{n_s}(\bF_{q_s^d})/\cD\right) \\
&\cong \prod_{s\in S} \left( \bF_{q_s^d}^\times / \mu_{n_s}(\bF_{q_s^d}) \right) / \cD \; \text{(see Proposition \ref{prop-notion-section-algebraic-structures-of-ultraproducts} or \cite[2.1.6, page 10]{schoutens})} \\
&\cong \Im(\Psi).
\end{align*}
So $\Im(\Psi)$ is an ultra-cyclic group of ultra-order $\ulim_{s\in S} \frac{q_s^d -1}{n_s} = \frac{\kappa^d-1}{n}$ since the cardinality of $\bF_{q_s^d}^\times / \mu_{n_s}(\bF_{q_s^d}) $ is $\frac{q_s^d -1}{n_s}$. In particular, the set of $n$-th ultra-powers in $\GF(\kappa^d)^\times \cong (\cU(\bA) / P\cU(\bA))^\times$ is an internal set of internal cardinality $\frac{\kappa^n-1}{n}$. For an $n$-th ultra-power $b^n$ in $\GF(\kappa^d)^\times$ for some $b \in \GF(\kappa^d)^\times$, it is easy to see that $x=b^n$ satisfies the equation
\begin{align*}
x^{\frac{\kappa^d-1}{n}} -1 =0 \in \GF(\kappa^d)^{\times},
\end{align*}
following the analogue of Fermat's little theorem \ref{thm-Fermat-little-theorem-for-A}. Since the ultra-degree of the ultra-polynomial $x^{\frac{\kappa^d-1}{n}} -1$ is $\frac{\kappa^d-1}{n}$, we know from Corollary \ref{cor-upper-bound-for-zero-set-of-ultra-polynomial} that the internal cardinality of its zero set in $\GF(\kappa^d)^\times$ is at most $\frac{\kappa^d-1}{n}$. We have shown that the internal cardinality of $n$-th ultra-powers in $\GF(\kappa^d)^\times$ is $\frac{\kappa^d-1}{n}$, and all $n$-th ultra-powers in $\GF(\kappa^d)\times$ belong in the zero set of $x^{\frac{\kappa^d-1}{n}} -1$ whose internal cardinality is at most $\frac{\kappa^d-1}{n}$. Thus it follows from Proposition \ref{prop-comparing-internal-sets-with-same-cardinality} that the zero set of $x^{\frac{\kappa^d-1}{n}} -1$ consists precisely of all $n$-th ultra-powers in $\GF(\kappa^d)^\times$.

Since $\alpha^{\frac{\kappa^d-1}{n}} -1 \equiv 0 \pmod{P\cU(\bA)}$, i.e., $\alpha \pmod{P\cU(\bA)}$ is a zero to the equation
\begin{align*}
x^{\frac{\kappa^d-1}{n}} -1=0 \; \text{in} \; \cU(\bA)/ P\cU(\bA) \cong \bA/P\bA \cong \GF(\kappa^d),
\end{align*}
$\alpha \pmod{P\cU(\bA)}$ is an $n$-th ultra-power in the field
 \begin{align*}
\cU(\bA)/ P\cU(\bA) \cong \GF(\kappa^d).
\end{align*}
Thus there exists $\beta \pmod{P\cU(\bA)} \in \cU(\bA)/P\cU(\bA)$ for some ultra-polynomial $\beta \in \cU(\bA)$ such that
\begin{align*}
\left(\alpha \pmod{P\cU(\bA)}\right) = \left(\beta \pmod{P\cU(\bA)}\right)^n
\end{align*}
holds in the field $\cU(\bA)/P\cU(\bA)$. By Lemma \ref{lem-ultra-powers-mod-PU}, we deduce that
\begin{align*}
\left(\beta \pmod{P\cU(\bA)}\right)^n = \left(\beta^n \pmod{P\cU(\bA)}\right)
\end{align*}
holds in the field $\cU(\bA)/P\cU(\bA)$. By Lemma \ref{lem-U/PU=A/PA-iso}, one can find a polynomial $\lambda$ of degree less than $d$ such that
\begin{align*}
\beta^n \equiv \lambda^n \pmod{P\cU(\bA)},
\end{align*}
and hence
\begin{align*}
\lambda^n \equiv \alpha \pmod{P\cU(\bA)},
\end{align*}
which proves the first assertion of part (iii).

If $n \in \bZ_{>0}$, then $\lambda^n-\alpha \in \bA$ and $\lambda^n-\alpha = PQ$ for some $Q \in \cU(\bA)$. Since $\lambda^n-\alpha \in \bA$ and $P \in \bA$, we deduce that $Q$ must belong in $\bA$, which implies that
\begin{align*}
\lambda^n \equiv \alpha \pmod{P\bA}.
\end{align*}
Thus part (iii) follows.

We now prove part (iv). Let $\rho : \cU(\bA)/P\cU(\bA) \to \bA/P\bA$ be the field isomorphism in Lemma \ref{lem-U/PU=A/PA-iso} that sends each element $\lambda \pmod{P\cU(\bA)}$ to $\epsilon(\lambda) \pmod{P\bA}$, where for each $\lambda \in \cU(\bA)$, $\epsilon(\lambda) \in \bA$ is the unique polynomial of degree less than $d$ in $\bA$ such that $\lambda \equiv \epsilon(\lambda) \pmod{P\cU(\bA)}$.

We define the map $\Sigma : (\cU(\bA)/P\cU(\bA))^{\times} \to \GF(\kappa)^{\times}$ by
\begin{align*}
\Sigma(\lambda \pmod{P\cU(\bA)}) = \left( \dfrac{\epsilon(\lambda)}{P}\right)_n
\end{align*}
for each $\lambda \in \cU(\bA)$ such that $P$ does not divide $\lambda$ in $\cU(\bA)$. We prove that $\Sigma$ is a group homomorphism. Let $\lambda_1, \lambda_2 \in \cU(\bA)$ such that $P$ does not divide both $\lambda_1, \lambda_2$. Then
\begin{align*}
\Sigma(\lambda_1\lambda_2 \pmod{P\cU(\bA)}) = \left( \dfrac{\epsilon(\lambda_1\lambda_2)}{P}\right)_n.
\end{align*}

By definition, $\lambda_1\lambda_2 \equiv \epsilon(\lambda_1\lambda_2)\pmod{P\cU(\bA)}$, $\lambda_1 \equiv \epsilon(\lambda_1)\pmod{P\cU(\bA)}$, and $\lambda_2 \equiv \epsilon(\lambda_2)\pmod{P\cU(\bA)}$, and thus
\begin{align*}
\epsilon(\lambda_1\lambda_2) \equiv \epsilon(\lambda_1)\epsilon(\lambda_2)\pmod{P\cU(\bA)}.
\end{align*}
Therefore there is an ultra-polynomial $H \in \cU(\bA)$ such that
\begin{align*}
\epsilon(\lambda_1\lambda_2) - \epsilon(\lambda_1)\epsilon(\lambda_2) = PH.
\end{align*}
Since both $\epsilon(\lambda_1\lambda_2) - \epsilon(\lambda_1)\epsilon(\lambda_2)$ and $P$ are polynomials in $\bA$, we deduce that $H$ must be a polynomial in $\bA$, and thus
\begin{align*}
\epsilon(\lambda_1\lambda_2) \equiv \epsilon(\lambda_1)\epsilon(\lambda_2) \pmod{P\bA}.
\end{align*}

By parts (i) and (ii), we deduce that
\begin{align*}
\left( \dfrac{\epsilon(\lambda_1\lambda_2)}{P}\right)_n = \left( \dfrac{\epsilon(\lambda_1)\epsilon(\lambda_2)}{P}\right)_n = \left( \dfrac{\epsilon(\lambda_1)}{P}\right)_n\left(\dfrac{\epsilon(\lambda_2)}{P}\right)_n,
\end{align*}
which deduces that
\begin{align*}
\Sigma(\lambda_1\lambda_2 \pmod{P\cU(\bA)}) = \Sigma(\lambda_1 \pmod{P\cU(\bA)})\Sigma(\lambda_2\pmod{P\cU(\bA)}).
\end{align*}
Therefore $\Sigma$ is a group homomorphism.

We contend that the kernel of $\Sigma$ is exactly the $n$-th ultra-powers in $(\cU(\bA)/ P\cU(\bA))^\times$. Indeed, suppose that
\begin{align*}
\Sigma(\lambda \pmod{P\cU(\bA)}) = \left( \dfrac{\epsilon(\lambda)}{P}\right)_n = 1
\end{align*}
for some ultra-polynomial $\lambda \in \cU(\bA)$ such that $P$ does not divide $\lambda$. Part (iii) implies that there exists a polynomial $\beta$ in $\bA$ such that $\beta^n \equiv \epsilon(\lambda)\pmod{P\cU(\bA)}$. Thus
\begin{align*}
\lambda \equiv \epsilon(\lambda)\equiv \beta^n \pmod{P\cU(\bA)},
\end{align*}
which, by Lemma \ref{lem-ultra-powers-mod-PU}, deduces  that $\lambda$ is an $n$-ultra-power in $\cU(\bA)$.

Conversely, if
\begin{align*}
(\lambda \pmod{P\cU(\bA)}) = (\beta \pmod{P\cU(\bA)})^n
\end{align*}
for some $\beta \in \cU(\bA)$, then Lemma \ref{lem-U/PU=A/PA-iso} implies that
\begin{align*}
\epsilon(\lambda)\pmod{P\cU(\bA)} = \lambda \pmod{P\cU(\bA)}  = (\beta \pmod{P\cU(\bA)})^n = (\epsilon(\beta)\pmod{P\cU(\bA)})^n,
\end{align*}
and it thus follows from Lemma \ref{lem-ultra-powers-mod-PU} that
\begin{align*}
\epsilon(\lambda) \equiv \epsilon(\beta)^n \pmod{P\cU(\bA)}.
\end{align*}
Therefore part (iii) implies that
\begin{align*}
\Sigma(\lambda \pmod{P\cU(\bA)}) = \left(\dfrac{\epsilon(\lambda)}{P}\right)_n = 1,
\end{align*}
and hence $\lambda \pmod{P\cU(\bA)}$ belongs in the kernel of $\Sigma$.

Thus the kernel of $\Sigma$ is exactly the $n$-th ultra-powers in $(\cU(\bA)/ P\cU(\bA))^\times$.

Following the proof of Lemma \ref{lem-U/PU=A/PA-iso}, we know that $(\cU(\bA)/ P\cU(\bA))^\times$ is an ultra-cyclic group of ultra-order $\kappa^d-1$. Since $\cU(\bA)/P\cU(\bA) \cong \GF(\kappa^d)$, we also know from the proof of part (iii) above that the kernel of $\Sigma$ consisting of exactly all the $n$-th ultra-powers in $(\cU(\bA)/ P\cU(\bA))^\times$ is an ultra-cyclic group of ultra-order $\frac{\kappa^d-1}{n}$. Thus it follows from the first isomorphism theorem (see \cite{lang-algebra}) and \L{}o\'s' theorem that the image of $\Sigma$ is an ultra-cyclic subgroup of $\GF(\kappa)^\times$ of ultra-degree $n$. By Theorem \ref{thm-uniqueness-of-ultra-cyclic-subgroups}, the image of $\Sigma$ is a unique ultra-cyclic subgroup of $\GF(\kappa)^{\times}$ of ultra-order $n$. Since $\zeta$ is an element in $\GF(\kappa)^\times$ of ultra-order dividing $n$, it follows from Theorem \ref{thm-uniqueness-of-ultra-cyclic-subgroups} that the ultra-cyclic subgroup ultra-generated by $\zeta$, say $\langle \zeta \rangle_u$ is a unique ultra-cyclic subgroup of the image of $\Sigma$, and thus there exists an element $\alpha \in \bA$ such that $\left( \frac{\alpha}{P}\right)_n = \zeta$.

\end{proof}

\section{Reciprocity Law}
\label{sec-reciprocity-law}

We maintain the same notation as in Section \ref{sec-power-residue-symbol}. In this section, we prove an analogue of the reciprocity law for $\bA = \GF(\kappa)[t]$. We begin with a simple but useful lemma that we will need in the proof of the reciprocity law.

\begin{lemma}
\label{lem-ultra-Frobenius-action-on-polynomials}

For every polynomial $Q(t) \in \bA$ and any positive integer $j$, $Q(t)^{\kappa^j} = Q(t^{\kappa^j})$.

\end{lemma}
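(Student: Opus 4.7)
The plan is to reduce to the corresponding fact in each finite field $\bF_{q_s}$ and then transfer via ultraproducts. Since $Q(t)\in\bA$ has finite (standard) degree, I can write $Q(t)=\sum_{i=0}^d a_i t^i$ with $a_i\in\GF(\kappa)$, and for each $i$ choose a representative $a_i=\ulim_{s\in S}a_{i,s}$ with $a_{i,s}\in\bF_{q_s}$. Setting $Q_s(t):=\sum_{i=0}^d a_{i,s}t^i\in\bF_{q_s}[t]$ one has $Q=\ulim_{s\in S}Q_s$.

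First I would handle the computation fibrewise. For $\cD$-almost all $s\in S$, the Frobenius endomorphism $x\mapsto x^{q_s}$ fixes $\bF_{q_s}$ pointwise, hence so does its $j$-fold iterate $x\mapsto x^{q_s^j}$. Since $\bF_{q_s}[t]$ has characteristic $p_s$ and the map $f\mapsto f^{q_s^j}$ is a ring endomorphism there, the freshman's dream gives
\begin{align*}
Q_s(t)^{q_s^j}=\sum_{i=0}^d a_{i,s}^{q_s^j}\,t^{i q_s^j}=\sum_{i=0}^d a_{i,s}\,t^{iq_s^j}=Q_s(t^{q_s^j}).
\end{align*}

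Then I would transfer this identity to the ultra-hull. By the definition of the ultra-power action (Section \ref{subsec-action-of-hyperintegers-on-ultraproducts}) and of ultraproduct arithmetic,
\begin{align*}
Q(t)^{\kappa^j}=\ulim_{s\in S}Q_s(t)^{q_s^j}=\ulim_{s\in S}Q_s(t^{q_s^j}),
\end{align*}
and expanding the right-hand side componentwise, $\ulim_{s\in S}Q_s(t^{q_s^j})=\sum_{i=0}^d\bigl(\ulim_{s\in S}a_{i,s}\bigr)\bigl(\ulim_{s\in S}t^{q_s^j}\bigr)^{i}=\sum_{i=0}^d a_i\,t^{i\kappa^j}=Q(t^{\kappa^j})$. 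This chain of equalities gives the claim.

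There is no real obstacle here; the only point to watch is that the manipulations on each factor take place in $\bF_{q_s}[t]$ (a genuine polynomial ring of characteristic $p_s$) before passing to the ultraproduct, so the freshman's dream applies unambiguously. \L{}o\'s' theorem can alternatively be invoked to formalize the transfer, but since $j$ is a standard integer and the identity $Q(t)^{q_s^j}=Q(t^{q_s^j})$ is a first-order statement with parameters, the direct ultra-limit computation above suffices.
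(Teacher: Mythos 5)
Your proposal is correct and follows essentially the same route as the paper: represent $Q$ componentwise as $\ulim_{s\in S}Q_s$, use that the Frobenius iterate fixes $\bF_{q_s}$ so $Q_s(t)^{q_s^j}=Q_s(t^{q_s^j})$ in each fibre, and then pass to the ultra-limit to conclude $Q(t)^{\kappa^j}=Q(t^{\kappa^j})$. The only cosmetic difference is that you justify the fibrewise identity via the freshman's dream, whereas the paper simply cites $a_{i,s}^{q_s^j}=a_{i,s}$; the substance is identical.
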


\begin{proof}

If $Q(t)$ is a constant polynomial, say $Q(t) \equiv a$ for some $a \in \GF(\kappa)$, then the lemma follows trivially.

Suppose that $m = \deg(Q) \in \bZ_{> 0}$. We can write
\begin{align*}
Q(t) = a_mt^m + \cdots + a_1t + a_0,
\end{align*}
where for each $0 \le i \le m$, $a_i = \ulim_{s \in S}a_{i, s} \in \GF(\kappa)$ for some elements $a_{i, s} \in \bF_{q_s}$ such that $a_m \ne 0$. Let
\begin{align*}
Q_s(t) = a_{m, s}t^m + \cdots + a_{1, s}t + a_{0, s} \in \bF_{q_s}[t].
\end{align*}
Then $Q(t) = \ulim_{s \in S}Q_s(t)$.

Since $a_{i, s}^{q_s^j} = a_{i, s}$ for any $0 \le i \le m$, we deduce that $Q_s(t)^{q_s^j} = Q_s(t^{q_s^j})$, and thus
\begin{align*}
Q(t)^{\kappa^j} &= \ulim_{s \in S}Q_s(t)^{q_s^j} \\
&= \ulim_{s \in S}Q_s(t^{q_s^j})\\
&= \ulim_{s \in S}\left(a_{m, s}(t^{q_s^j})^m + \cdots + a_{1, s}(t^{q_s^j}) + a_{0, s}\right)\\
&= \sum_{i = 0}^m(\ulim_{s \in S}a_{i, s})(\ulim_{s \in S}(t^{q_s^j})^i \\
&= \sum_{i = 0}^ma_i(t^{\kappa^j})^i \\
&= Q(t^{\kappa^j}).
\end{align*}

\end{proof}

\begin{theorem}
(Reciprocity Law)
\label{thm-Reciprocity-Law}

Let $n$ be a hypernatural number in $\bN^{\#}$ such that $n$ divides $\kappa-1$. For any monic irreducible polynomials $P, Q$ in $\bA$,
\begin{align*}
\left( \frac{Q}{P} \right)_n = (-1)^{\frac{\kappa-1}{n}\deg(P)\deg(Q)}\left( \frac{P}{Q} \right)_n.
\end{align*}

\end{theorem}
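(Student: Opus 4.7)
The plan is to emulate the classical proof of the higher reciprocity law for $\bF_q[t]$ in the ultra-finite setting: express each power residue symbol as an ultra-power of a resultant, then exploit the antisymmetry $\mathrm{Res}(P,Q)=(-1)^{d\ell}\mathrm{Res}(Q,P)$ of the resultant of monic polynomials.

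First I would apply Theorem \ref{thm-roots-of-ultra-polynomials-in-ultra-Galois-fields} to the irreducible polynomial $P$ of degree $d$ to obtain a root $\alpha$ of $P$ in $\GF(\kappa^d)$ whose full orbit under the ultra-Frobenius is $\{\alpha,\alpha^\kappa,\ldots,\alpha^{\kappa^{d-1}}\}$, so that $P(t)=\prod_{j=0}^{d-1}(t-\alpha^{\kappa^j})$. The defining congruence of Definition \ref{def-nth-power-residue-symbol} then reads $Q(t)^{(\kappa^d-1)/n}-\left(\tfrac{Q}{P}\right)_n=P(t)R(t)$ for some $R\in\cU(\bA)$. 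Evaluating at $t=\alpha$ (which is legitimate via the evaluation formalism of Definition \ref{def-the-values-of-ultra-poly}, since $\alpha\in\GF(\kappa^d)$) and using $P(\alpha)=0$ yields the key identity
\[
Q(\alpha)^{(\kappa^d-1)/n}=\left(\frac{Q}{P}\right)_n.
\]

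Next I would factor the exponent as $\frac{\kappa^d-1}{n}=\frac{\kappa-1}{n}\bigl(1+\kappa+\kappa^2+\cdots+\kappa^{d-1}\bigr)$ and apply Lemma \ref{lem-ultra-Frobenius-action-on-polynomials} in the form $Q(\alpha)^{\kappa^j}=Q(\alpha^{\kappa^j})$ to rewrite
\[
Q(\alpha)^{(\kappa^d-1)/n}=\left(\prod_{j=0}^{d-1}Q(\alpha^{\kappa^j})\right)^{(\kappa-1)/n}=\mathrm{Res}(P,Q)^{(\kappa-1)/n},
\]
the last equality being the standard resultant formula $\mathrm{Res}(P,Q)=\prod_j Q(\alpha_j)$ when $P$ is monic with roots $\alpha_j$. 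Swapping the roles of $P$ and $Q$ (using a root $\beta\in\GF(\kappa^\ell)$ of $Q$) gives the companion identity $\left(\tfrac{P}{Q}\right)_n=\mathrm{Res}(Q,P)^{(\kappa-1)/n}$. The theorem then follows from the classical antisymmetry $\mathrm{Res}(P,Q)=(-1)^{d\ell}\mathrm{Res}(Q,P)$ of resultants of monic polynomials, which lifts to $\cU(\bA)$ by \L{}o\'s' theorem or by direct polynomial algebra.

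The main conceptual obstacle is justifying the substitution $t=\alpha$ in the ultra-polynomial congruence and handling the internal product over $j=0,\ldots,d-1$. Fortunately, $d=\deg(P)$ is a standard positive integer (since $P\in\bA$, by \L{}o\'s' theorem the degree of each representative $P_s$ equals $d$ for $\cD$-almost all $s$), so the product is an ordinary finite product; only the exponents $\kappa$ and $n$ live in $\bN^\#$. The quantity $\mathrm{Res}(P,Q)=\prod_{j=0}^{d-1}Q(\alpha^{\kappa^j})$ is visibly invariant under the ultra-Frobenius $x\mapsto x^\kappa$ (which merely permutes the factors), so by Lemma \ref{lem-ultra-cyclic-structures} it lies in $\GF(\kappa)^\times$, legitimizing the ultra-power $\mathrm{Res}(P,Q)^{(\kappa-1)/n}$ as an element of $\GF(\kappa)^\times$. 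As a sanity check one may instead transfer the classical reciprocity law componentwise via \L{}o\'s' theorem, using the uniqueness clause of Definition \ref{def-nth-power-residue-symbol} to identify $\left(\tfrac{Q}{P}\right)_n=\ulim_s\left(\tfrac{Q_s}{P_s}\right)_{n_s}$; but the resultant-based argument above is stylistically in line with the rest of the paper and makes full use of the preparatory Lemma \ref{lem-ultra-Frobenius-action-on-polynomials}.
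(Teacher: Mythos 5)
Your proposal is correct and follows essentially the same route as the paper: the paper likewise factors $P$ and $Q$ into their Frobenius-conjugate roots in $\GF(\kappa^{d\ell})$, reduces to the exponent $\frac{\kappa^d-1}{\kappa-1}$ (your factoring of $\frac{\kappa^d-1}{n}$), and identifies both symbols with the double product $\prod_{i,j}(a^{\kappa^i}-b^{\kappa^j})$, i.e.\ the resultant in disguise, the only cosmetic difference being that the paper works with congruences modulo $(t-a^{\kappa^i})\cU(\bL[t])$ and Lemma \ref{lem-relatively-prime-ultra-polynomials} rather than evaluating at a root. Just add the one-line observation that when $P=Q$ both symbols vanish and the identity is trivial, since your key identity presupposes that $P$ does not divide $Q$.
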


\begin{proof}

If $P = Q$, then both $\left( \frac{Q}{P} \right)_n$, $\left( \frac{P}{Q} \right)_n$ are zero, and the theorem follows immediately. Suppose that $P \ne Q$. Since both $P$, $Q$ are monic irreducible, $P, Q$ are relatively prime in $\bA$.

Set $d = \deg(P)$ and $\ell = \deg(Q)$.

From the definition of power residue symbols, we see that
\begin{align*}
\left(\dfrac{f}{g}\right)_n = \left(\dfrac{f}{g}\right)^{\frac{\kappa-1}{n}}_{\kappa-1}
\end{align*}
for any monic irreducible polynomials $f, g$ in $\bA$, and thus the theorem follows immediately if one could show
\begin{align*}
\left(\dfrac{Q}{P}\right)_{\kappa-1} = \left(-1\right)^{d\ell}\left(\dfrac{P}{Q}\right)_{\kappa-1}
\end{align*}
since the assertion in the theorem would follow by raising both sides to the $\frac{\kappa-1}{n}$-th ultra-power.

By Theorem \ref{thm-roots-of-ultra-polynomials-in-ultra-Galois-fields}, $P$ has a root $a$ in the ultra-Galois field $\GF(\kappa^d)$, and the set of all roots of $P$ consists precisely of $d$ distinct elements $a, a^{\kappa}, \ldots, a^{\kappa^{d - 1}}$ in $\GF(\kappa^d)$. Thus one can write
\begin{align}
\label{eq1-thm-reciprocity-law-for-A}
P(t) = \prod_{i = 0}^{d - 1}(t - a^{\kappa^i}).
\end{align}

Similarly, $Q$ has a root $b$ in the ultra-Galois field $\GF(\kappa^{\ell})$, and the set of all roots of $Q$ consists precisely of $\ell$ distinct elements $b, b^{\kappa}, \ldots, b^{\kappa^{\ell - 1}}$ in $\GF(\kappa^\ell)$. Thus one can write
\begin{align}
\label{eq2-thm-reciprocity-law-for-A}
Q(t) = \prod_{i = 0}^{\ell - 1}(t - b^{\kappa^i}).
\end{align}

Let $\bL = \GF(\kappa^{d\ell})$ be the ultra-Galois field that contains both $\GF(\kappa^d)$ and $\GF(\kappa^{\ell})$ as ultra-subfields. Note that $\bL = \prod_{s\in S}\bF_{q_s^{d\ell}}/\cD$. Let $\cU(\bL[t])$ denote the ultra-hull of $\bL[x]$, i.e., $\cU(\bL[t]) = \prod_{s\in S}\bF_{q_s^{d\ell}}[t]/\cD$.

By Lemma \ref{lem-ultra-Frobenius-action-on-polynomials}, we deduce that
\begin{align}
\label{eq3-thm-reciprocity-law-for-A}
Q(t)^{\kappa^j} = Q(t^{\kappa^j})
\end{align}
for any $0 \le j \le d$. Using Lemma \ref{lem-x-c-divides-f(x)-if-c-is-a-root-of-f(x)} and (\ref{eq3-thm-reciprocity-law-for-A}), we see that
\begin{align}
\label{eq3-1/2-thm-reciprocity-law-for-A}
Q(t)^{1+\kappa+\kappa^2+\cdots+\kappa^{d-1}} &= Q(t)Q(t^\kappa)\cdots Q(t^{\kappa^{d-1}}) \nonumber \\
&\equiv Q(a)Q(a^\kappa)\cdots Q(a^{\kappa^{d-1}}) \pmod{(t-a)\cU(\bL[t])}.
\end{align}

By symmetry, the above congruence also holds modulo $(t-a^{\kappa^i})\cU(\bL[t])$ for all $0 \le i \le d - 1$, and since polynomials $t-a^{\kappa^i}$ and $t-a^{\kappa^j}$  are relatively prime in $\bL[t]$ for any $0 \le i \ne j \le d - 1$, we deduce from Lemma \ref{lem-relatively-prime-ultra-polynomials}, (\ref{eq1-thm-reciprocity-law-for-A}), (\ref{eq2-thm-reciprocity-law-for-A}), and (\ref{eq3-1/2-thm-reciprocity-law-for-A}) that
\begin{align*}
Q^{\frac{\kappa^d-1}{\kappa-1}} =  Q^{1+\kappa+\cdots+\kappa^{d-1}} \equiv \prod_{i =0}^{d - 1}  \prod_{j = 0}^{\ell - 1} (a^{\kappa^i}- b^{\kappa^j})\pmod{P\cU(\bL[t])}.
\end{align*}

By Definition \ref{def-nth-power-residue-symbol},
\begin{align*}
\left(\frac{Q}{P}\right)_{\kappa-1} \equiv Q^{\frac{\kappa^d-1}{\kappa-1}} \pmod{P\cU(\bA)}.
\end{align*}

Since $\cU(\bA) = \cU(\GF(\kappa)[t])$ is an ultra-subfield of $\cU(\GF(\kappa^{d\ell})[t]) = \cU(\bL[t])$, and thus
\begin{align*}
\left(\frac{Q}{P}\right)_{\kappa-1} \equiv Q^{\frac{\kappa^d-1}{\kappa-1}} \pmod{P\cU(\bL[t])}.
\end{align*}
Therefore
\begin{align*}
\left(\frac{Q}{P}\right)_{\kappa-1} \equiv \prod_{i =0}^{d - 1}  \prod_{j = 0}^{\ell - 1} (a^{\kappa^i}- b^{\kappa^j}) \pmod{P\cU(\bL[t])}.
\end{align*}

Since both sides of the above congruence are in $\bL$, and the degree of $P$ is a positive integer,
\begin{align}
\label{eq4-them-reciprocity-law-for-A}
\left(\dfrac{Q}{P}\right)_{\kappa-1} &= \prod_{i =0}^{d - 1}  \prod_{j = 0}^{\ell - 1} (a^{\kappa^i}- b^{\kappa^j})) \nonumber \\
&=\left(-1\right)^{d\ell} \prod_{i =0}^{d - 1}  \prod_{j = 0}^{\ell - 1} (b^{\kappa^i}- a^{\kappa^j}).
\end{align}

Using the same arguments as above with the roles of $P, Q$ exchanged, we deduce that
\begin{align*}
\left(\dfrac{P}{Q}\right)_{\kappa-1} =  \prod_{i =0}^{d - 1}  \prod_{j = 0}^{\ell - 1} (b^{\kappa^i}- a^{\kappa^j}),
\end{align*}
and it thus follows from (\ref{eq4-them-reciprocity-law-for-A} that
\begin{align*}
\left(\dfrac{Q}{P}\right)_{\kappa-1} = \left(-1\right)^{d\ell}\left(\dfrac{P}{Q}\right)_{\kappa-1}.
\end{align*}
Thus the theorem follows immediately.

\end{proof}

We can extend the definition of the $n$-th power residue symbol to the case where the prime $P$ is replaced with an arbitrary nonzero element in $\bA$ that is not necessarily prime.

\begin{definition}
\label{def-general-definition-of-power-residue-symbol}

Let $n$ be a hypernatural number in $\bN^{\#}$ such that $n$ divides $\kappa-1$. Let $\beta \in \bA$ such that $\deg(\beta) > 0$. Write $\beta =  a P_1^{d_1}P_2^{d_2}\cdots P_m^{d_m}$ for the prime factorization of $\beta$ into monic irreducible polynomials $P_i$ in $\bA$ where $a \in \GF(\kappa)^\times$ and the $d_i$ are positive integers. For each $\alpha \in \bA$, define
\begin{align*}
\left( \dfrac{\alpha}{\beta} \right)_n := \prod_{i=1}^m \left( \frac{\alpha}{P_i} \right)_n^{d_i} \in \GF(\kappa)^\times.
\end{align*}

If $\beta \in \GF(\kappa)^{\times}$, i.e., $\deg(\beta) = 0$ and $\beta \ne 0$, we simply define
\begin{align*}
\left( \dfrac{\alpha}{\beta} \right)_n := 1.
\end{align*}

\end{definition}

\begin{remark}
\label{rem-generl-power-residue-symbol-are-nth-ultra-roots}

\begin{itemize}

\item []

\item [(i)] In the prime factorization of $\beta$, $a$ is called the \textbf{sign of $\beta$} which we denote by $a = \text{sign}(\beta)$. In the above definition, we ignore $\text{sign}(\beta)$, and thus the symbol only depends on the principal ideal $\beta\bA$ generated by $\beta$.

\item [(ii)] It follows from Remark \ref{rem-power-residue-symbol-are-nth-roots-of-unity} that for any relatively prime polynomials $\alpha$, $\beta$ in $\bA$, the symbol $\left(\dfrac{\alpha}{\beta}\right)_n$ also belong in the group $\mu_n(\GF(\kappa))$ of $n$-th ultra-roots of unity in the ultra-cyclic group $\GF(\kappa)^{\times}$, i.e.,
    \begin{align*}
    \left(\dfrac{\alpha}{\beta}\right)_n^n = 1.
    \end{align*}

\end{itemize}
\end{remark}

The following result generalizes Proposition \ref{prop-properties-of-power-residue-symbol}.

\begin{proposition}
\label{prop-properties-of-general-power-residue-symbol}

The symbol $\left( \dfrac{\alpha}{\beta} \right)_n$ has the following properties: for any $\alpha, \alpha_1, \alpha_2, \beta, \beta_1, \beta_2 \in \bA$ such that $\beta, \beta_1, \beta_2$ are nonzero,

\begin{itemize}

\item[(i)] If $\alpha_1 \equiv \alpha_2 \pmod{\beta\bA}$, then $\left( \dfrac{\alpha_1}{\beta} \right)_n = \left( \dfrac{\alpha_2}{\beta} \right)_n$.

\item[(ii)] $\left(\dfrac{\alpha_1\alpha_2}{\beta} \right)_n =\left(\dfrac{\alpha_1}{\beta} \right)_n  \left( \dfrac{\alpha_2}{\beta} \right)_n$.

\item[(iii)] $  \left(\dfrac{\alpha}{\beta_1\beta_2} \right)_n =\left( \dfrac{\alpha}{\beta_1} \right)_n  \left( \dfrac{\alpha}{\beta_2} \right)_n$.

\item[(iv)]  $\left( \dfrac{\alpha}{\beta} \right)_n \neq 0$ if and only if $\alpha$ and $\beta$ are relatively prime in $\bA$.

\item[(v)] If $x^n \equiv \alpha \pmod{\beta \cU(\bA)}$ is solvable in $\bA$, then $\left(\dfrac{\alpha}{\beta} \right)_n = 1$, provided that  $\alpha, \beta$ are relatively prime.

\end{itemize}
\end{proposition}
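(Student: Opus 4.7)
The plan is to reduce each of the five assertions to the prime-denominator case already established in Proposition~\ref{prop-properties-of-power-residue-symbol}, exploiting the multiplicative definition of the symbol in Definition~\ref{def-general-definition-of-power-residue-symbol}. Throughout, fix the factorization $\beta = a P_1^{d_1} \cdots P_m^{d_m}$ with $a \in \GF(\kappa)^{\times}$ and monic irreducible $P_i \in \bA$, and likewise for $\beta_1,\beta_2$ in parts that need them.

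For part~(i), since each $P_i$ divides $\beta$ in $\bA$, the hypothesis $\alpha_1 \equiv \alpha_2 \pmod{\beta\bA}$ forces $\alpha_1 \equiv \alpha_2 \pmod{P_i\bA}$, so Proposition~\ref{prop-properties-of-power-residue-symbol}(i) gives $\left(\dfrac{\alpha_1}{P_i}\right)_n = \left(\dfrac{\alpha_2}{P_i}\right)_n$ for every $i$, and multiplying through the defining product yields the claim. Part~(ii) follows from the identity $\left(\dfrac{\alpha_1\alpha_2}{P_i}\right)_n = \left(\dfrac{\alpha_1}{P_i}\right)_n \left(\dfrac{\alpha_2}{P_i}\right)_n$ of Proposition~\ref{prop-properties-of-power-residue-symbol}(ii), raised to $d_i$ and multiplied over $i$. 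Part~(iii) is handled by concatenating the prime factorizations of $\beta_1$ and $\beta_2$, adding exponents at shared primes, and unwinding the defining product; the convention $\left(\dfrac{\alpha}{u}\right)_n = 1$ for $u \in \GF(\kappa)^{\times}$ covers the degenerate case where $\beta_1$ or $\beta_2$ has degree zero. Part~(iv) is then immediate: by Definition~\ref{def-nth-power-residue-symbol}, $\left(\dfrac{\alpha}{P_i}\right)_n = 0$ iff $P_i \mid \alpha$, and since $\GF(\kappa)$ is a field, the product is nonzero iff no $P_i$ divides $\alpha$, equivalently iff $\gcd(\alpha,\beta) = 1$ in $\bA$.

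For part~(v), assume $x_0^n \equiv \alpha \pmod{\beta\,\cU(\bA)}$ for some $x_0 \in \bA$, so $x_0^n - \alpha = \beta H$ with $H \in \cU(\bA)$. The division-algorithm argument from the proof of Lemma~\ref{lem-U/PU=A/PA-iso}, applied with $\beta$ in place of $P$, shows that since $x_0^n - \alpha$ and $\beta$ both lie in $\bA$, the cofactor $H$ must also lie in $\bA$, giving $x_0^n \equiv \alpha \pmod{\beta\bA}$. Reducing modulo each prime $P_i$ produces $x_0^n \equiv \alpha \pmod{P_i\,\cU(\bA)}$, whence Proposition~\ref{prop-properties-of-power-residue-symbol}(iii), valid because $\alpha$ and $P_i$ are coprime by the coprimality of $\alpha$ and $\beta$, forces $\left(\dfrac{\alpha}{P_i}\right)_n = 1$ for every $i$, and the product defining $\left(\dfrac{\alpha}{\beta}\right)_n$ collapses to $1$. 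The only nontrivial step I anticipate is the descent $H \in \cU(\bA) \Rightarrow H \in \bA$; once that is in hand, every remaining step is formal bookkeeping through the defining product.
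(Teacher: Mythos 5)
Your treatment of parts (i)--(iv) is essentially the paper's: both arguments reduce immediately to the prime-denominator case of Proposition \ref{prop-properties-of-power-residue-symbol} by unwinding the defining product of Definition \ref{def-general-definition-of-power-residue-symbol}, and your bookkeeping there (including the unit case and part (iv)) is fine.

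For part (v) you take a mildly different route from the paper, and it contains one step that is both unjustified and unnecessary. The paper argues globally: from $\lambda^n \equiv \alpha \pmod{\beta\cU(\bA)}$ it invokes parts (i)--(ii) and Remark \ref{rem-generl-power-residue-symbol-are-nth-ultra-roots} to get $\left(\dfrac{\alpha}{\beta}\right)_n = \left(\dfrac{\lambda}{\beta}\right)_n^n = 1$. You instead go prime by prime through Proposition \ref{prop-properties-of-power-residue-symbol}(iii); that works, but your descent step does not: you claim that since $x_0^n - \alpha$ and $\beta$ lie in $\bA$ the cofactor $H$ must lie in $\bA$, yet in this section $n$ is an arbitrary hypernatural dividing $\kappa-1$, so when $\deg(x_0) > 0$ and $n \notin \bN$ the ultra-power $x_0^n$ has nonstandard ultra-degree and $x_0^n - \alpha$ is not an element of $\bA$ at all; the degree comparison borrowed from Lemma \ref{lem-U/PU=A/PA-iso} is then unavailable (it is valid only for standard $n$, which is exactly how it is used in the second half of Proposition \ref{prop-properties-of-power-residue-symbol}(iii)). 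Fortunately you never need it: since each $P_i$ divides $\beta$ in $\bA \subseteq \cU(\bA)$, the hypothesis $x_0^n - \alpha = \beta H$ with $H \in \cU(\bA)$ already gives $x_0^n \equiv \alpha \pmod{P_i\cU(\bA)}$ directly, and Proposition \ref{prop-properties-of-power-residue-symbol}(iii), together with the coprimality of $\alpha$ and $P_i$, yields $\left(\dfrac{\alpha}{P_i}\right)_n = 1$ for every $i$, so the defining product is $1$. Simply delete the descent sentence and your proof is correct; indeed it is then arguably more careful than the paper's one-line argument, which applies part (i) to a congruence modulo $\beta\cU(\bA)$ even though $\lambda^n$ need not lie in $\bA$.
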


\begin{proof}

Parts (i)--(iv) follow immediately from the definition, Proposition \ref{prop-properties-of-power-residue-symbol}, and the properties of the symbol $\left( \frac{\alpha}{P} \right)_n$ for primes $P \in \bA$.

We now prove part (v). Suppose $\lambda^n \equiv \alpha \pmod{\beta \cU(\bA)}$ for some $\lambda \in \bA$. Then parts (i), (ii), and Remark \ref{rem-generl-power-residue-symbol-are-nth-ultra-roots} imply that
\begin{align*}
\left( \dfrac{\alpha}{\beta} \right)_n = \left( \dfrac{\lambda^n}{\beta} \right)_n = \left( \dfrac{\lambda}{\beta} \right)_n^n = 1.
\end{align*}

\end{proof}

For each polynomial $f \in \bA$, we denote by $\text{sign}_n(f)$ the leading coefficient of the ultra-polynomial $f^{\frac{\kappa - 1}{n}}$. If the leading coefficient of $f$ is $a \in \GF(\kappa)^{\times}$, then
\begin{align}
\label{def-signature-of-kappa-1/n-ultra-power}
\text{sign}_n(f) = a^{\frac{\kappa - 1}{n}}.
\end{align}

We need the following lemma in the proof of the general reciprocity law below.

\begin{lemma}
\label{lem-power-residue-symbol-for-constants}

Let $P$ be a monic irreducible polynomial in $\bA$ of degree $d \in \bZ_{>0}$. For any $a \in \GF(\kappa)$, .
\begin{align*}
\left( \frac{a}{P} \right)_n = a^{\frac{\kappa-1}{n}\deg(P)}.
\end{align*}

\end{lemma}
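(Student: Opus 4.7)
The plan is to reduce the computation directly to the defining congruence of the power residue symbol and exploit the fact that $a$, being an element of $\GF(\kappa)$, satisfies $a^{\kappa-1} = 1$ (for $a \neq 0$) by Lemma \ref{lem-ultra-cyclic-structures}. The case $a = 0$ is trivial since $P$ divides $0$, giving $\left(\frac{0}{P}\right)_n = 0 = 0^{\frac{\kappa-1}{n}d}$, so I would immediately assume $a \in \GF(\kappa)^{\times}$.

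For the main case, I would start from the factorization in $\bN^{\#}$
\begin{align*}
\frac{\kappa^d - 1}{n} = \frac{\kappa - 1}{n}\bigl(1 + \kappa + \kappa^2 + \cdots + \kappa^{d-1}\bigr),
\end{align*}
which is valid because $n$ divides $\kappa - 1$ in $\bZ^{\#}$ and $\kappa - 1$ divides $\kappa^d - 1$ in $\bZ^{\#}$ (both identities transfer componentwise by \L{}o\'s' theorem from $\bZ$). Setting $c := a^{\frac{\kappa-1}{n}}$, which lies in $\GF(\kappa)^{\times}$ by closure under ultra-powers, I note that $c^n = a^{\kappa-1} = 1$, so in particular $c^{\kappa-1} = 1$ and hence $c^{\kappa} = c$ by Lemma \ref{lem-ultra-cyclic-structures}. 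Iterating gives $c^{\kappa^j} = c$ for every nonnegative hyperinteger $j$.

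From this I compute, entirely inside $\GF(\kappa)^{\times}$,
\begin{align*}
a^{\frac{\kappa^d - 1}{n}} \;=\; c^{\,1 + \kappa + \kappa^2 + \cdots + \kappa^{d-1}} \;=\; c \cdot c^{\kappa} \cdot c^{\kappa^2} \cdots c^{\kappa^{d-1}} \;=\; c^d \;=\; a^{\frac{\kappa-1}{n}\, d}.
\end{align*}
Thus $a^{\frac{\kappa^d - 1}{n}}$ is already a constant element of $\GF(\kappa)^{\times}$. By Definition \ref{def-nth-power-residue-symbol}, $\left(\frac{a}{P}\right)_n$ is the unique element of $\GF(\kappa)^{\times}$ congruent to $a^{\frac{\kappa^d - 1}{n}}$ modulo $P\cU(\bA)$; since two constants that are congruent modulo a polynomial of positive degree must be equal (and $\deg(P) = d > 0$), we conclude
\begin{align*}
\left(\frac{a}{P}\right)_n \;=\; a^{\frac{\kappa-1}{n}\, d} \;=\; a^{\frac{\kappa-1}{n}\deg(P)},
\end{align*}
as required.

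There is no real obstacle here: the entire content of the lemma is the observation that the ultra-Frobenius $x \mapsto x^{\kappa}$ fixes $\GF(\kappa)$ pointwise, which collapses the telescoping exponent $1 + \kappa + \cdots + \kappa^{d-1}$ down to $d$. The only point to be careful about is the justification that the arithmetic identity $\frac{\kappa^d-1}{\kappa-1} = 1 + \kappa + \cdots + \kappa^{d-1}$ makes sense in $\bZ^{\#}$ (and that the ultra-power laws stated in Section \ref{subsec-action-of-hyperintegers-on-ultraproducts} apply with hyperinteger exponents), but this is immediate from \L{}o\'s' theorem applied componentwise.
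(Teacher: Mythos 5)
Your proof is correct and follows essentially the same route as the paper: treat $a=0$ trivially, factor $\frac{\kappa^d-1}{n}=\frac{\kappa-1}{n}(1+\kappa+\cdots+\kappa^{d-1})$, use that the ultra-Frobenius fixes $\GF(\kappa)$ pointwise to collapse the exponent to $d$, and invoke the uniqueness of the residue symbol in $\GF(\kappa)^{\times}$. The only cosmetic difference is that you apply the Frobenius-fixing to $c=a^{\frac{\kappa-1}{n}}$ while the paper applies it directly to $a$, and you spell out the final uniqueness step that the paper leaves implicit.
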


\begin{proof}

The lemma is trivial for $a = 0$. Suppose that $a \ne 0$. We see that
\begin{align*}
\frac{\kappa^d-1}{n} = \frac{\kappa-1}{n} ( 1+\kappa+\kappa^2+\cdots + \kappa^{d-1}).
\end{align*}
Since $a\in \GF(\kappa)^{\times}$, $a^{\kappa^i}=a$ for all $0 \leq \kappa \leq i-1$, and thus
\begin{align*}
a^{\frac{\kappa^d-1}{n}} =a^{\frac{\kappa-1}{n} d} \in \GF(\kappa)^{\times}.
\end{align*}
Therefore $\left(\dfrac{a}{P} \right)_n = a^{\frac{\kappa-1}{n}\deg(P)}$.

\end{proof}

We now prove the general reciprocity law for arbitrary relative prime nonzero elements in $\bA$.

\begin{theorem}
(The general reciprocity law)
\label{thm-General-Reciprocity-Law}

Let $\alpha, \beta$ be relatively prime nonzero elements in $\bA$. Then
\begin{align*}
\left( \dfrac{\alpha}{\beta} \right)_n \left(\dfrac{\beta}{\alpha} \right)_n^{-1} = (-1)^{\frac{\kappa-1}{n} \deg(\alpha)\deg(\beta)} \text{sign}_n(\alpha)^{\deg(\beta)}  \text{sign}_n(\beta)^{-\deg(\alpha)}.
\end{align*}

\end{theorem}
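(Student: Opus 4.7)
The plan is to reduce the statement to Theorem \ref{thm-Reciprocity-Law} (the reciprocity law for monic irreducible polynomials) by factoring $\alpha$ and $\beta$ into their leading coefficients and monic irreducible parts, then exploiting the multiplicativity of the power residue symbol.

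First I would write $\alpha = a \prod_i P_i^{a_i}$ and $\beta = b \prod_j Q_j^{b_j}$, where $a, b \in \GF(\kappa)^{\times}$ are the leading coefficients, the $P_i$ and $Q_j$ are monic irreducibles in $\bA$, and the exponents are positive integers. Since $\alpha$ and $\beta$ are relatively prime, no $P_i$ equals any $Q_j$, so each pair $(P_i, Q_j)$ is eligible for Theorem \ref{thm-Reciprocity-Law}. Combining Definition \ref{def-general-definition-of-power-residue-symbol}, the multiplicativity of Proposition \ref{prop-properties-of-general-power-residue-symbol}(ii), and Lemma \ref{lem-power-residue-symbol-for-constants}, the upper symbol expands as
\[
\left(\frac{\alpha}{\beta}\right)_n = \prod_j \left(\frac{\alpha}{Q_j}\right)_n^{b_j} = \prod_j \left(a^{\frac{\kappa - 1}{n}\deg(Q_j)} \prod_i \left(\frac{P_i}{Q_j}\right)_n^{a_i}\right)^{b_j} = \text{sign}_n(\alpha)^{\deg(\beta)}\prod_{i,j}\left(\frac{P_i}{Q_j}\right)_n^{a_i b_j},
\]
where the scalar factor is collected using $\sum_j b_j \deg(Q_j) = \deg(\beta)$ together with the identity (\ref{def-signature-of-kappa-1/n-ultra-power}). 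The symmetric computation yields $\left(\frac{\beta}{\alpha}\right)_n = \text{sign}_n(\beta)^{\deg(\alpha)}\prod_{i,j}\left(\frac{Q_j}{P_i}\right)_n^{a_i b_j}$.

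Next, I would feed each factor $\left(\frac{P_i}{Q_j}\right)_n$ into Theorem \ref{thm-Reciprocity-Law} to acquire the sign $(-1)^{\frac{\kappa - 1}{n}\deg(P_i)\deg(Q_j)}$. The bilinear identity $\sum_{i,j} a_i b_j \deg(P_i)\deg(Q_j) = \deg(\alpha)\deg(\beta)$ consolidates all such sign contributions into the single factor $(-1)^{\frac{\kappa - 1}{n}\deg(\alpha)\deg(\beta)}$. Substituting back into the expression for $\left(\frac{\alpha}{\beta}\right)_n$ and dividing by the expression for $\left(\frac{\beta}{\alpha}\right)_n$ yields exactly the claimed identity.

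The computation is a careful bookkeeping exercise; no new arithmetic is needed beyond Theorem \ref{thm-Reciprocity-Law}, Lemma \ref{lem-power-residue-symbol-for-constants}, and the multiplicativity properties already established. The main point requiring attention is that the exponent $(\kappa - 1)/n$ is a hyperinteger rather than an integer, so identities such as $a^{\frac{\kappa - 1}{n}\deg(Q_j)} = (a^{(\kappa - 1)/n})^{\deg(Q_j)}$ and the rearrangement $\prod_j (a^{\frac{\kappa-1}{n}\deg(Q_j)})^{b_j} = a^{\frac{\kappa-1}{n}\sum_j b_j \deg(Q_j)}$ depend on the ultra-power action developed in Subsection \ref{subsec-action-of-hyperintegers-on-ultraproducts}. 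Degenerate cases --- for instance $\alpha$ or $\beta$ a nonzero constant --- collapse immediately, with the empty product over $i$ or $j$ and the convention $\left(\frac{\alpha}{\beta}\right)_n = 1$ when $\deg(\beta) = 0$.
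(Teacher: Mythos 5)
Your proposal is correct and follows essentially the same route as the paper's proof: factor $\alpha$ and $\beta$ into leading coefficients and monic irreducible parts, expand both symbols by multiplicativity, evaluate the constant contributions via Lemma \ref{lem-power-residue-symbol-for-constants} to obtain the $\text{sign}_n$ factors, and apply Theorem \ref{thm-Reciprocity-Law} to each pair of monic irreducibles, collecting the signs through the bilinear degree identity. The degenerate constant case is likewise dispatched at the outset in the paper exactly as you indicate.
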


\begin{remark}

In Nguyen \cite{nguyen-2023-2nd-paper}, the author proves an analogue of the Hilbert reciprocity law for $\GF(\kappa)(t)$ whose proof essentially relies on the General Reciprocity Law above.

\end{remark}

\begin{proof}

If at least one of $\alpha$, $\beta$ belongs in $\GF(\kappa)^{\times}$, the theorem follows trivially from Definition \ref{def-general-definition-of-power-residue-symbol} and Lemma \ref{lem-power-residue-symbol-for-constants}. Hence without loss of generality, suppose that both $\alpha$ and $\beta$ are of positive degrees in $\bA$.

Let $\alpha =  a P_1^{d_1}P_2^{d_2}\cdots P_r^{d_r}$  and $\beta =  b Q_1^{\ell_1}Q_2^{\ell_2}\cdots Q_m^{\ell_m}$ be the prime factorizations of $\alpha, \beta$, respectively, where $a, b \in \GF(\kappa)^\times$, the $P_i, Q_j$ are monic primes in $\bA$ and the $d_i, \ell_j$ are positive integers.

So
\begin{align}
\label{eq1-the-general-reciprocity-law}
\left( \dfrac{\alpha}{\beta} \right)_n =\prod_{i=1}^m \prod_{j=1}^r \left[ \left( \dfrac{P_j}{Q_i} \right)_n^{d_j \ell_i} \left( \dfrac{a}{Q_i} \right)_n^{\ell_i} \right].
\end{align}

We see from \eqref{eq1-the-general-reciprocity-law} and Lemma \ref{lem-power-residue-symbol-for-constants} that
\begin{equation*}
\begin{aligned}
\left( \frac{\alpha}{\beta} \right)_n &= \prod_{i=1}^m a^{\frac{\kappa-1}{n} \ell_i \deg(Q_i)} \prod_{j=1}^r \left( \frac{P_j}{Q_i} \right)_n^{d_j \ell_i} \\
&= a^{\frac{\kappa-1}{n} \sum_{i=1}^m \deg(Q_i^{\ell_i})} \prod_{i=1}^m \prod_{j=1}^r \left( \frac{P_j}{Q_i} \right)_n^{d_j \ell_i}
\end{aligned}
\end{equation*}
Note that
\begin{align*}
a^{\frac{\kappa-1}{n} \sum_{i=1}^m \deg(Q_i^{\ell_i})} = \text{sign}_n(\alpha)^{\deg(\beta)}.
\end{align*}
By Theorem \ref{thm-Reciprocity-Law},
\begin{align*}
\left( \dfrac{P_j}{Q_i} \right)_n = \left(\dfrac{Q_i}{P_j} \right)_n (-1)^{ \frac{\kappa-1}{n} \deg(P_j) \deg(Q_i)},
\end{align*}
and thus
\begin{align*}
\left( \frac{\alpha}{\beta} \right)_n = \text{sign}_n(\alpha)^{\deg(\beta)} \prod_{i=1}^m \prod_{j=1}^r \left( \frac{Q_i}{P_j} \right)_n^{d_j \ell_i} (-1)^{d_j \ell_i \frac{\kappa-1}{n} \deg(P_j) \deg(Q_i)}.
\end{align*}
Since
\begin{align*}
\deg(\alpha) \deg(\beta) &= \left( \sum_{j=1}^r \deg(P_j^{d_j}) \right) \left( \sum_{i=1}^m \deg(Q_i^{\ell_i}) \right) \\
&=  \sum_{i=1}^m  \sum_{j=1}^r \deg(P_j^{d_j}) \deg(Q_i^{\ell_i})\\
&=  \sum_{i=1}^m  \sum_{j=1}^r d_j \ell_i \deg(P_j) \deg(Q_i),
\end{align*}
we deduce that
\begin{align}
\label{eq2-the-general-reciprocity-law}
\left( \frac{\alpha}{\beta} \right)_n = (-1)^{\frac{\kappa-1}{n} \deg(\alpha) \deg(\beta)} \text{sign}_n(\alpha)^{\deg(\beta)} \prod_{i=1}^m \prod_{j=1}^r \left( \frac{Q_i}{P_j} \right)_n^{d_j \ell_i}.
\end{align}

On the other hand,
\begin{equation}
 \label{eq3-the-general-reciprocity-law}
\begin{aligned}
\left( \frac{\beta}{\alpha} \right)_n  &=  \prod_{j=1}^r  \prod_{i=1}^m \left[ \left( \frac{Q_i}{P_j} \right)_n^{d_j \ell_i} \left( \frac{b}{P_j} \right)_n^{d_j} \right] \\
&= \prod_{j=1}^r  \prod_{i=1}^m \left[ b^{ \frac{\kappa-1}{n} \deg(P_j)d_j} \left( \frac{Q_i}{P_j} \right)_n^{d_j \ell_i}  \right]
\\
&= b^{ \frac{\kappa-1}{n} \sum_{j=1}^r \deg(P_j^{d_j})} \prod_{j=1}^r  \prod_{i=1}^m \left( \frac{Q_i}{P_j} \right)_n^{d_j \ell_i} \\
&= \text{sign}_n (\beta)^{\deg(\alpha)} \prod_{j=1}^r  \prod_{i=1}^m\left( \frac{Q_i}{P_j} \right)_n^{d_j \ell_i} .
\end{aligned}
\end{equation}

Combining \eqref{eq2-the-general-reciprocity-law} and \eqref{eq3-the-general-reciprocity-law}, the theorem follows immediately.

\end{proof}

\begin{remark}
\label{rem-2nd-part-of-CP-theorem}

In this remark, we give another proof to the second part of Theorem \ref{thm-Clark-Pollack} for the polynomial ring over an ultra-finite field which is the second part of Clark--Pollack \cite[part (b), Theorem 9]{CP}. For the rest of this remark, assume that $n$ is a positive integer dividing $\kappa - 1$, and that $\alpha, \beta$ are relatively prime monic polynomials in $\bA = \GF(\kappa)[t]$. By Remark \ref{rem-power-residue-symbol-recovers-CP-PRS},
\begin{align*}
\Gamma_{\GF(\kappa), n}(x) = x^{\frac{\kappa - 1}{n}}
\end{align*}
for each $x \in \GF(\kappa)$, where the map $\Gamma_{\GF(\kappa), n}$ is defined by (\ref{eqn-Gamma-map-introduction-in-Serre}). Thus
\begin{align*}
\Gamma_{\GF(\kappa), n}(-1)^{\deg(\alpha)\deg(\beta)} = (-1)^{\frac{\kappa - 1}{n}\deg(\alpha)\deg(\beta)},
\end{align*}
and therefore the second part of Theorem \ref{thm-Clark-Pollack} follows immediately from Remark \ref{rem-power-residue-symbol-recovers-CP-PRS} and Theorem \ref{thm-General-Reciprocity-Law}.

\end{remark}

We also give another proof to \cite[Corollary 10]{CP} using Theorem \ref{thm-General-Reciprocity-Law}.

\begin{corollary}
(See Corollary 10 in Clark--Pollack \cite{CP})
\label{cor-Corollary-10-in-CP}

We maintain the same notation as above for $\GF(\kappa)$. Assume that $\GF(\kappa)$ contains the $n$-th roots of unity for all positive integers $n$. Let $\alpha, \beta$ be relatively prime monic polynomials in $\bA = \GF(\kappa)[t]$. Then, for any positive integer $n$,

\begin{itemize}

\item[(i)] $\left(\dfrac{\alpha}{\beta}\right)_n = \left(\dfrac{\beta}{\alpha}\right)_n$.

\item[(ii)] If $\alpha$ and $\beta$ are moreover irreducible, then $\alpha$ is an $n$-th power modulo $\beta$ if and only if $\beta$ is an $n$-th power modulo $\alpha$.

\end{itemize}

\end{corollary}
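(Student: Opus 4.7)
The plan is to derive both parts as direct consequences of Theorem \ref{thm-General-Reciprocity-Law}, using the hypothesis only to eliminate the correction factors on the right-hand side of that identity. The crucial arithmetic input I would extract from the assumption is the following. By Corollary \ref{cor-nth-ultra-roots-of-unity-in-Galois-ultra-fields}, the group $\mu_m(\GF(\kappa))$ of $m$-th ultra-roots of unity is an ultra-cyclic subgroup of $\GF(\kappa)^{\times}$ of ultra-order exactly $m$ if and only if $m$ divides $\kappa - 1$ in $\bZ^{\#}$. Hence the blanket assumption that $\GF(\kappa)$ contains all $m$-th roots of unity for every positive integer $m$ is equivalent to: $m$ divides $\kappa - 1$ in $\bZ^{\#}$ for every positive integer $m$. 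Applying this with $m = 2n$ gives $2n \mid \kappa - 1$, so $\frac{\kappa - 1}{n}$ is an even hyperinteger and consequently
\[
(-1)^{\frac{\kappa - 1}{n}\deg(\alpha)\deg(\beta)} = 1.
\]

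For part (i), I then observe that since $\alpha, \beta$ are monic, their leading coefficients equal $1 \in \GF(\kappa)^{\times}$, so by (\ref{def-signature-of-kappa-1/n-ultra-power}),
\[
\text{sign}_n(\alpha) = 1^{\frac{\kappa - 1}{n}} = 1 \quad \text{and} \quad \text{sign}_n(\beta) = 1.
\]
Substituting these identities together with the previous one into the general reciprocity law (Theorem \ref{thm-General-Reciprocity-Law}), the entire right-hand side collapses to $1$, yielding
\[
\left(\dfrac{\alpha}{\beta}\right)_n\left(\dfrac{\beta}{\alpha}\right)_n^{-1} = 1,
\]
which is part (i).

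For part (ii), since $\alpha, \beta$ are monic irreducible in $\bA$ and $n$ is a positive integer, part (iii) of Proposition \ref{prop-properties-of-power-residue-symbol} tells us that $\left(\dfrac{\alpha}{\beta}\right)_n = 1$ if and only if $x^n \equiv \alpha \pmod{\beta\bA}$ is solvable in $\bA$, with the analogous statement holding after swapping $\alpha$ and $\beta$. Combined with the identity of symbols from part (i), this immediately gives that $\alpha$ is an $n$-th power modulo $\beta$ if and only if $\beta$ is an $n$-th power modulo $\alpha$. The only subtle point in the whole argument is the step where the hypothesis is applied to $2n$ rather than to $n$ itself; without that observation one could only conclude $n \mid \kappa - 1$, which is not enough to make the sign factor trivial.
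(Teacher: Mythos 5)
Your proposal is correct and follows essentially the same route as the paper: both translate the roots-of-unity hypothesis into divisibility of $\kappa-1$ by every positive integer, apply it to $2n$ to kill the factor $(-1)^{\frac{\kappa-1}{n}\deg(\alpha)\deg(\beta)}$, use monicity to trivialize the sign terms in Theorem \ref{thm-General-Reciprocity-Law}, and deduce part (ii) from part (i) via part (iii) of Proposition \ref{prop-properties-of-power-residue-symbol}. Your explicit remark that one needs $2n \mid \kappa-1$ rather than just $n \mid \kappa-1$ is exactly the step the paper makes as well.
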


\begin{proof}

By Lemma \ref{lem-divisibility-of-ultra-order} and Corollary \ref{cor-nth-ultra-roots-of-unity-in-Galois-ultra-fields}, $\GF(\kappa)$ contains the $n$-th roots of unity for every positive integer $n$ if and only if $n$ divides $\kappa -1$ in $\bZ^{\#}$ for every positive integer $n$. Since $\alpha, \beta$ are monic, Theorem \ref{thm-General-Reciprocity-Law} implies that
\begin{align*}
\left(\dfrac{\alpha}{\beta}\right)_n = (-1)^{\frac{\kappa - 1}{n}\deg(\alpha)\deg(\beta)}\left(\dfrac{\beta}{\alpha}\right)_n.
\end{align*}

Since $2n$ divides $\kappa - 1$, $\kappa - 1 = 2n\lambda$ for some hyperinteger $\lambda \in \bZ^{\#}$. Thus
\begin{align*}
(-1)^{\frac{\kappa - 1}{n}\deg(\alpha)\deg(\beta)} = (-1)^{2\lambda\deg(\alpha)\deg(\beta)} = 1,
\end{align*}
and therefore we deduce from the above equation that
\begin{align*}
\left(\dfrac{\alpha}{\beta}\right)_n = \left(\dfrac{\beta}{\alpha}\right)_n.
\end{align*}
So part (i) follows.

Part (ii) follows from part (i), the definition of $n$-th power residue symbol, and Proposition \ref{prop-properties-of-power-residue-symbol}.

\end{proof}

\begin{remark}

We construct infinitely many hyperintegers $\kappa$ that satisfy the conditions of Corollary \ref{cor-Corollary-10-in-CP}. Indeed, let $S = \bZ_{>0}$, and let $\cD$ be a nonprincipal ultrafilter on $S$. For each $m \in \bZ_{>0}$, let $p_m = m!r_m + 1 = (1\cdot2 \cdots m)r_m + 1$ for some integer $r_m$. By Dirichlet's theorem on primes in arithmetic progressions (see Serre \cite{serre-arithmetic}), one can choose infinitely many integers $r_m$ such that the $p_m$ are distinct primes. Let $(\epsilon_m)_{m \in \bZ_{> 0}}$ be a sequence of positive integers, and for each $m \in \bZ_{>0}$, set $q_m = p_m^{\epsilon_m}$. Set
\begin{align*}
\kappa = \ulim_{m \in \bZ_{>0}}q_m \in \bZ^{\#}.
\end{align*}

Take any positive integer $n$. We see that $n$ divides $p_m - 1$ for all $m \ge n$, and thus $q_m - 1 = p_m^{\epsilon_m} - 1$ is divisible by $n$ for all $m \ge n$. Since $\cD$ is a nonprincipal ultrafilter, we deduce that $n$ divides $q_m - 1$ for $\cD$-almost all $m \in S$, and thus $n$ divides $\kappa - 1 = \ulim_{m \in \bZ_{>0}}(q_m - 1)$ in $\bZ^{\#}$ for every positive integer $n$. Thus the ultra-finite field $\GF(\kappa) = \prod_{m \in S}\bF_{q_m}/\cD$ with $\kappa$ above satisfies the conditions of Corollary \ref{cor-Corollary-10-in-CP}.

\end{remark}

\section{An analogue of the Grunwald-Wang theorem}
\label{sec-analogue-of-GW-thm}

We maintain the same notation as in Section \ref{sec-power-residue-symbol}. In this section, we prove an analogue of the Grunwald--Wang theorem for the ring of polynomials over an ultra-finite field. For the classical Grunwald--Wang theorem, see Artin--Tate \cite{Artin-Tate}.

\begin{lemma}
\label{lem-infinitely-many-primes-in-A}

Let $n$ be a positive integer in $\bZ$. There are infinitely many monic primes in $\bA$ of degree $n$. More precisely, let $\cP(n)$ denote the set of monic primes of degree $n$ in $\bA$. Then
\begin{align*}
\cP(n) = \prod_{s\in S} \cP_{q_s}(n)/ \cD,
\end{align*}
where for each $s \in S$, $\cP_{q_s}(n)$ denotes the set of monic primes of degree $n$ in $\bA_s = \bF_{q_s}[t]$. In particular, the internal cardinality of $\cP(n)$ satisfies $\icard(\cP(n)) \ge \dfrac{\kappa^{n/2}}{n}$.

\end{lemma}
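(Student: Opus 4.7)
The plan is to prove the two assertions in sequence: first the ultraproduct identity $\cP(n) = \prod_{s \in S} \cP_{q_s}(n)/\cD$, and then the internal cardinality bound as a consequence via a componentwise application of the classical Prime Polynomial Theorem.

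For the ultraproduct identity I would argue as follows. Given any monic polynomial $P \in \bA$ of degree $n$, write $P = t^n + \sum_{i = 0}^{n-1} a_i t^i$ with $a_i = \ulim_{s \in S} a_{i,s}$ for suitable $a_{i,s} \in \bF_{q_s}$, and set $P_s := t^n + \sum_{i = 0}^{n-1} a_{i,s} t^i \in \bA_s$, so that $P = \ulim_{s \in S} P_s$ and $P_s$ is monic of degree $n$ for $\cD$-almost all $s$. The key point is that, with $n$ fixed, ``monic polynomial of degree $n$ is irreducible'' is expressible as a first-order formula in the coefficients $a_0, \dots, a_{n-1}$: irreducibility amounts to the conjunction, over the finitely many pairs $(k, \ell)$ with $k + \ell = n$ and $1 \le k, \ell \le n - 1$, of the universal statement that no polynomials $Q$ of degree $k$ and $R$ of degree $\ell$ satisfy $QR = P$, which involves only the $2n$ candidate coefficients of $Q$ and $R$. \L{}o\'s' theorem then gives $P \in \cP(n)$ if and only if $P_s \in \cP_{q_s}(n)$ for $\cD$-almost all $s \in S$, yielding the desired identity.

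For the lower bound on internal cardinality I would invoke the Prime Polynomial Theorem for $\bF_{q_s}[t]$ (see Rosen \cite{rosen}),
\begin{align*}
|\cP_{q_s}(n)| = \frac{1}{n} \sum_{d \mid n} \mu(d)\, q_s^{n/d},
\end{align*}
from which the elementary estimate $|\cP_{q_s}(n)| \ge q_s^{n/2}/n$ holds for every prime power $q_s \ge 2$ and every $n \ge 1$: the case $n = 1$ is immediate since $|\cP_{q_s}(1)| = q_s$, and for $n \ge 2$ one bounds the sum of the proper-divisor terms by $\sum_{k \le n/2} q_s^k \le q_s^{n/2 + 1}/(q_s - 1)$ and verifies the resulting inequality, handling the few boundary cases $(q_s,n)$ directly. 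Since $\cP(n) = \prod_{s \in S} \cP_{q_s}(n)/\cD$ is an internal set, Definition \ref{def-internal-cardinality} gives $\icard(\cP(n)) = \ulim_{s \in S} |\cP_{q_s}(n)|$, and taking the ultralimit of the above inequality produces
\begin{align*}
\icard(\cP(n)) \ge \ulim_{s \in S} \frac{q_s^{n/2}}{n} = \frac{\kappa^{n/2}}{n},
\end{align*}
interpreted in the hyper-reals. When $\kappa$ is a nonstandard hyperinteger (the setting of principal interest for the rest of the paper), this bound is nonstandard-large, so $\cP(n)$ is an infinite set in the ordinary sense.

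The main technical hurdle is the careful first-order axiomatization of irreducibility at a fixed degree so that \L{}o\'s' theorem can be applied uniformly in $s$; once this is done, everything else reduces to the standard Möbius-type estimate transferred through the ultralimit, and no further subtleties arise.
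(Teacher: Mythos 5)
Your proposal is correct and follows essentially the same route as the paper: the identity $\cP(n) = \prod_{s\in S}\cP_{q_s}(n)/\cD$ via componentwise coefficients and \L{}o\'s' theorem, followed by the lower bound obtained by taking the ultralimit of the classical count of monic irreducibles over $\bF_{q_s}$. The only (harmless) differences are that you spell out the fixed-degree first-order formula for irreducibility and verify the inequality $\card(\cP_{q_s}(n)) \ge q_s^{n/2}/n$ uniformly in $q_s$ from the M\"obius formula, whereas the paper simply cites Rosen's Theorem 2.2 for sufficiently large $q_s$.
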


\begin{proof}

Let $f(t) = t^n+ \sum_{i=1}^{n-1} a_i t^i$ be a monic prime of degree $n$ in $\bA$, where the $a_i$ are in $\GF(\kappa)$. For each $i$, let
\begin{align*}
a_i = \ulim_{s\in S} a_{i,s},
\end{align*}
where $a_{i,s} \in \bF_{q_s}$. Then it is easy to verify that
\begin{align*}
f(t)= \ulim_{s\in S} f_s(t),
\end{align*}
where $f_s(t) = t^n+ \sum_{i=1}^{n-1} a_{i, s} t^i \in \bA_s$.

Since $f(t)$ is prime, it follows from \L{}o\'s' theorem that $f_s(t)$ is prime for $\cD$-almost all $s \in S$, and thus $f_s(t) \in \cP_s(n)$ for $\cD$-almost all $s \in S$. Thus
\begin{align*}
\cP(n) \subseteq \prod_{s\in S} \cP_s(n)/\cD.
\end{align*}

Let $f(t) = \ulim_{s \in S}f_s(t) \in \prod_{s \in S} \cP_s(n)/ \cD$, where $f_s(t) = t^n + \sum_{i=0}^{n-1} a_{i,s}t^i \in \cP_s(n)$ for $\cD$-almost all $s \in S$. For each $s \in S$, set
\begin{align*}
a_i = \ulim_{s \in S} a_{i,s} \in \GF(\kappa).
\end{align*}
Then we see that
\begin{align*}
f(t)= t^n+ \sum_{i=0}^{n-1} a_i t^i \in \bA,
\end{align*}
and it follows from \L{}o\'s' theorem that $f$ is prime in $\bA$. Thus $f(t) \in \cP(n)$, and so $\cP(n) = \prod_{s \in S} \cP_s(n) / \cD$.

By \cite[Theorem 2.2]{rosen}, we know that the cardinality of $\cP_s(n)$ is greater than $\frac{q_s^{n/2}}{n}$ for all sufficiently large prime powers $q_s$. Thus the internal cardinality of $\cP(n)$ satisfies
\begin{align*}
\icard(\cP(n)) &= \ulim_{s \in S} \card(\cP_s(n)) \\
&\geq \ulim_{s \in S} \frac{q_s^{n/2}}{n} = \frac{k^{n/2}}{n} ,
\end{align*}
which in particular implies that $\cP(n)$ is uncountable.

\end{proof}

The following is an analogue of the Grunwald--Wang theorem for $\bA$ with the restriction that $n$ divides $\kappa - 1$. We will remove this condition in Theorem \ref{thm-Grunwald-Wang}.

\begin{lemma}
\label{lem-for-thm-Grunwald-Wang}

Let $\alpha$ be a polynomial in $\bA$ of positive degree, and let $n$ be a positive integer that divides $\kappa-1$ in $\bZ^{\#}$. If $x^n \equiv \alpha \pmod{P\bA}$ is solvable in $\bA$ for all but finitely many monic primes $P$ of $\bA$, then $\alpha = \beta^n$ for some $\beta \in \bA$.

\end{lemma}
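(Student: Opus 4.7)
The plan is to use the $n$-th power residue symbol and the general reciprocity law to convert the hypothesis into the assertion that a Dirichlet-type character on $(\bA/\alpha\bA)^{\times}$ is trivial, and then to analyze the factorization of $\alpha$ to conclude.

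First, by Proposition~\ref{prop-properties-of-power-residue-symbol}(iii), the hypothesis is equivalent to $\left(\dfrac{\alpha}{P}\right)_n = 1$ for all but finitely many monic primes $P$ of $\bA$. Applying the general reciprocity law (Theorem~\ref{thm-General-Reciprocity-Law}) for such a $P$, and using $\text{sign}_n(P) = 1$ since $P$ is monic, the identity rearranges to
\[
\left(\dfrac{P}{\alpha}\right)_n = \eta^{-\deg(P)}, \qquad \eta := (-1)^{\frac{\kappa-1}{n}\deg(\alpha)}\,\text{sign}_n(\alpha) \in \GF(\kappa)^{\times},
\]
valid for all but finitely many monic primes $P$. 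Since $\left(\dfrac{\cdot}{\alpha}\right)_n$ depends only on residue classes modulo $\alpha$ (Proposition~\ref{prop-properties-of-general-power-residue-symbol}(i)), this identity lives at the level of $(\bA/\alpha\bA)^{\times}$.

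The second step is to invoke an analogue of Dirichlet's theorem for $\bA$: for every residue class $\bar{\beta} \in (\bA/\alpha\bA)^{\times}$ and every sufficiently large integer $d$ (the degree bound depending only on $\deg(\alpha)$), there exists a monic prime $P$ of degree $d$ in $\bA$ with $P \equiv \beta \pmod{\alpha}$. I would obtain this by transferring the classical effective Dirichlet theorem for $\bF_{q_s}[t]$ (whose Weil-type error term gives a degree threshold depending only on the modulus) via \L{}o\'s' theorem, exploiting that $\deg(\alpha_s) = \deg(\alpha)$ for $\cD$-almost all $s$. Choosing $\bar{\beta} = 1$ produces a monic prime $P \equiv 1 \pmod{\alpha}$ of each large degree $d$, so $\left(\dfrac{P}{\alpha}\right)_n = 1$ gives $\eta^{-d} = 1$ for all large $d$, forcing $\eta = 1$. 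Then, varying $\bar\beta$ over $(\bA/\alpha\bA)^{\times}$, the character $\chi_\alpha : \bar\beta \mapsto \left(\dfrac{\beta}{\alpha}\right)_n$ is identically trivial on $(\bA/\alpha\bA)^{\times}$.

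To finish, I would unpack this triviality using the factorization $\alpha = a P_1^{a_1}\cdots P_r^{a_r}$ with $a \in \GF(\kappa)^{\times}$ and distinct monic primes $P_i$ in $\bA$. By the Chinese remainder theorem and Definition~\ref{def-general-definition-of-power-residue-symbol}, $\chi_\alpha$ decomposes as $\prod_i \left(\dfrac{\cdot}{P_i}\right)_n^{a_i}$ on $\prod_i (\bA/P_i\bA)^{\times}$. Each $\left(\dfrac{\cdot}{P_i}\right)_n$ surjects onto the ultra-cyclic group $\mu_n(\GF(\kappa))$ of ultra-order $n$ (Proposition~\ref{prop-properties-of-power-residue-symbol}(iv) together with Corollary~\ref{cor-nth-ultra-roots-of-unity-in-Galois-ultra-fields}), so triviality of $\chi_\alpha$ forces $n \mid a_i$ for every $i$, whence $n \mid \deg(\alpha)$. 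A short parity analysis (if $\mathrm{char}\,\GF(\kappa) = 2$ then $-1 = 1$ in $\GF(\kappa)$; otherwise $\kappa - 1$ is even in $\bZ^{\#}$, and either $n$ is odd so $(\kappa-1)/n$ is even, or $n$ is even so $n \mid \deg(\alpha)$ makes $\deg(\alpha)$ even) shows $(-1)^{\frac{\kappa-1}{n}\deg(\alpha)} = 1$ in $\GF(\kappa)$. Combined with $\eta = 1$ this yields $\text{sign}_n(\alpha) = a^{(\kappa-1)/n} = 1$, so $a$ is an $n$-th power in $\GF(\kappa)^{\times}$; together with $n \mid a_i$, this gives $\alpha = \beta^n$ for some $\beta \in \bA$, as desired. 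The main obstacle will be the Dirichlet-style density in $\bA$: the transfer from $\bF_{q_s}[t]$ yields a monic prime of $\bA$ in a prescribed class modulo $\alpha$ only if the degree threshold can be chosen uniformly for $\cD$-almost all $s$. Fortunately the classical effective Dirichlet theorem for $\bF_q[t]$ (via Weil's bound on the associated $L$-function) provides precisely this uniformity, as the threshold depends only on $\deg(\alpha)$ rather than on $q_s$.
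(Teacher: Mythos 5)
Your proposal is correct in substance, but it takes a genuinely different route from the paper. The paper never invokes equidistribution of primes in arithmetic progressions: assuming some exponent $\ell_1$ in the prime factorization of $\alpha$ is not divisible by $n$, it uses Proposition~\ref{prop-properties-of-power-residue-symbol}(iv) and the Chinese remainder theorem to construct an auxiliary \emph{monic} polynomial $\lambda$ of degree $2un$ with $\lambda \equiv \epsilon \pmod{Q_1\bA}$ (where $\left(\frac{\epsilon}{Q_1}\right)_n=\zeta_n$) and $\lambda \equiv 1$ modulo the other prime factors of $\alpha$ and modulo the finitely many exceptional primes; the choice $\deg\lambda=2un$ makes all sign factors in Theorem~\ref{thm-General-Reciprocity-Law} equal to $1$, so $\left(\frac{\alpha}{\lambda}\right)_n=\left(\frac{\lambda}{\alpha}\right)_n=\zeta_n^{\ell_1}\neq 1$, forcing some prime divisor of $\lambda$ to be exceptional and contradicting $\lambda\equiv 1$ modulo each exceptional prime; the constant $a$ is then treated separately by choosing, via Lemma~\ref{lem-infinitely-many-primes-in-A}, a monic prime of degree prime to $n$ and transferring $a_s^{(q_s-1)/n}=1$ componentwise. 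Your route instead converts the hypothesis, through the reciprocity law, into triviality of the character $\bar\beta\mapsto\left(\frac{\beta}{\alpha}\right)_n$ on $(\bA/\alpha\bA)^{\times}$, and this requires a Dirichlet theorem for monic primes of prescribed degree in residue classes modulo $\alpha$ in $\bA$. That is a substantially heavier input: it rests on Weil's Riemann hypothesis for the $L$-functions of characters modulo $\alpha_s$ over $\bF_{q_s}[t]$, and the \L{}o\'s transfer only works because (as you note, and as one must verify from the proof rather than the usual fixed-$q$ statement) the error term is of size roughly $\deg(\alpha)\,q_s^{d/2}/d$ against a main term of size at least $q_s^{d-\deg\alpha}/d$, so the admissible degree threshold depends only on $\deg(\alpha)$ and not on $q_s$; you should also pick the progression primes of degree exceeding the degrees of the exceptional primes, and the final step ``$a^{(\kappa-1)/n}=1$ implies $a\in\GF(\kappa)^{\times n}$'' still needs the same componentwise (or ultra-cyclic) argument the paper spells out. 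In exchange, your approach is conceptually cleaner (one character-triviality statement yields both $n\mid a_i$ and, via $\eta=1$ and your parity analysis, the statement about the leading coefficient) and gives effective degree bounds, whereas the paper's argument is elementary and self-contained given its Section~\ref{sec-power-residue-symbol} toolkit, using only the crude prime count of Lemma~\ref{lem-infinitely-many-primes-in-A}.
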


\begin{proof}

Let $\alpha = a Q_1^{\ell_1}Q_2^{\ell_2} \cdots Q_m^{\ell_m}$ be the prime factorization of $\alpha$, where $a \in \GF(\kappa)^\times$, the $Q_i$ are monic primes in $\bA$, and the $\ell_i$ are positive integers.

Suppose that there is some $\ell_i$ that is not divisible by $n$. We can claim that there are infinitely many primes $\fp$ in $\bA$ such that $\left( \dfrac{\alpha}{\fp} \right)_n \neq 1$.

By reindexing the $Q_i$, if necessary, we can assume that $\ell_1$ is not divisible by $n$. By part (iii) of Proposition  \ref{prop-properties-of-power-residue-symbol} and since $x^n \equiv \alpha \pmod{P\bA}$  is solvable in $\bA$ for all but finitely many monic primes $P$, there are only finitely many monic primes $\fp_1, \fp_2, \ldots, \fp_h$ in $\bA$ such that $\left( \frac{\alpha}{\fp_i} \right)_n \neq 1$ for all $1 \leq i \leq h$, and for any monic prime $P$ in $\bA$ that are distinct from $\fp_1, \ldots, \fp_h$, $\left( \frac{\alpha}{P} \right)_n = 1$.

Let $\zeta_n$ be a primitive $n$-th root of unity in $\GF(\kappa)^{\times}$ that exists since $n$ divides $\kappa - 1$. By part (iv) of Proposition  \ref{prop-properties-of-power-residue-symbol}, there exists an element $\epsilon \in \bA$ such that
\begin{align*}
\left( \frac{\epsilon}{Q_1} \right)_n = \zeta_n.
\end{align*}

Note that since $0^n \equiv \alpha \pmod{Q_i\bA}$ for all $1 \leq i \leq m$, it follows from part (iii) of  Proposition  \ref{prop-properties-of-power-residue-symbol}, that $\left( \dfrac{\alpha}{Q_i} \right)_n = 1$ for all $1 \leq i \leq m$.

Thus all primes $Q_i, \fp_j$ are distinct. Therefore by the Chinese Remainder Theorem for $\bA$ (see Lang \cite{lang-algebra}), there exists a polynomial $\lambda_0 \in \bA$ such that $\lambda_0 \equiv \epsilon \pmod{Q_1\bA}$, $\lambda_0 \equiv 1 \pmod{Q_i \bA}$ for all $2 \leq i \leq m$, and $\lambda_0 \equiv 1 \pmod{\fp_j \bA}$ for all $1 \leq j \leq h$.

Consider the polynomial
\begin{align}
\label{eq6-lem-Grunwald-Wang}
\lambda = \lambda_0 + c \prod_{i=1}^m Q_i \prod_{j=1}^h \fp_j \in \bA,
\end{align}
for some polynomial $c \in \bA$ that will be determined later. Let
\begin{align*}
d_0 =  \deg( \prod_{i=1}^m  Q_i \prod_{j=1}^h \fp_j ) \in \bZ_{>0}.
\end{align*}
Let $u$ be any positive integer such that
\begin{align*}
2un-d_0 > \deg(\lambda_0).
\end{align*}
Then for any polynomial $c \in \bA$ of degree $2un-d_0$, we see that the degree of $\lambda$ is $2un$.

We can choose $c$ to be a monic polynomial so that $\lambda$ is a monic polynomial. By \eqref{eq6-lem-Grunwald-Wang}, we know that
\begin{align*}
\left \{ \begin{array}{l}
                    \lambda \equiv \lambda_0 \equiv \epsilon \pmod{Q_1 \bA}, \\
                    \lambda \equiv \lambda_0 \equiv 1 \pmod{Q_i \bA}, 2 \leq i \leq m, \\
                    \lambda \equiv \lambda_0 \equiv 1 \pmod{\fp_j \bA}, 1 \leq j \leq h.
         \end{array} \right.
\end{align*}

By Proposition \ref{prop-properties-of-general-power-residue-symbol},
\begin{align}
\label{eq7-lem-Grunwald-Wang}
\left( \dfrac{\lambda }{\alpha} \right)_n = \left( \dfrac{\epsilon }{Q_1} \right)^{\ell_1} = \zeta_n^{\ell_1} \neq 1,
\end{align}
since $\ell_1$ is not divisible by $n$.

By the General Reciprocity Law \ref{thm-General-Reciprocity-Law},
\begin{align}
\label{eq8-lem-Grunwald-Wang}
\left( \frac{\alpha}{\lambda} \right)_n =\left( \frac{\lambda }{\alpha} \right)_n (-1)^{\frac{\kappa-1}{n} \deg(\alpha)\deg(\lambda)} \text{sign}_n(\alpha)^{\deg(\lambda)} \text{sign}_n(\lambda)^{-\deg(\alpha)}.
\end{align}
We see that
\begin{align*}
(-1)^{\frac{\kappa-1}{n} \deg(\alpha)\deg(\lambda)} = (-1)^{2(\kappa-1) \deg(\alpha)u} =1
\end{align*}
since $\deg(\lambda) = 2un$.

Recall that $\text{sign}_n(\alpha)$ is the leading coefficient of the ultra-polynomial $\alpha^{\frac{\kappa-1}{n} }$, and thus $\text{sign}_n(\alpha) = a^{\frac{\kappa-1}{n}}$. Therefore, it follows from Lemma \ref{lem-ultra-cyclic-structures} that
\begin{align*}
\text{sign}_n(\alpha)^{\deg(\lambda)} = a^{2(\kappa-1)u}=1
\end{align*}
since $a \in \GF(\kappa)^\times$, $\deg(\lambda)=2un$, and $\kappa = \ulim_{s\in S}q_s$.

Since $\lambda$ is monic, the leading coefficient of $\lambda^{\frac{\kappa-1}{n} }$ is $1$, and thus
\begin{align*}
\text{sign}_n(\lambda)^{-\deg(\alpha)} =1.
\end{align*}
Thus by \eqref{eq7-lem-Grunwald-Wang}, \eqref{eq8-lem-Grunwald-Wang}, we see that
\begin{align*}
\left( \dfrac{\alpha}{\lambda} \right)_n =\left(\dfrac{\lambda }{\alpha} \right)_n = \zeta_n^{\ell_1} \neq 1.
\end{align*}
It follows that there exists a monic prime $P$ in $\bA$ such that $P$ divides $\lambda$ and $\left(\dfrac{\alpha}{P} \right)_n \neq 1$, which implies that $P = \fp_z$ for some $1 \le z \le h$.

We know that $\lambda \equiv 1\pmod{\fp_j\bA}$ for every $1 \le j \le h$, we deduce that $P = \fp_z \neq \fp_j$ for all $1 \le j \le h$, which is a contradiction.

Thus $\ell_i$ is divisible by $n$ for all $1 \leq i \leq m$ and therefore
\begin{align*}
\alpha = a \alpha_0^n
\end{align*}
for some $a \in \GF(\kappa)^\times$ and $\alpha_0 \in \bA$. It remains to show that $a$ is a $n$-th power in $\GF(\kappa)^{\times}$.

We know from Proposition \ref{prop-properties-of-power-residue-symbol} and Lemma \ref{lem-power-residue-symbol-for-constants} that
\begin{align*}
\left (\dfrac{\alpha}{P} \right)_n &= \left(\dfrac{a\alpha_0^n}{P} \right)_n = \left (\dfrac{a}{P} \right)_n \left(\dfrac{\alpha_0}{P} \right)_n^n = \left(\dfrac{a}{P} \right) \\
&= a^{\frac{\kappa-1}{n} \deg(\P)}
\end{align*}
for any prime $P$ in $\bA$.

Since $x^n \equiv \alpha \pmod{P\bA}$ is solvable in $\bA$ for all but finitely many monic primes $P$, it follows from Lemma \ref{lem-infinitely-many-primes-in-A} that there exists a monic prime $P$ in $\bA$ of a sufficiently large degree that is prime to $n$ such that $x^n \equiv \alpha \pmod{P\bA}$ is solvable in $\bA$.

Equivalently, using Part (iii) of Proposition \ref{prop-properties-of-power-residue-symbol}, there exists a monic prime $P$ of degree prime to $n$ such that $\left(\dfrac{\alpha}{P} \right)_n =1$. Thus
\begin{align*}
a^{\frac{\kappa-1}{n} \deg(\P)} =1.
\end{align*}

Since $\GF(\kappa) = \prod_{s \in S} \bF_{q_s}/ \cD$, one can write $a = \ulim_{s \in S}a_s$, where $a_s \in \bF_{q_s}$. By the equation above, we see that
\begin{align*}
a^{\frac{\kappa-1}{n} \deg(\P)} = \ulim_{s \in S} a_s^{\frac{q_s-1}{n} \deg(P)} =1,
\end{align*}
and thus it follows from \L{}o\'s' theorem that $a_s^{\frac{q_s-1}{n} \deg(P)} =1$ for $\cD$-almost all $s \in S$.

Since $\deg(P)$ is prime to $n$, $a_s^{\frac{q_s-1}{n} } =1$ in $\bF_{q_s}$, and it then follows from the theory of finite fields (see \cite{Lidl-Niederreiter}) that $a_s$ is an $n$-th power in $\bF_{q_s}$ for $\cD$-almost all $s \in S$, i.e., there is an element $b_s \in \bF_{q_s}$ such that $a_s = b_s^n$ for $\cD$-almost all $s \in S$. Thus
\begin{align*}
a = \ulim_{s \in S}a_s = \ulim_{s \in S}b_s^n = (\ulim_{s \in S}b_s)^n = b^n,
\end{align*}
where $b =\ulim_{s\in S}b_s \in \GF(\kappa)$. Thus $\alpha = a \alpha_0^n = (b\alpha_0)^n$, which proves the lemma.

\end{proof}

\begin{lemma}
\label{lem-powers-of-elements-in-A}

Let $n$ be a positive integer, and let $d = \gcd(n, \kappa-1) \in \bZ^{\#}_{>0}$. Then
\begin{align*}
\GF(\kappa)^{\times^n} = \GF(\kappa)^{\times^d},
\end{align*}
where $\GF(\kappa)^{\times^n}  = \{ a^n: a \in \GF(\kappa)^\times \}$ and $\GF(\kappa)^{\times^d}  = \{ a^d: a \in \GF(\kappa)^\times\}$.

\end{lemma}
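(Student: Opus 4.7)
The plan is to transfer the statement from its classical counterpart in finite fields via the ultraproduct construction, in three steps.

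First, I would observe that although $d = \gcd(n, \kappa - 1)$ is \emph{a priori} only a hyperinteger, it is in fact a standard positive integer. By Lemma \ref{lem-gcd-of-hyperintegers}, $d = \ulim_{s \in S} d_s$ where $d_s = \gcd(n, q_s - 1)$; since each $d_s$ divides $n$, we have $d_s \in \{1, 2, \ldots, n\}$. The decomposition $S = \bigcup_{j=1}^{n} \{s \in S : d_s = j\}$ is a finite partition, so exactly one of these pieces lies in the ultrafilter $\cD$; letting $d^{\star}$ be the corresponding index, we obtain $d = d^{\star} \in \bZ_{>0}$, and in particular $d_s = d$ for $\cD$-almost all $s \in S$.

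Second, I would identify both power subgroups explicitly as ultraproducts. For any $a = \ulim_{s\in S} a_s \in \GF(\kappa)^\times$ with $a_s \in \bF_{q_s}^{\times}$, and using that $n$ and $d$ are standard positive integers, one has $a^n = \ulim_{s\in S} a_s^n$ and $a^d = \ulim_{s\in S} a_s^d$. A direct check (using that any sequence $(b_s)$ with $b_s \in \bF_{q_s}^{\times^n}$ comes from a sequence $(c_s)$ with $c_s^n = b_s$) then yields
\begin{align*}
\GF(\kappa)^{\times^n} = \prod_{s\in S} \bF_{q_s}^{\times^n}/\cD, \qquad \GF(\kappa)^{\times^d} = \prod_{s\in S} \bF_{q_s}^{\times^d}/\cD.
\end{align*}

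Third, I would invoke the classical fact for finite cyclic groups: in a cyclic group of order $m$, the image of the $n$-th power map coincides with the image of the $\gcd(n, m)$-th power map. Applied to $\bF_{q_s}^{\times}$, which is cyclic of order $q_s - 1$, this gives $\bF_{q_s}^{\times^n} = \bF_{q_s}^{\times^{d_s}} = \bF_{q_s}^{\times^d}$ for $\cD$-almost all $s$, using the first step. Taking ultraproducts of these component-wise equalities and applying the identifications from the second step yields $\GF(\kappa)^{\times^n} = \GF(\kappa)^{\times^d}$, as desired. The hard part, such as it is, is really only the first-step observation that $d$ is standard; this is what makes the explicit component-wise identification in the second step go through literally, without having to invoke the more delicate hyperinteger-exponent ultrapower action from Subsection \ref{subsec-action-of-hyperintegers-on-ultraproducts}. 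Everything else is a routine transfer.
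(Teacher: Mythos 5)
Your proof is correct, but it takes a genuinely different route from the paper's. The paper argues directly at the ultra-level with hyperinteger exponents: by the nonstandard B\'ezout identity (Theorem \ref{thm-Bezout}) one writes $un + (\kappa-1)v = d$ with $u, v \in \bZ^{\#}$, and since $a^{\kappa-1}=1$ for $a \in \GF(\kappa)^{\times}$ (Lemma \ref{lem-ultra-cyclic-structures}) this gives $a^{d} = (a^{u})^{n}$, hence $\GF(\kappa)^{\times^d} \subseteq \GF(\kappa)^{\times^n}$; the reverse inclusion is just $a^{n} = \bigl(a^{n/d}\bigr)^{d}$, using that $d$ is a standard divisor of $n$ (the paper records this in Remark \ref{rem-preceding-thm-Grunwald-Wang}, with a terser justification than your finite-partition argument, which is a nice explicit way to see that $d_s$ is $\cD$-almost everywhere constant). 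You instead pin $d$ down as standard first, identify $\GF(\kappa)^{\times^n}$ and $\GF(\kappa)^{\times^d}$ as the ultraproducts $\prod_{s\in S}\bF_{q_s}^{\times^n}/\cD$ and $\prod_{s\in S}\bF_{q_s}^{\times^d}/\cD$ (your surjectivity check via a choice of $n$-th roots componentwise is the right point to verify), and then transfer the classical fact that in a cyclic group of order $m$ the $n$-th powers coincide with the $\gcd(n,m)$-th powers. What your route buys is elementarity: everything reduces to a classical statement about finite cyclic groups plus a \L{}o\'s-style componentwise identification, with no hyperinteger-exponent arithmetic beyond noting $d$ is standard. What the paper's route buys is uniformity with the rest of its ultra-cyclic-group machinery and robustness: the B\'ezout argument does not depend on $n$ being standard (only the reverse inclusion uses $n/d \in \bZ$), so it is closer in spirit to how the paper handles hypernatural exponents elsewhere, whereas your finite-partition step genuinely requires $n \in \bZ_{>0}$.
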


\begin{remark}
\label{rem-preceding-thm-Grunwald-Wang}

Note that since $d$ is a divisor of $n$ in $\bZ^{\#}$ and $n \in \bZ_{>0}$, $d$ must be an integer, and thus $d \in \bZ_{>0}$.

\end{remark}

\begin{proof}

By the nonstandard B\'ezout's identity \ref{thm-Bezout}, there exit hyperintegers $u, v \in \bZ^{\#}$ such that
\begin{align*}
un + (\kappa-1)v=d.
\end{align*}

For all $a \in \GF(\kappa)^\times$, we know from Lemma \ref{lem-ultra-cyclic-structures} that $(a^v)^{\kappa-1} =1$, and thus
\begin{align*}
a^d = (a^u)^n (a^v)^{\kappa-1} = (a^u)^n.
\end{align*}
Therefore
\begin{align*}
\GF(\kappa)^{\times^d} \subseteq \GF(\kappa)^{\times^n}.
\end{align*}

For the inverse implication, we see that
\begin{align*}
a^n = (a^{\frac{n}{d}})^d
\end{align*}
since $\frac{n}{d} \in \bZ$. Thus $\GF(\kappa)^{\times^n} \subseteq \GF(\kappa)^{\times^d}$, and hence the lemma follows immediately.

\end{proof}

We are now ready to prove an analogue of the Grunwald--Wang theorem for $\bA$ for any positive integer $n$ that is prime to the characteristic of $\GF(\kappa)$.

\begin{theorem}
\label{thm-Grunwald-Wang}

Let $\alpha$ be a polynomial in $\bA$ of positive degree. Let $n$ be a positive integer such that either $\GF(\kappa)$ has characteristic $0$ or $n$ is prime to the characteristic of $\GF(\kappa)$. If $x^n \equiv \alpha \pmod{P\bA}$ for all but finitely many monic primes $P$ in $\bA$, then $\alpha = \beta^n$ for some $\beta \in \bA$.

\end{theorem}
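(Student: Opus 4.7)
The strategy is to reduce the theorem to Lemma \ref{lem-for-thm-Grunwald-Wang} by base-changing to a finite ultra-extension of $\GF(\kappa)$ in which all $n$-th roots of unity exist, and then to descend the resulting $n$-th root of $\alpha$ to $\bA$ by unique factorization together with evaluation at degree-one primes. First I would find a positive integer $m$ with $n \mid \kappa^m - 1$ in $\bZ^{\#}$: for $\cD$-almost all $s$, $n$ is coprime to $q_s$, so the order $m_s$ of $q_s$ in $(\bZ/n\bZ)^{\times}$ is defined and bounded by $\phi(n)$, and since the divisors of $\phi(n)$ form a finite set, the ultrafilter axiom produces a fixed $m$ with $m_s = m$ for $\cD$-almost all $s$, whence $n \mid \kappa^m - 1$. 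Setting $\bL := \GF(\kappa^m)$ (Corollary \ref{cor-uniqueness-of-ultra-Galois-fields}) and $\bA' := \bL[t]$, every monic prime $P$ of $\bA$ factors in $\bA'$ into at most $m$ monic primes, and the solvability of $x^n \equiv \alpha \pmod{P\bA}$ in $\bA$ transfers to solvability modulo each prime of $\bA'$ above $P$. Lemma \ref{lem-for-thm-Grunwald-Wang} applied to $\bA'$ with the hyperinteger $\kappa^m$ then produces $\gamma \in \bA'$ with $\alpha = \gamma^n$.

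Next I would descend this factorization to $\bA$. Write $\alpha = a \prod_{i=1}^{r} P_i^{d_i}$ in $\bA$ with $a \in \GF(\kappa)^{\times}$, the $P_i$ distinct monic irreducibles, and $d_i \geq 1$. Since $\GF(\kappa)$ is perfect (Remark \ref{rem-ultra-Frobenius-map-definition}), each $P_i$ is separable and factors in $\bA'$ as a product of \emph{distinct} monic irreducibles $Q_{ij}$; and for $i \ne i'$ the families $\{Q_{ij}\}_{j}$ and $\{Q_{i'j}\}_{j}$ are disjoint, because distinct $P_i, P_{i'}$ are coprime in $\bA$ and hence share no roots over $\overline{\bL}$. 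The equality $\alpha = \gamma^n$ in the UFD $\bA'$ therefore forces $n \mid d_i$ for every $i$, so setting $\delta := \prod_i P_i^{d_i/n} \in \bA$ yields $\alpha = a \delta^n$, and the problem reduces to showing $a \in \GF(\kappa)^{\times n}$.

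For the remaining constant $a$, I would use the degree-one monic primes $t - c$ ($c \in \GF(\kappa)$) of $\bA$, whose residue fields are canonically $\GF(\kappa)$ via the evaluation map $f \mapsto f(c)$. Excluding the finitely many exceptional primes from the hypothesis together with the at most $\deg(\delta)$ roots of $\delta$ in $\GF(\kappa)$, a non-excluded $c$ exists whenever the internal cardinality $\kappa$ of $\GF(\kappa)$ exceeds this fixed standard bound, which is automatic in the genuinely ultra-finite case (and in the degenerate case $\GF(\kappa) = \bF_q$ of standard size the theorem reduces to the classical Grunwald--Wang for $\bF_q[t]$, combined with the abundance of primes in Lemma \ref{lem-infinitely-many-primes-in-A}). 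For such $c$ the hypothesis yields $\alpha(c) = b^n$ with $b \in \GF(\kappa)$, while $\alpha(c) = a\,\delta(c)^n$ with $\delta(c) \ne 0$, so $a = (b/\delta(c))^n \in \GF(\kappa)^{\times n}$. Writing $a = c_0^n$, the polynomial $\beta := c_0 \delta \in \bA$ satisfies $\beta^n = \alpha$.

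The main obstacle is precisely this descent from $\bA'$ back to $\bA$: Lemma \ref{lem-for-thm-Grunwald-Wang} only provides an $n$-th root of $\alpha$ in the enlarged ring, and naive Galois descent along $\bL/\GF(\kappa)$ fails because the inclusion $\GF(\kappa)^{\times n} \subseteq \GF(\kappa) \cap \bL^{\times n}$ is in general strict (already $2 \in \bF_3^{\times}$ is a square in $\bF_9^{\times}$ but not in $\bF_3^{\times}$). The fix is to decouple the polynomial divisor $\delta$ of $\alpha$, which descends by unique factorization in $\bA$ using the separability of $\bL/\GF(\kappa)$, from the leading constant $a$, which is forced to be an $n$-th power in the small field $\GF(\kappa)$ itself by evaluating at a degree-one prime of $\bA$ whose residue field is $\GF(\kappa)$ and not $\bL$.
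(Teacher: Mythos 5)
Your proof is correct and follows the paper's skeleton for the main reduction: like the paper, you pass to $\bL[t]$ for an ultra-extension $\bL$ of $\GF(\kappa)$ of integer degree (the paper simply takes $\phi(n)$, you take the common order $m\le\phi(n)$ of the $q_s$ modulo $n$ fixed by the ultrafilter --- either works, since all that is needed is $n\mid \kappa^{m}-1$ in $\bZ^{\#}$), transfer the hypothesis to all but finitely many monic primes of the larger ring, invoke Lemma \ref{lem-for-thm-Grunwald-Wang} there, and descend the polynomial part of the factorization via unique factorization together with separability of the $P_i$ over the perfect field $\GF(\kappa)$, reducing the theorem to showing that the constant $a$ lies in $\GF(\kappa)^{\times n}$. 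Where you genuinely diverge is this last step. The paper stays uniform: it deduces that $x^{n}\equiv a\pmod{P\bA}$ is solvable for almost all $P$, sets $d=\gcd(n,\kappa-1)$, which is an ordinary positive integer, applies Lemma \ref{lem-for-thm-Grunwald-Wang} a second time with exponent $d$, and finishes with Lemma \ref{lem-powers-of-elements-in-A}, which gives $\GF(\kappa)^{\times d}=\GF(\kappa)^{\times n}$. You instead evaluate at a degree-one prime $t-c$ avoiding the finitely many excluded primes and the roots of your $\delta$, so that $a=(b/\delta(c))^{n}$; this is a clean shortcut that avoids both Lemma \ref{lem-powers-of-elements-in-A} and the second use of Lemma \ref{lem-for-thm-Grunwald-Wang}, but it requires enough residues $c$, i.e. $\GF(\kappa)$ infinite, and hence forces the case split in which the degenerate case $\GF(\kappa)\cong\bF_{q}$ is disposed of by citing the known finite-field local-to-global result (Rosen), as the introduction itself does. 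That is defensible but less self-contained than the paper's argument, which treats finite and infinite $\GF(\kappa)$ identically; if you wish to remove the appeal to the classical theorem, substitute the paper's $\gcd(n,\kappa-1)$ argument for your evaluation step at the cost of only Lemma \ref{lem-powers-of-elements-in-A}.
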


\begin{remark}
\label{rem-thm-Grunwald-Wang}

Theorem \ref{thm-Grunwald-Wang} refines Lemma \ref{lem-for-thm-Grunwald-Wang} by removing the assumption that $n$ divides $\kappa-1$ in $\bN^{\#}$.

\end{remark}

\begin{proof}

Throughout the proof, we denote by $\phi(\cdot)$ the Euler's totient function, i.e., $\phi(n)$ is the number of integers $k$ with $1 \leq k \leq n$ for which $\text{gcd}(\kappa, n)=1$.

There are only finitely many primes in $\bZ$ that divide $n$. Thus since either $\GF(\kappa)$ has characteristic $0$ in which case the $\bF_{q_s}$ are of distinct characteristics for $\cD$-almost all $s \in S$, or $n$ is prime to the characteristic of $\GF(\kappa)$, we deduce that the set $\{s \in S\; : \; \gcd(q_s, n) = 1\} \in \cD$, i.e., $q_s$ and $n$ are relatively prime for $\cD$-almost all $s \in S$. Thus we know from Euler's theorem (see Serre \cite{serre-arithmetic}) that $q_s^{\phi(n)}-1 \equiv 0 \pmod{n}$ for $\cD$-almost all $s \in S$. Thus $n$ divides $\kappa^{\phi(n)} - 1 = \ulim_{s \in S}(q_s^{\phi(n)} - 1)$ in $\bZ^{\#}$.

Let $\bL := \GF(\kappa^{\phi(n)})$ be a unique field extension of $\GF(\kappa)$ of degree $\phi(n)$ (see Corollary \ref{cor-uniqueness-of-ultra-Galois-fields} ). The extension $\bL$ can be described as $\bL = \prod_{s \in S} \bF_{q_s^{\phi(n)}}/ \cD$, and thus $\bL$ has internal cardinality $\ulim_{s \in S}q_s^{\phi(n)} = \kappa^{\phi(n)}$. The group $\bL^\times$ of nonzero elements of $\bL$ is an ultra-cyclic group that is the ultraproduct $\prod_{s \in S} \bF_{q_s^{\phi(n)}}^{\times}/ \cD$, and is of internal cardinality $\kappa^{\phi(n)}-1$.

Let $\bB = \bL[t]$ be the ring of polynomials over $\bL$.

For any monic prime $Q$ of $\bB$, $Q\bB \cap \bA$ is a prime ideal of $\bA$, and since $\bA$ is a principal ideal domain, $Q\bB \cap \bA = P\bA$ for some monic prime $P$ of $\bA$.

Note that for each monic prime $P$ of $\bA$, there are only finitely many monic primes $Q$ of $\bB$ such that $Q \bB \cap \bA = P\bA$ (see \cite{Zariski-Samuel}).

Since $x^n \equiv \alpha \pmod{P\bA}$ is solvable in $\bA$ for all but finitely many monic primes $P$ of $\bA$, we contend that $x^n \equiv \alpha \pmod{Q\bB}$ is solvable in $\bB$ for all but finitely many monic primes $Q$ of $\bB$. Indeed, let $\{\fp_1, \ldots, \fp_r\}$ be the set of all monic primes in $\bA$ such that the equation $x^n \equiv \alpha \pmod{\fp_i\bA}$ is not solvable in $\bA$ for all $1 \le i \le r$. Let $\cU$ denote the set of monic primes $Q$ in $\bB$ such that $Q\bB \cap \bA = \fp_i\bA$ for some $1 \le i \le r$. Since $\bA$ is a Dedekind domain with fraction field $\GF(\kappa)(t)$ and $\bB$ is the integral closure of $\bA$ in $\bL(t) = \GF(\kappa^{\phi(n)})(t)$ \footnote{Using Corollary \ref{cor-finite-extensions-of-ultra-finite-fields-are-Galois}, we can also prove that $\bL(t)$ is a Galois extension of degree $\phi(n)$ over $\GF(\kappa)(t)$.}, $\bB$ is itself a Dedekind domain. Thus the set $\cU$ is a finite set.

Take any monic prime $Q$ that does not belong in $\cU$, and let $P$ be a monic prime in $\bA$ such that $Q\bB \cap \bA = P\bA$. Since $Q$ does not belong in $\cU$, $P$ is distinct from $P_1, \ldots, P_h$, and thus the equation $x^n \equiv \alpha \pmod{P\bA}$ is solvable in $\bA$. Thus it follows immediately that the equation $x^n \equiv \alpha \pmod{Q\bB}$ is solvable in $\bB$. Therefore the equation $x^n \equiv \alpha \pmod{Q\bB}$ is solvable in $\bB$ for all but finitely many monic primes $Q$ of $\bB$.

Since $n$ divides $\kappa^{\phi(n)}-1 = \ulim_{s \in S}(q_s^{\phi(n)} -1)$, applying Lemma \ref{lem-for-thm-Grunwald-Wang} for the ring $\bB = \bL[t]$, we deduce that $\alpha = \alpha_0^n$ for some polynomial $\alpha_0 \in \bB$. We claim that $\alpha_0$ can be chosen to be in $\bA$. Indeed, the prime factorization of $\alpha_0$ in $\bB$ is of the form: $\alpha_0 =  a Q_1^{\ell_1} Q_2^{\ell_2} \cdots Q_m^{\ell_m}$, where $a \in \bL^\times$, the $Q_i$ are distinct monic primes in $\bB$, and the $\ell_i$ are positive integers. Thus
\begin{align}
\label{eq9-thm-Grunwald-Wang}
\alpha =  a^n Q_1^{n\ell_1} Q_2^{n\ell_2} \cdots Q_m^{n\ell_m}.
\end{align}
Since $\alpha \in \bA$, we can write
\begin{align}
\label{eq10-thm-Grunwald-Wang}
\alpha =  b P_1^{f_1}  \cdots P_h^{f_h},
\end{align}
where $b \in \GF(\kappa)^\times$, the $P_j$ are distinct monic primes of $\bA$, and the $f_j$ are positive integers.

For each $1 \leq j \leq h$, the prime factorization of $P_j$ in $\bB$ is of the form
\begin{align}
\label{eq11-thm-Grunwald-Wang}
P_j = Q_{j,1} \cdots Q_{j, m_j},
\end{align}
where the $Q_{j,k}$ are distinct monic primes in $\bB$ since any irreducible polynomial over $\GF(\kappa)$ is separable, and thus it has no repeated roots in an algebraic closure of $\GF(\kappa)$.

By \eqref{eq9-thm-Grunwald-Wang}, \eqref{eq10-thm-Grunwald-Wang}, \eqref{eq11-thm-Grunwald-Wang},
\begin{align}
\label{eq12-thm-Grunwald-Wang}
\alpha =  a^n Q_1^{n\ell_1}\cdots Q_m^{n\ell_m} = b\left( \prod_{i=1}^{m_1} Q_{1, i} \right)^{f_1} \cdots \left( \prod_{i=1}^{m_h} Q_{h, i} \right)^{f_h}.
\end{align}
We claim that for any $1 \leq i \neq j \leq h$,
\begin{align*}
Q_{i,k} \neq Q_{j,\ell}
\end{align*}
for any $k, \ell$. Indeed, assume the contrary, i.e.,
\begin{align*}
Q_{i,k} = Q_{j,\ell}
\end{align*}
for some $k, \ell$. Since $P_i$ divides $Q_{i,k}$ and $P_j$ divides $Q_{j,\ell}$, we deduce that
\begin{align*}
P_i \bA = Q_{i,k} \bB \cap \bA = Q_{j,\ell}\bB \cap \bA = P_j \bA,
\end{align*}
and thus since $P_i, P_j$ are monic primes in $\bA$,   $P_i = P_j$, a contradiction.

By the uniqueness of the prime factorization of $\alpha$ in $\bB$, we deduce from \eqref{eq12-thm-Grunwald-Wang} that for any $1 \leq i \leq h$ and $1 \leq j \leq m_i$,
\begin{align*}
Q_{i,j}^{f_i} = Q^{n\ell_{\kappa_{i, j}}}_{\kappa_{i, j}}
\end{align*}
for some $1 \leq \kappa_{i, j} \leq m$.

Since both $Q_{\kappa_{i, j}}, Q_{i,j}$ are monic primes in $\bB$, $Q_{i,j} = Q_{\kappa_{i, j}}$ and $f_i = n\ell_{\kappa_{i, j}}$ for any $1 \leq i \leq h$ and $1 \le j \le m_i$.

Then it follows from \eqref{eq10-thm-Grunwald-Wang} that
\begin{align*}
\alpha &=  b ( P_1^{\ell_{\kappa_{1, 1}}} P_2^{\ell_{\kappa_{2, 1}}}  \cdots P_h^{\ell_{\kappa_{h, 1}}} )^n  \\
&= b\beta^n,
\end{align*}
where $\beta = P_1^{\ell_{\kappa_{1, 1}}} P_2^{\ell_{\kappa_{2, 1}}}  \cdots P_h^{\ell_{\kappa_{h, 1}}} \in \bA$.

It remains to show that $b$ is an $n$-th power in $\bA$.

Since $x^n \equiv \alpha = b\beta^n \pmod{P\bA}$ is solvable in $\bA$ for all but finitely many monic primes $P$ of $\bA$, we deduce that $x^n \equiv b \pmod{P\bA}$ is solvable in $\bA$ for all but finitely many monic primes $P$ of $\bA$, excluding also those monic primes $P$ dividing $\beta$.

Let $d = \text{gcd}(n, \kappa-1)$ in $\bZ^{\#}$. By Remark \ref{rem-preceding-thm-Grunwald-Wang}, $d$ must be a positive integer. Since $d$ divides $n$ in $\bZ$, we deduce that $x^d \equiv b \pmod{P\bA}$ is solvable in $\bA$ for all but finitely many monic primes $P$ of $\bA$. Since $d$ divides $\kappa-1$ in $\bZ^{\#}$, Lemma \ref{lem-for-thm-Grunwald-Wang} implies that $b =  c^d$ for some $c \in \bA$.

Since $b \in \GF(\kappa)^\times$, $c$ must be in $\GF(\kappa)^\times$, and thus $b \in \GF(\kappa)^{\times^d}$.

By Lemma \ref{lem-powers-of-elements-in-A}, $\GF(\kappa)^{\times^d} = \GF(\kappa)^{\times^n}$, and thus $b \in  \GF(\kappa)^{\times^n}$, which implies that $b=\lambda^n$ for some $\lambda \in \GF(\kappa)^\times$. Therefore
\begin{align*}
\alpha = b \beta^n = (\lambda \beta)^n
\end{align*}
for some $\lambda \in \GF(\kappa)^\times$ and $\beta \in \bA$, which proves the theorem.

\end{proof}

We now recast Theorem \ref{thm-Grunwald-Wang} in the form that can be viewed as an analogue of the Grunwald--Wang theorem for the ring of polynomials over an ultra-finite field.

For a monic prime $P$ of $\bA=\GF(\kappa)[t]$, we can define the following $P$-adic valuation $v_P$ on the rational function field $\Sigma = \GF(\kappa)(t)$ that is the quotient field of $\bA$: let $v_P(0) = \infty$ and $v_P(P^n \frac{f}{g})=n$, where $n \in \bZ$, $f, g \in \Sigma\setminus\{0\}$ are not divisible by $P$.

We denote by $\Sigma_P$ the completion of $\Sigma$ with respect to the $P$-adic valuation $v_P$. Let $\cO_P$ be the valuation ring of $\Sigma_P$, i.e.,
\begin{align*}
\cO_P = \{x \in \Sigma_P | v_P(x) \geq 0\},
\end{align*}
where by abuse of notation, we also denote by $v_P$ the unique extension of the $P$-adic valuation to $\Sigma_P$.

The residue class field $\overline{\Sigma}_P = \cO_P / \cM_P$ with $\cM_P = \{ x \in \Sigma_P | v_P(x)>0 \}$, is canonically isomorphic to the field $\bA/P\bA = \GF(\kappa)[t]/P(\GF(\kappa)[t])$ which can be identified with the unique ultra-extension of degree $\deg(P)$ over $\GF(\kappa)$, that is, the ultra-finite field $\GF(\kappa^{\deg(P)})$ (see Corollary \ref{cor-uniqueness-of-ultra-Galois-fields}).

Let $\alpha$ be a polynomial in $\bA$ of positive degree and let $n$ be a positive integer such that either $\GF(\kappa)$ has characteristic $0$ or $n$ is prime to the characteristic of $\GF(\kappa)$. Let $P$ be a monic prime in $\bA$ such that $P$ dose not divide $\alpha$. Suppose that the equation $x^n \equiv \alpha \pmod{P\bA}$ is solvable in $\bA$. Let $f(x) = x^n-\alpha$ be a polynomial over $\bA$. Then $f(x) \in \bA[x] \subseteq \cO_P[x]$. By assumption, there exists an element $a_0 \in \bA \subseteq \cO_P$ such that $f(a_0) \equiv 0 \pmod{P\bA}$, and thus $v_P(f(a_0)) \geq 1$.

We know that $a_0 \not \equiv 0 \pmod{P\bA}$; otherwise, $a_0  \equiv 0 \pmod{P\bA}$, and thus $\alpha \equiv a_0^n  \equiv 0 \pmod{P\bA}$, a contradiction.

Since $\overline{\Sigma}_P \cong \bA / P\bA \cong \GF(\kappa^{\deg(P)})$ is an ultra-extension of $\GF(\kappa)$, and either $\GF(\kappa)$ has characteristic $0$ or $n$ is prime to the characteristic of $\GF(\kappa)$, the formal derivative $f'(x) =  nx^{n-1}$ satisfies $f'(a_0) =  na_0^{n-1} \not \equiv 0 \pmod{P\bA}$. So $v_P(f'(a_0)) = 0$, and hence
\begin{align*}
v_P(f(a_0)) > 2v_P(f'(a_0)).
\end{align*}
By Hensel's lemma (see \cite[Theorem 1.3.1]{prestel}), there exists some element $a \in \cO_P$ such that $f(a)=0$, i.e., $x^n = \alpha$ is solvable in $\cO_P$ if and only if $x^n = \alpha$ is solvable in $\overline{\Sigma}_P$.

By Theorem \ref{thm-Grunwald-Wang} and the discussion above, one obtains an equivalent formulation of Theorem \ref{thm-Grunwald-Wang} in the local-to-global principle form.

\begin{theorem}
(analogue of the Grunwald--Wang theorem)
\label{thm-local-to-global-GW}

Let $\alpha$ be a polynomial in $\bA$ of positive degree, and let $n$ be a positive integer such that either $\GF(\kappa)$ has characteristic $0$ or $n$ is prime to the characteristic of $\GF(\kappa)$. Let $\Sigma= \GF(\kappa)(t)$ be the quotient field of $\bA$. Then $x^n = \alpha$ is solvable in $\bA$ if and only if $x^n = \alpha$ is solvable in $\cO_P$ for all but finitely many monic primes $P$ of $\bA$.

\end{theorem}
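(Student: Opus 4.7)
The proof will be short given the groundwork already laid: Theorem \ref{thm-Grunwald-Wang} already establishes the local-to-global principle at the level of congruences modulo $P\bA$, and the Hensel's-lemma discussion immediately preceding the statement relates congruential solvability to solvability in $\cO_P$. So the plan is simply to combine these two ingredients, handling the ``all but finitely many'' bookkeeping with care.

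First I would dispose of the forward direction trivially: if $\alpha = \beta^n$ for some $\beta \in \bA$, then since $\bA \subseteq \cO_P$ for every monic prime $P$ of $\bA$, the equation $x^n = \alpha$ has the solution $\beta \in \cO_P$ for every $P$, a fortiori for all but finitely many $P$.

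For the reverse direction, suppose that $x^n = \alpha$ is solvable in $\cO_P$ for all monic primes $P$ outside some finite exceptional set $\cE$. Let $\cE' = \cE \cup \{P : P \text{ divides } \alpha\}$, which is still finite since $\alpha$ has only finitely many prime divisors in $\bA$. For any monic prime $P \notin \cE'$, pick a solution $a \in \cO_P$ to $x^n = \alpha$, and reduce modulo the maximal ideal $\cM_P \cap \bA = P\bA$: since $\cO_P/\cM_P \cong \bA/P\bA$, the image $\bar a$ satisfies $\bar a^n \equiv \alpha \pmod{P\bA}$. Thus $x^n \equiv \alpha \pmod{P\bA}$ is solvable in $\bA/P\bA$, hence solvable in $\bA$, for all but finitely many monic primes $P$.

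At this point Theorem \ref{thm-Grunwald-Wang} applies directly (its hypothesis on the characteristic of $\GF(\kappa)$ vis-\`a-vis $n$ is exactly the one here), yielding $\alpha = \beta^n$ for some $\beta \in \bA$. Since the main substantive content, namely Theorem \ref{thm-Grunwald-Wang}, and the Hensel-style reduction have already been done in the excerpt, I do not anticipate any real obstacle; the only point requiring care is verifying that the reduction map $\cO_P \to \bA/P\bA$ lands in a congruence of the right form modulo $P\bA$ (as opposed to modulo $P\cU(\bA)$, which is the congruence appearing in the power-residue-symbol definition). This is automatic here because both sides of $a^n \equiv \alpha$ lie in $\bA$, so the difference $a^n - \alpha$ is a polynomial divisible by $P$ in $\cO_P \cap \bA = \bA_{(P)}$, and a straightforward clearing-of-denominators argument (or equivalently the observation used repeatedly in Lemma \ref{lem-U/PU=A/PA-iso} that $PH \in \bA$ with $P \in \bA$ forces $H \in \bA$) gives divisibility in $\bA$ itself.
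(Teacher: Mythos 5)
Your proof is correct and follows essentially the same route as the paper: reduce the $\cO_P$-solvability hypothesis to solvability of $x^n \equiv \alpha \pmod{P\bA}$ via the identification $\cO_P/\cM_P \cong \bA/P\bA$, then invoke Theorem \ref{thm-Grunwald-Wang}, with the converse direction being trivial since $\bA \subseteq \cO_P$. The only (harmless) difference is that you observe the direction you need uses only the trivial reduction modulo $\cM_P$, whereas the paper's preceding discussion also invokes Hensel's lemma to get the full equivalence between solvability in $\cO_P$ and in the residue field.
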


\begin{remark}
\label{rem-after-local-to-global-GW}

Since $\alpha \in \bA$, $x^n = \alpha$ has a solution in $\bA$ iff $x^n = \alpha$ has a solution in $\Sigma$. Thus the assertion in Theorem \ref{thm-Grunwald-Wang} is equivalent to the following: $x^n = \alpha$ is solvable in $\Sigma$ if and only if $x^n = \alpha$ is solvable in $\Sigma_P$ for all but finitely many monic primes $P$ of $\bA$.

\end{remark}

\end{document}